\theoremstyle{plain}
\newtheorem{thm}{Theorem}[section]
\newtheorem{prop}[thm]{Proposition}
\newtheorem{defn}[thm]{Definition}
\newtheorem{lemma}[thm]{Lemma}
\newtheorem{corollary}[thm]{Corollary}
\newtheorem{assumption}[thm]{Assumption}
\theoremstyle{remark}
\newtheorem{remark}[thm]{Remark}
\newtheorem{example}[thm]{Example}
\newcommand{\eps}{{\varepsilon}} 
\newcommand{\e}{_{\varepsilon}} 
\newcommand{\R}{\mathbb{R}}
\newcommand{\N}{\mathbb{N}}
\newcommand{\cE}{{\mathcal{E}}}
\newcommand{\cB}{{\mathcal{B}}}
\newcommand{\sB}{{\mathscr{B}}}
\newcommand{\sD}{{\mathscr D}}
\newcommand{\sA}{{\mathscr A}}
\newcommand{\sM}{{\mathscr M}}
\newcommand{\tsq}[1]{{\stackrel{2s}{\rightharpoonup}_{#1}}}
\newcommand{\ts}{{\stackrel{2s}{\rightharpoonup}}}
\newcommand{\weakto}{\rightharpoonup}
\newcommand{\tomega}{\tilde{\omega}}
\newcommand{\CL}{{\mathscr{C\!P}}}
\newcommand{\p}{\varphi} 
\newcommand{\pinv}{P_{{\textsf{inv}}}} 
\newcommand{\ex}[1]{\left\langle {#1} \right\rangle} 
\newcommand{\unf}{\mathcal{T}_{\varepsilon}} 
\newcommand{\brac}[1]{\left({#1}\right) } 
\newcommand{\cb}[1]{\left\lbrace {#1} \right\rbrace}
\newcommand{\pot}{{\textsf{pot}}}
\newcommand{\inv}{{\textsf{inv}}}
\newcommand{\ns}[1]{\|{#1}\|_{L^p(\Omega\times Q)}} 
\newcommand{\ltp}{L^p(\Omega \times Q)} 
\newcommand{\ltq}{L^q(\Omega \times Q)}
\newcommand{\wt}{\overset{2s}{\rightharpoonup}} 
\newcommand{\st}{\overset{2s}{\rightarrow}} 
\newcommand{\re}[1]{\mathbb{R}^{#1}}
\newcommand{\sob}{W^{1,p}}
\title{Stochastic unfolding and homogenization}
\author[1]{Martin Heida\thanks{martin.heida@wias-berlin.de}} 
\author[2]{Stefan Neukamm\thanks{stefan.neukamm@tu-dresden.de}}
\author[2]{Mario Varga\thanks{mario.varga@tu-dresden.de}}
\affil[1]{Weierstrass Institute for Applied Analysis and Stochastics, Berlin}
\affil[2]{Faculty of Mathematics, Technische Universit\"at Dresden}
\begin{document}
\maketitle
\begin{abstract}
The notion of periodic two-scale convergence and the method of periodic unfolding are prominent and useful tools in multiscale modeling and analysis of PDEs with rapidly oscillating periodic coefficients. In this paper we are interested in the theory of stochastic homogenization for continuum mechanical models in form of PDEs with random coefficients, describing random heterogeneous materials. The notion of periodic two-scale convergence has been extended in different ways to the stochastic case. In this work we introduce a stochastic unfolding method that features many similarities to periodic unfolding. In particular it allows to characterize the notion of stochastic two-scale convergence in the mean by mere weak convergence in an extended space. We illustrate the method on the (classical) example of stochastic homogenization of convex integral functionals, and prove a new result on stochastic homogenization for a non-convex evolution equation of  Allen-Cahn type. Moreover, we discuss the relation of stochastic unfolding to previously introduced notions of (quenched and mean) stochastic two-scale convergence. The method described in the present paper extends to the continuum setting the notion of discrete stochastic unfolding, as recently introduced by the second and third author in the context of discrete-to-continuum transition.
\medskip

\noindent
{\bf Keywords:} stochastic homogenization, unfolding, two-scale convergence, gradient systems
\end{abstract}

\setlength{\parindent}{0pt}
\tableofcontents

\section{Introduction}\label{Intro}
Homogenization theory deals with the derivation of effective, macroscopic models for problems that involve two or more length-scales. Typical examples are continuum mechanical models for microstructured materials that give rise to boundary value problems or evolutionary problems for partial differential equations with coefficients that feature rapid, spatial oscillations. The first results in homogenization theory were motivated by a mechanics problem which was about the determination of the macroscopic behavior of linearly elastic composites with periodic microstructure, see Hill \cite{Hill1963}. In the mathematical community early contributions in the 70s came from the \textit{French school} (e.g.~see \cite{Bensoussan1978} for an early standard reference, and \cite{Tartar1977,Murat1997} for Tartar and Murat's notion of $H$-convergence), the \textit{Russian school} (e.g.~Zhikov, Kozlov and Oleinik, see \cite{Zhikov1979,jikov2012homogenization}), and from the \textit{Italian school} for variational problems (e.g., Marcellini \cite{Marcellini1978}, Spagnolo \cite{Spagnolo1976} for $G$-convergence, and De Giorgi and Franzoni for $\Gamma$-convergence \cite{DeGiorgi1975}). In the 80s and later, homogenization was intensively studied for a variety of models from continuum mechanics including non-convex integral functionals and applications to non-linear elasticity (e.g.~M\"uller \cite{Mueller1987, Geymonat1993} and Braides \cite{Braides1985}),  or the topic of effective flow through porous media (e.g.~see Hornung et al. \cite{arbogast1990derivation,Hornung1991,Hornung2012} and Allaire \cite{Allaire1989}). Most results in homogenization theory discuss problems with periodic microstructure, and specific analytic tools for periodic homogenization of linear (or monotone) operators are developed, including the notions of two-scale convergence  and  periodic unfolding \cite{nguetseng1989general, allaire1992homogenization, Visintin2006, cioranescu2002periodic}, which by now are standard tools in  multiscale modeling and analysis. In the last decade considerable interest in applied mathematics emerged in understanding random heterogeneous materials, i.e.~materials whose properties on a small length-scale are only described on a statistical level, such as polycrystalline composites, foams, or biological tissues, see \cite{Torquato2013} for a standard reference. Although the first results in stochastic homogenization were already obtained in the 70s and 80s for linear elliptic equations and convex minimization problems, see \cite{Papanicolaou1979, Kozlov1979, DalMaso1985,DalMaso1986}, the theory in the stochastic case is still less developed as in the periodic case and object of various recent studies, e.g.~regarding error estimates and regularity properties (see \cite{GO11,GO12, GNO1, GNO2, GNOreg, armstrong2016quantitative, Armstrong2017}, or modeling of random heterogeneous materials~\cite{Zhikov2006,Alicandro2011, Cicalese2017, Hornung2017, Heida2017b, HeidaNesenenko2017monotone, Berlyand2017, neukamm2017stochastic}. With the present paper we contribute to the latter. In particular, we introduce a \textit{stochastic unfolding method} that shares many similarities to periodic unfolding and two-scale convergence with the intention to systematize and simplify the process of lifting results from periodic homogenization to the stochastic case. We illustrate this  by reconsidering stochastic homogenization of convex integral functionals and by proving a new stochastic homogenization result for semilinear gradient flows of Allen-Cahn type. In order to put the notion into perspective, in the following we recall the concepts of two-scale convergence and periodic unfolding.
\medskip

For problems with periodic coefficients, the notion of (periodic) two-scale convergence was introduced in \cite{nguetseng1989general} and further developed in \cite{allaire1992homogenization,lukkassen2002two}. Two-scale convergence refines weak convergence in $L^p$-spaces: The two-scale limit captures not only the averaged behavior of an oscillating sequence (as opposed to the weak limit), but also oscillations on a prescribed small scale $\varepsilon$. In particular, let $Q\subset \R^d$ and $\Box=[0,1)^d$, a sequence $(u_{\varepsilon})\subset L^p(Q)$ two-scale converges to $u\in L^p(Q\times \Box)$ (as $\varepsilon\to 0$) if 
\begin{equation*}
\lim_{\varepsilon \to 0}\int_{Q}u_{\varepsilon}(x)\varphi\brac{x,\frac{x}{\varepsilon}}dx =  \int_{Q}\int_{\Box}u(x,y)\varphi(x,y)dy dx,
\end{equation*}   
for all $\varphi \in L^q(Q; C_{\#}(\Box))$. Here $C_{\#}(\Box)$ denotes the space of continuous and $\Box$-periodic functions and $p,q\in (1,\infty)$ are dual exponents.
\medskip

In \cite{arbogast1990derivation} in the specific context of homogenization of flow through porous media Arbogast et~al.~introduced a dilation operator to resolve oscillations on a prescribed scale of weakly converging sequences; it turned out that the latter yields a characterization of two-scale convergence (see \cite[Proposition~4.6]{bourgeat1996convergence}). In a similar spirit, Cioranescu et al.~introduced in \cite{cioranescu2002periodic, cioranescu2008periodic} the periodic unfolding method as a systematic approach to homogenization. The key object of this method is a linear isometry $\mathcal{T}_{\varepsilon}^{p}:L^p(Q)\to L^p(Q\times \Box)$ (the periodic unfolding operator) which invokes a change of scales and allows (at the expense of doubling the dimension) to use standard weak and strong convergence theorems in $L^p$-spaces to capture the microscopic behavior of oscillatory sequences. It turned out that the method is well-suited for periodic multiscale problems, e.g.~see~\cite{cioranescu2012periodic,griso2004error,mielke2007two, Visintin2006, neukamm2010homogenization, hanke2011homogenization, liero2015homogenization}. Moreover, the unfolding method allows to rephrase two-scale convergence: Applied to an oscillatory sequence $(u_{\varepsilon})\subset L^p(Q)$, the unfolded sequence $(\mathcal{T}_{\varepsilon}^{p}u_{\varepsilon})$ weakly converges in $L^p(Q \times \Box)$ if and only if $(u_{\varepsilon})$ two-scale converges, and the corresponding limits are the same. We refer to \cite{mielke2007two} where this perspective on two-scale convergence is investigated and applied in the context of  evolutionary problems.
\medskip

Motivated by the idea of (periodic) two-scale convergence, in \cite{bourgeat1994stochastic} the notion of stochastic two-scale convergence in the mean was introduced suited for homogenization problems that invoke random  coefficients, see also \cite{andrews1998stochastic}. In stochastic homogenization typically random coefficients of the form $a(\omega,x)=a_0(\tau_x\omega)$ (for $x\in\R^d$) are considered where $\omega$ stands for a ``random configuration'' and $a_0$ is defined on a probability space $(\Omega,\mathcal F,P)$ that is equipped with a measure preserving action $\tau_x:\Omega\to\Omega$, see Section \ref{S_Desc}. A sequence $(u_{\varepsilon})\subset L^p(\Omega\times Q)$ (where $Q\subset\R^d$ denotes a continuum domain) is said to two-scale converge in the mean to some $u\in L^p(\Omega\times Q)$ if 
\begin{equation*}
\lim_{\varepsilon\to 0} \int_{\Omega}\int_{Q}u_{\varepsilon}(\omega,x)\varphi(\tau_{\frac{x}{\varepsilon}}\omega,x)dx dP(\omega)= \int_{\Omega}\int_{Q}u(\omega,x)\varphi(\omega,x)dx dP(\omega)
\end{equation*} 
for all $\varphi \in L^q(\Omega\times Q)$ satisfying suitable measurability conditions.
\medskip

Motivated by the concept of the periodic unfolding method, in \cite{neukamm2017stochastic} the second and third author developed a stochastic unfolding method for a discrete-to-continuum analysis of discrete models of random heterogeneous materials. In the present work, we extend the concept to problems defined on continuum domains $Q\subset\R^d$. In particular, we introduce a stochastic unfolding operator $\mathcal{T}_{\varepsilon}: L^p(\Omega\times Q)\to L^p(\Omega\times Q)$ which is an isometric isomorphism (see Section \ref{S_Stoch_1}).  It displays similar properties as the periodic unfolding operator; in particular, weak convergence of the unfolded sequence $(\mathcal{T}_{\varepsilon}u_{\varepsilon})$ is equivalent to stochastic two-scale convergence in the mean, and -- as in the periodic case -- we recover a compactness statement for two-scale limits of gradients.
\medskip

A first example that we treat via stochastic unfolding is the classical problem of stochastic homogenization of convex integral functionals. As in the periodic case, the proof of the homogenization theorem via unfolding is merely based on elementary properties of the unfolding operator and on (semi-)continuity of convex functionals (with suitable growth assumptions). The second example we consider is homogenization for gradient flows driven by $\lambda$-convex energies. In particular, we consider an Allen-Cahn type equation with random and oscillating coefficients. The argument follows an abstract strategy for evolutionary $\Gamma$-convergence of gradient systems, see \cite{mielke2016evolutionary} and the references therein (we provide more references in Section 3). The homogenization results that we obtain via stochastic unfolding establish convergence in the \textit{mean} (i.e.~in a statistically averaged sense). This is in contrast to \textit{quenched} homogenization, where a finer topology is considered -- namely convergence for almost every random realization. Although homogenization in the mean (via unfolding) is easier to prove than homogenization in a quenched sense (which in most cases relies on a subadditive ergodic theorem), typically the homogenization limits in both cases are the same, see  Section~\ref{Section:3:3}. One thus might view stochastic unfolding as a convenient and easy tool to rigorously identify homogenized models.
\medskip

The alternative, \textit{quenched} notion of stochastic two-scale convergence was introduced by Zhikov and Piatnitski in \cite{Zhikov2006}. In a very general setting, they introduced  two-scale convergence on random measures as a generalization of periodic two-scale convergence as presented in \cite{Zhikov2000}. In this work, we restrict to the simplest case where the random measure is the Lebesgue measure. The concept of stochastic two-scale convergence in \cite{Zhikov2006} is based on Birkhoff's ergodic theorem. 
Although the definition of (quenched) stochastic two-scale convergence,  which we recall in Section~\ref{Section_4}, and two-scale convergence in the mean look quite similar, it is non-trivial to relate both notions. In this paper we investigate this issue and provide some tools that allow to draw conclusions on quenched homogenization from mean homogenization. As an example we treat convex integral functionals. For the analysis, we appeal to Young measures generated by stochastically two-scale convergent sequences in the mean and in particular establish a compactness result (see Theorem \ref{thm:Balder-Thm-two-scale} and Lemma \ref{lem:Balder-Lem-two-scale}). Moreover, we exploit a lower semicontinuity result of convex integral functionals w.r.t.~quenched stochastic two-scale convergence that has been recently obtained by the first author and Nesenenko in \cite{HeidaNesenenko2017monotone}.

\medskip

\textbf{Structure of the paper.} In Section \ref{S_Stoch} we introduce the standard setting for stochastic homogenization, introduce the notion of stochastic unfolding and derive the most significant properties of the unfolding operator. In the following Section \ref{Section_Applications} two examples of the homogenization procedure via stochastic unfolding are presented. Namely, Section \ref{Section_Convex} is dedicated to homogenization of convex functionals and in Section \ref{Section_3.2} homogenization for Allen-Cahn type gradient flows is provided. In Section \ref{Section_4} we discuss the relations of stochastic unfolding and quenched stochastic two-scale convergence. Section~\ref{S_Stoch} and ~\ref{Section_Convex}, which contain the basic concepts and the application to convex homogenization, are self-contained and require only basic input from functional analysis. Section~\ref{Section_3.2} and Section~\ref{Section_4} require some advanced tools from analysis and measure theory.

\section{Stochastic unfolding and properties}\label{S_Stoch}
\subsection{Description of random media - a functional analytic framework}\label{S_Desc}
To fix ideas we consider for a moment the setup of Papanicolaou and Varadhan \cite{Papanicolaou1979} for homogenization of elliptic operators of the form $-\nabla\cdot a(\frac{x}{\varepsilon})\nabla$ with a coefficient field $a:\R^d\to\R^{d\times d}$. In the stochastic case the coefficients are assumed to be random and thus $a$ can be viewed as a family of random variables $\{a(x)\}_{x\in\R^d}$. A minimal requirement for stochastic homogenization of such operators is that the distribution of the coefficient field is \textit{stationary} and \textit{ergodic}. Stationarity means that the coefficients are statistically homogeneous (i.e.~for any finite set of points $x_1,\ldots,x_n\in\R^d$ the joint distribution of the \textit{shifted} random variables $a(x_1+z),\ldots,a(x_n+z)$ is independent of $z\in\R^d$), while \textit{ergodicity} (see below for the precise definition) is an assumption that ensures a separation of scales in the sense that long-range correlations of the coefficients become negligible in the large scale limit, e.g.~$\mbox{cov}[\fint_{B+z}a,\fint_Ba]\to 0$ as $z\to\infty$. In \cite{Papanicolaou1979}, Papanicolaou and Varadhan introduced a (by now standard) setup that allows to phrase these conditions in the following functional analytic framework (see also \cite{jikov2012homogenization}):
\begin{assumption} \label{Assumption_2_1}
Let $\brac{\Omega,\mathcal{F},P}$ denote a probability space with a countably generated $\sigma$-algebra, and let $\tau=\cb{\tau_x}_{x\in \re{d}}$ denote a group of measurable, bijections $\tau_x:\Omega\to \Omega$ such that
\begin{enumerate}[(i)]
\item (group property). $\tau_0=Id$ and $\tau_{x+y}=\tau_x\circ \tau_y$ for all $x,y\in \re{d}$,
\item (measure preserving). $P(\tau_x A)=P(A)$ for all $A\in \mathcal{F}$ and $x\in \re{d}$,
\item (measurability). $(\omega,x)\mapsto \tau_{x}\omega$ is $\mathcal{F}\otimes \mathcal{L}$-measurable ($\mathcal{L}$ denotes the Lebesgue-$\sigma$-algebra on $\R^d$).
\end{enumerate}
\end{assumption}
From now on we assume that $(\Omega,\mathcal F,P,\tau)$ satisfies these assumptions and we write $\langle\cdot\rangle:=\int_\Omega\cdot\, dP$ as a shorthand for the expectation. 
\medskip

In the functional analytic setting, a random coefficient field is described by a map $a:\Omega\times \R^d\to\R^{d\times d}$ with the interpretation that $a(\omega,\cdot):\R^d\to\R^{d\times d}$ with $\omega\in\Omega$ sampled according to $P$ yields  a realization of the random coefficient field.  Likewise, solutions to an associated PDE with physical domain $Q\subset\R^d$ might be considered as \textit{random} functions, i.e.~quantities defined on the product $\Omega\times Q$. In this paper we denote by $L^p(\Omega)$ and $L^p(Q)$ (with $Q\subset\R^d$ open) the usual Banach spaces of $p$-integrable functions defined on $(\Omega,\mathcal F,P)$ and $Q$, respectively. We introduce function spaces for functions defined on $\Omega\times Q$ as follows: For closed subspaces $X\subset L^p(\Omega)$ and $Y\subset L^p(Q)$ (resp. $Y\subset W^{1,p}(Q)$) we denote by $X\otimes Y$ the closure of $$X\overset{a}{\otimes}Y:=\cb{\sum_{i=1}^{n}\varphi_i \eta_i:  \varphi_i \in X, \eta_i\in Y, n\in \mathbb{N}}$$ in $L^p(\Omega;L^p(Q))$ (resp. $L^p(\Omega;W^{1,p}(Q))$, with a slight abuse of notation we use ``$X\otimes Y$'' for both type of spaces). Since the probability space is countably generated, $L^p(\Omega)$ (with $1\leq p<\infty$) is separable, and thus we have $L^p(\Omega)\otimes L^p(Q)=L^p(\Omega\times Q)=L^p(\Omega;L^p(Q))$ up to isometric isomorphisms. We therefore simply write $L^p(\Omega\times Q)$ instead of $L^p(\Omega)\otimes L^p(Q)$.
\smallskip

In the functional analytic setting and in view of the measure preserving property of $\tau$, the requirement of stationarity can be rephrased as the assumption that the coefficient field can be written in the form $a(\omega,x)=a_0(\tau_x\omega)$ for some measurable map $a_0:\Omega\to\R^{d\times d}$. The transition from $a_0$ to $a$ conserves measurability. As usual we denote by $\mathcal B(Q)$ (resp. $\mathcal L(Q)$) the Borel (resp. Lebesgue)-$\sigma$-algebra on $Q\subset\R^d$. The proof of the following lemma is obvious and therefore we do not present it.
\begin{lemma}[Stationary extension]\label{L:stat}
  Let $\varphi:\Omega\to\R$ be $\mathcal F$-measurable. Then $S\varphi:\Omega\times Q\to\R$, $S\varphi(\omega,x):=\varphi(\tau_x\omega)$ defines an $\mathcal F\otimes\mathcal L(Q)$-measurable function -- called the stationary extension of $\varphi$. Moreover, if $Q$ is bounded, for all $1\leq p<\infty$ the map $S:L^p(\Omega)\to L^p(\Omega\times Q)$ is a linear injection satisfying
  \begin{equation*}
    \|S\varphi\|_{L^p(\Omega\times Q)}=|Q|^\frac{1}{p}\|\varphi\|_{L^p(\Omega)}.
  \end{equation*}
\end{lemma}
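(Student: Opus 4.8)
The plan is threefold: (i) obtain the measurability of $S\varphi$ by writing it as a composition of measurable maps; (ii) derive the norm identity from Tonelli's theorem together with the measure-preserving property of $\tau$; and (iii) read off linearity, well-definedness on $L^p$-equivalence classes, and injectivity from the norm identity.

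First, for the measurability claim, I would invoke Assumption~\ref{Assumption_2_1}(iii): the map $\Phi:\Omega\times\R^d\to\Omega$, $\Phi(\omega,x):=\tau_x\omega$, is $\mathcal F\otimes\mathcal L$-measurable (with values in $(\Omega,\mathcal F)$), hence its restriction to $\Omega\times Q$ is $\mathcal F\otimes\mathcal L(Q)$-measurable, since the trace of the product $\sigma$-algebra on $\Omega\times Q$ equals $\mathcal F\otimes\mathcal L(Q)$. As $\varphi:\Omega\to\R$ is $\mathcal F$-measurable, the composition $S\varphi=\varphi\circ\Phi$ is $\mathcal F\otimes\mathcal L(Q)$-measurable, which is the first assertion.

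Next I would prove the norm identity, now assuming $Q$ bounded so that $|Q|<\infty$. The function $(\omega,x)\mapsto|\varphi(\tau_x\omega)|^p$ is nonnegative and $\mathcal F\otimes\mathcal L(Q)$-measurable by the previous step, so Tonelli's theorem gives
\begin{equation*}
\|S\varphi\|_{L^p(\Omega\times Q)}^p=\int_Q\Big(\int_\Omega|\varphi(\tau_x\omega)|^p\,dP(\omega)\Big)\,dx .
\end{equation*}
For each fixed $x$, $\tau_x$ is a measurable bijection with $\tau_x^{-1}=\tau_{-x}$ by the group property, so its pushforward satisfies $\big((\tau_x)_*P\big)(A)=P(\tau_{-x}A)=P(A)$ for all $A\in\mathcal F$ by Assumption~\ref{Assumption_2_1}(i)--(ii); hence $(\tau_x)_*P=P$, and the abstract change-of-variables formula yields $\int_\Omega|\varphi(\tau_x\omega)|^p\,dP(\omega)=\langle|\varphi|^p\rangle=\|\varphi\|_{L^p(\Omega)}^p$. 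Inserting this and integrating over $Q$ gives $\|S\varphi\|_{L^p(\Omega\times Q)}^p=|Q|\,\|\varphi\|_{L^p(\Omega)}^p$; in particular $S\varphi\in L^p(\Omega\times Q)$.

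Finally, linearity of $S$ is immediate from the pointwise definition $S\varphi(\omega,x)=\varphi(\tau_x\omega)$. Applying the norm identity to $\varphi-\tilde\varphi$ shows at once that $S$ descends to a well-defined map on $L^p$-equivalence classes (if $\varphi=\tilde\varphi$ $P$-a.e.\ then $S\varphi=S\tilde\varphi$ a.e.) and that $S$ is injective: $S\varphi=0$ in $L^p(\Omega\times Q)$ forces $|Q|^{1/p}\|\varphi\|_{L^p(\Omega)}=0$, and $|Q|>0$ since $Q$ is open and nonempty, whence $\varphi=0$. As the authors remark, there is no real obstacle here; the only two points that deserve a word of justification are the appeal to Assumption~\ref{Assumption_2_1}(iii) for the joint measurability needed to apply Tonelli, and the reduction of the hypothesis $P(\tau_xA)=P(A)$ to the change-of-variables identity $\int f\circ\tau_x\,dP=\int f\,dP$ via the pushforward measure.
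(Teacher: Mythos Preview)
Your proof is correct and complete. The paper itself does not present a proof of this lemma, stating only that ``the proof of the following lemma is obvious and therefore we do not present it''; your argument---composition of measurable maps for the measurability claim, Tonelli plus the measure-preserving property of $\tau_x$ for the norm identity, and the norm identity for injectivity---is precisely the standard route one would expect.
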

The assumption of ergodicity can be phrased as follows:  We say $(\Omega,\mathcal F,P,\tau)$ is \textit{ergodic} (we also say $\ex{\cdot}$ is ergodic), if
\begin{align*}
  \text{ every shift invariant }A\in \mathcal{F} \text{ (i.e.~}\tau_x A=A \text{ for all }x\in \re d)\text{ satisfies } P(A)\in \cb{0,1}. 
\end{align*}
In this case the celebrated  ergodic theorem of Birkhoff applies, which we recall in the following form:
\begin{thm}[{Birkhoff's ergodic Theorem \cite[Theorem 10.2.II]{Daley1988}}]
\label{thm:ergodic-thm} Let $\ex{\cdot}$ be ergodic and  $\varphi:\Omega\to\R$ be integrable. Then for $P$-a.e.~$\omega\in\Omega$ it holds: $ S\varphi(\omega,\cdot)$ is locally integrable and for all open, bounded sets $Q\subset\R^d$ we have
\begin{equation}
\lim_{\eps\rightarrow0}\int_{Q}S\varphi(\omega,\tfrac{x}{\varepsilon})\,dx=|Q|\langle\varphi\rangle\,.\label{eq:ergodic-thm}
\end{equation}
Furthermore, if $\varphi\in L^p(\Omega)$ with $1\leq p\leq\infty$, then for $P$-a.e.~$\omega\in\Omega$ it holds: $S\varphi(\omega,\cdot)\in L_{loc}^{p}(\R^d)$, and provided $p<\infty$ it holds $S\varphi(\omega,\frac{\cdot}{\varepsilon})\weakto \langle\varphi\rangle$ weakly in $L_{loc}^{p}(\R^d)$ as $\eps\rightarrow0$.
\end{thm}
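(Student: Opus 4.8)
The plan is to deduce both parts from the classical ($\R^d$-parameter) Birkhoff--Wiener ergodic theorem for the measure-preserving action $\{\tau_x\}$ (legitimate here because of the joint measurability in Assumption \ref{Assumption_2_1}(iii)): in the ergodic case it asserts that for $\varphi\in L^1(\Omega)$ one has $\tfrac{1}{|B_r|}\int_{B_r}\varphi(\tau_y\omega)\,dy\to\ex\varphi$ as $r\to\infty$ for $P$-a.e.\ $\omega$, and it comes with the weak-type maximal inequality $P(M\varphi>\lambda)\le \tfrac{C_d}{\lambda}\ex{|\varphi|}$, where $M\varphi(\omega):=\sup_{r>0}\tfrac{1}{|B_r|}\int_{B_r}|\varphi(\tau_y\omega)|\,dy$. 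The bridge to \eqref{eq:ergodic-thm} is the rescaling $y=x/\eps$, which turns $\int_Q S\varphi(\omega,\tfrac x\eps)\,dx$ into $\eps^d\int_{Q/\eps}\varphi(\tau_y\omega)\,dy$; since $\eps^d|Q/\eps|=|Q|$, proving \eqref{eq:ergodic-thm} amounts to showing that the averages of $\varphi(\tau_\cdot\omega)$ over the dilated sets $Q/\eps$ converge to $\ex\varphi$.

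For a single ball or cube this is exactly the classical statement, so the real work is to promote it to an \emph{arbitrary} bounded open $Q$ with a \emph{single} $P$-null exceptional set. First I would fix once and for all the countable family of half-open dyadic cubes and apply the classical theorem to $\varphi^\pm$, $|\varphi|$ and the tails $|\varphi|\,\mathbf{1}_{\{|\varphi|>k\}}$ ($k\in\N$), producing a full-measure set $\Omega_0$ on which the rescaled averages over all dyadic cubes converge, $M\varphi<\infty$, and $M(|\varphi|\mathbf{1}_{\{|\varphi|>k\}})(\omega)\downarrow 0$ as $k\to\infty$ (the last by monotonicity in $k$ together with the maximal inequality). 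Given $\omega\in\Omega_0$ and a bounded open $Q$, I write $Q$ as a countable disjoint-interior union of dyadic cubes, let $R_N$ be the union of the first $N$ of them, and split $\int_Q=\int_{R_N}+\int_{Q\setminus R_N}$, treating $\varphi^+$ and $\varphi^-$ separately. The integral over $R_N$ converges to $|R_N|\ex{\varphi^\pm}$ by finite additivity; the remainder over the small-measure open set $Q\setminus R_N$ is bounded \emph{uniformly in $\eps$} by $k\,|Q\setminus R_N|+c_d(\operatorname{diam}Q)^d\,M(|\varphi|\mathbf{1}_{\{|\varphi|>k\}})(\omega)$, after truncating $\varphi$ at height $k$ and dominating the tail by the maximal function. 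Letting $\eps\to0$, then $N\to\infty$, then $k\to\infty$ yields \eqref{eq:ergodic-thm}; local integrability of $S\varphi(\omega,\cdot)$ is the same estimate applied to $|\varphi|$ on a cube swallowing the given compact set. Controlling these boundary layers uniformly is the one genuinely delicate point — without the maximal inequality one only obtains the statement for $Q$ with $|\partial Q|=0$.

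For the $L^p$ assertions, since $P$ is a probability measure $\varphi\in L^p(\Omega)\subset L^1(\Omega)$, so part 1 applied to $|\varphi|^p\in L^1(\Omega)$ gives directly $S\varphi(\omega,\cdot)\in L^p_{loc}(\R^d)$ for a.e.\ $\omega$ (the case $p=\infty$ being immediate), and, with the rescaling, $\|S\varphi(\omega,\tfrac\cdot\eps)\|_{L^p(U)}^p\to|U|\ex{|\varphi|^p}$, so that for $p<\infty$ the family $(S\varphi(\omega,\tfrac\cdot\eps))_\eps$ is bounded in $L^p(U)$ on every bounded open $U$ and every good $\omega$. By part 1, $\int_U S\varphi(\omega,\tfrac x\eps)\,\mathbf{1}_C\,dx\to|C|\ex\varphi$ for each cube $C\subset U$, hence against every step function. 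For $p>1$ this pins down the weak limit, since step functions are dense in $L^q(U)$ and bounded sets in $L^p(U)$ are weakly precompact, giving $S\varphi(\omega,\tfrac\cdot\eps)\weakto\ex\varphi$ in $L^p(U)$; for $p=1$ one adds equi-integrability — the mass of $S\varphi(\omega,\tfrac\cdot\eps)$ on $\{|S\varphi(\omega,\tfrac x\eps)|>k\}$ tends to $0$ as $k\to\infty$ uniformly in $\eps$, which follows from part 1 applied to $|\varphi|\mathbf{1}_{\{|\varphi|>k\}}$ and the uniform tail bound above — and concludes by Dunford--Pettis with the same identification of the limit. As $U$ was an arbitrary bounded open set, this is weak convergence in $L^p_{loc}(\R^d)$.
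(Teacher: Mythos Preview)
The paper does not give a proof of this theorem: it is stated as a citation to \cite[Theorem~10.2.II]{Daley1988} and used as a black box throughout. There is therefore nothing in the paper to compare your argument against.

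That said, your sketch is a standard and essentially correct derivation. One point deserves a word of caution: when you say you ``apply the classical theorem'' to obtain convergence of the rescaled averages over each dyadic cube $C$, note that for a cube not containing the origin the dilated set $C/\eps$ drifts off to infinity as $\eps\to0$, so this is not literally the statement for centered balls. You need either the Wiener--Tempel'man form of the ergodic theorem for general rectangles, or the familiar inclusion--exclusion trick that writes $\int_{C/\eps}$ as a signed sum of integrals over $2^d$ boxes with a corner at the origin; either way the conclusion holds for a single exceptional null set. Your maximal-function control of the boundary remainder and the truncation at level $k$ are exactly the right tools to push through to arbitrary bounded open $Q$, and the $L^p$ part (boundedness via part one applied to $|\varphi|^p$, identification of the weak limit on step functions, Dunford--Pettis for $p=1$) is the standard argument.
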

\begin{example}\label{example:1}
Basic examples for stationary and ergodic systems include the random checkerboard (e.g.~see \cite[Example 2.12]{Neukamm_lecture}), Gaussian random fields (e.g.~see \cite[Example 2.13]{Neukamm_lecture}).
We remark that the setting for periodic homogenization fits as well into this framework. In particular, $\Omega = \Box$ equipped with the Lebesque-$\sigma$-algebra and the Lebesgue measure, and the shift $\tau_{x}y = y + x \mod 1$ defines a system satisfying Assumption 2.1 and ergodicity. We refer to \cite{duerinckx2017weighted} for further examples of stationary and ergodic systems. 
\end{example}

\subsection{Stochastic unfolding operator and two-scale convergence in the mean}\label{S_Stoch_1}
In the following we introduce the stochastic unfolding operator, which is a key object in this paper. It is a linear, $\varepsilon$-parametrized, isometric isomorphism $\mathcal{T}_{\varepsilon}$ on $L^p(\Omega\times Q)$ where $Q\subset\R^d$ denotes an open set which we think of as the domain of a PDE.
\begin{lemma}\label{L:unf}
  Let $\varepsilon>0$, $1<p<\infty$, $q:=\frac{p}{p-1}$, and $Q\subset\R^d$ be open. There exists a unique linear isometric isomorphism
  \begin{equation*}
    \unf: \ltp\rightarrow \ltp
  \end{equation*}
  such that 
  \begin{equation*}
    \forall u\in L^p(\Omega)\overset{a}{\otimes} L^p(Q)\,:\qquad (\unf u)(\omega,x)=u(\tau_{-\frac{x}{\varepsilon}}\omega,x)\qquad \text{a.e. in }\Omega\times Q.
  \end{equation*}
  Moreover, its adjoint is the unique linear isometric isomorphism $\unf^{*}:\ltq\to\ltq$ that satisfies $(\unf^{*}u)(\omega,x)=u(\tau_{\frac{x}{\varepsilon}}\omega,x)$ a.e.~in $\Omega\times Q$ for all $u\in L^q(\Omega)\overset{a}{\otimes}L^q(Q)$.
  
(For the proof see Section \ref{S_Proofs}.)
\end{lemma}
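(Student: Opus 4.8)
The plan is to define $\unf$ explicitly on the algebraic tensor product $L^p(\Omega)\overset{a}{\otimes}L^p(Q)$ by the formula $(\unf u)(\omega,x)=u(\tau_{-x/\varepsilon}\omega,x)$, check that it is a well-defined linear isometry into $\ltp$, and then extend it by density; the isomorphism property and the description of the adjoint will follow by running the same construction for the inverse map and for the $q$-exponent. First I would verify that for an elementary tensor $u=\sum_i\varphi_i\eta_i$ with $\varphi_i\in L^p(\Omega)$, $\eta_i\in L^p(Q)$, the function $(\omega,x)\mapsto\sum_i\varphi_i(\tau_{-x/\varepsilon}\omega)\eta_i(x)$ is $\mathcal F\otimes\mathcal L(Q)$-measurable — this uses Assumption~\ref{Assumption_2_1}(iii) (joint measurability of $(\omega,x)\mapsto\tau_x\omega$) composed with the measurable map $x\mapsto -x/\varepsilon$, exactly as in Lemma~\ref{L:stat}. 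I also need to observe that the formula does not depend on the representation of $u$ as a finite sum: two representations that agree a.e.\ on $\Omega\times Q$ produce functions agreeing a.e.\ after the change of variables, which follows from the isometry computation below applied to their difference.

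The core computation is the isometry identity. For $u\in L^p(\Omega)\overset{a}{\otimes}L^p(Q)$ and $Q$ bounded (the general open case follows by exhausting $Q$ with bounded open sets, or by working on $Q\cap B_R$ and letting $R\to\infty$ via monotone convergence) I would write
\begin{equation*}
\int_\Omega\int_Q |u(\tau_{-x/\varepsilon}\omega,x)|^p\,dx\,dP(\omega)
=\int_Q\Big(\int_\Omega |u(\tau_{-x/\varepsilon}\omega,x)|^p\,dP(\omega)\Big)\,dx
=\int_Q\int_\Omega |u(\omega,x)|^p\,dP(\omega)\,dx,
\end{equation*}
where the first equality is Fubini (legitimate by the measurability just established and Tonelli for the nonnegative integrand) and the second is the measure-preserving property Assumption~\ref{Assumption_2_1}(ii) applied for each fixed $x$ to the measurable map $\omega\mapsto\tau_{-x/\varepsilon}\omega$ (a bijection by the group property (i)). This shows $\|\unf u\|_{\ltp}=\|u\|_{\ltp}$ on the dense subspace $L^p(\Omega)\overset{a}{\otimes}L^p(Q)$ (dense in $\ltp$ since the probability space is countably generated, as recalled in Section~\ref{S_Desc}), so $\unf$ extends uniquely to a linear isometry on all of $\ltp$; uniqueness of the extension is automatic from density and continuity, giving the ``unique linear isometry satisfying the formula'' part of the claim.

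For surjectivity, I would note that the map $u\mapsto \big((\omega,x)\mapsto u(\tau_{x/\varepsilon}\omega,x)\big)$ on $L^p(\Omega)\overset{a}{\otimes}L^p(Q)$ is, by the identical argument (now using that $\omega\mapsto\tau_{x/\varepsilon}\omega$ is measure preserving and bijective with inverse $\tau_{-x/\varepsilon}$), a linear isometry extending to an operator $S_\varepsilon$ on $\ltp$, and on the dense subspace $\unf\circ S_\varepsilon=S_\varepsilon\circ\unf=\mathrm{Id}$ by the group property $\tau_{-x/\varepsilon}\circ\tau_{x/\varepsilon}=\tau_0=\mathrm{Id}$; passing to the closure gives $\unf$ is a bijective isometry, i.e.\ an isometric isomorphism. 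Finally, for the adjoint, I would compute $\langle\unf u,v\rangle$ for $u\in L^p(\Omega)\overset{a}{\otimes}L^p(Q)$ and $v\in L^q(\Omega)\overset{a}{\otimes}L^q(Q)$ by the same Fubini-plus-change-of-variables on $\Omega$ (now with Hölder in place of the $p$-th power bound, so that each single integral converges absolutely and Fubini in the signed sense applies), obtaining $\langle\unf u,v\rangle=\langle u,\, (\omega,x)\mapsto v(\tau_{x/\varepsilon}\omega,x)\rangle$; by the duality $(\ltp)^*=\ltq$ and density this identifies $\unf^*$ with the $q$-analogue of $S_\varepsilon$, which by the argument above is itself the unique isometric isomorphism on $\ltq$ satisfying $(\unf^*v)(\omega,x)=v(\tau_{x/\varepsilon}\omega,x)$ on the algebraic tensor product. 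I expect the only genuinely delicate point to be bookkeeping the measurability and the legitimacy of Fubini on the unbounded-domain / general-open-$Q$ case; the measure-theoretic heart — a change of variables on $\Omega$ that is an isometry because $\tau$ is measure preserving — is short, and everything else is density and the group law.
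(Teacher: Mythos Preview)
Your proposal is correct and follows essentially the same route as the paper: define the operator on the algebraic tensor product via the shift, verify measurability through Assumption~\ref{Assumption_2_1}(iii) and Lemma~\ref{L:stat}, compute the isometry by Fubini plus the measure-preserving property of $\tau$, extend by density, and identify the adjoint by the same change-of-variables computation on the pairing.

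Two minor remarks. First, your detour through bounded $Q$ and exhaustion is unnecessary: the isometry computation uses only Tonelli for the nonnegative integrand $|u(\tau_{-x/\varepsilon}\omega,x)|^p$, which holds on any $\sigma$-finite product, so the general open $Q$ works directly. Second, for surjectivity you construct the explicit inverse $S_\varepsilon$ and check $\unf\circ S_\varepsilon=S_\varepsilon\circ\unf=\mathrm{Id}$ on the dense subspace; the paper instead first identifies $\unf^*=\mathcal{T}_{-\varepsilon}$ as an isometry and then invokes the functional-analytic fact that an operator whose adjoint is an isometry is surjective (citing \cite[Theorem~2.20]{brezis2010functional}). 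Your argument is slightly more elementary and self-contained; the paper's is marginally shorter once the adjoint is already in hand. Both are entirely standard.
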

\begin{defn}[Unfolding operator and two-scale convergence in the mean]\label{def46}
  The operator $\unf:\ltp\to\ltp$ of Lemma~\ref{L:unf} is called the stochastic unfolding operator. We say that a sequence $(u\e) \subset \ltp$ weakly (strongly) two-scale converges in the mean in $\ltp$ to $u\in \ltp$ if (as $\varepsilon\to 0$)
  \begin{equation*}
    \unf u\e \rightarrow u \quad \text{ weakly (strongly) in }\ltp.
  \end{equation*} 
  In this case we write $u\e\wt u$ (resp. $u_{\varepsilon} \st u$) in $L^p(\Omega\times Q)$.
\end{defn}

See Remark \ref{R_Stoch_1} for an explanation of the origin of the term \textit{weak/strong stochastic two-scale convergence in the mean} used for the above notion of convergence.

To motivate the definition, let $u\e\in H^1_0(Q)$ denote a (distributional) solution to $-\nabla\cdot a\e(x)\nabla u\e=f$ in $Q$, where $a\e$ is a family of uniformly elliptic, random coefficient fields of the form $a\e(\omega, x)=a_0(\tau_{\frac{x}{\varepsilon}}\omega)$. The main difficulty in homogenization of this PDE is the passage to the limit $\varepsilon\to 0$ in the product $a\e \nabla u\e$, since both factors in general only weakly converge. The stochastic unfolding operator $\unf$ turns this expression into a product of a strongly and a weakly convergent sequence in $L^2(\Omega\times Q)$: Indeed, we have $\unf(a\e\nabla u\e)=a_0(\unf\nabla u\e)$ and thus it remains to characterize the weak limit of $\unf\nabla u\e$, as will be done in the next section.

Since $\unf$ is an isometry, we obtain the following properties (which resemble the key properties of the periodic unfolding method). The below lemma directly follows from the isometry property of $\unf$ and the usual properties of weak and strong convergence in $L^p(\Omega\times Q)$; therefore, we do not present its proof.
\begin{lemma}[Basic properties]\label{lemma_basics} Let $p\in (1,\infty)$, $q=\frac{p}{p-1}$ and $Q\subset \R^d$ be open.
Consider sequences $(u\e) \subset \ltp$ and $(v\e) \subset \ltq$.
\begin{enumerate}[(i)]
\item (Boundedness and lower-semicontinuity of the norm). If $u\e \wt u$, then \\ $\sup_{\varepsilon\in (0,1)}\ns{u\e}< \infty$ and $\ns{u}\leq \liminf_{\varepsilon\to 0}\ns{u\e}$.
\item (Compactness of bounded sequences). If $\limsup_{\varepsilon\rightarrow 0}\ns{u\e}<\infty$, then there exists a subsequence $\varepsilon'$ and $u\in \ltp$ such that $u_{\varepsilon'} \wt u$ in $\ltp$.
\item (Characterization of strong two-scale convergence). $u\e \st u$ if and only if $u\e \wt u$ in $L^p(\Omega\times Q)$ and $\ns{u\e}\to \ns{u}$.
\item (Products of strongly and weakly two-scale convergent sequences). If $u\e \wt u$ in $\ltp$ and $v\e \st v$ in $\ltq$, then
\begin{equation*}
\ex{\int_Qu\e(\omega,x) v\e (\omega,x) dx}\rightarrow \ex{\int_Qu(\omega,x)v(\omega,x) dx}.
\end{equation*}
\end{enumerate}
\end{lemma}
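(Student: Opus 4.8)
The plan is to reduce every assertion to the single structural fact supplied by Lemma~\ref{L:unf}: $\unf$ is a \emph{linear isometric isomorphism} of $\ltp$ onto itself, and --- by the same lemma applied with the dual exponent --- also of $\ltq$ onto itself, with the analogous defining formula $(\unf v)(\omega,x)=v(\tau_{-x/\eps}\omega,x)$ on the algebraic tensor product. Once this is fixed, each item becomes a textbook statement about the reflexive, uniformly convex Banach space $\ltp$ (recall $1<p<\infty$ and that the $\sigma$-algebra is countably generated), transported through the isometry.

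For (i): if $u\e\wt u$ then by Definition~\ref{def46} $\unf u\e\weakto u$ in $\ltp$; a weakly convergent sequence is norm-bounded by the uniform boundedness principle, and since $\ns{u\e}=\|\unf u\e\|_{\ltp}$ this gives $\sup_{\eps\in(0,1)}\ns{u\e}<\infty$, while weak lower semicontinuity of the norm yields $\ns{u}\le\liminf_{\eps\to0}\|\unf u\e\|_{\ltp}=\liminf_{\eps\to0}\ns{u\e}$. For (ii): the isometry turns $\limsup_{\eps\to0}\ns{u\e}<\infty$ into boundedness of $(\unf u\e)$ in $\ltp$, and reflexivity of $\ltp$ then furnishes a subsequence with $\unf u_{\eps'}\weakto u$ for some $u\in\ltp$ --- which is precisely $u_{\eps'}\wt u$. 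For (iii): one implication is immediate, since strong convergence implies weak convergence together with convergence of norms and $\ns{u\e}=\|\unf u\e\|_{\ltp}$; for the converse I would invoke that $\ltp$, being uniformly convex, has the Radon--Riesz (Kadec--Klee) property, so that $\unf u\e\weakto u$ in $\ltp$ together with $\|\unf u\e\|_{\ltp}=\ns{u\e}\to\ns{u}=\|u\|_{\ltp}$ forces $\unf u\e\to u$ strongly, i.e.\ $u\e\st u$.

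For (iv) the only genuinely new computation is the identity
\begin{equation*}
\ex{\int_Q(\unf u)(\omega,x)\,(\unf v)(\omega,x)\,dx}=\ex{\int_Q u(\omega,x)\,v(\omega,x)\,dx}\qquad(u\in\ltp,\ v\in\ltq),
\end{equation*}
which I would first check for $u,v$ in the algebraic tensor products --- there the left integrand equals $u(\tau_{-x/\eps}\omega,x)\,v(\tau_{-x/\eps}\omega,x)$, and the equality follows from Fubini and the measure-preserving property of $\omega\mapsto\tau_{-x/\eps}\omega$ (Assumption~\ref{Assumption_2_1}(ii)) --- and then extend to all $u,v$ by density, both sides being continuous bilinear forms on $\ltp\times\ltq$ by H\"older's inequality and the norm-preservation of $\unf$. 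Applying this with $u=u\e$, $v=v\e$ rewrites $\ex{\int_Q u\e v\e\,dx}$ as $\ex{\int_Q(\unf u\e)(\unf v\e)\,dx}$, and since $\unf u\e\weakto u$ in $\ltp$ while $\unf v\e\to v$ strongly in $\ltq$, the standard convergence of the pairing between a weakly convergent and a strongly convergent sequence delivers the claim.

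I do not anticipate a real obstacle. If forced to name the least routine points: in (iii) the argument leans on a genuine geometric property of $L^p$ (uniform convexity / Radon--Riesz), the only place this is needed; and in (iv) one must be slightly careful to establish the change-of-variables pairing identity on a dense subspace and then propagate it by continuity, rather than reading it off pointwise for arbitrary $u\in\ltp$, $v\in\ltq$.
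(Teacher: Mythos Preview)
Your proposal is correct and follows exactly the approach the paper indicates: the paper states that the lemma ``directly follows from the isometry property of $\unf$ and the usual properties of weak and strong convergence in $L^p(\Omega\times Q)$'' and omits the proof, and you have simply spelled out those standard $L^p$ facts (uniform boundedness, weak sequential compactness via reflexivity, the Radon--Riesz property from uniform convexity, and the weak--strong pairing) together with the change-of-variables identity for (iv). Nothing is missing or done differently.
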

\begin{remark}
The stochastic unfolding operator enjoys many similarities to the periodic unfolding operator, however we would like to point out one considerable difference. Namely, in the periodic case if a sequence $(u_{\varepsilon})\subset L^p(Q)$ satisfies $u_{\varepsilon}\to u$ strongly in $L^p(Q)$, it follows that $\unf^p u_{\varepsilon}\to u$ strongly in $L^p(Q\times \Box)$ (see e.g. \cite[Proposition 2.4]{mielke2007two}). In the stochastic case, this does not hold in general, specifically even for a fixed function $u \in L^p(\Omega\times Q)$, in general it does not hold $\unf u \weakto u$. However, if $\ex{\cdot}$ is ergodic, using Proposition \ref{prop1} below, it follows that for a sequence $(u\e) \subset L^{p}(\Omega)\otimes W^{1,p}(Q)$ such that $u_{\varepsilon}\weakto u$ weakly in $L^p(\Omega\times Q)$ it holds that $u_{\varepsilon}\overset{2}{\weakto} \ex{u}$. In this respect, stochastic two-scale convergence might be viewed as an ergodic theorem for weakly convergent sequences.
\end{remark}
\begin{remark}
The choice $\Omega=\Box =[0,1)^d$ (see Example \ref{example:1}), provides us with a tool for periodic homogenization (we might call it \textit{periodic unfolding in the mean}). However, we remark that the convergence notion we obtain using the unfolding operator (in the mean) slightly differs from the notion obtained in standard periodic homogenization results (e.g. using the usual periodic unfolding operator). Namely, we consider a standard convex periodic homogenization problem: For $V: \Box \times \R^d \to \R$, $V$ being convex in its second variable (see Section \ref{Section_Convex} for precise assumptions), for $y\in \Box$, let $u\e(y,\cdot) \in H^1_0(Q)$ be the minimizer of the functional
\begin{equation*}
\mathcal{E}\e^{y}: H^1_0(Q)\to \R, \quad \cE\e^{y}(u)= \int_{Q}V(\tau_{\frac{x}{\varepsilon}}y,\nabla u(x))dx.
\end{equation*}
Let $u\in H^1_0(Q)$ be the minimizer of the homogenized functional $\cE_{\mathsf{hom}}: H^1_0(Q)\to \R$, $\cE_{\mathsf{hom}} (u)= \int_{Q}V_{hom}(\nabla u(x))dx$ (see Section \ref{Section_Convex} for the formula for $V_{\mathsf{hom}}$).
Theorem \ref{thm2} below implies that $\int_{\Box}u\e(y,\cdot)dy \to u(\cdot)$ in $L^2(Q)$, whereas classical periodic results (e.g. \cite{cioranescu2004homogenization}) include the convergence $u\e(y)\to u$ in $L^2(Q)$ for all $y\in \Box$. Note that, in the case of a strongly convex integrand (see Proposition \ref{prop3}), we might recover the convergence $u\e(y)\to u$ in $L^2(Q)$, but merely for a.e. $y\in \Box$ and for a subsequence (which might depend on the choice of the exceptional set in $\Box$). 
\end{remark}
For homogenization of variational problems (in particular, convex integral functionals) the following transformation and (lower semi-)continuity properties are useful.
%
\begin{prop}\label{P_Cont_1}
Let $p\in (1,\infty)$ and $Q\subset \R^d$ be open and bounded. Let $V: \Omega \times Q \times \R^{m}\to \R$ be such that $V(\cdot,\cdot,F)$ is $\mathcal{F} \otimes \mathcal{L}(Q)$-measurable for all $F\in \R^m$ and $V(\omega,x,\cdot)$ is continuous for a.e. $(\omega,x)\in \Omega \times Q$. Also, we assume that there exists $C>0$ such that for a.e. $(\omega,x)\in \Omega\times Q$
\begin{equation*}
|V(\omega, x, F)|\leq C(1+|F|^p), \quad \text{for all }F\in \R^m.
\end{equation*}
\begin{enumerate}[(i)]
\item We have
\begin{equation}\label{intform}
\forall u\in \ltp^{m}\quad \ex{\int_Q V(\tau_{\frac{x}{\varepsilon}}\omega, x,u(\omega,x))dx}=\ex{\int_Q V(\omega, x, \unf u(\omega,x))dx} \,.
\end{equation}
\item 
If $u\e \overset{2s}{\to} u$ in $L^p(\Omega\times Q)^m$, then 
\begin{equation*}
\lim_{\varepsilon\to 0}\ex{\int_{Q}V(\tau_{\frac{x}{\varepsilon}}\omega,x, u\e(\omega,x))dx} = \ex{\int_{Q} V(\omega, x, u(\omega,x))dx}.
\end{equation*} 
\item We additionally assume that for a.e. $(\omega,x)\in \Omega\times Q$, $V(\omega, x,\cdot)$ is convex. Then, if $u\e \overset{2s}{\weakto} u$ in $L^p(\Omega\times Q)^m$,
\begin{equation*}
\liminf_{\varepsilon\to 0}\ex{\int_{Q} V(\tau_{\frac{x}{\varepsilon}} \omega,x,u\e(\omega,x)) dx}\geq \ex{\int_{Q} V(\omega, x, u(\omega,x))dx}.
\end{equation*}
\end{enumerate}
\end{prop}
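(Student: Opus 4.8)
The plan is to introduce, for $u,w\in L^p(\Omega\times Q)^m$, the functionals
\begin{equation*}
J_\varepsilon(u):=\ex{\int_Q V(\tau_{\frac{x}{\varepsilon}}\omega,x,u(\omega,x))\,dx},\qquad J(w):=\ex{\int_Q V(\omega,x,w(\omega,x))\,dx},
\end{equation*}
and to reduce all three statements to the single identity $J_\varepsilon=J\circ\unf$ together with classical facts about Nemytskii operators and convex integral functionals on $L^p$. First I would check that $J$ and $J_\varepsilon$ are well defined and finite: for $w\in L^p(\Omega\times Q)^m$ the map $(\omega,x)\mapsto V(\omega,x,w(\omega,x))$ is $\mathcal F\otimes\mathcal L(Q)$-measurable (a Carath\'eodory integrand composed with a measurable map) and dominated by $C(1+|w|^p)\in L^1(\Omega\times Q)$ since $Q$ is bounded; for $J_\varepsilon$ one additionally uses Assumption~\ref{Assumption_2_1}(iii) for measurability of $(\omega,x)\mapsto\tau_{\frac{x}{\varepsilon}}\omega$ and a Fubini argument showing that the set where $V(\tau_{\frac{x}{\varepsilon}}\omega,x,\cdot)$ fails to be continuous is $P\otimes\mathcal L$-null (for a.e.\ $x$ the corresponding $\omega$-section is null, and $\omega\mapsto\tau_{\frac{x}{\varepsilon}}\omega$ is measure preserving). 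The standard domination-plus-subsequence argument --- pass to an a.e.\ convergent subsequence with an $L^p$ dominating function, apply dominated convergence, then invoke the subsequence principle --- then shows that both $J$ and $J_\varepsilon$ are \emph{continuous} on $L^p(\Omega\times Q)^m$ with respect to strong convergence.

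For (i), on the dense subspace $\brac{L^p(\Omega)\overset a\otimes L^p(Q)}^m$ we have $\unf u(\omega,x)=u(\tau_{-\frac{x}{\varepsilon}}\omega,x)$ a.e.\ by Lemma~\ref{L:unf}. Then, for each fixed $x\in Q$, the measure-preserving substitution $\omega\mapsto\tau_{\frac{x}{\varepsilon}}\omega$ (Assumption~\ref{Assumption_2_1}(ii)) gives $\int_\Omega V(\tau_{\frac{x}{\varepsilon}}\omega,x,u(\omega,x))\,dP=\int_\Omega V(\omega,x,u(\tau_{-\frac{x}{\varepsilon}}\omega,x))\,dP=\int_\Omega V(\omega,x,\unf u(\omega,x))\,dP$; integrating over $Q$ and using Fubini yields $J_\varepsilon(u)=J(\unf u)$ on the dense subspace. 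Since $J_\varepsilon$, $J$ and $\unf$ are all continuous on $L^p(\Omega\times Q)^m$, this identity extends to all of $L^p(\Omega\times Q)^m$ by density, which is precisely \eqref{intform}. I expect this step to be the only real obstacle: one must remember that $\unf$ is a priori characterized by the pointwise formula only on a dense subspace, so the identity has to be proven there and then propagated by continuity (which is why establishing strong continuity of $J_\varepsilon$ and $J$ first is convenient), and one must apply the change of variables inside the $\omega$-integral for fixed $x$ before invoking Fubini, relying on the measurability from Assumption~\ref{Assumption_2_1}(iii).

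Given (i), statement (ii) is immediate: if $u_\varepsilon\st u$ then $\unf u_\varepsilon\to u$ strongly in $L^p(\Omega\times Q)^m$ by definition, hence $J_\varepsilon(u_\varepsilon)=J(\unf u_\varepsilon)\to J(u)$ by the strong continuity of $J$ established above. For (iii), when $V(\omega,x,\cdot)$ is convex for a.e.\ $(\omega,x)$ the functional $J$ is convex on $L^p(\Omega\times Q)^m$; being in addition everywhere finite and strongly continuous, it is weakly lower semicontinuous (its epigraph is convex and strongly closed, hence weakly closed by Hahn--Banach separation --- alternatively one invokes Mazur's lemma). Since $u_\varepsilon\wt u$ means $\unf u_\varepsilon\weakto u$ weakly in $L^p(\Omega\times Q)^m$, combining (i) with the weak lower semicontinuity of $J$ gives
\begin{equation*}
\liminf_{\varepsilon\to0}\ex{\int_Q V(\tau_{\frac{x}{\varepsilon}}\omega,x,u_\varepsilon)\,dx}=\liminf_{\varepsilon\to0}J(\unf u_\varepsilon)\geq J(u)=\ex{\int_Q V(\omega,x,u)\,dx},
\end{equation*}
which is the claim. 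The whole point is that, once the transformation identity \eqref{intform} is in place, the statements become the classical strong continuity and weak lower semicontinuity of integral functionals on $L^p$ spaces.
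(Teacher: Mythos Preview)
Your proposal is correct and follows essentially the same approach as the paper: establish the transformation identity $J_\varepsilon=J\circ\unf$ first on the dense subspace $\brac{L^p(\Omega)\overset a\otimes L^p(Q)}^m$ via the measure-preserving property and extend by density using the strong $L^p$-continuity of the Nemytskii functional $J$, then deduce (ii) from strong continuity and (iii) from weak lower semicontinuity of the convex functional. The only cosmetic difference is that you front-load the continuity of $J$ (and $J_\varepsilon$) as a standalone lemma and reuse it, whereas the paper repeats the dominated-convergence argument inline in parts (i) and (ii); your organization is slightly cleaner and you are also a bit more careful than the paper in justifying that the shifted integrand $(\omega,x,F)\mapsto V(\tau_{x/\varepsilon}\omega,x,F)$ is again Carath\'eodory.
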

\textit{(For the proof see Section \ref{S_Proofs}.)}
\begin{remark}[A technical remark about measurability]
The stochastic unfolding operator $\unf$ is defined as a linear operator on the Banach space $L^p(\Omega\times Q)$, which is convenient since this prevents us from (fruitless) discussions on measurability properties. The elements of $L^p(\Omega\times Q)$ are strictly speaking not functions but equivalence classes of functions that coincide a.e.~in $\Omega\times Q$. Thus, a representative function $\tilde u$ in $L^p(\Omega\times Q)$ is measurable w.r.t.~the completion of the product $\sigma$-algebra $\mathcal F\otimes\mathcal L(Q)$, and thus the map  $(\omega,x)\mapsto \tilde u(\tau_{x}\omega,x)$ might not be measurable. However, if $\tilde u$ is $\mathcal F\otimes\mathcal L(Q)$-measurable (e.g. if $\tilde u\in L^p(\Omega)\overset{a}{\otimes}L^p(Q)$), then $\tilde u\e(\omega,x):=\tilde u(\tau_{\frac{x}{\varepsilon}}\omega,x)$ is $\mathcal F\otimes \mathcal L(Q)$-measurable. In particular, since $L^p(\Omega)\overset{a}{\otimes}L^p(Q)$ is dense in $L^p(\Omega\times Q)$, for any $u\in L^p(\Omega\times Q)$ we can find a representative-$\mathcal F\otimes\mathcal L(Q)$ measurable function $\tilde u:\Omega\times Q\to\R$ and we have $\unf u=\tilde u\e$ a.e.~in $\Omega\times Q$.
\end{remark}
\begin{remark}[{Comparison to the notion of} \cite{bourgeat1994stochastic}]\label{R_Stoch_1}
The notion of weak two-scale convergence in the mean of Definition~\ref{def46}, i.e.~the weak convergence of the unfolded sequence, coincides with the convergence introduced in \cite{bourgeat1994stochastic} (see also \cite{andrews1998stochastic}). More precisely, for a bounded sequence $(u_{\varepsilon})\subset L^p(\Omega\times Q)$ we have $u\e\wt u$ in $L^p(\Omega\times Q)$ (in the sense of Definition~\ref{def46}) if and only if $u\e$ stochastically 2-scale converges in the mean to $u$ in the sense of \cite{bourgeat1994stochastic}, i.e. 
\begin{equation}\label{eq:1}
\lim_{\varepsilon\rightarrow 0}\ex{\int_{Q}u_{\varepsilon}(\omega,x)\varphi(\tau_{\frac{x}{\varepsilon}}\omega,x)dx}=\ex{ \int_{Q}u(\omega,x)\varphi(\omega,x)dx},
\end{equation}
for any $\varphi\in L^q(\Omega\times Q)$ that is admissible (in the sense that the transformation $(\omega,x)\mapsto \varphi(\tau_{\frac{x}{\varepsilon}}\omega,x)$ is well-defined). Indeed, with help of $\unf$ (and its adjoint) we might rephrase the integral on the left-hand side in \eqref{eq:1} as
\begin{equation}\label{eq:1234}
\ex{\int_{Q}u_{\varepsilon}(\unf^{*}\varphi)\, dx}=\ex{\int_{Q}(\unf u_{\varepsilon})\varphi dx},
\end{equation}
which proves the equivalence. For the reason of this equivalence, we use the terms \textit{weak} and \textit{strong stochastic two-scale convergence in the mean} instead of talking about \textit{weak} or \textit{strong convergence of unfolded sequences}. 
\end{remark}
\subsection{Two-scale limits of gradients}\label{S_Two}
\def\per{{\sf per}}
As for periodic homogenization via periodic unfolding or two-scale convergence, also in the stochastic case it is important to understand the interplay of the unfolding operator and the gradient operator and to characterize two-scale limits of gradient fields. As a motivation we first recall the periodic case. A standard result states that for any bounded sequence in $W^{1,p}(Q)$ we can extract a subsequence such that $u\e$ weakly converges in $W^{1,p}(Q)$ to a single scale function $u\in W^{1,p}(Q)$ and $\nabla u\e$ weakly two-scale converges to a two-scale limit of the form $\nabla u(x)+\chi(x,y)$, where $\chi$ is a vector field in $L^p(Q)\otimes L^p_{\per}(\Box)^d$ and $L^p_{\per}(\Box)$ denotes the space of locally $p$-integrable, $\Box$-periodic functions on $\R^d$, and $\chi$ is mean-free and curl-free w.r.t. $y\in \Box:=[0,1)^d$. Since $\Box$ is compact, such vector fields can be represented with help of a periodic potential field, i.e.~there exists $\varphi\in L^p(Q,W^{1,p}_{\per}(\Box))$ s.t. $\chi(x,y)=\nabla_y\varphi(x,y)$ for a.e.~$(x,y)$. A helpful example to have in mind is the following $u\e(x):=\varepsilon\varphi(\frac{x}{\varepsilon})\eta(x)$ with $\eta\in W^{1,p}(Q)$ and $\varphi\in C^\infty_{\per}(\Box)$. Then a direct calculation shows that $\nabla u\e(x)=\nabla_y\varphi(\tfrac{x}{\varepsilon})\eta(x)+O(\varepsilon)$, which obviously two-scale converges to $\nabla_y\varphi(y)\eta(x)$.
\medskip

In the stochastic case the torus of the periodic case (which is above represented by $\Box$) is replaced by the probability space $\Omega$ and periodic functions (e.g.~$\varphi$ above) are conceptually replaced by stationary functions, i.e.~functions of the form $S\varphi(\omega,x)=\varphi(\tau_{x}\omega)$ with $\varphi:\Omega\to\R$ measurable. To proceed further, we need to introduce an analogue of the gradient $\nabla_{y}$ and its domain $W^{1,p}_{\per}(\Box)$  in the stochastic setting. As illustrated below, the shift-group $\tau$ together with standard concepts from functional analysis lead to a horizontal gradient $D$ and the space $W^{1,p}(\Omega)$. With help of these objects we prove, as in the  periodic case, that  any bounded sequence in $L^p(\Omega)\otimes W^{1,p}(Q)$ admits (up to extraction of a subsequence) a weak two-scale limit $u$ and the sequence of gradients converges weakly two-scale to a limit of the form $\nabla u+\chi$ where $\chi$ is $D$-curl-free w.r.t.~$\omega$. A difference to the periodic case to be pointed out is that $\chi$ in general does not admit a representation by means of a stationary potential.
\medskip

In order to implement the above philosophy we require some input from functional analysis, which we recall from the original work by Papanicolaou and Varadhan \cite{Papanicolaou1979} (see also \cite{jikov2012homogenization}). We consider the group of isometric operators $\cb{U_x:x\in \re{d}}$ on $L^p(\Omega)$ defined by $U_x \varphi(\omega)=\varphi(\tau_{x}\omega)$. This group is strongly continuous (see \cite[Section 7.1]{jikov2012homogenization}). For $i=1,...,d$, we consider the 1-parameter group of operators $\cb{U_{h e_i}:h\in \re{}}$ ($\cb{e_i}$ being the usual basis of $\re d$) and its infinitesimal generator $D_i:\mathcal{D}_i\subset L^p(\Omega)\rightarrow L^p(\Omega)$
\begin{equation*}
D_i \varphi=\lim_{h\rightarrow 0} \frac{U_{he_i}\varphi-\varphi}{h},
\end{equation*}
which we refer to as \textit{horizontal} derivative. $D_i$ is a linear and closed operator and the associated domain $\mathcal{D}_i$ is dense in $L^p(\Omega)$. We set $W^{1,p}(\Omega)=\cap_{i=1}^{d}\mathcal{D}_i$ and define for $\varphi\in W^{1,p}(\Omega)$ the horizontal gradient as $D\varphi=(D_1 \varphi,...,D_d \varphi)$. In this manner, we obtain  a linear, closed and densely defined operator $D:W^{1,p}(\Omega)\rightarrow L^p(\Omega)^d$, and we denote by
\begin{equation*}
L^p_{\pot}(\Omega):=\overline{\mathcal R(D)}\subset L^p(\Omega)^d
\end{equation*}
the closure of the range of $D$ in $L^p(\Omega)^d$. We denote the adjoint of $D$ by $D^*:\mathcal{D}^*\subset{L^q(\Omega)^d}\rightarrow L^q(\Omega)$ which is a linear, closed and densely defined operator ($\mathcal{D}^*$ is the domain of $D^*$). 
Note that $W^{1,q}(\Omega)^d\subset \mathcal{D}^*$ and for all $\varphi\in W^{1,p}(\Omega)$ and $\psi\in W^{1,q}(\Omega)$ ($i=1,...,d$) we have the integration by parts formula
\begin{equation*}
\ex{D_i \varphi \psi}=-\ex{\varphi D_i \psi},
\end{equation*}
and thus $D^*\psi=-\sum_{i=1}^d D_i \psi_i$ for $\psi\in W^{1,q}(\Omega)^d$. We define the subspace of shift invariant functions in $L^p(\Omega)$ by 
\begin{equation*}
L^p_{{\inv}}(\Omega)=\cb{\varphi\in L^p(\Omega):U_x \varphi=\varphi \quad \text{for all }x \in \re{d}},
\end{equation*}
and denote by $ P_{\inv}:L^p(\Omega) \to L^p_{\inv}(\Omega)$ the conditional expectation with respect to the $\sigma$-algebra of shift invariant sets $\cb{ A \in \mathcal{F} : \tau_x A = A \text{ for all } x\in \re d}$. It is a contractive projection and for $p=2$ it coincides with the orthogonal projection onto $L^2_{\inv}(\Omega)$.
\begin{prop}[Compactness]\label{prop1}
Let $p\in (1,\infty)$ and $Q\subset \R^d$ be open. Let $(u\e)$ be a bounded sequence in $L^p(\Omega)\otimes \sob(Q)$. Then, there exist $u\in L^p_{{\inv}}(\Omega)\otimes \sob(Q)$ and $\chi\in L^p_{\pot}(\Omega)\otimes L^p(Q)$ such that (up to a subsequence)
\begin{equation}\label{equation1}
u\e \wt u  \text{ in }\ltp, \quad \nabla u\e \wt \nabla u +\chi  \text{ in }\ltp^d.
\end{equation}
If, additionally, $\ex{\cdot}$ is ergodic, then $u=P_{\mathsf{{\inv}}} u=\ex{u} \in \sob(Q)$ and $\ex{u_{\varepsilon}}\weakto u$ weakly in $\sob(Q)$.
\end{prop}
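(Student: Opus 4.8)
The plan is to follow the periodic template for two-scale limits of gradients, replacing the periodic cell gradient $\nabla_y$ by the horizontal gradient $D$, and to read everything off from the isometries $\unf,\unf^*$ together with weak compactness in reflexive spaces. Since $(u\e)$ is bounded in the reflexive space $L^p(\Omega)\otimes\sob(Q)$ -- so that $(u\e)$ and $(\nabla u\e)$ are bounded in $L^p(\Omega\times Q)$ resp.\ $L^p(\Omega\times Q)^d$ -- I would first pass, using reflexivity and Lemma~\ref{lemma_basics}(ii), to one subsequence along which $u\e\weakto u_*$ weakly in $L^p(\Omega)\otimes\sob(Q)$, $u\e\wt w$ and $\nabla u\e\wt v$, for some $u_*\in L^p(\Omega)\otimes\sob(Q)$, $w\in L^p(\Omega\times Q)$, $v\in L^p(\Omega\times Q)^d$. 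The task is then to identify $w$ and $v$.

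\textbf{Identification of $w$.} The key is an oscillation test: for $\phi\in W^{1,q}(\Omega)$, $\eta\in C_c^\infty(Q)$ and $i\in\{1,\dots,d\}$ I would start from $\ex{\int_Q\partial_{x_i}u\e\,\phi(\tau_{x/\varepsilon}\omega)\,\eta\,dx}$, integrate by parts in $x_i$ (legitimate: $\eta$ is compactly supported and $x\mapsto\phi(\tau_{x/\varepsilon}\omega)$ is locally Sobolev for a.e.\ $\omega$ with $\partial_{x_i}\phi(\tau_{x/\varepsilon}\omega)=\tfrac1\varepsilon(D_i\phi)(\tau_{x/\varepsilon}\omega)$), and multiply by $\varepsilon$; the $\varepsilon$-weighted terms vanish in the limit (products of bounded sequences), while $\phi(\tau_{x/\varepsilon}\cdot)\eta=\unf^*(\phi\otimes\eta)$ and $(D_i\phi)(\tau_{x/\varepsilon}\cdot)\eta=\unf^*\big((D_i\phi)\otimes\eta\big)$ turn the remaining term into $\ex{\int_Q\unf u\e\cdot(D_i\phi)\otimes\eta\,dx}\to\ex{\int_Q w\,(D_i\phi)\,\eta\,dx}$. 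Hence $\ex{\int_Q w\,(D_i\phi)\,\eta\,dx}=0$ for all such $\phi,\eta,i$, i.e.\ $w(\cdot,x)$ annihilates $\mathcal R(D_i)$ for a.e.\ $x$; by the integration-by-parts formula the relevant adjoint is $-D_i$, so $D_iw(\cdot,x)=0$, hence $w(\cdot,x)$ is fixed by each group $\{U_{he_i}\}$ and therefore $w(\cdot,x)\in L^p_\inv(\Omega)$ a.e. Next, pairing $\unf u\e\weakto w$ with $\psi\otimes\eta$, $\psi\in L^q_\inv(\Omega)$ -- for which $\unf^*(\psi\otimes\eta)=\psi\otimes\eta$ since $\psi(\tau_{x/\varepsilon}\omega)=\psi(\omega)$ -- gives $\ex{\int_Q w\,\psi\,\eta\,dx}=\lim\ex{\int_Q u\e\,\psi\,\eta\,dx}=\ex{\int_Q u_*\,\psi\,\eta\,dx}$; as $w$ is itself invariant this forces $w=\pinv u_*=:u$, and since $\pinv$ is a contraction on $L^p(\Omega)$ commuting with $\nabla_x$, $u\in L^p_\inv(\Omega)\otimes\sob(Q)$ with $\nabla u=\pinv\nabla u_*$.

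\textbf{Identification of $v$ -- the main obstacle.} The heart of the proof is to identify $\chi:=v-\nabla u\in L^p(\Omega\times Q)^d$ as an element of $L^p_\pot(\Omega)\otimes L^p(Q)$, i.e.\ to show $\chi(\cdot,x)\in L^p_\pot(\Omega)=\overline{\mathcal R(D)}$ for a.e.\ $x$, equivalently (by closedness and reflexivity) $\chi(\cdot,x)\perp L^p_\pot(\Omega)^\perp$. I would run one more oscillation test on $\nabla u\e\wt v$: for $\psi\in W^{1,q}(\Omega)^d$ with $D^*\psi=0$ and $\eta\in C_c^\infty(Q)$, integrating $\ex{\int_Q\nabla u\e\cdot\psi(\tau_{x/\varepsilon}\omega)\,\eta\,dx}$ by parts in $x$, the $\tfrac1\varepsilon$-contribution drops out because $\sum_jD_j\psi_j=-D^*\psi=0$, leaving $-\ex{\int_Q u\e\,\psi(\tau_{x/\varepsilon}\omega)\cdot\nabla\eta\,dx}$; passing to the limit (using that $\psi\otimes\eta,\ \psi\otimes\nabla\eta\in L^q(\Omega\times Q)^d$ are admissible and $\unf u\e\weakto u$, $\unf\nabla u\e\weakto v$) and integrating by parts back against $\nabla u$ yields $\ex{\int_Q(v-\nabla u)\cdot\psi\,\eta\,dx}=0$, i.e.\ $\chi(\cdot,x)$ annihilates every horizontally-solenoidal $\psi\in W^{1,q}(\Omega)^d$ for a.e.\ $x$ (note this already covers the invariant fields $\psi\in L^q_\inv(\Omega)^d$). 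The proof is then closed by the classical functional-analytic input of Papanicolaou--Varadhan (see also \cite{Papanicolaou1979,jikov2012homogenization}): the closure of $\{\psi\in W^{1,q}(\Omega)^d:D^*\psi=0\}$ in $L^q(\Omega)^d$ is exactly $L^p_\pot(\Omega)^\perp$ (the solenoidal summand in the Helmholtz-type decomposition of $L^q(\Omega)^d$), whence $\chi(\cdot,x)\in L^p_\pot(\Omega)$ a.e.\ and $\chi\in L^p_\pot(\Omega)\otimes L^p(Q)$. I expect this density/decomposition fact -- guaranteeing that the constructed test fields really exhaust $L^p_\pot(\Omega)^\perp$ -- to be the only genuinely non-routine ingredient; everything else is bookkeeping with the isometries $\unf,\unf^*$ and elementary integration by parts.

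\textbf{The ergodic case.} If $\ex\cdot$ is ergodic then $L^p_\inv(\Omega)$ reduces to the ($P$-a.e.)\ constants, so $u=\pinv u_*=\ex{u_*}=\ex u\in\sob(Q)$ (and $\pinv u=u$). Moreover $\ex{u\e}$ is bounded in $\sob(Q)$ (the expectation is a contraction on $L^p(\Omega)$ commuting with $\nabla_x$), and testing $u\e\wt u$ against $1\otimes\eta$, $\eta\in L^q(Q)$ (here $\unf^*(1\otimes\eta)=1\otimes\eta$ since $1$ is invariant) gives $\ex{u\e}\weakto\ex u=u$ weakly in $L^p(Q)$; boundedness in $\sob(Q)$ upgrades this to weak convergence in $\sob(Q)$ along the chosen subsequence, which completes the proof.
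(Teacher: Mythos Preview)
Your proof is correct and follows essentially the same route as the paper: the oscillation test with $\phi\in W^{1,q}(\Omega)$ (to force the two-scale limit into $L^p_{\inv}$) and the test with horizontally solenoidal $\psi\in W^{1,q}(\Omega)^d$, $D^*\psi=0$ (to place $v-\nabla u$ in $L^p_{\pot}\otimes L^p$) are exactly those of the paper, and you correctly isolate the density of $\{\psi\in W^{1,q}(\Omega)^d:D^*\psi=0\}$ in $\mathcal N(D^*)$ as the only non-routine input (the paper records this as Lemma~\ref{lemA}(ii) and proves it by heat-kernel mollification). The one organizational difference is in how $u\in L^p_{\inv}(\Omega)\otimes W^{1,p}(Q)$ is obtained: the paper tracks the auxiliary sequence $\pinv u\e$ and exploits the identity $\pinv\circ\unf=\pinv$ (Lemma~\ref{lem7}), whereas you extract a weak $L^p(\Omega)\otimes W^{1,p}(Q)$-limit $u_*$ and identify $w=\pinv u_*$ by testing against invariant functions; both arguments are short and equivalent.
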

We remark that the above result is already established in \cite{bourgeat1994stochastic} in the context of two-scale convergence in the mean in the $L^2$-space setting. We recapitulate its (short) proof from the perspective of stochastic unfolding, see section \ref{S_Proofs}.
\begin{remark}\label{R_Two_1}
Since closed, convex subsets of a Banach space are also weakly closed, for any sequence $(u\e)$ that satisfies the assumption of Proposition \ref{prop1} and $\unf u\e \in X$ where $X\subset L^p(\Omega\times Q)$ is closed and convex, the two-scale limit from Proposition \ref{prop1} satisfies $u\in X$. This is useful to study problems with boundary conditions.
\end{remark}
\begin{lemma}[Nonlinear recovery sequence]\label{Nonlinear_recovery} Let $p \in (1,\infty)$ and $Q\subset \R^d$ be open. For $\chi\in L^p_{\pot}(\Omega)\otimes L^p(Q)$ and $\delta>0$, there exists a sequence $g_{\delta,\varepsilon}(\chi) \in L^p(\Omega)\otimes W^{1,p}_0(Q)$ such that
\begin{equation*}
\|g_{\delta,\varepsilon}(\chi)\|_{L^{p}(\Omega\times Q)} \leq \varepsilon C(\delta), \quad \limsup_{\varepsilon\to 0}\|\mathcal{T}_{\varepsilon}\nabla g_{\delta, \varepsilon}(\chi)-\chi\|_{L^p(\Omega\times Q)^d}\leq \delta.
\end{equation*}
(For the proof see Section \ref{S_Proofs}.)
\end{lemma}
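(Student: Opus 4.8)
The plan is to build $g_{\delta,\varepsilon}(\chi)$ by first approximating $\chi$ by a nice stationary potential field and then unfolding backwards. Since $L^p_{\pot}(\Omega)=\overline{\mathcal R(D)}$, for the given $\chi\in L^p_{\pot}(\Omega)\otimes L^p(Q)$ and $\delta>0$ I would first choose a finite sum $\chi_\delta=\sum_{k=1}^N D\varphi_k\,\eta_k$ with $\varphi_k\in W^{1,p}(\Omega)$, $\eta_k\in L^p(Q)$, such that $\|\chi-\chi_\delta\|_{L^p(\Omega\times Q)^d}\le\delta/2$; by a further density step I may also take $\eta_k\in C_c^\infty(Q)$. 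It then suffices to produce, for each fixed such ``elementary'' field $D\varphi\,\eta$ (with $\varphi\in W^{1,p}(\Omega)$, $\eta\in C_c^\infty(Q)$), a sequence $h_\varepsilon\in L^p(\Omega)\otimes W^{1,p}_0(Q)$ with $\|h_\varepsilon\|_{L^p(\Omega\times Q)}\le\varepsilon C$ and $\unf\nabla h_\varepsilon\to D\varphi\,\eta$ strongly in $L^p(\Omega\times Q)^d$; summing over $k$ and absorbing the $\delta/2$ approximation error via lower semicontinuity of the norm under the (strong) limit gives the claim, with $C(\delta)$ depending on $N$ and the $\varphi_k,\eta_k$.

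For the elementary building block, the natural guess -- mirroring the periodic example $u\e(x)=\varepsilon\varphi(x/\varepsilon)\eta(x)$ recalled in the text -- is
\[
h_\varepsilon(\omega,x):=\varepsilon\,(S\varphi)\!\left(\tau_{-\frac{x}{\varepsilon}}\omega,\,x\right)\eta(x)\;=\;\varepsilon\,\varphi(\omega)\,\eta(x)\ \text{``unfolded back''},
\]
i.e.\ I would define $h_\varepsilon:=\unf^{-1}\big(\varepsilon\, S^\varepsilon\varphi\cdot\eta\big)$ where $S^\varepsilon\varphi(\omega,x)=\varphi(\tau_{x/\varepsilon}\omega)$ is the stationary extension at scale $\varepsilon$; concretely $\unf^{-1}$ of $\varepsilon(S\varphi)(\omega)\eta(x)$ is $\varepsilon\varphi(\tau_{x/\varepsilon}\omega)\eta(x)$, so $h_\varepsilon(\omega,x)=\varepsilon\,\varphi(\tau_{\frac{x}{\varepsilon}}\omega)\,\eta(x)$. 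Here one must be slightly careful: $\varphi$ need only lie in $W^{1,p}(\Omega)$, so to make the pointwise manipulations legitimate I would first approximate $\varphi$ in $W^{1,p}(\Omega)$ by elements of a core for $D$ (e.g.\ functions of the form $\int_{\R^d}\rho(y)\,U_y\varphi\,dy$ with $\rho\in C_c^\infty$, which are smooth under the shift action and satisfy $D_i(\text{smoothed }\varphi)=\text{smoothed }(D_i\varphi)$), and handle the density error as above. For such smooth-under-shift $\varphi$ the chain rule for the shift group gives
\[
\nabla_x\big[\varphi(\tau_{\frac{x}{\varepsilon}}\omega)\big]=\tfrac{1}{\varepsilon}\,(D\varphi)(\tau_{\frac{x}{\varepsilon}}\omega),
\]
hence
\[
\nabla h_\varepsilon(\omega,x)=(D\varphi)(\tau_{\tfrac{x}{\varepsilon}}\omega)\,\eta(x)+\varepsilon\,\varphi(\tau_{\tfrac{x}{\varepsilon}}\omega)\,\nabla\eta(x).
\]
Applying $\unf$ and using $\unf(S^\varepsilon\psi\cdot\zeta)=(S\psi)\otimes\zeta$ (the defining property of $\unf$ on product functions), we get $\unf\nabla h_\varepsilon=S(D\varphi)\,\eta+\varepsilon\,S\varphi\,\nabla\eta=D\varphi\otimes\eta+O(\varepsilon)$ in $L^p(\Omega\times Q)^d$, where the error term is bounded by $\varepsilon\|\varphi\|_{L^p(\Omega)}\|\nabla\eta\|_{L^p(Q)}$ using the isometry of $\unf$ and Lemma~\ref{L:stat}. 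Likewise $\|h_\varepsilon\|_{L^p(\Omega\times Q)}=\varepsilon\|\varphi\|_{L^p(\Omega)}\|\eta\|_{L^p(Q)}$. Finally $h_\varepsilon\in L^p(\Omega)\otimes W^{1,p}_0(Q)$ because $\eta\in C_c^\infty(Q)$, and smoothness of $\varphi$ under the shift guarantees $\varphi(\tau_{x/\varepsilon}\cdot\,)$ genuinely lies in $W^{1,p}$ in $x$.

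The main obstacle is the regularity bookkeeping rather than any deep estimate: one must justify the chain rule $\nabla_x[\varphi(\tau_{x/\varepsilon}\omega)]=\tfrac1\varepsilon(D\varphi)(\tau_{x/\varepsilon}\omega)$, which is only clean when $\varphi$ lies in a core of $D$ consisting of functions that are differentiable (not merely strongly continuous) under the shift action, and then thread the three nested density approximations -- $\chi\rightsquigarrow$ finite sum of $D\varphi_k\,\eta_k$; each $\varphi_k\rightsquigarrow$ mollified-in-shift; each $\eta_k\rightsquigarrow C_c^\infty(Q)$ -- so that all error contributions are $\le\delta$ uniformly for small $\varepsilon$, while keeping track that the constant $C(\delta)$ (which blows up as $\delta\to0$ through $N$ and the norms of the approximants) multiplies only the $\varepsilon$ in the first estimate. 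Everything else is the isometry of $\unf$, Lemma~\ref{L:stat}, and the product-function formula for $\unf$, all already available.
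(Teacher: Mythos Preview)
Your approach is essentially the same as the paper's. The paper approximates $\chi$ by $Dg_\delta$ with $g_\delta=\sum_i\varphi_i^\delta\eta_i^\delta$, $\varphi_i^\delta\in W^{1,p}(\Omega)$, $\eta_i^\delta\in C_c^\infty(Q)$, then sets $g_{\delta,\varepsilon}:=\varepsilon\,\unf^{-1}g_\delta$ and records the identity $\nabla g_{\delta,\varepsilon}=\unf^{-1}Dg_\delta+\varepsilon\,\unf^{-1}\nabla g_\delta$, which together with the isometry of $\unf^{-1}$ gives both estimates immediately --- this is exactly your $h_\varepsilon(\omega,x)=\varepsilon\,\varphi(\tau_{x/\varepsilon}\omega)\eta(x)$ written operator-theoretically.

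One minor remark: your additional mollification of $\varphi$ to a ``core'' for $D$ is not needed. For any $\varphi\in W^{1,p}(\Omega)$ the identity $\partial_{x_i}\big[\varphi(\tau_{x/\varepsilon}\omega)\big]=\tfrac{1}{\varepsilon}(D_i\varphi)(\tau_{x/\varepsilon}\omega)$ already holds (as weak derivatives in $L^p(\Omega\times Q)$), since the $L^p(\Omega)$-limit in the definition of $D_i$ becomes, after applying the isometric stationary extension $S$, exactly the difference-quotient limit defining the weak $x_i$-derivative. So the paper works directly with $\varphi_i^\delta\in W^{1,p}(\Omega)$ and skips that layer of approximation.
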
 
\begin{prop}[Linear recovery sequence]\label{prop2}
Let $p\in (1,\infty)$ and $Q\subset \R^d$ be open, bounded and $C^1$. For $\varepsilon>0$ there exists a linear operator $\mathcal{G}\e: L^p_{\pot}(\Omega)\otimes L^p(Q) \to L^p(\Omega)\otimes W^{1,p}_0(Q)$, that is uniformly bounded in $\eps$, with the property that for any $\chi\in L^p_{\pot}(\Omega)\otimes L^p(Q)$
\begin{equation*}
\mathcal{G}\e \chi \st 0 \text{ in } \ltp, \quad  \nabla \mathcal{G}\e \chi \st \chi \text{ in }\ltp^d.
\end{equation*}
\end{prop}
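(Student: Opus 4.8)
The plan is to construct $\mathcal{G}_\varepsilon$ by combining the nonlinear recovery construction of Lemma~\ref{Nonlinear_recovery} applied to a dense, well-behaved subclass of potentials with an abstract extension/approximation argument to obtain a genuinely \emph{linear} operator that is uniformly bounded in $\varepsilon$. First I would recall the mechanism behind Lemma~\ref{Nonlinear_recovery}: for $\chi\in L^p_{\pot}(\Omega)\otimes L^p(Q)$ of the simple form $\chi=D\varphi\otimes\eta$ with $\varphi\in W^{1,p}(\Omega)$ (or a finite sum of such) and $\eta\in C^\infty_c(Q)$, the natural candidate is $g_\varepsilon(\chi)(\omega,x):=\varepsilon\, (S\varphi)(\tau_{-x/\varepsilon}\omega,x)\,\eta(x)$, i.e.\ (up to the unfolding twist) the stochastic analogue of the classical two-scale oscillation $\varepsilon\varphi(x/\varepsilon)\eta(x)$. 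A direct computation gives $\unf\nabla g_\varepsilon(\chi)=D\varphi\otimes\eta+O(\varepsilon)$ terms coming from $\varepsilon\nabla\eta$ and from commutator terms between $\unf$ and $\nabla$; this is exactly the content that makes $\nabla\mathcal{G}_\varepsilon\chi\st\chi$ and $\mathcal{G}_\varepsilon\chi\st0$ on this subclass, and the map $\chi\mapsto g_\varepsilon(\chi)$ so defined is manifestly linear on the (non-closed) linear span $\mathcal R(D)\overset{a}{\otimes} C^\infty_c(Q)$.

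The key steps, in order, are: (1) verify that for $\chi$ in the dense subspace $\mathcal{D}:=\mathrm{span}\{D\varphi\otimes\eta:\varphi\in W^{1,p}(\Omega),\,\eta\in C^\infty_c(Q)\}$ the explicit operator $\chi\mapsto g_\varepsilon(\chi)=:\mathcal{G}_\varepsilon\chi$ is linear, maps into $L^p(\Omega)\otimes W^{1,p}_0(Q)$, and satisfies $\|\mathcal{G}_\varepsilon\chi\|_{L^p(\Omega\times Q)}\le\varepsilon C(\chi)$ together with the two strong two-scale convergences; (2) establish a \emph{uniform} bound $\|\nabla\mathcal{G}_\varepsilon\chi\|_{L^p(\Omega\times Q)^d}\le C\|\chi\|_{L^p(\Omega\times Q)^d}$ with $C$ independent of $\varepsilon$ \emph{and} of $\chi$ in $\mathcal{D}$ — this is where the $C^1$-regularity of $Q$ enters, since one needs a bounded extension/cutoff near $\partial Q$ to stay inside $W^{1,p}_0(Q)$ without losing control of the norm; here I would use the isometry of $\unf$, write $\unf\nabla\mathcal{G}_\varepsilon\chi=\chi+r_\varepsilon(\chi)$, and bound $\|r_\varepsilon(\chi)\|\le C(\varepsilon)\|\chi\|$ with $C(\varepsilon)$ bounded as $\varepsilon\to0$; (3) use the uniform bound from (2) to extend $\mathcal{G}_\varepsilon$ from $\mathcal{D}$ to a bounded linear operator on all of $L^p_{\pot}(\Omega)\otimes L^p(Q)$ by density (the closure of $\mathcal{D}$ is exactly $L^p_{\pot}(\Omega)\otimes L^p(Q)$ since $\overline{\mathcal R(D)}=L^p_{\pot}(\Omega)$ and $C^\infty_c(Q)$ is dense in $L^p(Q)$); (4) upgrade the two-scale convergences from $\mathcal{D}$ to the whole space by a standard $3\varepsilon$-argument: given general $\chi$ and $\delta>0$, pick $\chi_\delta\in\mathcal{D}$ with $\|\chi-\chi_\delta\|\le\delta$, use linearity and the uniform bound to control $\mathcal{G}_\varepsilon(\chi-\chi_\delta)$ and $\nabla\mathcal{G}_\varepsilon(\chi-\chi_\delta)$ uniformly in $\varepsilon$, and apply step (1) to $\chi_\delta$; since strong two-scale convergence is just strong convergence of $\unf(\cdot)$, and $\unf$ is an isometry, the error terms are $O(\delta)$ uniformly, so letting $\varepsilon\to0$ then $\delta\to0$ gives the claim.

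I expect the main obstacle to be step (2): producing the recovery sequence \emph{inside} $W^{1,p}_0(Q)$ with a gradient bound that is uniform in both $\varepsilon$ and $\chi$. The naive product $\varepsilon (S\varphi\circ\tau_{-\cdot/\varepsilon})\,\eta$ with $\eta\in C^\infty_c(Q)$ automatically has compact support, but passing to the closure one only has $\eta\in L^p(Q)$, so a direct cutoff near $\partial Q$ reintroduces a gradient term of order $\varepsilon^{-1}$ times the width of the boundary layer unless one is careful; the standard fix (as in the periodic unfolding literature, cf.\ \cite{cioranescu2008periodic}) is to first mollify/truncate $\eta$ at scale $\sim\varepsilon^{\theta}$ and use a boundary cutoff of comparable width, exploiting the $C^1$-boundary to get $|Q\setminus Q_\varepsilon|\to0$, and to absorb the resulting error into the $\delta$-budget. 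One must also handle the commutator $[\unf,\nabla]$ carefully: differentiating $g_\varepsilon(\chi)(\omega,x)=\varepsilon(S\varphi)(\tau_{-x/\varepsilon}\omega)\eta(x)$ in $x$ produces, via the chain rule for the $\tau$-action, exactly the term $\unf^{-1}$-conjugate of $D\varphi\otimes\eta$ plus $\varepsilon\,(S\varphi\circ\tau_{-\cdot/\varepsilon})\,\nabla\eta$, and one needs the measurability/density remarks of Section~\ref{S_Stoch_1} to make this rigorous at the level of $L^p$-equivalence classes rather than pointwise. Once the boundary layer and the $\varepsilon$-order remainders are controlled uniformly, the extension-by-density and the $3\varepsilon$-argument are routine.
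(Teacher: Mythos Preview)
Your proposal has a genuine gap in step~(2), and it is not the boundary-layer issue you anticipate. The explicit oscillatory construction $g_\varepsilon(\chi)(\omega,x)=\varepsilon\,\varphi(\tau_{x/\varepsilon}\omega)\,\eta(x)$ for $\chi=D\varphi\otimes\eta$ gives $\unf\nabla g_\varepsilon(\chi)=\chi+r_\varepsilon(\chi)$ with $r_\varepsilon(\chi)=\varepsilon\,\varphi\otimes\nabla\eta$, and you claim $\|r_\varepsilon(\chi)\|\le C(\varepsilon)\|\chi\|$. But $\|r_\varepsilon(\chi)\|=\varepsilon\,\|\varphi\|_{L^p(\Omega)}\|\nabla\eta\|_{L^p(Q)}$, and there is \emph{no} Poincar\'e-type inequality $\|\varphi-P_{\inv}\varphi\|_{L^p(\Omega)}\le C\|D\varphi\|_{L^p(\Omega)}$ available on a general ergodic probability space (no spectral gap). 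Hence $\|\varphi\|$ cannot be controlled by $\|D\varphi\|=\|\chi\|$, the bound in step~(2) fails, and the operator does not extend by density to a bounded linear map on $L^p_{\pot}(\Omega)\otimes L^p(Q)$. This is precisely why Lemma~\ref{Nonlinear_recovery} is stated with a $\delta$-dependent constant $C(\delta)$ rather than a bound in $\|\chi\|$.

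The paper circumvents this entirely by a different mechanism: it defines $\mathcal{G}_\varepsilon\chi=v_\varepsilon$ as the solution in $W^{1,p}_0(Q)$ (for $P$-a.e.\ $\omega$) of the Poisson equation $-\Delta v_\varepsilon(\omega)=-\nabla\cdot(\unf^{-1}\chi)(\omega)$. The Calder\'on--Zygmund estimate (this is where the $C^1$ boundary is used) gives $\|\nabla v_\varepsilon\|_{L^p}\le C\|\unf^{-1}\chi\|_{L^p}=C\|\chi\|_{L^p}$ directly, so linearity and uniform boundedness are immediate and require no potential for $\chi$. The nonlinear recovery $g_{\delta,\varepsilon}$ from Lemma~\ref{Nonlinear_recovery} is then used only as a \emph{comparison} object: since $v_\varepsilon-g_{\delta,\varepsilon}$ solves $-\Delta(v_\varepsilon-g_{\delta,\varepsilon})=-\nabla\cdot(\unf^{-1}\chi-\nabla g_{\delta,\varepsilon})$, another application of Calder\'on--Zygmund yields $\|\unf\nabla v_\varepsilon-\chi\|\le C\|\unf\nabla g_{\delta,\varepsilon}-\chi\|$, which tends to $0$ by first sending $\varepsilon\to0$ and then $\delta\to0$. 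The PDE acts as a linear, uniformly bounded projector onto gradient fields that inherits the good two-scale behaviour from the nonlinear approximants.
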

\textit{(For the proof see Section \ref{S_Proofs}.)}
\begin{remark}\label{rem14} 
If $Q\subset \R^d$ is open, bounded and $C^1$, using Proposition \ref{prop2}, we obtain a mapping 
$$\brac{L^p_{{\inv}}(\Omega)\otimes \sob(Q)} \times \brac{L^p_{\pot}(\Omega)\otimes L^p(Q)}\ni(u,\chi)\mapsto u_{\varepsilon}(u,\chi):= u+ \mathcal{G}_{\varepsilon} \chi \in L^p(\Omega)\otimes\sob(Q)$$ 
which is linear, uniformly bounded in $\eps$ and it satisfies (for all $(u,\chi)$)
\begin{equation}\label{cor22_eq1}
u_{\varepsilon}(u,\chi) \overset{2s}{\to} u \text{ in }L^p(\Omega\times Q), \quad \nabla u_{\varepsilon}(u,\chi) \overset{2s}{\to} \nabla u + \chi \text{ in }L^p(\Omega\times Q).
\end{equation}
In the case that $Q$ is merely open, we can use the nonlinear construction from Lemma \ref{Nonlinear_recovery}. Specifically, for $(u,\chi)\in \brac{L^p_{{\inv}}(\Omega)\otimes \sob(Q) }\times \brac{L^p_{\pot}(\Omega)\otimes L^p(Q)}$ we define $u_{\delta,\varepsilon}(u,\chi)= u + g_{\delta,\varepsilon}(\chi)$. Using Attouch's diagonal argument, we find a sequence $u_{\varepsilon}(u,\chi)=u_{\delta(\varepsilon),\varepsilon}$ which satisfies (\ref{cor22_eq1}). We remark that in both cases, the recovery sequence $u_{\varepsilon}$ matches the boundary conditions of the function $u$ (see constructions in Section \ref{S_Proofs}).
\end{remark}

We conclude this section with some basic facts from functional analysis used in the proof of Proposition~\ref{prop1}.
\begin{remark}
  Let $p \in (1,\infty)$ and $q= \frac{p}{p-1}$.
  \begin{enumerate}[(i)]
  \item $\ex{\cdot}$ is ergodic $\Leftrightarrow$ $L^p_{\inv}(\Omega)\simeq \re{}$ $\Leftrightarrow$ $\pinv f=\ex{f}$. 
  \item The following orthogonality relations hold (for a proof see \cite[Section 2.6]{brezis2010functional}): Identify the dual space $L^p(\Omega)^*$ with $L^q(\Omega)$, and define for a set $A\subset L^q(\Omega)$ its orthogonal complement $A^{\bot}\subset L^p(\Omega)$ as $A^{\bot}=\cb{\varphi\in L^p(\Omega):\ex{\varphi,\psi}_{L^p,L^q}=0 \text{ for all }\psi\in A}$. Then
    \begin{equation}\label{orth}
      \mathcal N(D)=\mathcal{R}(D^*)^{\bot}, \quad 
      L^p_{\pot}(\Omega)= \overline{\mathcal{R}(D)}=\mathcal N(D^*)^{\bot}.
    \end{equation}
    Above, $\mathcal{N}(\cdot)$ denotes the kernel and $\mathcal{R}(\cdot)$ the range of an operator.
  \end{enumerate}
\end{remark}

\subsection{Proofs}\label{S_Proofs}
\begin{proof}[Proof of Lemma \ref{L:unf}]
  
  We first define $\unf$ on $\sA:=\{\psi(\omega,x)=\varphi(\omega)\eta(x)\,:\,\varphi\in L^p(\Omega),\,\eta\in L^p(Q)\,\}\subset L^p(\Omega\times Q)$ by setting $(\unf\psi)(\omega,x)=(S\varphi)(\omega,-\tfrac{x}{\eps})\eta(x)$ for all $\psi=\varphi\eta\in\sA$. In view of Lemma~\ref{L:stat} $(\unf\psi)$ is $\mathcal F\otimes\mathcal L(Q)$-measurable, and 
  \begin{equation*}
    \ex{\int_Q|\unf\psi|^p\,dx}=\int_Q\big(\int_\Omega|S\varphi(\omega,-\tfrac{x}{\eps})|^p\,dP(\omega)\big)|\eta(x)|^p\,dx=\|\varphi\|_{L^p(\Omega)}^p\|\eta\|_{L^p(Q)}^p=\|\psi\|_{L^p(\Omega\times Q)}^p.
  \end{equation*}
  Since $\mbox{span}(\sA)$ is dense in $L^p(\Omega\times Q)$, $\unf$ extends to a linear isometry from $L^p(\Omega\times Q)$ to $L^p(\Omega\times Q)$. We define a linear isometry $\mathcal{T}_{-\varepsilon}: L^q(\Omega\times Q)\to L^q(\Omega\times Q)$ analogously as $\mathcal{T}_{\varepsilon}$ with $\varepsilon$ replaced by $-\varepsilon$. Then for any $\varphi\in L^p(\Omega)\stackrel{a}{\otimes} L^p(Q)$ and $\psi\in L^q(\Omega)\stackrel{a}{\otimes} L^q(Q)$ we have (thanks to the measure preserving property of $\tau$):
  \begin{eqnarray*}
    \ex{\int_Q(\unf\varphi)\psi\,dx}&=&\int_Q\int_\Omega\varphi(\tau_{-\frac{x}{\eps}}\omega,x)\psi(\omega,x)\,dP(\omega)\,dx\\
  &=&\int_Q\int_\Omega\varphi(\omega,x)\psi(\tau_{\frac{x}{\eps}}\omega,x)\,dP(\omega)\,dx=\ex{\int_Q\varphi(\mathcal T_{-\eps}\psi)}\,dx.
  \end{eqnarray*}
  Since  $L^p(\Omega)\stackrel{a}{\otimes} L^p(Q)$ and $L^q(\Omega)\stackrel{a}{\otimes} L^q(Q)$ are dense in $L^p(\Omega\times Q)$ and $L^q(\Omega\times Q)$, respectively, we conclude that $\unf^*=\mathcal T_{-\eps}$.  

It remains to argue that $\unf$ and $\mathcal{T}_{\varepsilon}^*$ are surjective. Since $\mathcal{T}_{\varepsilon}^*$ is an isometry, it follows that $\unf$ is surjective (see \cite[Theorem 2.20]{brezis2010functional}). Analogously, $\mathcal{T}_{\varepsilon}^*$ is also surjective.
\end{proof}
\begin{proof}[Proof of Proposition \ref{P_Cont_1}]
We first note that $V$ is a \textit{Charath{\'e}odory integrand} (which is defined as a function satisfying the measurability and continuity assumptions given in the statement of the proposition) and therefore it follows that $V$ is $\overline{\mathcal{F}\otimes \mathcal{L}(Q)}\otimes \mathcal{B}(\R^{m})$-measurable (see \cite{rockafellar1971} Proposition 1 and the remarks following it). For fixed $\varepsilon>0$, the mapping $(\omega,x)\mapsto (\tau_{\frac{x}{\varepsilon}}\omega,x)$ is $\mathcal{F}\otimes \mathcal{L}(Q)$-$\mathcal{F}\otimes \mathcal{L}(Q)$-measurable and therefore $(\omega,x,F)\mapsto V(\tau_{\frac{x}{\varepsilon}}\omega,x,F)$ defines as well a Charath{\'e}odory integrand (with same measurability as $V$). As a result of these facts, for any function $u\in L^p(\Omega\times Q)^m$ it follows that $(\omega,x)\mapsto V(\omega,x,u(\omega,x))$ and $(\omega,x)\mapsto V(\tau_{\frac{x}{\varepsilon}}\omega, x, u(\omega,x))$ define measurable functions with respect to the completion of $\mathcal{F}\otimes \mathcal{L}(Q)$. Additionally, these functions are integrable thanks to the growth assumptions on $V$. Thus all the integrals in the statement of the proposition are well-defined.
\medskip

(i) We first argue that it suffices to prove that
\begin{equation}\label{eq1_prop2.8}
\ex{\int_{Q}V(\tau_{\frac{x}{\varepsilon}}\omega,x,u(\omega,x))dx}=\ex{\int_{Q}V(\omega,x,\unf u(\omega,x))dx} \quad \text{for all }u\in L^p(\Omega)\overset{a}{\otimes} L^p(Q)^m.
\end{equation}
Indeed, for any $u\in L^p(\Omega\times Q)^m$ we can find a sequence $u_k \in L^p(\Omega)\overset{a}{\otimes} L^p(Q)^m$ such that $u_k \to u$ strongly in $L^p(\Omega\times Q)^m$, and by passing to a subsequence (not relabeled) we may additionally assume that $u_k\to u$ pointwise a.e.~in $\Omega\times Q$. 
By continuity of $V$ in its last variable, we thus have $V(\tau_{\frac{x}{\varepsilon}}\omega,x, u_{k}(\omega,x)) \to V(\tau_{\frac{x}{\varepsilon}}\omega,x, u(\omega,x))$ for a.e.~$(\omega,x)\in \Omega\times Q$. Since $|V(\tau_{\frac{x}{\varepsilon}}\omega,x, u_{k}(\omega,x))|\leq C(1+|u_{k}(\omega,x)|^p)$ a.e.~in $\Omega\times Q$, the dominated convergence theorem by Vitali implies that $\lim_{k\to \infty}\ex{\int_{Q}V(\tau_{\frac{x}{\varepsilon}}\omega,x,u_{k}(\omega,x))dx} = \ex{\int_{Q}V(\tau_{\frac{x}{\varepsilon}}\omega,x,u(\omega,x))dx}$. In the same way we conclude that 
$$\lim_{k\to \infty}\ex{\int_{Q}V(\omega,x,\unf u_k(\omega,x))dx} = \ex{\int_{Q}V(\omega,x,\unf u(\omega,x))dx}\,,$$ 
and thus (\ref{eq1_prop2.8}) extends to general $u\in L^p(\Omega\times Q)^m$.

It is left to show (\ref{eq1_prop2.8}). Let $u\in L^p(\Omega)\overset{a}{\otimes} L^p(Q)^m$. By Fubini's theorem, the measure preserving property of $\tau$, and the transformation $\omega\mapsto \tau_{-\frac{x}{\varepsilon}}\omega$ in the second equality below, it follows
\begin{equation*}
\ex{\int_{Q}V(\tau_{\frac{x}{\varepsilon}}\omega,x,u(\omega,x))dx}=\int_{Q}\ex{V(\tau_{\frac{x}{\varepsilon}}\omega,x,u(\omega,x))}dx = \int_{Q}\ex{V(\omega,x,u(\tau_{-\frac{x}{\varepsilon}}\omega,x))}dx.
\end{equation*}
Since $u\in L^p(\Omega)\stackrel{a}{\otimes}L^p(Q)$, we have $u(\tau_{-\frac{x}{\varepsilon}}\omega,x)=\unf u(\omega,x)$, and thus the right-hand side equals $\ex{\int_{Q}V(\omega,x,\unf u(\omega,x))dx}$, which completes the proof of (i).
\medskip

(ii) By part (i) we get $\ex{\int_Q V(\tau_{\frac{x}{\varepsilon}}\omega, x,u_{\varepsilon}(\omega,x))dx}=\ex{\int_Q V(\omega, x, \unf u_{\varepsilon}(\omega,x))dx}$. Since $\unf u\e \to u$ strongly in $L^p(\Omega\times Q)^m$ (by assumption), using the growth conditions of $V$ and the dominated convergence theorem, it follows (similarly as in part (i)) that $\lim_{\varepsilon\to 0}\ex{\int_Q V(\omega, x, \unf u_{\varepsilon}(\omega,x))dx}= \ex{\int_{Q}V(\omega,x,u(\omega,x))dx}$.
\medskip

(iii) We note that the functional $L^p(\Omega\times Q)^m\ni u \mapsto \ex{\int_{Q}V(\omega, x,u(\omega,x))dx}$ is convex and lower semi-continuous, therefore it is weakly lower semi-continuous (see \cite[Corollary 3.9]{brezis2010functional}). Combining this fact with the transformation formula from (i) and the weak convergence $\unf u_{\varepsilon}\weakto u$ (by assumption), the claim follows.
\end{proof}
Before stating the proof of Proposition \ref{prop1}, we present some auxiliary lemmas.
\begin{lemma}\label{lemA} Let $p \in (1,\infty)$ and $q=\frac{p}{p-1}$.
\begin{enumerate}[(i)]
\item
If $\varphi\in \cb{D^*\psi:\psi\in W^{1,q}(\Omega)^d}^{\bot}$, then $\varphi\in L^p_{{\inv}}(\Omega)$.
\item
If $\varphi \in \cb{\psi\in W^{1,q}(\Omega)^d: D^*\psi=0}^{\bot}$, then $\varphi\in L^p_{\pot}(\Omega)$.
\end{enumerate}
\end{lemma}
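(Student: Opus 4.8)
The plan is to prove Lemma~\ref{lemA} by exploiting the orthogonality relations \eqref{orth} together with the definitions of $L^p_{\inv}(\Omega)$ and $L^p_{\pot}(\Omega)$. Both parts have the same flavour: a function that is orthogonal to a distinguished subset of $L^q(\Omega)^d$ must lie in a prescribed subspace of $L^p(\Omega)$. The main point is to identify the relevant orthogonal complements correctly and to verify that the hypotheses really pin down membership in $L^p_{\inv}$ or $L^p_{\pot}$.

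For part (i), I would argue as follows. Suppose $\varphi \in \{D^*\psi : \psi \in W^{1,q}(\Omega)^d\}^{\bot}$. I want to show $U_x\varphi = \varphi$ for all $x \in \R^d$, equivalently $D_i\varphi = 0$ for all $i$ in the sense that $\varphi$ lies in the kernel of each $D_i$; actually the cleanest route is to show $\varphi \in \mathcal N(D)$ and then invoke that $\mathcal N(D) = L^p_{\inv}(\Omega)$ (since $\varphi$ is annihilated by the generators of the strongly continuous group $\{U_x\}$, it is fixed by the whole group --- this is standard semigroup theory). Now, $\mathcal N(D) = \mathcal R(D^*)^{\bot}$ by \eqref{orth}. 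Since $\{D^*\psi : \psi \in W^{1,q}(\Omega)^d\} = D^*(W^{1,q}(\Omega)^d)$ is dense in $\mathcal R(D^*)$ --- here I use that $W^{1,q}(\Omega)^d$ is a core for $D^*$, or more simply that $W^{1,q}(\Omega)^d \subset \mathcal D^*$ and that being orthogonal to this dense-in-range subset is the same as being orthogonal to the whole range --- the hypothesis gives $\varphi \perp \mathcal R(D^*)$, i.e. $\varphi \in \mathcal N(D) = L^p_{\inv}(\Omega)$. One subtlety to be careful about: whether $D^*(W^{1,q}(\Omega)^d)$ is genuinely dense in $\mathcal R(D^*)$, or whether one should instead directly use that $\psi \in W^{1,q}(\Omega)^d$ suffices to recover the integration-by-parts identity $\ex{D_i\varphi\, \psi} = -\ex{\varphi\, D_i\psi}$ and test against a dense family. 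The robust formulation is: for $\varphi$ in the stated orthogonal complement, $\ex{\varphi\, D^*\psi} = 0$ for all $\psi \in W^{1,q}(\Omega)^d$; taking $\psi = \eta e_i$ with $\eta \in W^{1,q}(\Omega)$ gives $\ex{\varphi\, D_i\eta} = 0$ for all $\eta \in W^{1,q}(\Omega) \subset \mathcal D_i$, hence $\varphi \in \mathcal D_i^{**} = \mathcal D_i$ (reflexivity) with $D_i\varphi = 0$; since this holds for each $i$, $\varphi \in \bigcap_i \mathcal N(D_i) = L^p_{\inv}(\Omega)$.

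For part (ii), the argument is dual. Suppose $\varphi \in \{\psi \in W^{1,q}(\Omega)^d : D^*\psi = 0\}^{\bot}$. I want $\varphi \in L^p_{\pot}(\Omega) = \overline{\mathcal R(D)} = \mathcal N(D^*)^{\bot}$ by \eqref{orth}. So it suffices to show that $\varphi$ is orthogonal to all of $\mathcal N(D^*)$, knowing only that it is orthogonal to $\mathcal N(D^*) \cap W^{1,q}(\Omega)^d$. This reduces to the density claim: $\mathcal N(D^*) \cap W^{1,q}(\Omega)^d$ is dense in $\mathcal N(D^*)$ (with respect to the $L^q$-norm). This is the step I expect to be the main obstacle. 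One way to establish it is via a mollification/averaging argument: given $\psi \in \mathcal N(D^*) \subset L^q(\Omega)^d$, form the group-averaged approximations $\psi_\rho := \int_{\R^d} \rho_\delta(y)\, U_y \psi \, dy$ (a smoothing over the shift action with a standard mollifier $\rho_\delta$), which lie in $W^{1,q}(\Omega)^d$ by strong continuity and differentiability of $y \mapsto U_y\psi$ in the averaged sense, converge to $\psi$ in $L^q$ as $\delta \to 0$, and still satisfy $D^*\psi_\rho = 0$ because $D^*$ commutes with the $U_y$'s (the averaging and $D^*$ commute, and $D^*\psi = 0$). Then orthogonality of $\varphi$ to each $\psi_\rho$ passes to the limit, giving $\varphi \perp \psi$. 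Hence $\varphi \in \mathcal N(D^*)^{\bot} = L^p_{\pot}(\Omega)$. If the paper prefers to avoid the mollification argument, an alternative is to cite directly that $W^{1,q}(\Omega)^d$ is a core for $D^*$ (which would give density of the graph, from which the needed density of the kernel intersection follows by a standard closed-range argument), but the mollification route is self-contained and is the one I would write out.
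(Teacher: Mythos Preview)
Your treatment of part (ii) is essentially the same as the paper's: both reduce to showing that $\mathcal N(D^*)\cap W^{1,q}(\Omega)^d$ is dense in $\mathcal N(D^*)$ via mollification along the group action. The paper uses the heat kernel $p_t$ rather than a generic mollifier $\rho_\delta$, but the mechanism is identical.

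For part (i), your approach differs from the paper's and has a gap. When you pass from ``$\ex{\varphi\, D_i\eta}=0$ for all $\eta\in W^{1,q}(\Omega)$'' to ``$\varphi\in\mathcal D_i$ with $D_i\varphi=0$'' via the adjoint, you are implicitly using that $W^{1,q}(\Omega)$ is a \emph{core} for the generator $D_i$ on $L^q(\Omega)$: the definition of the adjoint requires testing against the full domain $\mathcal D_i^{(q)}$, not just the subspace $W^{1,q}(\Omega)=\bigcap_j\mathcal D_j^{(q)}$. This core property is true, and in fact your own mollification argument from part (ii) would establish it (mollified vectors lie in $W^{1,q}(\Omega)$ and approximate in the graph norm), but you do not verify it, and without it the step does not follow.

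The paper avoids this issue altogether by working with the group rather than the generator. For $\psi\in W^{1,q}(\Omega)$ it uses the identity $U_{-he_i}\psi-\psi=\int_0^h U_{-te_i}D_i^*\psi\,dt$ to compute
\[
\ex{(U_{he_i}\varphi-\varphi)\psi}=\int_0^h\ex{\varphi\,D_i^*(U_{-te_i}\psi)}\,dt=0,
\]
since $U_{-te_i}\psi\in W^{1,q}(\Omega)$ for every $t$. Density of $W^{1,q}(\Omega)$ in $L^q(\Omega)$ (which is elementary, unlike the core property) then gives $U_{he_i}\varphi=\varphi$ directly. This route is more self-contained; yours is conceptually cleaner once the core fact is in hand, but you would need to supply that argument explicitly.
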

\begin{proof}
(i) First, we note that 
\begin{align*}
\varphi \in L^p_{\inv}(\Omega) \quad \Leftrightarrow \quad U_{h e_i}U_{y}\varphi=U_y\varphi  \quad \text{for all }y\in \R^d,h\in \R, i=1,...,d.
\end{align*} 
We consider $\p\in \cb{D^*\psi:\psi\in W^{1,q}(\Omega)^d}^{\bot}$ and we show that $\p\in L^p_{\inv}(\Omega)$ using the above equivalence. 
Let $\psi \in W^{1,q}(\Omega)$ and $i\in \cb{1,...,d}$. Then, by the group property we have $U_{-h e_i}\psi-\psi=\int_{0}^h U_{-t e_i}D_i^*\psi dt$ and therefore
\begin{align*}
\ex{(U_{h e_i}\varphi-\varphi)\psi}=\ex{\varphi (U_{-he_i}\psi-\psi)}=\ex{\varphi \int_{0}^h U_{-t e_i}D_i^*\psi dt}=\int_{0}^h\ex{\varphi D^*_i(U_{-t e_i}\psi)}dt.
\end{align*}
Since $U_{-t e_i}\psi \in W^{1,q}(\Omega)$ for any $t\in [0,h]$, we obtain $\ex{\varphi D^*_i(U_{-t e_i}\psi)}=0$ and thus $U_{h e_i}\varphi = \varphi$. Furthermore, for any $y\in \R^d$, we have $\ex{(U_{h e_i}U_y \varphi - U_y \varphi)\psi}=\ex{(U_{h e_i}\varphi -\varphi)U_{-y}\psi}=0$ by the same argument.
\medskip

(ii) In view of $L^p_{\pot}(\Omega)=\mathcal{N}(D^*)^{\bot}$ (see (\ref{orth})), it is sufficient to prove that $\cb{\p\in W^{1,q}(\Omega)^d: D^*\p=0}$ is dense in $\mathcal{N} (D^*)$. This follows by an approximation argument as in \cite{jikov2012homogenization}, Section 7.2.
Let $\p \in \mathcal{N}(D^*)$ and we define for $t>0$
\begin{equation*}
\p^t(\omega)=\int_{\re{d}}p_t(y)\p(\tau_y \omega)dy, \quad \text{where }p_t(y)=\frac{1}{\brac{4\pi t}^{\frac{d}{2}}}e^{-\frac{|y|^2}{4t}}.
\end{equation*}
Then the claimed density follows, since $\p^t\in W^{1,q}(\Omega)^d$, $D^*\p^t=0$ for any $t>0$ and $\p^t \rightarrow \p$ strongly in $L^q(\Omega)^d$. The last statement can be seen as follows. By the continuity property of $U_x$, for any $\varepsilon>0$ there exists $\delta>0$ such that $\ex{|\p(\tau_y \omega)-\p(\omega)|^q}\leq \varepsilon$ for any $y\in B_{\delta}(0)$.
It follows that
\begin{align*}
\ex{|\p^t-\p|^q} & =\ex{\bigg|\int_{\re{d}}p_t(y)\brac{\p(\tau_y \omega)-\p(\omega)}dy\bigg|^q}\\  & \leq \int_{\re{d}}p_t(y)\ex{|\p(\tau_y \omega)-\p(\omega)|^q}dy \\ & =   \int_{B_{\delta}}p_t(y)\ex{|\p(\tau_y \omega)-\p(\omega)|^q}dy+\int_{\re{d}\setminus B_{\delta}}p_t(y)\ex{|\p(\tau_y \omega)-\p(\omega)|^q}dy.
\end{align*}
The first term on the right-hand side of the above inequality is bounded by $\varepsilon$ as well as the second term for sufficiently small $t>0$.
\end{proof}
\begin{lemma}\label{lem6}
Let $u\e \in L^p(\Omega)\otimes \sob(Q)$ be such that $u\e \wt u$ in $\ltp$ and $\varepsilon \nabla u\e \wt 0$ in $\ltp^d$. Then $u\in L^p_{{\inv}}(\Omega)\otimes L^p(Q)$.
\end{lemma}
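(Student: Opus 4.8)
The plan is to characterise the two-scale limit $u$ by testing it against oscillating gradient fields: I will show that $\ex{\int_Q u(\omega,x)\,(D_i\psi)(\omega)\,\eta(x)\,dx}=0$ for every $i\in\{1,\dots,d\}$, every $\psi\in W^{1,q}(\Omega)$ and every $\eta\in C_c^\infty(Q)$. By Lemma~\ref{lemA}(i) this forces $u(\cdot,x)\perp\{D^*\psi:\psi\in W^{1,q}(\Omega)^d\}$, hence $u(\cdot,x)\in L^p_{\inv}(\Omega)$ for a.e.~$x\in Q$, which is exactly the assertion $u\in L^p_{\inv}(\Omega)\otimes L^p(Q)$. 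The mechanism is that differentiating the oscillating test field $\psi(\tau_{\frac{x}{\varepsilon}}\omega)\,\eta(x)$ in $x$ produces an extra factor $\tfrac1\varepsilon$, which is precisely cancelled by the hypothesis $\varepsilon\nabla u\e\wt 0$.

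Concretely, I would first record two elementary facts: (a) for $\psi\in W^{1,q}(\Omega)$ the stationary extension is weakly $x$-differentiable with $\partial_{x_i}\big(\psi(\tau_{\frac{x}{\varepsilon}}\omega)\big)=\tfrac1\varepsilon(D_i\psi)(\tau_{\frac{x}{\varepsilon}}\omega)$, and for a.e.~$\omega$ the map $x\mapsto\psi(\tau_{\frac{x}{\varepsilon}}\omega)\eta(x)$ lies in $W^{1,q}_0(Q)$; (b) $\psi(\tau_{\frac{x}{\varepsilon}}\omega)\eta(x)=\unf^*(\psi\overset{a}{\otimes}\eta)(\omega,x)$ with $\psi\overset{a}{\otimes}\eta\in L^q(\Omega)\overset{a}{\otimes}L^q(Q)$. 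Fixing $i,\psi,\eta$, integrating by parts in $x$ (no boundary term, since $\eta$ is compactly supported in $Q$ and $u\e(\omega,\cdot)\in W^{1,p}(Q)$ for a.e.~$\omega$), and then applying the adjoint identity $\ex{\int_Q f\,\unf^* g\,dx}=\ex{\int_Q(\unf f)\,g\,dx}$ on both sides, one gets
\begin{equation*}
\ex{\int_Q \unf(\varepsilon\partial_i u\e)\,(\psi\overset{a}{\otimes}\eta)\,dx}
=-\ex{\int_Q(\unf u\e)\,(D_i\psi\overset{a}{\otimes}\eta)\,dx}-\varepsilon\,\ex{\int_Q(\unf u\e)\,(\psi\overset{a}{\otimes}\partial_i\eta)\,dx}.
\end{equation*}
As $\varepsilon\to0$ the left-hand side tends to $0$ because $\unf(\varepsilon\nabla u\e)\weakto0$ in $\ltp^d$ and $\psi\overset{a}{\otimes}\eta\in\ltq$; on the right, $\unf u\e\weakto u$ and $(\unf u\e)$ is bounded, so the first term converges to $-\ex{\int_Q u\,(D_i\psi\overset{a}{\otimes}\eta)\,dx}$ and the second is $O(\varepsilon)$. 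Hence $\ex{\int_Q u(\omega,x)(D_i\psi)(\omega)\eta(x)\,dx}=0$.

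To conclude, I would first vary $\eta$ over $C_c^\infty(Q)$, which is dense in $L^q(Q)$: by H\"older in $\omega$ the map $x\mapsto\ex{u(\cdot,x)\,D_i\psi}$ lies in $L^p(Q)$ and annihilates $L^q(Q)$, hence vanishes a.e. Next I would remove the dependence of the exceptional set on $\psi$: $W^{1,q}(\Omega)$ with its graph norm is separable and each $D_i$ is bounded on it, so taking a countable dense subset and the union of the corresponding null sets produces a single null set $N\subset Q$ such that, for $x\notin N$, $\ex{u(\cdot,x)\,D_i\psi}=0$ for all $i$ and all $\psi\in W^{1,q}(\Omega)$; since $D^*\psi=-\sum_i D_i\psi_i$ on $W^{1,q}(\Omega)^d$, this says $u(\cdot,x)\perp\{D^*\psi:\psi\in W^{1,q}(\Omega)^d\}$. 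Lemma~\ref{lemA}(i) then gives $u(\cdot,x)\in L^p_{\inv}(\Omega)$ for a.e.~$x$, and since $\pinv$ is a bounded projection this is equivalent to $(\mathrm{id}\otimes\pinv)u=u$, i.e.~$u\in L^p_{\inv}(\Omega)\otimes L^p(Q)$.

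The main obstacle is bookkeeping the rescaled integration by parts and the two applications of $\unf^*$ correctly — in particular the identity $\partial_{x_i}\psi(\tau_{\frac{x}{\varepsilon}}\omega)=\tfrac1\varepsilon(D_i\psi)(\tau_{\frac{x}{\varepsilon}}\omega)$ and the exact placement of the $\varepsilon$-factors — because it is this structure that makes the term $\unf(\varepsilon\nabla u\e)$ appear (and vanish) while isolating $D_i\psi$ tested against the limit $u$. The remaining measure-theoretic clean-up is routine given separability.
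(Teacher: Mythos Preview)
Your proposal is correct and follows essentially the same approach as the paper: both test $u\e$ against the oscillating field $\unf^*(\psi\otimes\eta)=\psi(\tau_{x/\varepsilon}\omega)\eta(x)$ with $\psi\in W^{1,q}(\Omega)$, $\eta\in C_c^\infty(Q)$, integrate by parts in $x$ so that the $\tfrac1\varepsilon$ from $\partial_{x_i}\psi(\tau_{x/\varepsilon}\omega)$ is cancelled by $\varepsilon\nabla u\e\wt 0$, pass to the limit to obtain $\ex{u(\cdot,x)\,D_i\psi}=0$, and conclude via Lemma~\ref{lemA}(i). The only difference is cosmetic (the paper writes the test function as $v\e=\varepsilon\,\unf^*(\varphi\eta)$ and pairs it with $\partial_i u\e$, whereas you keep the $\varepsilon$ on the gradient side), and your final separability argument to produce a single $x$-null set is a bit more explicit than the paper's one-line conclusion.
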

\begin{proof}
Consider a sequence $v\e=\varepsilon \mathcal{T}_{\varepsilon}^*(\varphi \eta)$ such that $\varphi\in W^{1,q}(\Omega)$ and $\eta\in C^{\infty}_c(Q)$. Note that $\mathcal{T}\e v\e=\varepsilon \varphi \eta$ and we have ($i=1,...,d$)
\begin{equation*}
\ex{\int_Q \partial_i u\e v\e dx}=\ex{\int_Q\mathcal{T}\e \partial_i u\e \mathcal{T}\e v\e dx}=\ex{\int_Q\mathcal{T}\e \partial_i u\e \varepsilon \varphi \eta dx}\rightarrow 0.
\end{equation*}
Moreover, it holds that $\partial_{i}v_{\varepsilon}= \mathcal{T}_{\varepsilon}^*(D_i \varphi \eta + \varepsilon \varphi \partial_i \eta)$ and therefore
\begin{align*}
\ex{\int_Q\partial_i u\e v\e dx}  &=-\ex{\int_Q u\e \partial_i v\e dx} =-\ex{\int_Q u\e \mathcal{T}_{\varepsilon}^*(D_i \varphi \eta + \varepsilon \varphi \partial_i \eta)dx}\\ &=
-\ex{\int_Q \mathcal{T}\e u\e D_i\varphi \eta+\varepsilon \mathcal{T}\e u\e \varphi \partial_i\eta dx}. 
\end{align*}
The last expression converges to $-\ex{\int_Q u D_i\varphi \eta dx}$ as $\varepsilon\to 0$.
As a result of this, $\ex{u(x)D_i\varphi}=0$ for almost every $x\in Q$ and therefore $u\in L^p_{{\inv}}(\Omega)\otimes L^p(Q)$ by Lemma \ref{lemA} (i).
\end{proof}
\begin{lemma}\label{lem7}
Let $u\e$ be a bounded sequence in $L^p(\Omega)\otimes \sob(Q)$. Then there exists $u\in L^p_{{\inv}}(\Omega)\otimes \sob(Q)$ such that
\begin{equation*}
u\e \wt u \text{ in }\ltp, \quad \pinv u\e\wt u \text{ in }\ltp,\quad \pinv \nabla u\e \wt \nabla u \text{ in }\ltp^d.
\end{equation*}
\end{lemma}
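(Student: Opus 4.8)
The plan is to combine the compactness of bounded sequences, Lemma~\ref{lemma_basics}(ii), with two elementary facts about the interplay of the unfolding operator $\unf$ and the projection $\pinv$: that $\unf$ acts as the identity on $L^p_{\inv}(\Omega)\otimes L^p(Q)$, and that $\pinv$, acting on the $\omega$-variable of $\ltp\cong L^p(\Omega;L^p(Q))$ and of $L^p(\Omega;\sob(Q))$, is a contractive projection that commutes with both $\unf$ and the spatial gradient.

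First I would extract, from the boundedness of $(u\e)$ in $\ltp$ and Lemma~\ref{lemma_basics}(ii), a subsequence (not relabeled) and $u\in\ltp$ with $u\e\wt u$. Since $(\nabla u\e)$ is bounded in $\ltp^d$, we have $\|\eps\nabla u\e\|_{\ltp^d}\to0$, hence $\eps\nabla u\e\st0$ and in particular $\eps\nabla u\e\wt0$; Lemma~\ref{lem6} then yields $u\in L^p_{\inv}(\Omega)\otimes L^p(Q)$, that is $\pinv u=u$. Next I would record the structural identities by verifying them on products $\varphi\eta$ and extending by density and continuity: for $\psi\in L^p_{\inv}(\Omega)$ and $\eta\in L^p(Q)$ one has $(\unf(\psi\eta))(\omega,x)=\psi(\tau_{-x/\eps}\omega)\eta(x)=\psi(\omega)\eta(x)$, so $\unf$ restricts to the identity on $L^p_{\inv}(\Omega)\otimes L^p(Q)$ (and on $L^p_{\inv}(\Omega)\otimes L^p(Q)^d$); and since $\pinv(\varphi\eta)=(\pinv\varphi)\eta$, the range of $\pinv$ equals $L^p_{\inv}(\Omega)\otimes L^p(Q)$ (resp.\ $L^p_{\inv}(\Omega)\otimes\sob(Q)$), $\pinv$ commutes with $\nabla$, and $\pinv\unf=\unf\pinv=\pinv$ on $\ltp$.

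With these in hand the three convergences follow. From $\pinv\unf=\pinv$ we get $\pinv u\e=\pinv(\unf u\e)$, and since $\unf u\e\weakto u$ weakly in $\ltp$ and $\pinv$ is bounded and linear, $\pinv u\e\weakto\pinv u=u$ weakly; as $\pinv u\e\in L^p_{\inv}(\Omega)\otimes L^p(Q)$ we have $\unf(\pinv u\e)=\pinv u\e$, whence $\pinv u\e\wt u$. For the gradients, $\pinv\nabla u\e=\nabla\pinv u\e$ is bounded in $\ltp^d$ by contractivity of $\pinv$, so along a further subsequence $\nabla\pinv u\e\weakto\zeta$ weakly in $\ltp^d$; closedness of the distributional $x$-gradient under weak convergence, together with $\pinv u\e\weakto u$ and the identification $L^p(\Omega;\sob(Q))=L^p(\Omega)\otimes\sob(Q)$ (separability of $L^p(\Omega)$), gives $u\in L^p(\Omega)\otimes\sob(Q)$ with $\nabla u=\zeta$, and combining this with $\pinv u=u$ from the first step puts $u$ in $L^p_{\inv}(\Omega)\otimes\sob(Q)$. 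Finally $\pinv\nabla u\e=\nabla\pinv u\e\in L^p_{\inv}(\Omega)\otimes L^p(Q)^d$, so $\unf(\pinv\nabla u\e)=\pinv\nabla u\e\weakto\nabla u$, i.e.\ $\pinv\nabla u\e\wt\nabla u$.

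I expect the only genuinely delicate points to be the identity $\pinv\unf=\pinv$ (equivalently, that $\unf$ fixes shift-invariant functions, which is immediate from the formula defining $\unf$) and the step where the weak limits of $\pinv u\e$ and $\nabla\pinv u\e$ are promoted to the membership $u\in L^p_{\inv}(\Omega)\otimes\sob(Q)$; the remaining manipulations are routine bookkeeping with bounded linear operators and subsequences.
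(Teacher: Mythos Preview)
Your proof is correct and follows essentially the same approach as the paper: establish the operator identity $\pinv\unf=\unf\pinv=\pinv$, use that $\pinv$ commutes with $\nabla$, and then pass to limits. Your packaging is slightly more streamlined: you exploit directly that on shift-invariant functions two-scale convergence coincides with ordinary weak convergence (since $\unf$ acts as the identity there), and then invoke the weak closedness of the distributional gradient to obtain $u\in L^p_{\inv}(\Omega)\otimes\sob(Q)$, whereas the paper redoes this closedness by an explicit test-function computation.

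One small caution: your final parenthetical calls $\pinv\unf=\pinv$ ``equivalent'' to the statement that $\unf$ fixes shift-invariant functions, but the latter is literally $\unf\pinv=\pinv$; the identity $\pinv\unf=\pinv$ requires the separate (equally elementary) observation that $\pinv(U_y\varphi)=\pinv\varphi$ for every shift $U_y$, i.e.\ that the conditional expectation onto the invariant $\sigma$-algebra absorbs shifts. You do state both identities correctly in the body of the argument, so this is only a wording issue in the summary, not a gap.
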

\begin{proof}
\textit{Step 1. $\pinv \circ \mathcal{T}\e=\mathcal{T}\e\circ \pinv =\pinv$.}

The second equality holds clearly. To show that  $\pinv \circ \mathcal{T}\e=\pinv$, we consider $v\in \ltp$, $\varphi\in L^q(\Omega)$ and $\eta\in L^q(Q)$. We have
\begin{align*}
\ex{\int_Q (\pinv\unf v) (\p \eta) dx}& =\ex{\int_Q (\unf v) \pinv^* (\p \eta) dx}\\ &=\ex{\int_Q v \pinv^* (\p \eta) dx}=\ex{\int_Q  (\pinv v) (\p \eta) dx},
\end{align*}
where we use the fact that $\unf^* P_{\inv}^*= P_{\inv}^*$ since the adjoint $P_{\inv}^*$ of $P_{\inv}$ satisfies $\mathcal{R}(P_{\inv}^*)\subset L^q_{\inv}(\Omega)$. The claim follows by an approximation argument since $L^q(\Omega)\overset{a}{\otimes}L^q(Q)$ is dense in $L^q(\Omega\times Q)$.
\smallskip

\textit{Step 2. Convergence of $\pinv u\e$.}

$\pinv$ is bounded and it commutes with $\nabla$ and therefore
\begin{equation*}
\limsup_{\varepsilon\to 0} \ex{\int_Q |\pinv u\e|^p+|\nabla \pinv u\e|^p dx}< \infty.
\end{equation*}
As a result of this and with help of Lemma \ref{lemma_basics} (ii) and Lemma \ref{lem6}, it follows that $\pinv u\e\overset{2s}{\weakto} v$ and $\nabla \pinv u\e\overset{2s}{\weakto} w$ (up to a subsequence), where $v\in L^p_{{\inv}}(\Omega)\otimes L^p(Q)$ and $w\in L^p_{{\inv}}(\Omega)\otimes L^p(Q)^d$.

Let $\p\in W^{1,q}(\Omega)$ and $\eta\in C^{\infty}_c(Q)$.
On the one hand, we have
\begin{equation*}
\ex{\int_Q (\partial_i \pinv u\e) \mathcal{T}_{\varepsilon}^*(\varphi \eta) dx}=\ex{\int_Q\unf (\partial_i \pinv u\e) (\p\eta) dx}\rightarrow \ex{\int_Q w_i \p\eta dx}.
\end{equation*}
On the other hand, 
\begin{equation*}
\ex{\int_Q (\partial_i \pinv u\e) \mathcal{T}_{\varepsilon}^*(\varphi \eta) dx}=-\frac{1}{\varepsilon}\ex{\int_Q (\pinv u\e) (D_i\p\eta) dx}-\ex{\int_Q(\pinv u\e) (\p \partial_i\eta) dx}.
\end{equation*}
The first term on the right-hand side vanishes since $\pinv u\e(\cdot,x)\in L^p_{{\inv}}(\Omega)$ for almost every $x\in Q$ and by (\ref{orth}). The second term converges to $-\ex{\int_Q v \p \partial_i \eta dx}$ as $\varepsilon\rightarrow 0$. Consequently, we obtain $w=\nabla v$ and therefore $v\in L^p_{{\inv}}(\Omega)\otimes \sob(Q)$.
\smallskip

\textit{Step 3. Convergence of $u\e$.}

Since $u\e$ is bounded, by  Lemma \ref{lemma_basics} (ii) and Lemma \ref{lem6} there exists $u\in L^p_{{\inv}}(\Omega)\otimes L^p(Q)$ such that $u\e \wt u$ in $\ltp$. Also, $\pinv$ is a linear and bounded operator which, together with Step 1, implies that $\pinv u\e\weakto u$. Using this, we conclude that $u=v$.
\end{proof}
\begin{proof}[Proof of Proposition \ref{prop1}]
Lemma \ref{lem7} implies that $u\e \wt u$ in $\ltp$ (up to a subsequence), where $u\in L^p_{{\inv}}(\Omega)\otimes \sob(Q)$. Moreover, it follows that there exists $v\in \ltp^d$ such that $\nabla u\e \wt v$ in $\ltp^d$ (up to another subsequence). We show that $\chi:=v-\nabla u\in L^p_{\pot}(\Omega)\otimes L^p(Q)$.

Let $\varphi \in W^{1,q}(\Omega)^d$ with $D^*\varphi=0$ and $\eta\in C^{\infty}_c(Q)$. We have
\begin{equation}\label{eq4321}
\ex{\int_Q \nabla u\e \cdot \mathcal{T}_{\varepsilon}^*(\varphi \eta) dx} = \ex{\int_Q \unf \nabla u\e \cdot \varphi \eta dx} \rightarrow \ex{\int_{Q}v \cdot \varphi \eta dx}.
\end{equation}
On the other hand,
\begin{align}\label{eq1234}
\begin{split}
\ex{\int_Q \nabla u\e \cdot \mathcal{T}_{\varepsilon}^*(\varphi \eta) dx}  &=-\ex{\int_Q u\e \sum_{i=1}^d \mathcal{T}_{\varepsilon}^*(\frac{1}{\varepsilon} D_i \varphi \eta+\varphi_i \partial_i\eta) dx}  
\\ &= \frac{1}{\varepsilon} \ex{\int_Q (\unf u\e) (D^*\varphi \eta) dx}-\ex{\int_Q (\unf u\e) \sum_{i=1}^d \varphi_i\partial_i \eta dx}.
\end{split}
\end{align}
Above, the first term on the right-hand side vanishes by assumption and the second converges to $ \ex{\int_Q\nabla u\cdot \varphi \eta}$ as $\varepsilon\rightarrow 0$. Using
(\ref{eq1234}), (\ref{eq4321}) and Lemma \ref{lemA} (ii) we  complete the proof.
\end{proof}
\begin{proof}[Proof of Lemma \ref{Nonlinear_recovery}]
For $\chi \in L^p_{\pot}(\Omega)\otimes L^p(Q)$ and $\delta>0$, by definition of the space $L^p_{\pot}(\Omega)\otimes L^p(Q)$ and by density of $\mathcal{R}(D)$ in $L^p_{\pot}(\Omega)$, we find $g_{\delta}=\sum_{i=1}^{n(\delta)}\p^{\delta}_i \eta^{\delta}_i$ with $\p_i^{\delta} \in W^{1,p}(\Omega)$ and $\eta^{\delta}_i\in C^{\infty}_c(Q)$ such that
\begin{equation*}
\|\chi - Dg_{\delta} \|_{L^p(\Omega\times Q)^d} \leq \delta.
\end{equation*}
We define $g_{\delta,\varepsilon}= \varepsilon \unf^{-1} g_{\delta}$ and note that $g_{\delta,\varepsilon} \in L^p(\Omega)\otimes W_0^{1,p}(Q)$ and $\nabla g_{\delta,\varepsilon}=\unf^{-1}D g_{\delta}+\unf^{-1}\varepsilon\nabla g_{\delta}$. As a result of this and with help of the isometry property of $\unf^{-1}$, the claim of the lemma follows.
\end{proof}
\begin{proof}[Proof of Proposition \ref{prop2}]
For $\chi \in L^p_{\pot}(\Omega)\otimes L^p(Q)$ we define $\mathcal{G}\e \chi=v\e$ as the unique weak solution in $W^{1,p}_0(Q)$ to the equation (for $P$-a.e. $\omega\in \Omega$)
\begin{equation}\label{eq99}
-\Delta v\e(\omega)=- \nabla \cdot (\unf^{-1}\chi(\omega)).
\end{equation}
Above and further in this proof, we use the notation $u(\omega):= u(\cdot,\omega)\in L^p(Q)$ for functions $u\in L^p(\Omega\times Q)$.

By Poincar{\'e}'s inequality and the Calder{\'o}n-Zygmund estimate, we obtain
\begin{equation*}
\| v\e(\omega) \|_{L^p(Q)} \leq C \|\nabla v\e(\omega) \|_{L^p(Q)^d} \leq C \| \unf^{-1}\chi(\omega) \|_{L^p(Q)^d},
\end{equation*} 
and therefore 
\begin{equation*}
\| v\e \|_{L^p(\Omega \times Q)} \leq C \|\nabla v\e \|_{L^p(\Omega \times Q)^d} \leq C \| \chi \|_{L^p(\Omega\times Q)^d}.
\end{equation*}
Using Lemma \ref{Nonlinear_recovery}, we find a sequence $g_{\delta,\varepsilon}\in L^p(\Omega)\otimes W^{1,p}_0(Q)$ such that
\begin{equation*}
\|g_{\delta,\varepsilon}(\chi)\|_{L^p(\Omega\times Q)} \leq \varepsilon C(\delta), \quad \limsup_{\varepsilon\to 0}\|\mathcal{T}_{\varepsilon}\nabla g_{\delta, \varepsilon}(\chi)-\chi\|_{L^p(\Omega\times Q)^d}\leq \delta.
\end{equation*}
 Note that $v\e(\omega)-g_{\delta,\varepsilon}(\omega)\in W^{1,p}_0(Q)$ (for $P$-a.e. $\omega\in \Omega$) and it is the unique weak  solution to
\begin{equation*}
-\Delta(v\e(\omega) - g_{\delta,\varepsilon}(\omega))=-\nabla \cdot (\unf^{-1}\chi(\omega)-\nabla g_{\delta,\varepsilon}(\omega)).
\end{equation*}
As before, we have
\begin{equation}\label{eq98}
\| v\e- g_{\delta,\varepsilon}\|_{L^p(\Omega \times Q)}\leq C \|\nabla v\e- \nabla g_{\delta,\varepsilon} \|_{L^p(\Omega \times Q)^d} \leq C \| \chi -\unf \nabla g_{\delta,\varepsilon} \|_{L^p(\Omega\times Q)^d}.
\end{equation}
Therefore, using the isometry property of $\unf$, we obtain
\begin{align*}
\|\unf \nabla v\e- \chi \|_{L^p(\Omega \times Q)^d} & \leq \|\nabla v\e- \nabla g_{\delta,\varepsilon} \|_{L^p(\Omega \times Q)^d}+\|\unf \nabla g_{\delta,\varepsilon} - \chi \|_{L^p(\Omega \times Q)^d} \\ & \leq C \| \chi -\unf \nabla g_{\delta,\varepsilon} \|_{L^p(\Omega\times Q)^d}.
\end{align*}
Consequently, first letting $\varepsilon\rightarrow 0$ and then $\delta\rightarrow 0$ we obtain that $\nabla v\e \overset{2s}{\rightarrow} \chi$ in $\ltp^d$. Furthermore, using (\ref{eq98}) we obtain that $v\e \overset{2s}{\rightarrow }0$ in $\ltp$ which completes the proof.
\end{proof}
\section{Applications to  homogenization in the mean}\label{Section_Applications}
In this section we apply the stochastic unfolding method to homogenization problems. We discuss the classical homogenization problem of convex integral functionals and derive a homogenization result for an evolutionary gradient system. We refer to \cite{neukamm2017stochastic} where a similar analysis has been conducted in a discrete-to-continuum setting for convex integral functionals and for an evolutionary rate-independent system.
\medskip

The treatment of integral functionals is a well-known topic in stochastic homogenization and previous results typically rely on the subadditive ergodic theorem (see e.g. \cite{DalMaso1986,neukamm_schaeffner}) or on the notion of quenched stochastic two-scale convergence (see \cite{HeidaNesenenko2017monotone} and Section 4). The analysis via unfolding is less involved than these methods since it merely relies on lower semi-continuity of convex functionals and weak compactness properties of ``unfolded'' sequences in $L^p(\Omega\times Q)$. On the other hand, the method we present yields weaker results than other procedures, namely convergence for solutions is obtained in a statistically averaged sense (see Theorem \ref{thm2}), whereas the analysis based on the subadditive ergodic theorem (e.g. \cite{neukamm_schaeffner}) yields convergence for every typical realization of the medium and it even allows to consider non-convex functionals. We refer to a recent study \cite{Berlyand2017} for an investigation of homogenization of non-convex integral functionals by a two-scale $\Gamma$-convergence approach.
\medskip

The second part of this section is dedicated to the analysis of an evolutionary problem, a gradient system which corresponds to an Allen-Cahn type equation. A significant number of mathematical models can be phrased in the setting of evolutionary gradient systems which are formulated variationally, with the help of an energy and a dissipation functional (see Section \ref{Section_3.2} for a specific example). We refer to \cite{ambrosio2008gradient,savare2007gradient,mielke2013nonsmooth} for the abstract theory of gradient systems. Typically, the asymptotic analysis of sequences of gradient systems (so called evolutionary $\Gamma$-convergence \cite{mielke2016evolutionary}) relies merely on $\Gamma$-convergence properties of the underlying two functionals. For various general strategies for such problems we refer to \cite{attouch1984variational,sandier2004gamma,daneri2010lecture,mielke2013nonsmooth,mielke2016evolutionary}. 
In \cite{liero2015homogenization} a gradient system driven by a non-convex (Cahn-Hilliard type) energy is considered and a periodic homogenization result is established using periodic unfolding. In this study, we consider a related random model and derive a homogenization result based on the stochastic unfolding procedure (see Section \ref{Section_3.2}). We refer to \cite{hornung1994reactive,mielke2014two} for other related periodic homogenization results, where reaction-diffusion equations with periodic coefficients are considered.

In Section \ref{Section:3:3} we argue on the level of convex functionals that \textit{quenched homogenization} and \textit{homogenization in the mean} (via stochastic unfolding) typically lead to the same limiting equation. We therefore view stochastic unfolding as a useful tool to identifiy homogenized limit equations. 
       
\subsection{Convex integral functionals}\label{Section_Convex}
Let $p\in (1,\infty)$ and $Q\subset \R^d$ be open and bounded. We consider $V:\Omega\times Q\times \re{d\times d}\rightarrow \re{}$ and the following set of  assumptions.
\begin{itemize}
\item[(A1)] $V(\cdot,\cdot, F)$ is $\mathcal{F}\otimes \mathcal{L}(Q)$-measurable for all $F\in \R^{d\times d}$.
\item[(A2)] $V(\omega, x, \cdot)$ is convex for a.e. $(\omega,x)\in \Omega\times Q$.
\item[(A3)]\label{grow_cond} There exists a $C>0$ such that
\begin{equation*}
\frac{1}{C}|F|^p-C\leq V(\omega, x, F) \leq C(|F|^p+1)
\end{equation*}
for a.e. $(\omega,x) \in \Omega\times Q$ and all $F\in \re{d\times d}$.
\item[(A4)] For a.e. $(\omega,x) \in \Omega\times Q$, $V(\omega, x,\cdot)$ is uniformly convex with modulus $(\cdot)^p$, i.e. there exists $C>0$ (independent of $\omega$ and $x$) such that for all $F, G\in \re{d\times d}$ and $t\in [0,1]$
\begin{align*}
V(\omega, x,tF+(1-t)G)\leq t V(\omega, x,F)+(1-t) V(\omega, x,G)-(1-t)tC|F-G|^p.
\end{align*} 
\end{itemize}
Below we use the shorthand notation $\nabla^s u=\frac{1}{2}\brac{\nabla u+\nabla u ^{T}}$ and $\chi^{s}=\frac{1}{2}\brac{\chi+\chi^{T}}$. We consider problems with homogeneous Dirichlet boundary conditions and energy functional 
\begin{equation}\label{energy}
 \mathcal{E}_{\varepsilon}:L^p(\Omega)\otimes \sob_0(Q)^d\rightarrow \re{},\quad
 \mathcal{E}_{\varepsilon}(u)=\ex{\int_Q V(\tau_{\frac{x}{\varepsilon}}\omega, x,\nabla^s u(\omega,x))dx}.
\end{equation}

Under the assumptions $(A1)-(A3)$, in the limit $\varepsilon\rightarrow 0$ we obtain the following functional
\begin{align}\label{energy_hom}
\begin{split}
& \mathcal{E}_0:\brac{L^p_{{\inv}}(\Omega)\otimes \sob_0(Q)^d} \times \brac{L^p_{\pot}(\Omega)\otimes L^p(Q)^d},\\
& \mathcal{E}_0(u,\chi)=\ex{\int_Q V(\omega, x, \nabla^s u(\omega,x)+ \chi^{s}(\omega,x)) dx}.
\end{split}
\end{align}
\begin{thm}[Two-scale homogenization]\label{thm1}
Let $p\in (1,\infty)$ and $Q\subset \R^d$ be open and bounded. Assume $(A1)-(A3)$.
\begin{itemize}
\item[(i)](Compactness) Let $u\e \in L^p(\Omega)\otimes \sob_0(Q)^d$ be such that
$\limsup_{\varepsilon\rightarrow 0}\mathcal{E}_{\varepsilon}(u\e)<\infty$. 
There exist $(u,\chi) \in \brac{L^p_{{\inv}}(\Omega)\otimes \sob_0(Q)^d} \times \brac{L^p_{\pot}(\Omega)\otimes L^p(Q)^d}$ and a subsequence (not relabeled) such that
\begin{equation}\label{convergence}
u\e \wt u \text{ in }\ltp^d, \quad \nabla u\e \wt \nabla u+\chi  \text{ in }\ltp^{d\times d}.
\end{equation}
\item[(ii)](Liminf inequality) If the above convergence holds for the whole sequence, then
\begin{equation*}
\liminf_{\varepsilon\rightarrow 0}\mathcal{E}_{\varepsilon}(u\e)\geq \mathcal{E}_0(u,\chi).
\end{equation*}
\item[(iii)](Limsup inequality) Let $(u,\chi)\in \brac{L^p_{{\inv}}(\Omega)\otimes \sob_0(Q)^d} \times \brac{L^p_{\pot}(\Omega)\otimes L^p(Q)^d}$. There exists a sequence $u\e \in L^p(\Omega)\otimes \sob_0(Q)^d$ such that
\begin{equation*}
u\e \st u \text{ in }\ltp^d, \quad  \nabla u\e \st \nabla u+\chi  \text{ in }\ltp^{d\times d}, \quad
\lim_{\varepsilon\rightarrow 0}\mathcal{E}_{\varepsilon}(u\e)=\mathcal{E}_0(u,\chi).
\end{equation*}
\end{itemize}
\end{thm}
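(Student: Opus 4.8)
The plan is to prove the three items essentially as a $\Gamma$-convergence statement for the family $\mathcal{E}_\varepsilon$ with respect to weak two-scale convergence in the mean, using the tools assembled in Section~\ref{S_Stoch}. For (i), I would first use the coercivity in (A3): $\limsup_\varepsilon \mathcal{E}_\varepsilon(u_\varepsilon)<\infty$ together with $\frac1C|F|^p-C\le V$ gives a uniform bound on $\|\nabla^s u_\varepsilon\|_{L^p(\Omega\times Q)}$. Since $u_\varepsilon\in L^p(\Omega)\otimes W^{1,p}_0(Q)^d$ vanishes on $\partial Q$, Korn's inequality (in the $L^p$ setting, on the bounded domain $Q$, applied for $P$-a.e.\ $\omega$ and then integrated) upgrades this to a uniform bound on $\|\nabla u_\varepsilon\|_{L^p(\Omega\times Q)}$ and, via Poincar\'e, on $\|u_\varepsilon\|_{L^p(\Omega\times Q)}$; hence $(u_\varepsilon)$ is bounded in $L^p(\Omega)\otimes W^{1,p}(Q)^d$. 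Proposition~\ref{prop1} then yields, up to a subsequence, $u_\varepsilon\wt u$ with $u\in L^p_{\inv}(\Omega)\otimes W^{1,p}(Q)^d$ and $\nabla u_\varepsilon\wt \nabla u+\chi$ with $\chi\in L^p_{\pot}(\Omega)\otimes L^p(Q)^{d\times d}$. To see that $u$ still has zero boundary values I would invoke Remark~\ref{R_Two_1}: the unfolded sequence $\unf u_\varepsilon$ lies in the closed convex set $L^p(\Omega\times Q)$ corresponding to $W^{1,p}_0$ (equivalently, extend by zero outside $Q$ and note the trace-zero constraint is preserved under weak two-scale limits), so $u\in L^p_{\inv}(\Omega)\otimes W^{1,p}_0(Q)^d$.

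For (ii), the key identity is Proposition~\ref{P_Cont_1}(i): writing $\hat V(\omega,x,F):=V(\omega,x,F^s)$ (which still satisfies the Carath\'eodory and growth hypotheses of Proposition~\ref{P_Cont_1}, and is convex in $F$ by (A2)), we have
\begin{equation*}
\mathcal{E}_\varepsilon(u_\varepsilon)=\ex{\int_Q \hat V(\tau_{\frac{x}{\varepsilon}}\omega,x,\nabla u_\varepsilon)\,dx}=\ex{\int_Q \hat V(\omega,x,\unf\nabla u_\varepsilon)\,dx}.
\end{equation*}
Since $\unf\nabla u_\varepsilon\weakto \nabla u+\chi$ weakly in $L^p(\Omega\times Q)^{d\times d}$, and the functional $w\mapsto \ex{\int_Q \hat V(\omega,x,w)\,dx}$ is convex and strongly lower semicontinuous (Fatou plus growth), it is weakly lower semicontinuous, giving
\begin{equation*}
\liminf_{\varepsilon\to0}\mathcal{E}_\varepsilon(u_\varepsilon)\ge \ex{\int_Q \hat V(\omega,x,\nabla u+\chi)\,dx}=\ex{\int_Q V(\omega,x,\nabla^s u+\chi^s)\,dx}=\mathcal{E}_0(u,\chi),
\end{equation*}
which is exactly Proposition~\ref{P_Cont_1}(iii) applied to $\hat V$. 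For (iii), given $(u,\chi)$ I would construct the recovery sequence $u_\varepsilon=u_\varepsilon(u,\chi)$ from Remark~\ref{rem14}: it lies in $L^p(\Omega)\otimes W^{1,p}_0(Q)^d$, satisfies $u_\varepsilon\st u$ and $\nabla u_\varepsilon\st\nabla u+\chi$ strongly two-scale (using Proposition~\ref{prop2} if $Q$ is $C^1$, or the nonlinear construction of Lemma~\ref{Nonlinear_recovery} plus Attouch diagonalization for general open $Q$), and matches the boundary condition. Then Proposition~\ref{P_Cont_1}(ii) applied to $\hat V$ gives $\lim_\varepsilon \mathcal{E}_\varepsilon(u_\varepsilon)=\ex{\int_Q\hat V(\omega,x,\nabla u+\chi)\,dx}=\mathcal{E}_0(u,\chi)$.

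The routine parts are the measurability/growth bookkeeping needed to apply Proposition~\ref{P_Cont_1} to $\hat V$ rather than $V$ (checking symmetrization preserves the hypotheses) and the boundary-condition passage in (i). The main obstacle is the compactness step in (i): a uniform $L^p$ bound on the \emph{symmetrized} gradient does not immediately control the full gradient, so one genuinely needs an $L^p$-Korn inequality on $Q$, applied realization-wise and integrated over $\Omega$ — the $p\ne 2$ and the random-field aspects require a little care, but no new idea beyond the classical Korn--Poincar\'e inequality on a bounded domain with zero boundary data. Everything else is a direct assembly of Proposition~\ref{prop1}, Proposition~\ref{P_Cont_1}, and the recovery-sequence results.
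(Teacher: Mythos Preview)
Your proposal is correct and follows essentially the same route as the paper's own proof: part (i) via coercivity plus the Poincar\'e--Korn inequality and Proposition~\ref{prop1} with Remark~\ref{R_Two_1} for the boundary condition, part (ii) via Proposition~\ref{P_Cont_1}(iii), and part (iii) via the recovery sequence of Remark~\ref{rem14} together with Proposition~\ref{P_Cont_1}(ii). The only cosmetic difference is that you make the symmetrization explicit by introducing $\hat V(\omega,x,F)=V(\omega,x,F^s)$, whereas the paper applies Proposition~\ref{P_Cont_1} directly (implicitly using that $\nabla u_\varepsilon\wt\nabla u+\chi$ entails $\nabla^s u_\varepsilon\wt\nabla^s u+\chi^s$ since $\unf$ acts componentwise).
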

\textit{(For the proof see Section \ref{S_Proof_3}.)}
\begin{corollary}\label{C:thm1}
Assume the same assumptions as in Theorem \ref{thm1}.  Let $u\e\in L^p(\Omega)\otimes W^{1,p}_0(Q)^d$ be a minimizer of $\cE\e$. Then there exists a subsequence (not relabeled),  $u\in L^p_{\inv}(\Omega)\times W^{1,p}_0(Q)^d$, and $\chi\in L^p_{\pot}(\Omega)\otimes L^p(Q)^d$ such that $u\e \wt u \text{ in }\ltp^d$, $\nabla u\e \wt \nabla u+\chi  \text{ in }\ltp^{d\times d}$, and 
  \begin{equation*}
    \lim\limits_{\eps\to 0}\min\cE\e=    \lim\limits_{\eps\to 0}\cE\e(u\e)=\cE_0(u,\chi)=\min\cE_0.
  \end{equation*}
\end{corollary}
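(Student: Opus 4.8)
The plan is to run the standard ``fundamental theorem of $\Gamma$-convergence'' argument, using the three parts of Theorem~\ref{thm1} together with the minimality of $u\e$. First I would note that $0\in L^p(\Omega)\otimes\sob_0(Q)^d$, so by the upper bound in (A3)
\begin{equation*}
\min\cE\e=\cE\e(u\e)\leq\cE\e(0)=\ex{\int_Q V(\tau_{\frac{x}{\varepsilon}}\omega,x,0)\,dx}\leq C|Q|,
\end{equation*}
whence $\limsup_{\varepsilon\to0}\cE\e(u\e)<\infty$. Theorem~\ref{thm1}(i) then yields, after passing to a subsequence (not relabeled), a pair $(u,\chi)\in\big(L^p_{\inv}(\Omega)\otimes\sob_0(Q)^d\big)\times\big(L^p_{\pot}(\Omega)\otimes L^p(Q)^d\big)$ with $u\e\wt u$ in $\ltp^d$ and $\nabla u\e\wt\nabla u+\chi$ in $\ltp^{d\times d}$; applying Theorem~\ref{thm1}(ii) along this subsequence (now regarded as the full sequence) gives $\liminf_{\varepsilon\to0}\cE\e(u\e)\geq\cE_0(u,\chi)$.

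Next I would show $(u,\chi)$ minimizes $\cE_0$. Fix an arbitrary $(v,\eta)\in\big(L^p_{\inv}(\Omega)\otimes\sob_0(Q)^d\big)\times\big(L^p_{\pot}(\Omega)\otimes L^p(Q)^d\big)$ and use Theorem~\ref{thm1}(iii) to produce a recovery sequence $v\e\in L^p(\Omega)\otimes\sob_0(Q)^d$ with $\cE\e(v\e)\to\cE_0(v,\eta)$. Since $u\e$ minimizes $\cE\e$, we have $\cE\e(u\e)\leq\cE\e(v\e)$ for all $\varepsilon$, so together with the liminf inequality
\begin{equation*}
\cE_0(u,\chi)\leq\liminf_{\varepsilon\to0}\cE\e(u\e)\leq\liminf_{\varepsilon\to0}\cE\e(v\e)=\cE_0(v,\eta).
\end{equation*}
As $(v,\eta)$ was arbitrary, $(u,\chi)$ is a minimizer of $\cE_0$ and $\cE_0(u,\chi)=\min\cE_0$ (in particular the minimum is attained).

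Finally, specializing Theorem~\ref{thm1}(iii) to $(v,\eta)=(u,\chi)$ gives $\tilde u\e\in L^p(\Omega)\otimes\sob_0(Q)^d$ with $\cE\e(\tilde u\e)\to\cE_0(u,\chi)$, and minimality of $u\e$ yields $\cE\e(u\e)\leq\cE\e(\tilde u\e)$, hence
\begin{equation*}
\limsup_{\varepsilon\to0}\cE\e(u\e)\leq\cE_0(u,\chi)\leq\liminf_{\varepsilon\to0}\cE\e(u\e).
\end{equation*}
Thus $\cE\e(u\e)\to\cE_0(u,\chi)=\min\cE_0$, and since $\cE\e(u\e)=\min\cE\e$ this is exactly the claimed chain $\lim_{\varepsilon\to0}\min\cE\e=\lim_{\varepsilon\to0}\cE\e(u\e)=\cE_0(u,\chi)=\min\cE_0$. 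Since Theorem~\ref{thm1} does all the real work, there is no substantial obstacle; the only points to watch are the (trivial) a priori bound that makes the compactness statement applicable, and the bookkeeping that the two-scale limits of $u\e$ and $\nabla u\e$ are obtained only along a subsequence, which is why the statement is phrased ``up to a subsequence'' and why the liminf inequality is used along that subsequence. (A routine subsequence argument would, if desired, upgrade $\min\cE\e\to\min\cE_0$ to the full sequence.)
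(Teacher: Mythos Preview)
Your proof is correct and follows essentially the same standard $\Gamma$-convergence argument as the paper's: bound $\cE\e(u\e)$ via $\cE\e(0)$, extract a two-scale convergent subsequence via Theorem~\ref{thm1}(i), use the liminf inequality (ii), and a recovery sequence from (iii) to close the chain of inequalities. The only cosmetic difference is that the paper picks a minimizer $(u_0,\chi_0)$ of $\cE_0$ directly and writes one chain, whereas you first compare with an arbitrary competitor $(v,\eta)$ (thereby also establishing that the minimum of $\cE_0$ is attained) and then specialize; both routes are equivalent.
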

\textit{(For the proof see Section \ref{S_Proof_3}.)}
\begin{remark}
If $V(\omega, x,\cdot)$ is strictly convex the minimizers are unique and the convergence in the above corollary holds for the entire sequence.
\end{remark}
\begin{remark}
We might consider the perturbed energy functional $\mathcal I_{\varepsilon}(\cdot)=\cE_{\varepsilon}(\cdot)+\ex{l_{\varepsilon},\cdot}_{(L^p)^*,L^p}$ with $l_{\varepsilon} \overset{2}{\to} l$ in $L^{q}(\Omega\times Q)$. As in Corollary \ref{C:thm1}, minimizers of $\mathcal I_{\varepsilon}$ converge in the above two-scale sense (up to a subsequence) to minimizers of $(u,\chi)\mapsto \mathcal I_0(u,\chi):=\cE_{0}(u,\chi)+\ex{P_{\inv}l,u}_{(L^p)^*,L^p}$. 
\end{remark}
\smallskip

If we additionally assume that $\ex{\cdot}$ is ergodic, the limit functional reduces to a single-scale energy
\begin{equation*}
 \mathcal{E}_{\hom}:\sob_0(Q)^d \rightarrow \re{}, \quad
 \mathcal{E}_{\hom}(u)=\int_Q V_{\hom}(x,\nabla u(x))dx,
\end{equation*}
where the homogenized integrand $V_{\hom}$ is given for $x\in \R^d$ and $F\in \R^{d\times d}$ by 
\begin{align}\label{equation}
V_{\hom}(x,F)=\inf_{\chi\in L^p_{\pot}(\Omega)^d}\ex{V(\omega, x,F^s+\chi^s(\omega))}.
\end{align}
\begin{thm}[Ergodic case]\label{thm2}
Assume the same assumptions as in Theorem \ref{thm1}. Moreover, we assume that $\ex{\cdot}$ is ergodic.
\begin{itemize}
\item[(i)] Let $u\e \in L^p(\Omega)\otimes \sob_0(Q)^d$ be such that $\limsup_{\varepsilon\rightarrow 0}\mathcal{E}_{\varepsilon}(u\e)<\infty$. There exist $u \in  \sob_0(Q)^d $ and a subsequence (not relabeled) such that
\begin{gather}
u\e \wt u \text{ in }\ltp^d , \quad \ex{u\e}\rightarrow u \text{ strongly in } L^p(Q)^d, \nonumber \\ \ex{\nabla u\e} \weakto \nabla u \text{ weakly in } L^p(Q)^{d\times d}.
\end{gather}
Moreover,
\begin{align*}
\liminf_{\varepsilon\rightarrow 0}\mathcal{E}_{\varepsilon}(u\e)\geq \mathcal{E}_{\hom}(u).
\end{align*}
\item[(ii)] Let $u\in\sob_0(Q)^d$. There exists a sequence $u\e \in L^p(\Omega)\otimes \sob_0(Q)^d$ such that
\begin{align*}
u\e \st u  \text{ in } \ltp^d, \quad \ex{\nabla u\e} \to \nabla u \text{ strongly in }L^p(Q)^{d\times d}, 
\quad \lim_{\varepsilon\rightarrow 0}\mathcal{E}_{\varepsilon}(u\e)= \mathcal{E}_{\hom}(u).
\end{align*}
\end{itemize}
\end{thm}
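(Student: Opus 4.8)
The plan is to obtain the ergodic statement as a consequence of the abstract two‑scale homogenization result Theorem~\ref{thm1} combined with the ergodic refinements of Proposition~\ref{prop1}. Under ergodicity one has $L^p_{\inv}(\Omega)\simeq\re{}$ and $\pinv=\ex{\cdot}$, so the only genuinely new content is: (a) the two‑scale limit is a \emph{deterministic} Sobolev function and the expectation of the sequence converges in a single‑scale sense, and (b) the two‑scale energy $\cE_0$ reduces to $\cE_{\hom}$, the reduction being an infimum that must be (almost) realized for the recovery sequence.

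\textbf{Proof of (i).} Given $(u\e)\subset L^p(\Omega)\otimes\sob_0(Q)^d$ with $\limsup_{\eps\to0}\cE\e(u\e)<\infty$, first pass to a subsequence realizing $\ell:=\liminf_{\eps\to0}\cE\e(u\e)$, then, by Theorem~\ref{thm1}(i), to a further subsequence along which $u\e\wt u$ in $\ltp^d$ and $\nabla u\e\wt\nabla u+\chi$ in $\ltp^{d\times d}$, with $u\in L^p_{\inv}(\Omega)\otimes\sob_0(Q)^d$ and $\chi\in L^p_{\pot}(\Omega)\otimes L^p(Q)^d$. Since $\ex{\cdot}$ is ergodic, $u=\pinv u=\ex{u}\in\sob_0(Q)^d$ is deterministic, and Proposition~\ref{prop1} gives $\ex{u\e}\weakto u$ weakly in $\sob(Q)^d$; as $Q$ is bounded, the Rellich embedding $\sob_0(Q)\hookrightarrow\hookrightarrow L^p(Q)$ upgrades this to $\ex{u\e}\to u$ strongly in $L^p(Q)^d$, while weak $\sob$‑convergence directly yields $\ex{\nabla u\e}\weakto\nabla u$ weakly in $L^p(Q)^{d\times d}$. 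For the lower bound, Theorem~\ref{thm1}(ii), applied along the two‑scale convergent subsequence (on which the energies still converge to $\ell$), gives $\ell\geq\cE_0(u,\chi)$. Since $u$ is deterministic, Fubini's theorem yields $\cE_0(u,\chi)=\int_Q\ex{V(\omega,x,\nabla^s u(x)+\chi^s(\omega,x))}\,dx$, and because $\chi(\cdot,x)\in L^p_{\pot}(\Omega)^d$ for a.e.\ $x$ (closedness of the tensor space under slicing) each integrand is $\geq V_{\hom}(x,\nabla u(x))$ by \eqref{equation}; hence $\cE_0(u,\chi)\geq\cE_{\hom}(u)$ and $\liminf_{\eps\to0}\cE\e(u\e)\geq\cE_{\hom}(u)$.

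\textbf{Proof of (ii).} Fix $u\in\sob_0(Q)^d$ and $\delta>0$. Since $L^p(\Omega)$ is separable, choose a countable dense set $\{\psi_k\}\subset L^p_{\pot}(\Omega)^d$; by (A3) and dominated convergence $F\mapsto\ex{V(\omega,x,F^s+\psi^s(\omega))}$ is continuous, so $V_{\hom}(x,F)=\inf_k W_k(x,F)$ with $W_k(x,F):=\ex{V(\omega,x,F^s+\psi_k^s(\omega))}$ a Carath\'eodory integrand. Letting $k_\delta(x)$ be the least index $k$ with $W_k(x,\nabla u(x))\leq V_{\hom}(x,\nabla u(x))+\delta$ and $\|\psi_k\|_{L^p(\Omega)}$ bounded by a fixed $L^p(Q)$‑function determined by (A3) (such $k$ exists for a.e.\ $x$ by the growth bounds), this defines a measurable function of $x$, and $\chi^\delta(\omega,x):=\psi_{k_\delta(x)}(\omega)$ is a strongly measurable $L^p_{\pot}(\Omega)^d$‑valued map lying in $L^p_{\pot}(\Omega)\otimes L^p(Q)^d$ (via the identification with $L^p(Q;L^p_{\pot}(\Omega)^d)$) with $\cE_0(u,\chi^\delta)\leq\cE_{\hom}(u)+\delta|Q|$. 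Applying Theorem~\ref{thm1}(iii) to the pair $(u,\chi^\delta)$ produces $u^\delta\e\in L^p(\Omega)\otimes\sob_0(Q)^d$ with $u^\delta\e\st u$ in $\ltp^d$, $\nabla u^\delta\e\st\nabla u+\chi^\delta$ in $\ltp^{d\times d}$, and $\cE\e(u^\delta\e)\to\cE_0(u,\chi^\delta)$. Strong two‑scale convergence means $\unf\nabla u^\delta\e\to\nabla u+\chi^\delta$ strongly in $\ltp^{d\times d}$; combining this with $\pinv\circ\unf=\pinv$ (Step~1 of the proof of Lemma~\ref{lem7}), $\pinv=\ex{\cdot}$, continuity of $\pinv$, and $\ex{\chi^\delta(\cdot,x)}=0$ a.e.\ (elements of $L^p_{\pot}(\Omega)$ are mean‑free) gives $\ex{\nabla u^\delta\e}=\pinv\unf\nabla u^\delta\e\to\pinv(\nabla u+\chi^\delta)=\nabla u$ strongly in $L^p(Q)^{d\times d}$. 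By part (i) applied to the bounded‑energy sequence $(u^\delta\e)$ we have $\cE_{\hom}(u)\leq\liminf_{\eps\to0}\cE\e(u^\delta\e)=\cE_0(u,\chi^\delta)\leq\cE_{\hom}(u)+\delta|Q|$. Letting $\delta\to0$ and applying Attouch's diagonalization yields $u\e:=u^{\delta(\eps)}\e$ with $u\e\st u$ in $\ltp^d$, $\ex{\nabla u\e}\to\nabla u$ strongly in $L^p(Q)^{d\times d}$, and $\cE\e(u\e)\to\cE_{\hom}(u)$, which is the assertion.

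\textbf{Main obstacle.} The delicate point is the construction of the corrector $\chi^\delta$ in part (ii): because $V$ depends on the macroscopic variable $x$, a corrector almost attaining the cell formula \eqref{equation} must be selected measurably in $x$ and approximated inside the tensor product $L^p_{\pot}(\Omega)\otimes L^p(Q)^d$. This rests on separability of $L^p_{\pot}(\Omega)$, the normal‑integrand structure of $V_{\hom}$ (a countable infimum of Carath\'eodory integrands, ensuring measurability of $x\mapsto V_{\hom}(x,\nabla u(x))$ and well‑posedness of $\cE_{\hom}$), and a truncation controlled by the growth condition (A3) so that $\|\chi^\delta(\cdot,x)\|_{L^p(\Omega)}$ is $p$‑integrable on $Q$ (with the usual care for the symmetric‑gradient setting). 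The remaining ingredients -- Rellich compactness (using boundedness of $Q$), the identities $\pinv=\ex{\cdot}$ and $\pinv\circ\unf=\pinv$ under ergodicity, mean‑freeness of potential fields, and the final diagonalization -- are routine.
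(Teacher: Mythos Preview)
Your proof of part~(i) is essentially the paper's argument with more detail supplied; this is fine.

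For part~(ii) your route is genuinely different. The paper constructs an \emph{exact} corrector $\chi\in L^p_{\pot}(\Omega)\otimes L^p(Q)^d$ with $\cE_0(u,\chi)=\cE_{\hom}(u)$ and then invokes Theorem~\ref{thm1}(iii) once. To do this it first treats constant gradients $F$ on measurable pieces $Q'\subset Q$ and applies Castaing's measurable selection lemma (Lemma~\ref{castaing}) to the multifunction $x\mapsto\{\chi:\ex{V(\omega,x,F^s+\chi^s)}\leq V_{\hom}(x,F)\}$; then it approximates $\nabla u$ by piecewise constants, uses the stochastic Korn inequality (Lemma~\ref{lem8}) and the growth bound to obtain uniform $L^p$-boundedness of the resulting correctors, and passes to a weak limit (exploiting weak lower semicontinuity of $\cE_0$ and continuity of $V_{\hom}(x,\cdot)$) to get the exact $\chi$. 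Your approach instead selects an approximately optimal corrector $\chi^\delta$ from a countable dense subset of $L^p_{\pot}(\Omega)^d$ via a least-index rule, invokes Theorem~\ref{thm1}(iii) for each $\delta$, and diagonalizes. This is correct and more elementary in that it bypasses Castaing's lemma entirely; the a~priori bound on $\|\psi_{k_\delta(x)}\|_{L^p(\Omega)}$ that you need (to ensure $\chi^\delta\in L^p(Q;L^p_{\pot}(\Omega)^d)$) indeed follows automatically from the coercivity in (A3) combined with Lemma~\ref{lem8}, so the side condition in your least-index selection is redundant but harmless. The trade-off is that the paper's argument delivers the sharper intermediate statement that the infimum in $\cE_{\hom}$ is \emph{attained} at the level of $\cE_0$, which is reused verbatim in the proof of Proposition~\ref{prop3}; your argument proves Theorem~\ref{thm2} as stated but would not directly supply that exact corrector.
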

\textit{(For the proof see Section \ref{S_Proof_3}.)}

We consider problems with an additional strong convexity assumption and consequently obtain that the whole sequence of unique minimizers of $\cE_{\varepsilon}$ converges strongly in the usual strong topology of $L^p(\Omega\times Q)$ to the unique minimizer of $\cE_{\hom}$:
\begin{prop}\label{prop3} Assume the same assumptions as in Theorem \ref{thm2}. Assume additionally $(A4)$. $\cE_{\varepsilon}$ and $\cE_{\hom}$ admit unique minimizers $u\e \in L^p(\Omega)\otimes W^{1,p}_0(Q)^d$ and $u\in W^{1,p}_0(Q)$, respectively. We have
\begin{equation*}
u\e \to u \text{ in }\ltp^d, \quad \ex{\nabla u_{\varepsilon}}{\weakto} \nabla u \text{ weakly in }L^p(Q)^{d\times d}.
\end{equation*}
\end{prop}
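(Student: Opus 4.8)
The plan is to upgrade the $\Gamma$-convergence/compactness statements of Theorem~\ref{thm2} and Corollary~\ref{C:thm1} to strong convergence by exploiting the uniform convexity $(A4)$, with Korn's first inequality on $\sob_0(Q)^d$ serving as the bridge from symmetrized gradients back to the functions themselves. First I would dispatch existence and uniqueness. Existence of minimizers of $\cE_\varepsilon$ and $\cE_{\hom}$ is the direct method: $(A3)$ together with Poincar\'e's and Korn's inequalities gives coercivity on the reflexive spaces $L^p(\Omega)\otimes\sob_0(Q)^d$ and $\sob_0(Q)^d$, while convexity plus lower semicontinuity give weak lower semicontinuity. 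For uniqueness, if $u_\varepsilon^1,u_\varepsilon^2$ both minimize $\cE_\varepsilon$, then applying $(A4)$ with $t=\tfrac12$ to $\nabla^s u_\varepsilon^1(\omega,x),\nabla^s u_\varepsilon^2(\omega,x)$ at the shifted point $\tau_{x/\varepsilon}\omega$ and integrating over $\Omega\times Q$ gives
\[
\cE_\varepsilon\Big(\tfrac{u_\varepsilon^1+u_\varepsilon^2}{2}\Big)\le\tfrac12\cE_\varepsilon(u_\varepsilon^1)+\tfrac12\cE_\varepsilon(u_\varepsilon^2)-\tfrac{C}{4}\,\|\nabla^s u_\varepsilon^1-\nabla^s u_\varepsilon^2\|_{L^p(\Omega\times Q)^{d\times d}}^p ,
\]
whose left-hand side is $\ge\min\cE_\varepsilon$; hence $\nabla^s u_\varepsilon^1=\nabla^s u_\varepsilon^2$, and $u_\varepsilon^1=u_\varepsilon^2$ follows by Korn. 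The identical computation, using that every element of $L^p_{\pot}(\Omega)$ has vanishing expectation (so that $\ex{F_1^s-F_2^s+\chi_1^s-\chi_2^s}=F_1^s-F_2^s$ for $\chi_i\in L^p_{\pot}(\Omega)$), shows that $V_{\hom}(x,\cdot)$ is uniformly convex in its symmetrized part, which yields existence and uniqueness of the minimizer $u\in\sob_0(Q)^d$ of $\cE_{\hom}$.

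Next I would identify the limit. Since $\cE_\varepsilon(u_\varepsilon)\le\cE_\varepsilon(0)\le C$ uniformly in $\varepsilon$, Theorem~\ref{thm2}(i) yields a subsequence (not relabeled) along which $u_\varepsilon\wt u_\ast$, $\ex{u_\varepsilon}\to u_\ast$ strongly in $L^p(Q)^d$, $\ex{\nabla u_\varepsilon}\weakto\nabla u_\ast$ weakly in $L^p(Q)^{d\times d}$, and $\liminf_\varepsilon\cE_\varepsilon(u_\varepsilon)\ge\cE_{\hom}(u_\ast)$. For an arbitrary $v\in\sob_0(Q)^d$, choosing the recovery sequence $v_\varepsilon$ from Theorem~\ref{thm2}(ii) and using minimality $\cE_\varepsilon(u_\varepsilon)\le\cE_\varepsilon(v_\varepsilon)$, I obtain $\cE_{\hom}(u_\ast)\le\cE_{\hom}(v)$, so $u_\ast$ minimizes $\cE_{\hom}$ and hence $u_\ast=u$ by uniqueness; the three convergences therefore hold for the whole sequence, and in addition $\cE_\varepsilon(u_\varepsilon)\to\cE_{\hom}(u)$. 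Passing to a further subsequence, Corollary~\ref{C:thm1} also provides $\chi$ with $\nabla u_\varepsilon\wt\nabla u+\chi$, $(u,\chi)$ a minimizer of $\cE_0$, and $\cE_\varepsilon(u_\varepsilon)\to\cE_0(u,\chi)$.

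The core step is the strong convergence. Using Remark~\ref{rem14} (the linear operator $\mathcal{G}_\varepsilon$ when $Q$ is $C^1$, the nonlinear construction with Attouch diagonalization otherwise) I form the recovery sequence $w_\varepsilon:=u+\mathcal{G}_\varepsilon\chi\in L^p(\Omega)\otimes\sob_0(Q)^d$, so that $w_\varepsilon\st u$ and $\nabla w_\varepsilon\st\nabla u+\chi$, hence also $\nabla^s w_\varepsilon\st\nabla^s u+\chi^s$; by Proposition~\ref{P_Cont_1}(ii) this gives $\cE_\varepsilon(w_\varepsilon)\to\ex{\int_Q V(\omega,x,\nabla^s u+\chi^s)\,dx}=\cE_0(u,\chi)=\lim_\varepsilon\cE_\varepsilon(u_\varepsilon)$, so $\cE_\varepsilon(w_\varepsilon)-\cE_\varepsilon(u_\varepsilon)\to0$. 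Moreover, since $u$ is deterministic (ergodicity, cf.\ Theorem~\ref{thm2}(i) and Proposition~\ref{prop1}) we have $\unf u=u$, so $\ns{w_\varepsilon-u}=\ns{\mathcal{G}_\varepsilon\chi}=\|\unf\mathcal{G}_\varepsilon\chi\|_{L^p(\Omega\times Q)^{d\times d}}\to0$ (the last limit because $\mathcal{G}_\varepsilon\chi\st0$), i.e.\ $w_\varepsilon\to u$ strongly in $L^p(\Omega\times Q)^d$. Applying $(A4)$ with $t=\tfrac12$ to $\nabla^s u_\varepsilon(\omega,x),\nabla^s w_\varepsilon(\omega,x)$ at $\tau_{x/\varepsilon}\omega$, integrating over $\Omega\times Q$, and using that $\tfrac{u_\varepsilon+w_\varepsilon}{2}$ is admissible so that the resulting left-hand side is $\ge\min\cE_\varepsilon=\cE_\varepsilon(u_\varepsilon)$, I arrive at
\[
\tfrac{C}{4}\,\|\nabla^s u_\varepsilon-\nabla^s w_\varepsilon\|_{L^p(\Omega\times Q)^{d\times d}}^p\le\tfrac12\big(\cE_\varepsilon(w_\varepsilon)-\cE_\varepsilon(u_\varepsilon)\big)\longrightarrow 0 .
\]
Thus $\nabla^s u_\varepsilon-\nabla^s w_\varepsilon\to0$ in $L^p(\Omega\times Q)^{d\times d}$; Korn's inequality applied for $P$-a.e.\ $\omega$ to $u_\varepsilon(\omega,\cdot)-w_\varepsilon(\omega,\cdot)\in\sob_0(Q)^d$ and then integrated over $\Omega$ gives $u_\varepsilon-w_\varepsilon\to0$ in $L^p(\Omega)\otimes\sob_0(Q)^d$, in particular in $L^p(\Omega\times Q)^d$. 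Together with $w_\varepsilon\to u$ this yields $u_\varepsilon\to u$ strongly in $L^p(\Omega\times Q)^d$ along the subsequence; since the limit is subsequence-independent, the full sequence converges, while $\ex{\nabla u_\varepsilon}\weakto\nabla u$ was already obtained.

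I expect the main obstacle to be precisely this final passage from strong convergence of symmetrized gradients to strong convergence of the functions, which rests on Korn's first inequality on $\sob_0(Q)^d$ (valid for $1<p<\infty$) applied $\omega$-wise; a subtler point is that $w_\varepsilon$ must be built from exactly the corrector $\chi$ that the actual minimizing sequence $u_\varepsilon$ selects (its symmetric part being pinned down by uniform convexity), so that $\cE_\varepsilon(w_\varepsilon)-\cE_\varepsilon(u_\varepsilon)\to0$ — taking e.g.\ $\chi=0$ would leave the nonzero gap $\int_Q\brac{\ex{V(\omega,x,\nabla u)}-V_{\hom}(x,\nabla u)}\,dx$ — together with the bookkeeping needed to promote subsequential to full-sequence convergence via uniqueness of minimizers.
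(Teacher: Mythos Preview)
Your proof is correct and follows essentially the same route as the paper: build a strong recovery sequence $w_\varepsilon$ for the pair $(u,\chi)$, use uniform convexity $(A4)$ together with minimality of $u_\varepsilon$ to control $\|\nabla^s u_\varepsilon-\nabla^s w_\varepsilon\|_{L^p}$, and convert to $\|u_\varepsilon-w_\varepsilon\|_{L^p}$ via Poincar\'e--Korn. The only cosmetic difference is that you bound the uniform-convexity remainder by $\tfrac12(\cE_\varepsilon(w_\varepsilon)-\cE_\varepsilon(u_\varepsilon))$ and use that both energies converge to $\cE_0(u,\chi)$, whereas the paper bounds it by $\cE_\varepsilon(v_\varepsilon)-\cE_\varepsilon(\tfrac12(u_\varepsilon+v_\varepsilon))$ and closes via the liminf inequality for the midpoint; both are equivalent rearrangements of the same estimate.
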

\textit{(For the proof see Section \ref{S_Proof_3}.)}
\subsection{Allen-Cahn type gradient flows}\label{Section_3.2}
In this section we provide a homogenization result for an evolutionary gradient system. Let $Q\subset \R^d$ be open and bounded. The system is defined on a state space $\sB := L^2(\Omega \times Q)$ and with the help of two functionals - a dissipation potential $\mathcal{R}_{\varepsilon}$ and an energy functional $\cE_{\varepsilon}$.
The dissipation potential $\mathcal{R}_{\varepsilon}: \sB\to [0,\infty)$ is given by 
\begin{equation*}
\mathcal{R}_{\varepsilon}(v)=\frac{1}{2}\ex{\int_{Q}r(\tau_{\frac{x}{\varepsilon}}\omega)|v(\omega,x)|^2 dx},
\end{equation*}
and the energy functional $\cE_{\varepsilon}:\sB\to \R\cup \cb{\infty}$ is defined as follows: For $u \in L^2(\Omega)\otimes H^1(Q)\cap L^p(\Omega \times Q)=:dom(\mathcal{E}\e)$ (where $p>2$ is fixed throughout this section),
\begin{equation*}
\cE_{\varepsilon}(u)=\ex{\int_{Q}A(\tau_{\frac{x}{\varepsilon}}\omega)\nabla u(\omega,x)\cdot \nabla u(\omega,x)+f(\tau_{\frac{x}{\varepsilon}}\omega,u(\omega,x))dx},
\end{equation*}
and $\cE_{\varepsilon} = \infty$ otherwise. Our assumptions on $r:\Omega \to \R_{+}$, $A: \Omega \to \mathbb{R}^{d\times d}_{sym}$ and $f:\Omega\times \R \to \R$ are given as follows:
\begin{itemize}
\item[(B1)] $r\in L^{\infty}(\Omega)$, $A\in L^{\infty}(\Omega)^{d\times d}$ and there exists $C>0$ such that for $P$-a.e. $\omega\in \Omega$ it holds that $\frac{1}{C}\leq r(\omega)\leq C $ and $A(\omega)F\cdot F \geq \frac{1}{C} |F|^2$ for all $F\in \R^d$.
\item[(B2)] $f(\cdot,y)$ is measurable for all $y\in \R$ and $f(\omega,\cdot)$ is continuous for $P$-a.e. $\omega\in \Omega$. There exists $\lambda \in \R$ such that for $P$-a.e. $\omega \in \Omega$ 
\begin{align*}
& f(\omega, \cdot) \text{ is }\lambda\text{-convex, i.e. } \quad y \mapsto f(\omega, y)-\frac{\lambda}{2} |y|^2 \text{ is convex},\\
&\frac{1}{C}|y|^p - C \leq f(\omega,y)\leq C(|y|^p+1) \quad \text{for all }y\in \mathbb{R}.
\end{align*}
\end{itemize}
We remark that the above assumptions imply that $u\mapsto \cE_{\varepsilon}(u)-\Lambda \mathcal{R}_{\varepsilon}(u)$ is convex, where $\Lambda:=\frac{\lambda}{C}$. 
Let $T>0$ and we consider the following differential inclusion
\begin{equation}\label{diff_incl}
0\in D\mathcal{R}_{\varepsilon}(\dot{u}(t))+\partial_{F}\mathcal{E}_{\varepsilon}(u(t)) \quad \text{for a.e. } t\in (0,T), \quad u(0)=u_{0,\varepsilon},
\end{equation}
where $\partial_F \mathcal{E}\e: \sB \to 2^{\sB^*}$ is the Frech{\'e}t subdifferential of $\mathcal{E}\e$ given by 
\begin{equation*}
\partial_F \mathcal{E}\e(u)=\cb{\xi \in \sB^{*}: \liminf_{w\to u}\frac{\mathcal{E}\e(w)-\mathcal{E}\e(u)-\ex{\xi,w-u}_{\sB,\sB^*}}{\|w-u\|_{\sB}}\geq 0}.
\end{equation*}
Since $\cE\e(\cdot)-\Lambda\mathcal{R}\e(\cdot)$ is convex, using standard convex analysis arguments (see Proposition 1.2 and Corollary 1.12.2 in \cite{kruger2003frechet}), it follows that  
\begin{equation*}
\partial_F \mathcal{E}\e(u)= \cb{\xi \in \sB^*: \cE\e(u)\leq \cE\e(w)+\ex{\xi,u-w}_{\sB^*,\sB}- \Lambda \mathcal{R}\e(u-w) \text{ for all }w\in \sB}.
\end{equation*}
We refer to \cite{mielke2016evolutionary} for various other formulations of the differential inclusion (\ref{diff_incl}). If we assume $(B1)-(B2)$ and $u_{0,\varepsilon}\in dom(\cE\e)$, then (\ref{diff_incl}) admits a unique solution $u\e \in H^1(0,T;\sB)$ (see e.g., \cite[Theorem 3.2]{clement2009introduction}).
\medskip

As $\varepsilon\to 0$, we derive a limit gradient system which is described in the following. The state space for the effective model is $\sB_0:=L^2_{\inv}(\Omega)\otimes L^2(Q)$. The effective dissipation potential $\mathcal{R}_{\mathsf{hom}}: \sB_0\to [0,\infty)$ is given by
\begin{equation*}
\mathcal{R}_{\mathsf{hom}}(v)=\ex{\int_{Q}r(\omega) |v(\omega,x)|^2 dx}.
\end{equation*}
The energy functional $\cE_{\mathsf{hom}}: \sB_{0} \to \R\cup \cb{\infty}$ is defined as 
\begin{align}\label{equation_442}
\begin{split}
 & \cE_{\mathsf{hom}}(u)  = \\ &\inf_{\chi \in L^2_{pot}(\Omega)\otimes L^2(Q)}\ex{ \int_{Q}  A(\omega)\brac{\nabla u(\omega,x)+\chi(\omega,x)}\cdot\brac{\nabla u(\omega,x)+\chi(\omega,x)}+   f(\omega,u(\omega,x))dx} 
\end{split}
\end{align}
for $u \in L^2_{\inv}(\Omega)\otimes H^1(Q)\cap L^p_{\mathsf{inv}}(\Omega)\otimes L^p(Q)=: dom(\cE_{\mathsf{hom}})$ and $\cE_{\mathsf{hom}}=\infty$ otherwise. We remark that $u\mapsto \cE_{\mathsf{hom}}(u)-\Lambda \mathcal{R}_{\mathsf{hom}}(u)$ is convex. The limit differential inclusion is 
\begin{equation}\label{diff:incl:2}
0\in D\mathcal{R}_{\mathsf{hom}}(\dot{u}(t))+\partial_{F}\mathcal{E}_{\mathsf{hom}}(u(t)) \quad \text{for a.e. } t\in (0,T), \quad u(0)=u_{0},
\end{equation}
where $\partial_{F}\cE_{\mathsf{hom}}: \sB_0 \to 2^{\sB_0^*}$ is the Frech{\'e}t subdifferential of $\cE_{\mathsf{hom}}$ defined analogously as $\partial_{F}\cE\e$.
If $(B1)-(B2)$ hold and for initial data $u_0\in dom(\cE_{\mathsf{hom}})$, (\ref{diff:incl:2}) has a unique solution $u\in H^1(0,T;\sB_0)$ (see e.g. \cite[Theorem 3.2]{clement2009introduction}).

The following homogenization result is based on a strategy related to a general method for evolutionary $\Gamma$-convergence of abstract gradient systems presented in \cite[Theorem 3.2]{mielke2014deriving}. In our particular case, the latter strategy does not apply due to the lack of a compactness property used to treat the non-convexity of the energy functional. In our model a priori bounds do not imply compactness: namely $L^2(\Omega)\otimes H^1(Q)$ is not compactly embedded into $\sB= L^2(\Omega\times Q)$. In contrast, in deterministic homogenization of similar problems (e.g. \cite{liero2015homogenization}) the compact Sobolev embedding $H^1(Q){\subset} L^p(Q)$ with $p<2^*$ is critically used. In the stochastic case, we only have $L^2(\Omega)\otimes H^1(Q)\subset L^2(\Omega)\otimes L^p(Q)$ continuously. We remedy this issue by reducing problem (\ref{diff_incl}) to an equivalent evolutionary variational inequality with a modified (convex) energy functional which allows us to pass to the limit $\varepsilon\to 0$ using merely weak convergence. As an additional merit of this procedure, in our results the growth assumptions on the integrand $f$ are independent of the Sobolev exponents. 
\begin{thm}[Evolutionary $\Gamma$-convergence]\label{s3_thm_5} Let $p>2$ and $Q\subset \R^d$ be open and bounded. Assume $(B1)-(B2)$, and consider $u_0\in dom(\cE_{\mathsf{hom}})$, $u_{0,\varepsilon} \in dom(\cE\e)$ such that
\begin{equation*}
u_{0,\varepsilon} \to u_0 \text{ strongly in }\sB, \quad \cE_{\varepsilon}(u_{0,\varepsilon}) \to \cE_{\mathsf{hom}}(u_0) \quad \text{(well-prepared initial data).}
\end{equation*}
Then $u\e\in H^1(0,T;\sB)$, the unique solution to (\ref{diff_incl}), satisfies: For all $t\in [0,T]$
\begin{align*}
& u\e(t) \to u(t) \text{ in }\sB, \quad P_{\inv}\nabla u_{\varepsilon}(t)\weakto \nabla u (t) \text{ weakly in }\sB^d,
\end{align*}
where $u\in H^1(0,T;\sB_0)$ is the unique solution to (\ref{diff:incl:2}). Moreover, it holds $\dot{u}\e \to \dot{u}$ strongly in $L^2(0,T; \sB)$ and for any $t\in [0,T]$
\begin{equation*}
\cE_{\varepsilon}(u\e(t))\to \cE_{\mathsf{hom}}(u(t)).
\end{equation*}
\end{thm}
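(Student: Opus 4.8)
The plan is to rephrase the differential inclusion (\ref{diff_incl}) as an \emph{evolutionary variational inequality} (EVI), to pass to the limit $\varepsilon\to0$ in that inequality, and to identify the limit with the solution of (\ref{diff:incl:2}); strong convergence and convergence of the energies are then recovered a posteriori from the energy--dissipation balance. Write $(v,w)_{\mathcal R_\varepsilon}:=\ex{\int_Q r(\tau_{\frac{x}{\varepsilon}}\omega)vw\,dx}$, so that $\mathcal R_\varepsilon(v)=\tfrac12\|v\|_{\mathcal R_\varepsilon}^2$ and $D\mathcal R_\varepsilon(\dot u_\varepsilon)$ is the Riesz representative of $\dot u_\varepsilon$ in this inner product. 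Using the subdifferential description of $\partial_F\cE_\varepsilon$ recalled above and $\tfrac{d}{dt}\mathcal R_\varepsilon(u_\varepsilon(t)-w)=(\dot u_\varepsilon(t),u_\varepsilon(t)-w)_{\mathcal R_\varepsilon}$, the inclusion (\ref{diff_incl}) is equivalent to the EVI
\begin{equation*}
\tfrac{d}{dt}\mathcal R_\varepsilon(u_\varepsilon(t)-w)+\Lambda\mathcal R_\varepsilon(u_\varepsilon(t)-w)+\cE_\varepsilon(u_\varepsilon(t))\leq\cE_\varepsilon(w)\qquad\text{for all }w\in\sB,\ \text{a.e. }t,
\end{equation*}
together with the energy--dissipation identity $\cE_\varepsilon(u_\varepsilon(t))+\int_0^t\|\dot u_\varepsilon\|_{\mathcal R_\varepsilon}^2\,ds=\cE_\varepsilon(u_{0,\varepsilon})$. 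Subtracting $\Lambda\mathcal R_\varepsilon$ this is an EVI for the \emph{convex} modified energy $\tilde\cE_\varepsilon:=\cE_\varepsilon-\Lambda\mathcal R_\varepsilon$, the $\Lambda$-convexity defect having been absorbed into a continuous lower-order term; this is exactly the point of the reduction, since $\liminf$-inequalities for the convex $\tilde\cE_\varepsilon$ will require only weak two-scale convergence. The limit problem (\ref{diff:incl:2}) admits the analogous EVI characterisation in $(\sB_0,\cE_{\mathsf{hom}},\mathcal R_{\mathsf{hom}})$.

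First I would derive a priori estimates and two-scale compactness. From the energy--dissipation identity, the monotone decrease of $\cE_\varepsilon$ along the flow, and $\cE_\varepsilon(u_{0,\varepsilon})\to\cE_{\mathsf{hom}}(u_0)<\infty$ one gets $\sup_{\varepsilon}\sup_{t}\cE_\varepsilon(u_\varepsilon(t))<\infty$; by the coercivity in (B1)--(B2) this bounds $\nabla u_\varepsilon$ in $L^\infty(0,T;L^2(\Omega\times Q)^d)$, $u_\varepsilon$ in $L^\infty(0,T;L^p(\Omega\times Q))$, and $\dot u_\varepsilon$ in $L^2(0,T;\sB)$. Since $\unf$ is an isometry commuting with $\partial_t$, the sequences $\unf u_\varepsilon$, $\unf\nabla u_\varepsilon$ and $\unf\dot u_\varepsilon=\partial_t\unf u_\varepsilon$ are bounded in the corresponding Bochner spaces. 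Applying Proposition~\ref{prop1} (for a.e.\ $t$, together with a measurable selection in $t$) and using the equi-$\tfrac12$-Hölder continuity of $u_\varepsilon$ in $\sB$ (from the $\dot u_\varepsilon$ bound) plus Arzel\`a--Ascoli in $\sB$ with the weak topology, I extract a subsequence and limits $u\in H^1(0,T;\sB_0)$ with $u(t)\in L^2_{\inv}(\Omega)\otimes H^1(Q)\cap L^p_{\inv}(\Omega)\otimes L^p(Q)$ and $\chi\in L^\infty(0,T;L^p_{\pot}(\Omega)\otimes L^p(Q)^d)$ such that $\unf u_\varepsilon(t)\weakto u(t)$ in $\sB$ for every $t$, $\unf\nabla u_\varepsilon\weakto\nabla u+\chi$ and $\unf\dot u_\varepsilon\weakto\dot u$; in particular $\pinv\nabla u_\varepsilon(t)\weakto\nabla u(t)$ weakly in $\sB^d$.

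Next I would pass to the limit in the EVI and identify $u$. Fix $w\in dom(\cE_{\mathsf{hom}})\subset\sB_0$ and, by Proposition~\ref{P_Cont_1} and Remark~\ref{rem14}, a recovery sequence $w_\varepsilon$ with $w_\varepsilon\st w$ and $\cE_\varepsilon(w_\varepsilon)\to\cE_{\mathsf{hom}}(w)$. Integrating the EVI over $(s,t)$ and letting $\varepsilon\to0$: by the transformation formula (Proposition~\ref{P_Cont_1}(i)), $\mathcal R_\varepsilon(v)=\tfrac12\ex{\int_Q r(\omega)|\unf v|^2\,dx}$, so the kinetic terms pass to the limit by weak lower semicontinuity of the weighted $L^2$-norm, observing that at $s=0$ the strong convergence $u_{0,\varepsilon}\to u_0$ to the \emph{invariant} function $u_0$ combined with the isometry of $\unf$ forces $\unf u_{0,\varepsilon}\to u_0$ strongly; the term $\int_s^t\cE_\varepsilon(u_\varepsilon)$ is handled through its convex part, $\liminf_\varepsilon\int_s^t\tilde\cE_\varepsilon(u_\varepsilon)\geq\int_s^t\tilde\cE_{\mathsf{hom}}(u)$ by Proposition~\ref{prop1}, Proposition~\ref{P_Cont_1}(iii) and Fatou, while the continuous quadratic remainder $\Lambda\mathcal R_\varepsilon(u_\varepsilon)$ is dealt with once convergence of $\|u_\varepsilon(t)\|_{\sB}$ is available (see below); finally $\int_s^t\cE_\varepsilon(w_\varepsilon)\to\int_s^t\cE_{\mathsf{hom}}(w)$. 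Hence $u$ is an EVI solution of $(\sB_0,\cE_{\mathsf{hom}},\mathcal R_{\mathsf{hom}})$ with datum $u_0$; by uniqueness of such solutions (well-posedness of (\ref{diff:incl:2})) $u$ is the solution of (\ref{diff:incl:2}) and, the limit being independent of the subsequence, the whole sequence converges.

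Finally I would upgrade to strong convergence and convergence of the energies. By the limit energy--dissipation identity $\cE_{\mathsf{hom}}(u(t))+\int_0^t\|\dot u\|_{\mathcal R_{\mathsf{hom}}}^2=\cE_{\mathsf{hom}}(u_0)$ and the $\varepsilon$-identity together with $\cE_\varepsilon(u_{0,\varepsilon})\to\cE_{\mathsf{hom}}(u_0)$, one has $\cE_\varepsilon(u_\varepsilon(t))+\int_0^t\|\dot u_\varepsilon\|_{\mathcal R_\varepsilon}^2\to\cE_{\mathsf{hom}}(u(t))+\int_0^t\|\dot u\|_{\mathcal R_{\mathsf{hom}}}^2$; since $\liminf_\varepsilon\int_0^t\|\dot u_\varepsilon\|_{\mathcal R_\varepsilon}^2\geq\int_0^t\|\dot u\|_{\mathcal R_{\mathsf{hom}}}^2$ and $\liminf_\varepsilon\cE_\varepsilon(u_\varepsilon(t))\geq\cE_{\mathsf{hom}}(u(t))$ (the $\liminf$ of the convex $A$-contribution via Proposition~\ref{prop1}/\ref{P_Cont_1}(iii), and the $\liminf$ of the $\lambda$-convex $f$-contribution via the subgradient inequality $f(\omega,y)\geq f(\omega,u)+\partial_yf(\omega,u)(y-u)+\tfrac{\lambda}{2}|y-u|^2$ tested at $u(t)$), both inequalities must be equalities: $\dot u_\varepsilon\to\dot u$ strongly in $L^2(0,T;\sB)$, $\cE_\varepsilon(u_\varepsilon(t))\to\cE_{\mathsf{hom}}(u(t))$, and the $A$- and $f$-parts converge separately; in particular $\|u_\varepsilon(t)\|_{\sB}=\|\unf u_\varepsilon(t)\|_{\sB}\to\|u(t)\|_{\sB}$, so Lemma~\ref{lemma_basics}(iii) gives $\unf u_\varepsilon(t)\st u(t)$, i.e.\ $u_\varepsilon(t)\to u(t)$ in $\sB$ (as $\unf u(t)=u(t)$ since $u(t)\in\sB_0$). \textbf{The main obstacle} is the lack of compactness: $L^2(\Omega)\otimes H^1(Q)$ is not compactly embedded in $\sB$, so the $\lambda$-convex term $f(\tau_{x/\varepsilon}\omega,u_\varepsilon)$ cannot be passed to the limit by the usual route. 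The device is to exploit the EVI for the \emph{convex} $\tilde\cE_\varepsilon$ (for which the $\liminf$ needs only weak two-scale convergence) and to recover the missing convergence of $\|u_\varepsilon(t)\|_{\sB}$ — hence strong convergence and the limit of $\Lambda\mathcal R_\varepsilon(u_\varepsilon)$ — from the energy--dissipation balance; making the last two steps interlock without circularity, in particular turning the $\liminf$-chain into equalities when $\lambda<0$ and treating $t$-dependent test functions in the EVI, is the delicate point, while a secondary technicality is the passage from the pointwise-in-$t$ compactness of Proposition~\ref{prop1} to a time-measurable (indeed $H^1$-in-$t$) limit.
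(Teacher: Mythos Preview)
Your outline captures the overall architecture of the paper's argument (a priori bounds, reduction to a convex problem, passage to the limit, upgrade via the energy--dissipation balance), but there is a genuine gap precisely at the ``delicate point'' you yourself flag: the circularity when $\lambda<0$.

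In your EVI, after integrating on $(0,t)$, the left-hand side contains both $\int_0^t\Lambda\,\mathcal R_\varepsilon(u_\varepsilon-w_\varepsilon)\,ds$ and the nonconvex part of $\int_0^t\cE_\varepsilon(u_\varepsilon)\,ds$. When $\Lambda<0$ neither term is weakly lower semicontinuous along $\unf u_\varepsilon\weakto u$; the subgradient inequality you invoke for $f$ produces a remainder $\tfrac{\lambda}{2}\|u_\varepsilon-u\|^2$ which, for $\lambda<0$, again requires a $\limsup$ bound, i.e.\ strong convergence of $u_\varepsilon(t)$. You propose to supply this strong convergence later from the energy--dissipation identity, but that identity only yields equalities once $u$ has already been identified as the limit solution, which in turn needs the EVI limit. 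Rewriting $\cE_\varepsilon=\tilde\cE_\varepsilon+\Lambda\mathcal R_\varepsilon$ does not help: the cross terms $\Lambda\mathcal R_\varepsilon(u_\varepsilon-w)+\Lambda\mathcal R_\varepsilon(u_\varepsilon)-\Lambda\mathcal R_\varepsilon(w)=\Lambda\big(2\mathcal R_\varepsilon(u_\varepsilon)-(u_\varepsilon,w)_{\mathcal R_\varepsilon}\big)$ still contain $2\Lambda\mathcal R_\varepsilon(u_\varepsilon)$ with the wrong sign. So as written the argument is circular for $\lambda<0$, and the proposal does not indicate how to break the loop.

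The paper resolves exactly this point by a different device: it performs the exponential change of variable $v_\varepsilon(t)=e^{\Lambda t}u_\varepsilon(t)$ and rewrites the inclusion as a \emph{convex} variational inequality for the time-dependent energy $\widetilde\cE_\varepsilon(t,v)=e^{2\Lambda t}\cE_\varepsilon(e^{-\Lambda t}v)-\Lambda\mathcal R_\varepsilon(v)$; the key algebraic identity is
\[
-\Lambda e^{2\Lambda t}\mathcal R_\varepsilon(u_\varepsilon-e^{-\Lambda t}\tilde w)=\Lambda\mathcal R_\varepsilon(v_\varepsilon)-\Lambda\mathcal R_\varepsilon(\tilde w)-\big\langle D\mathcal R_\varepsilon(\Lambda v_\varepsilon),v_\varepsilon-\tilde w\big\rangle,
\]
which absorbs the problematic quadratic term into $\widetilde\cE_\varepsilon$ and into the derivative $\dot v_\varepsilon=e^{\Lambda t}\dot u_\varepsilon+\Lambda v_\varepsilon$. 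One then integrates to obtain
\[
\int_0^T\!\widetilde\cE_\varepsilon(t,v_\varepsilon)+\big\langle -D\mathcal R_\varepsilon(\dot v_\varepsilon),w_\varepsilon\big\rangle-\widetilde\cE_\varepsilon(t,w_\varepsilon)\,dt\ \le\ -\mathcal R_\varepsilon(v_\varepsilon(T))+\mathcal R_\varepsilon(v_\varepsilon(0)),
\]
and passes to the limit by choosing $w_\varepsilon$ as a strong recovery sequence so that the left-hand side becomes $\int_0^T\widetilde\cE_{\mathsf{hom}}(t,v)+\widetilde\cE_{\mathsf{hom}}^*(t,-D\mathcal R_{\mathsf{hom}}(\dot v))\,dt$ (Fenchel--Young), while on the right $-\mathcal R_\varepsilon(v_\varepsilon(T))$ is weakly \emph{upper} semicontinuous and $\mathcal R_\varepsilon(v_\varepsilon(0))$ converges by well-preparedness. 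This yields the limit equation with only weak two-scale convergence, and as a by-product forces $\mathcal R_\varepsilon(v_\varepsilon(T))\to\mathcal R_{\mathsf{hom}}(v(T))$, hence $u_\varepsilon(T)\to u(T)$ strongly in $\sB$; the energy convergence and $\dot u_\varepsilon\to\dot u$ then follow from the energy balance exactly as in your final paragraph. In short, the missing idea in your proposal is the exponential time-rescaling together with the conjugate-functional (Fenchel--Young) trick, which is what breaks the circularity.
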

\textit{(For the proof see Section \ref{S_Proof_3}.)}
\begin{remark}
Note that the proof of the above theorem (in particular the convergence of the energies) allows us to additionally characterize the two-scale limit of $\nabla u\e$. Specifically, for all $t\in [0,T]$ it holds
\begin{equation*}
\nabla u\e \overset{2s}{\rightharpoonup} \nabla u(t) +\chi(t) \; \text{in }\sB,
\end{equation*}
where $\chi(t)\in L^2_{\mathsf{pot}}(\Omega)\otimes L^2(Q)$ is the solution to the minimization problem given on the right-hand side of (\ref{equation_442}) (with $u=u(t)$). 
\end{remark}
\begin{remark}[Ergodic case]
If we additionally assume that $\ex{\cdot}$ is ergodic, the limit system is driven by deterministic functionals. In particular, the limit is described by a state space $\tilde{\sB}_0=L^2(Q)$, dissipation potential
\begin{equation*}
\tilde{\mathcal{R}}_{\mathsf{hom}}(u)=\int_Q \ex{r}|u(x)|^2 dx, 
\end{equation*}
and energy functional (for $u\in H^1(Q)\cap L^p(Q)$ and otherwise $\infty$)
\begin{equation*}
\tilde{\cE}_{\mathsf{hom}}(u)=\int_{Q}A_{\mathsf{hom}}\nabla u(x)\cdot \nabla u(x) + f_{\mathsf{hom}}(u(x)) dx,
\end{equation*}
where $A_{\mathsf{hom}}$ and $f_{\mathsf{hom}}$ are defined as: Let $A_{\mathsf{hom}}F\cdot F=\inf_{\chi \in L^2_{pot}(\Omega)}\ex{A(\omega)(F+\chi(\omega))\cdot (F+\chi(\omega))}$ for $F\in \R^d$, and let $f_{\mathsf{hom}}(y)=\ex{f(\omega,y)}$ for $y\in \R$. This suggests that in the ergodic case we might lift the above averaged result to a quenched statement (convergence for $P$-a.e. $\omega\in \Omega$) similarly as in Section \ref{Section:4:3} for homogenization of convex integrals. 
\end{remark}
\subsection{Equality of mean and quenched limits}\label{Section:3:3}
In this section we show that for sequences of random functionals both mean and quenched homogenization (if both are possible) yield the same effective functional. 

Let $p\in (1,\infty)$ and $Q\subset \R^d$ be open. Consider $\cb{\cE^{\omega}\e: L^p(Q)\to \R \cup \cb{\infty}}_{\omega\in\Omega}$, a family of random functionals that $\Gamma$-converges to a deterministic functional $\cE_{\mathsf{hom}}: L^p(Q)\to \R \cup \cb{\infty}$ for $P$-a.e. $\omega\in \Omega$ (we refer to this notion as quenched homogenization). Under certain measurability assumptions (detailed below), we may consider the averaged functional $\cE\e: L^p(\Omega\times Q)\to \R \cup \cb{\infty}$, $\cE\e(u)=\ex{\cE^{\omega}\e(u(\omega))}$. We assume that $\cE\e$ $\Gamma$-converges in the mean ($2$-scale sense) to a deterministic limit $\widetilde{\cE}_{\mathsf{hom}}: L^p(Q)\to \R\cup \cb{\infty}$. A specific example of such situation are integral functionals of the form $\cE\e^{\omega}(u)=\int_{Q}V(\tau_{\frac{x}{\varepsilon}}\omega,\nabla u(x))dx$, where $V$ satisfies the assumptions from Section \ref{Section_Convex}.  A quenched homogenization result for integral functionals of this form is obtained in \cite{DalMaso1986} (based on the subadditive ergodic theorem) and a mean homogenization result for the corresponding averaged functionals is given in Section \ref{Section_Convex} (using the unfolding procedure). For a generic situation, below we show that the mean and quenched $\Gamma$-limits match, i.e. $\cE_{\mathsf{hom}}=\widetilde{\cE}_{\mathsf{hom}}$. 

To make the above discussion precise, we require the following assumptions: There exist $\Omega'\subset \Omega$ with $P(\Omega')= 1$, $C>0$, and $\psi \in L^1(\Omega)$ such that: 
\begin{enumerate}[(C1)]
\item {The mapping $\Omega\times L^p(Q)\ni (\omega,u)\mapsto \cE^{\omega}\e(u)$ is $\mathcal{F}\otimes \cB(L^p(Q))$-measurable. For all $\omega \in \Omega'$, $\cE^{\omega}_{\varepsilon}\geq -C$, $\inf_{u}\cE^{\omega}\e(u) \leq \psi(\omega)$ and $\cE^{\omega}_{\varepsilon}$ is convex, proper, and l.s.c.}

(This implies that $\cE\e$ is a (well-defined) convex, proper and l.s.c. functional.)
\item {It holds that $dom(\cE\e^{\omega})=X \subset W^{1,p}(Q)$ ($X$ is convex, closed and compactly embedded in $L^p(Q)$) and for all $\omega\in \Omega'$, $\cE_{\varepsilon}^{\omega}(u)\geq \frac{1}{C}\|u\|^p_{W^{1,p}(Q)} - C$ for all $u\in L^p(Q)$.}

(This implies that $\cE\e(u)\geq \frac{1}{C}\|u\|_{L^p(\Omega)\otimes W^{1,p}(Q)}-C$ for all $u \in L^p(\Omega\times Q)$. Moreover, $\cE\e^{\omega}$ (resp. $\cE\e$) is equi-mildly coercive in $L^p(Q)$ (resp. w.r.t. weak two-scale convergence).)
\item {There exist $\cE_{\mathsf{hom}}: L^p(Q)\to \R \cup \cb{\infty}, \; \widetilde{\cE}_{\mathsf{hom}}: L^p(Q)\to \R \cup \cb{\infty}$ such that for all $\omega\in \Omega'$, $\cE\e^{\omega}\overset{\Gamma}{\to}\cE_{\mathsf{hom}}$ in $L^p(Q)$, and $\cE\e \overset{\Gamma}{\rightharpoonup} \widetilde{\cE}_{\mathsf{hom}}$ in the following sense:}
\begin{enumerate}[(i)]
\item {If $u\e \overset{2}{\rightharpoonup} u$ where $u\in L^p(Q)$, then $\liminf_{\varepsilon\to 0}{\cE}\e(u\e) \geq \widetilde{\cE}_{\mathsf{hom}}(u)$.
\item For $u\in L^p(Q)$, there exists $u\e \in L^p(\Omega\times Q)$, such that $u\e \overset{2}{\rightharpoonup} u$, ${\cE}\e(u\e) \to \widetilde{\cE}_{\mathsf{hom}}(u).$}
\end{enumerate}
\end{enumerate} 
\begin{prop}\label{proposition:818}
Let $p\in (1,\infty)$, $Q\subset \R^d$ be open and $\ex{\cdot}$ be ergodic. If we assume $(C1)-(C3)$, then 
\begin{equation*}
\cE_{\mathsf{hom}}=\widetilde{\cE}_{\mathsf{hom}}.
\end{equation*}
(For the proof see Section \ref{S_Proof_3}.)
\end{prop}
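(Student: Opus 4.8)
The plan is to show two inequalities: $\cE_{\mathsf{hom}}\le\widetilde\cE_{\mathsf{hom}}$ and $\cE_{\mathsf{hom}}\ge\widetilde\cE_{\mathsf{hom}}$ pointwise on $L^p(Q)$. The bridge between the quenched $\Gamma$-limit $\cE_{\mathsf{hom}}$ and the mean $\Gamma$-limit $\widetilde\cE_{\mathsf{hom}}$ is the observation that deterministic functions $u\in L^p(Q)$ embed into $L^p(\Omega\times Q)$ as constant-in-$\omega$ functions, and that for such $u$ one has $\cE\e(u)=\ex{\cE_\varepsilon^\omega(u)}$. A second, more delicate ingredient is that a two-scale convergent sequence in the mean $u\e\overset{2}{\rightharpoonup}u$ with $u$ \emph{deterministic} can, after applying $P_{\inv}$ (which equals $\ex{\cdot}$ by ergodicity) and using Proposition~\ref{prop1} together with (C2), be related to sequences that converge in $L^p(Q)$ for a.e.\ realization $\omega$.

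First I would prove $\widetilde\cE_{\mathsf{hom}}\ge\cE_{\mathsf{hom}}$. Fix $u\in L^p(Q)$ with $\cE_{\mathsf{hom}}(u)<\infty$. By the quenched $\Gamma$-limsup (part of $\cE_\varepsilon^\omega\overset{\Gamma}{\to}\cE_{\mathsf{hom}}$ in (C3)), for every $\omega\in\Omega'$ there is a recovery sequence $v_\varepsilon^\omega\to u$ in $L^p(Q)$ with $\cE_\varepsilon^\omega(v_\varepsilon^\omega)\to\cE_{\mathsf{hom}}(u)$. The issue is joint measurability in $\omega$ of this recovery sequence; I would obtain a measurable selection by a standard argument (e.g.\ selecting $v_\varepsilon^\omega$ as an approximate minimizer of $\omega\mapsto\cE_\varepsilon^\omega(v)+\tfrac1\varepsilon\|v-u\|_{L^p(Q)}$, which is measurable in $\omega$ by (C1) and admits a measurable $\varepsilon$-minimizer via Aubin--Frankowska / von Neumann measurable selection). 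Then $u_\varepsilon(\omega,\cdot):=v_\varepsilon^\omega$ defines an element of $L^p(\Omega\times Q)$ (using the coercivity bound in (C2) and dominated convergence, with the $L^1(\Omega)$-bound coming from $\cE_\varepsilon^\omega(v_\varepsilon^\omega)\le\psi(\omega)+1$ eventually, and (C2) converting this into an $L^p(\Omega;W^{1,p}(Q))$ bound) with $u_\varepsilon\to u$ strongly in $L^p(\Omega\times Q)$, hence $u_\varepsilon\overset{2}{\rightharpoonup}u$, and $\cE_\varepsilon(u_\varepsilon)=\ex{\cE_\varepsilon^\omega(v_\varepsilon^\omega)}\to\cE_{\mathsf{hom}}(u)$ by dominated convergence. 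Comparing with the mean $\Gamma$-liminf (C3)(i) gives $\cE_{\mathsf{hom}}(u)=\lim\cE_\varepsilon(u_\varepsilon)\ge\widetilde\cE_{\mathsf{hom}}(u)$.

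Next I would prove $\cE_{\mathsf{hom}}\ge\widetilde\cE_{\mathsf{hom}}$ (equivalently $\widetilde\cE_{\mathsf{hom}}\le\cE_{\mathsf{hom}}$ is already done, so now $\widetilde\cE_{\mathsf{hom}}\ge\cE_{\mathsf{hom}}$). Fix $u\in L^p(Q)$ with $\widetilde\cE_{\mathsf{hom}}(u)<\infty$ and take, via (C3)(ii), a mean-recovery sequence $u_\varepsilon\in L^p(\Omega\times Q)$ with $u_\varepsilon\overset{2}{\rightharpoonup}u$ and $\cE_\varepsilon(u_\varepsilon)\to\widetilde\cE_{\mathsf{hom}}(u)$. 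By (C2) we get $\sup_\varepsilon\|u_\varepsilon\|_{L^p(\Omega)\otimes W^{1,p}(Q)}<\infty$, so for $P$-a.e.\ $\omega$, along a subsequence, $u_\varepsilon(\omega,\cdot)$ is bounded in $W^{1,p}(Q)$; by the compact embedding $X\subset\subset L^p(Q)$ in (C2) it is precompact in $L^p(Q)$. The goal is to extract, for a.e.\ $\omega$, a sequence (a subsequence of $u_\varepsilon(\omega,\cdot)$, or a Fatou-type averaging over $\omega$) converging in $L^p(Q)$ to the deterministic $u$ — here ergodicity enters, since $P_{\inv}u_\varepsilon=\ex{u_\varepsilon}\overset{2}{\rightharpoonup}\ex{u}=u$ and, using the two-scale limit structure from Proposition~\ref{prop1}, $\ex{u_\varepsilon}\rightharpoonup u$ weakly in $W^{1,p}(Q)$, hence $\ex{u_\varepsilon}\to u$ strongly in $L^p(Q)$. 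Then by the quenched $\Gamma$-liminf applied to $\cE_\varepsilon^\omega$ and Fatou's lemma in $\omega$, $\liminf_\varepsilon\cE_\varepsilon(u_\varepsilon)=\liminf_\varepsilon\ex{\cE_\varepsilon^\omega(u_\varepsilon(\omega,\cdot))}\ge\ex{\liminf_\varepsilon\cE_\varepsilon^\omega(u_\varepsilon(\omega,\cdot))}\ge\ex{\cE_{\mathsf{hom}}(w^\omega)}$ where $w^\omega$ is the $L^p(Q)$-limit of (a subsequence of) $u_\varepsilon(\omega,\cdot)$; the subtlety is that a priori $w^\omega$ need not equal $u$. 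To close this I would test against deterministic $\varphi\in L^q(Q)$: $\ex{\int_Q w^\omega\varphi}=\lim\ex{\int_Q u_\varepsilon\varphi}=\int_Q u\varphi$ (using $u_\varepsilon\overset{2}{\rightharpoonup}u$ and that $\varphi$ is an admissible, $\omega$-independent, test function so the two-scale pairing reduces to the ordinary one), and similarly using convexity of $\cE_{\mathsf{hom}}$ and Jensen's inequality in $\omega$ to get $\ex{\cE_{\mathsf{hom}}(w^\omega)}\ge\cE_{\mathsf{hom}}(\ex{w^\omega})=\cE_{\mathsf{hom}}(u)$. Hence $\widetilde\cE_{\mathsf{hom}}(u)=\lim\cE_\varepsilon(u_\varepsilon)\ge\cE_{\mathsf{hom}}(u)$.

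The main obstacle I expect is the measurable-selection step in the first inequality (ensuring the $\omega$-wise quenched recovery sequences assemble into a genuine element of $L^p(\Omega\times Q)$ with controlled norm and with $\cE_\varepsilon(u_\varepsilon)=\ex{\cE_\varepsilon^\omega(\cdot)}$ passing to the limit), together with the convexity/Jensen and ergodicity argument at the end of the second inequality that upgrades the a.e.-$\omega$ limit $w^\omega$ to the deterministic $u$ while keeping the energy inequality. Both are handled using (C1)'s joint measurability, (C2)'s coercivity and compact embedding, the ergodic identity $P_{\inv}=\ex{\cdot}$ and Proposition~\ref{prop1}, plus convexity from (C1); the remaining estimates are routine applications of dominated convergence and Fatou's lemma.
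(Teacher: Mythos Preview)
Your approach is genuinely different from the paper's, and the second inequality contains a real gap. You take a mean recovery sequence $u_\varepsilon\overset{2}{\rightharpoonup}u$ with $\cE_\varepsilon(u_\varepsilon)\to\widetilde\cE_{\mathsf{hom}}(u)$, then for $P$-a.e.\ $\omega$ extract an $\omega$-dependent subsequence along which $u_\varepsilon(\omega,\cdot)\to w^\omega$ in $L^p(Q)$, and finally claim $\ex{\int_Q w^\omega\varphi}=\lim_\varepsilon\ex{\int_Q u_\varepsilon\varphi}$. This last identity is not justified: since the subsequence depends on $\omega$, there is no single sequence in $\varepsilon$ whose expectation you are passing to the limit. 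Even measurability of $\omega\mapsto w^\omega$ is unclear. Two-scale convergence in the mean gives you information about $\ex{\int_Q(\unf u_\varepsilon)\varphi}$ for the \emph{full} sequence; it says nothing about limits along $\omega$-dependent subsequences of $u_\varepsilon(\omega,\cdot)$. (The paper devotes all of Section~4 and the Young-measure machinery precisely to making this kind of quenched-from-mean extraction rigorous, and even there it requires additional structure.) Your Jensen step $\ex{\cE_{\mathsf{hom}}(w^\omega)}\ge\cE_{\mathsf{hom}}(\ex{w^\omega})$ is fine, but without $\ex{w^\omega}=u$ it does not close the argument. The first inequality also has a loose end: the measurable selection you propose (approximate minimizers of $\cE_\varepsilon^\omega(v)+\tfrac1\varepsilon\|v-u\|$) is not obviously a recovery sequence, and the ``eventually $\le\psi(\omega)+1$'' bound you invoke for dominated convergence has an $\omega$-dependent threshold in $\varepsilon$, so it does not directly give an integrable majorant uniform in $\varepsilon$.

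The paper avoids all of this by passing to Legendre--Fenchel conjugates. The key identity is Rockafellar's interchange formula $\ex{(\cE_\varepsilon^\omega)^*(f)}=\cE_\varepsilon^*(f)$ for every deterministic $f\in L^q(Q)$, which bridges quenched and mean at the level of minimization problems with a \emph{fixed} linear perturbation (no selection needed). For each fixed $\omega$, $(\cE_\varepsilon^\omega)^*(f)=-\min_v(\cE_\varepsilon^\omega(v)-\int_Q fv)$, and quenched $\Gamma$-convergence plus equi-coercivity gives convergence of these minima to $-\cE_{\mathsf{hom}}^*(f)$; dominated convergence (with majorant built from $\psi$ via (C1)--(C2)) then yields $\ex{(\cE_\varepsilon^\omega)^*(f)}\to\cE_{\mathsf{hom}}^*(f)$. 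The right-hand side is handled analogously using mean $\Gamma$-convergence, giving $\cE_\varepsilon^*(f)\to\widetilde\cE_{\mathsf{hom}}^*(f)$. Hence $\cE_{\mathsf{hom}}^*=\widetilde\cE_{\mathsf{hom}}^*$, and biduality of convex l.s.c.\ proper functionals finishes. This route trades your two recovery/liminf constructions for a single identity plus convergence of infima, completely sidestepping $\omega$-dependent subsequences and measurable selection.
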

\subsection{Proofs}\label{S_Proof_3}
\begin{proof}[Proof of Theorem \ref{thm1}]
(i)
The Poincar{\'e}-Korn inequality and the growth conditions of $V$ imply that $u\e$ is bounded in $L^p(\Omega)\otimes W^{1,p}(Q)^d$. By Proposition \ref{prop1} there exist $u \in L^p_{{\inv}}(\Omega)\otimes W^{1,p}(Q)^d$ and $\chi \in L^p_{\pot}(\Omega)\otimes L^p(Q)^d$ with the claimed convergence (up to a subsequence). From $\unf u\e \in L^p(\Omega)\otimes W^{1,p}_0(Q)^d$ for every $\varepsilon>0$, we conclude that $u \in L^p_{{\inv}}(\Omega)\otimes W^{1,p}_0(Q)^d$ (cf. Remark \ref{R_Two_1}).
\medskip

(ii) The claim follows from Proposition \ref{P_Cont_1} (iii).
\medskip

(iii) The existence of a strongly two-scale convergent sequence $u\e \in L^p(\Omega)\otimes W^{1,p}_0(Q)^d$ follows from Remark \ref{rem14}. Furthermore, the convergence of the energy $\cE_{\varepsilon}(u_{\varepsilon})\to \cE_{0}(u,\chi)$ follows from Proposition \ref{P_Cont_1} (ii).
\end{proof}
\medskip
\begin{proof}[Proof of Corollary~\ref{C:thm1}]
 The statement follows by a standard argument from $\Gamma$-convergence: Since $u\e$ is a minimizer we conclude that $\limsup_{\eps\to 0}\cE_\eps(u\e)\leq\limsup_{\eps\to 0}\cE_\eps(0)<\infty$. Hence, by Theorem~\ref{thm1} there exists $u\in L^p_{\inv}(\Omega)\otimes W^{1,p}_0(Q)^d$ and $\chi\in L^p_{\pot}(\Omega)\otimes L^p(Q)^d$ such that $u\e \wt u \text{ in }\ltp^d$, $\nabla u\e \wt \nabla u+\chi  \text{ in }\ltp^{d\times d}$, and 
  \begin{equation*}
    \liminf\limits_{\eps\to 0}\cE\e(u\e)\geq \cE_0(u,\chi).
  \end{equation*}
  Let $(u_0,\chi_0)$ denote the minimizer of $\cE_0$. Then by Theorem~\ref{thm1} (iii) there exists a recovery sequence $v\e$ s.t.~$\cE_\eps(v\e)\to \cE_0(u_0,\chi_0)$, and thus
  \begin{equation*}
    \min\cE_0=\lim\limits_{\eps\to 0}\cE_\eps(v\e)\geq \liminf\limits_{\eps\to 0}\cE_\eps(u\e)=\liminf\limits_{\eps\to 0}\min\cE_\eps\geq \cE_0(u,\chi)\geq \min\cE_0,
  \end{equation*}
  and thus $(u,\chi)$ is a minimizer of $\cE_0$ and $\cE_\eps(u\e)=\min\cE_\eps\to \min\cE_0=\cE_0(u,\chi)$.
\end{proof}
Before presenting the proof of Theorem \ref{thm2}, we provide two auxiliary results.
\begin{lemma}[Stochastic Korn inequality]\label{lem8} Let $p\in (1,\infty)$. There exists $C>0$ such that
\begin{equation*}
\ex{|\chi|^p}\leq C \ex{|\chi^s|^p} \quad \text{for every }\chi\in L^p_{\pot}(\Omega)^d.
\end{equation*}
\end{lemma}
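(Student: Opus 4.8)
\emph{Proof strategy.} The estimate is nothing but Korn's inequality for the horizontal gradient $D$, and the plan is to transfer the classical Korn inequality on $\R^d$ to the probability space by means of the strongly continuous group of isometries $\cb{U_x}_{x\in\R^d}$ on $L^p(\Omega)$. Since $L^p_{\pot}(\Omega)=\overline{\mathcal R(D)}$ and the functionals $\chi\mapsto\ex{|\chi|^p}$, $\chi\mapsto\ex{|\chi^s|^p}$ are continuous on $L^p(\Omega)^{d\times d}$ (note $|\chi^s|\le|\chi|$), it suffices to prove the inequality for $\chi=D\varphi$ with $\varphi\in W^{1,p}(\Omega)^d$; and mollifying $\varphi$ by the Gaussian semigroup $\psi\mapsto\psi^t$ from the proof of Lemma~\ref{lemA}(ii) -- for which $D\varphi^t=(D\varphi)^t\to D\varphi$ in $L^p$ -- we may additionally assume that all iterated horizontal derivatives $D_{i_1}\cdots D_{i_k}\varphi$ exist in $L^p(\Omega)$.

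For such $\varphi$, writing $\chi_{ij}=D_j\varphi_i$ and using that the operators $D_i$ commute, one verifies the classical algebraic Korn identity $D_k\chi_{ij}=D_j\chi^s_{ik}+D_k\chi^s_{ij}-D_i\chi^s_{jk}$. Applying $D_k$, summing over $k$, and writing $-\Delta_\Omega:=-\sum_kD_k^2$ for the generator of the heat semigroup $\psi\mapsto\psi^t$, this gives
\[
  \chi_{ij}=\chi^s_{ij}+\sum_{k=1}^d\brac{\mathcal T_{jk}\,\chi^s_{ik}-\mathcal T_{ik}\,\chi^s_{jk}},\qquad \mathcal T_{ab}:=D_aD_b(-\Delta_\Omega)^{-1},
\]
where $(-\Delta_\Omega)^{-1}$ is applied only to functions orthogonal to $L^2_{\inv}(\Omega)$, so that $\mathcal T_{ab}$ is the transference along $\cb{U_x}$ (whose infinitesimal generators in the coordinate directions are the $D_i$) of the $\R^d$-Fourier multiplier $\xi\mapsto\xi_a\xi_b/|\xi|^2$, i.e.\ of a composition of two Riesz transforms. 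By the Coifman--Weiss transference principle together with the $L^p$-boundedness of the Riesz transforms on $\R^d$ for $1<p<\infty$, each $\mathcal T_{ab}$ is bounded on $L^p(\Omega)$ with norm depending only on $d$ and $p$. Inserting this into the displayed identity and using equivalence of norms on $\R^{d\times d}$ gives $\ex{|\chi|^p}\le C(d,p)\,\ex{|\chi^s|^p}$ for smooth potentials, and the reduction of the first paragraph finishes the proof.

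I expect the only non-routine step to be the $L^p$-mapping property of the operators $\mathcal T_{ab}$ for $p\ne 2$, together with the precise interpretation of $(-\Delta_\Omega)^{-1}$, whose kernel is the (nontrivial) space of shift-invariant functions. For $p=2$ everything is transparent: using the spectral resolution $U_x=\int_{\R^d}e^{\,ix\cdot\xi}\,dE(\xi)$ one has $\widehat\chi(\xi)=i\,\widehat\varphi(\xi)\otimes\xi$ for $\chi=D\varphi$, whence the elementary pointwise bound $|\widehat{\chi^s}(\xi)|^2\ge\tfrac12|\widehat\chi(\xi)|^2$ yields $\ex{|\chi|^2}\le 2\,\ex{|\chi^s|^2}$ with no transference at all. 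For $p\ne 2$ I would use the transference argument above; alternatively, when $\ex{\cdot}$ is ergodic one can avoid abstract multiplier theory: the stationary extension satisfies $S\chi(\omega,\cdot)=\nabla_xS\varphi(\omega,\cdot)$, so the classical Korn inequality on the ball $B_R$ (with its scale-invariant constant) gives $\inf_{W\ \text{skew}}\frac{1}{|B_R|}\int_{B_R}|S\chi(\omega,x)-W|^p\,dx\le C\,\frac{1}{|B_R|}\int_{B_R}|S\chi^s(\omega,x)|^p\,dx$; estimating the minimizer $W$ by the left-hand side and by $\frac{1}{|B_R|}\int_{B_R}S\chi(\omega,x)\,dx$, and then letting $R\to\infty$ via Birkhoff's theorem (Theorem~\ref{thm:ergodic-thm}) and using $\ex{\chi}=0$ (so the averages of $S\chi$ over $B_R$ vanish), one again obtains $\ex{|\chi|^p}\le C(d,p)\,\ex{|\chi^s|^p}$.
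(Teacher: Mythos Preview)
Your transference approach is correct in spirit but takes a genuinely different route from the paper. The paper's proof is more elementary and entirely avoids multiplier theory on $\Omega$: it fixes $R,K>0$, takes a cut-off $\eta_R\in C^\infty_c(B_{R+K})$ with $\eta_R\equiv 1$ on $B_R$ and $|\nabla\eta_R|\le 2/K$, and uses stationarity of $P$ to write
\[
\ex{|D\varphi|^p}=\ex{\fint_{B_R}|\nabla S\varphi|^p\,dx}\le\ex{\frac{1}{|B_R|}\int_{\R^d}|\nabla(\eta_R S\varphi)|^p\,dx}.
\]
It then applies the classical Korn inequality in $L^p(\R^d)$ to the compactly supported field $\eta_R S\varphi$ (so no skew-symmetric correction is needed), splits the resulting integral of $|\nabla^s(\eta_R S\varphi)|^p$ into the $B_R$-part, which by stationarity equals $|B_R|\,\ex{|D^s\varphi|^p}$, and the annulus $B_{R+K}\setminus B_R$, and shows the annulus contribution vanishes as $R\to\infty$ using only stationarity (since $|B_{R+K}\setminus B_R|/|B_R|\to 0$). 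Thus the paper's argument does \emph{not} require ergodicity, does not invert any Laplacian on $\Omega$, and does not invoke Coifman--Weiss; all the hard analysis is packaged into the single black box ``Korn in $L^p(\R^d)$'', which of course itself rests on Riesz-transform bounds, so at bottom the analytic input is the same as in your main approach --- the paper just applies it on $\R^d$ rather than transferring it to $\Omega$. Your alternative ergodic argument via Birkhoff's theorem and Korn on $B_R$ with a skew correction is closer in flavor to the paper's idea, but the cut-off trick is cleaner: it sidesteps both the ergodicity hypothesis and the need to control the minimizing skew matrix $W$, and it also avoids the delicate point you flag about interpreting $(-\Delta_\Omega)^{-1}$ on a space whose invariant part may be nontrivial.
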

The proof of the above inequality is similar as the argument for the case $p=2$ in \cite{heida_schweizer2017stochastic}. For the reader's convenience, we show it in the Appendix \ref{appendix:1}.

For the proof of Theorem~\ref{thm2} we apply Castaing's measurable selection lemma in the following form:
\begin{lemma}[See Theorem III.6 and Proposition III.11 in \cite{castaing2006convex}]\label{castaing}
Let $X$ be a complete separable metric space, $(\mathcal{S},\sigma)$ a measurable space and $\Gamma:\mathcal{S}\rightarrow 2^X$ a multifunction. Further, assume that for all $x\in \mathcal{S}$, $\Gamma(x)$ is nonempty and closed in $X$, and for any closed $G\subset X$ we have
\begin{equation*}
\Gamma^{-1}(G):=\cb{x\in S: \Gamma(x)\cap G\ne \varnothing}\in \sigma.
\end{equation*}
Then $\Gamma$ admits a measurable selection, i.e. there exists $\tilde{\Gamma}:\mathcal{S}\rightarrow X$ measurable with $\tilde{\Gamma}(x)\in \Gamma(x)$.
\end{lemma}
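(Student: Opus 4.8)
This is the Kuratowski--Ryll-Nardzewski measurable selection theorem, and the plan is to reconstruct its classical proof by an explicit successive-approximation argument. First I would observe that the stated hypothesis (measurability of $\Gamma^{-1}(G)$ for every closed $G\subset X$) implies the weak measurability we actually use, namely $\Gamma^{-1}(U)\in\sigma$ for every open $U\subset X$: writing $U=\bigcup_{n}F_{n}$ with $F_{n}:=\{y\in X:\mathrm{dist}(y,X\setminus U)\ge 1/n\}$ closed, one checks $\Gamma^{-1}(U)=\bigcup_{n}\Gamma^{-1}(F_{n})$. Fix once and for all a countable dense set $\{a_{i}\}_{i\in\N}\subset X$ (available since $X$ is separable). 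The aim is to construct measurable maps $f_{n}:\mathcal S\to X$, each taking only countably many values on a measurable partition of $\mathcal S$, such that $\mathrm{dist}(f_{n}(x),\Gamma(x))<2^{-n}$ and $d(f_{n+1}(x),f_{n}(x))<2^{-n+1}$ for all $x\in\mathcal S$.

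For the base step, set $S_{i}:=\Gamma^{-1}(B(a_{i},1))=\{x:\Gamma(x)\cap B(a_{i},1)\neq\varnothing\}$, which lies in $\sigma$ by the previous paragraph; since $\Gamma(x)$ is nonempty and $\{a_{i}\}$ is dense, $\bigcup_{i}S_{i}=\mathcal S$, so $A_{i}:=S_{i}\setminus\bigcup_{j<i}S_{j}$ defines a measurable partition of $\mathcal S$ and $f_{0}:=a_{i}$ on $A_{i}$ has the desired property with $n=0$. For the inductive step, suppose $f_{n}$ equals $b_{k}$ on the measurable set $C_{k}$ (with $\bigcup_{k}C_{k}=\mathcal S$, disjoint) and $\mathrm{dist}(f_{n}(x),\Gamma(x))<2^{-n}$. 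For each $k$ and each index $i$ with $d(a_{i},b_{k})<2^{-n}+2^{-n-1}$ put $T^{k}_{i}:=C_{k}\cap\Gamma^{-1}(B(a_{i},2^{-n-1}))\in\sigma$, and $T^{k}_{i}:=\varnothing$ otherwise; the key point is that the condition $d(a_{i},b_{k})<2^{-n}+2^{-n-1}$ does not depend on $x$, so each $T^{k}_{i}$ is measurable. One checks $\bigcup_{i}T^{k}_{i}=C_{k}$: given $x\in C_{k}$, choose $p\in\Gamma(x)$ with $d(b_{k},p)<2^{-n}$ and then $a_{i}$ with $d(a_{i},p)<2^{-n-1}$, so that $\mathrm{dist}(a_{i},\Gamma(x))<2^{-n-1}$ and $d(a_{i},b_{k})<2^{-n}+2^{-n-1}$. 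Refining $\{T^{k}_{i}\}_{k,i}$ into a measurable partition of $\mathcal S$ and letting $f_{n+1}$ take the value $a_{i}$ on the corresponding piece yields the required $f_{n+1}$, since then $d(f_{n+1}(x),f_{n}(x))<2^{-n}+2^{-n-1}<2^{-n+1}$.

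Finally, by construction $(f_{n}(x))_{n}$ is Cauchy uniformly in $x$ (its increments are summable), so by completeness of $X$ it converges pointwise to some $f:\mathcal S\to X$; $f$ is $\sigma$-measurable as a pointwise limit of measurable maps into a metric space, and since $z\mapsto\mathrm{dist}(z,\Gamma(x))$ is $1$-Lipschitz we get $\mathrm{dist}(f(x),\Gamma(x))=\lim_{n}\mathrm{dist}(f_{n}(x),\Gamma(x))=0$, whence $f(x)\in\overline{\Gamma(x)}=\Gamma(x)$ because $\Gamma(x)$ is closed. Thus $\tilde\Gamma:=f$ is the desired measurable selection. The main obstacle is the inductive step: one must select a new approximant that is \emph{simultaneously} measurable in $x$, within $2^{-n-1}$ of $\Gamma(x)$, and within a controlled distance of the previous approximant -- this is precisely what the device of splitting the index set according to the $x$-independent condition $d(a_{i},b_{k})<2^{-n}+2^{-n-1}$ and invoking weak measurability on the open balls $B(a_{i},2^{-n-1})$ accomplishes.
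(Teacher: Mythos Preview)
Your proof is correct: it is the standard Kuratowski--Ryll-Nardzewski successive-approximation argument, and all steps are sound (the reduction to weak measurability via $F_\sigma$ sets, the inductive construction with the $x$-independent constraint $d(a_i,b_k)<2^{-n}+2^{-n-1}$ ensuring measurability of the refinement, and the passage to the limit using completeness and closedness of $\Gamma(x)$).

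However, there is nothing to compare here: the paper does not prove this lemma. It is stated with the attribution ``See Theorem~III.6 and Proposition~III.11 in \cite{castaing2006convex}'' and is used as a black box in the proof of Theorem~\ref{thm2}~(ii), where Castaing's theorem is invoked to select a measurable family of correctors $x\mapsto\chi(x,\cdot)\in L^p_{\pot}(\Omega)^d$. So you have supplied a complete proof of a result that the paper simply cites from the literature; this is more than what was required, but it is correct.
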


\begin{proof}[Proof of Theorem \ref{thm2}]
(i) According to Theorem \ref{thm1} (i) there exist $u\in \sob_0(Q)$ and $\chi\in L^p_{\pot}(\Omega)\otimes L^p(Q)^d$ such that (using Proposition \ref{prop1}) $u\e$ satisfies the claimed convergences. Furthermore, we have 
\begin{align*}
\liminf_{\varepsilon\rightarrow 0}\mathcal{E}_{\varepsilon}(u\e)\geq \mathcal{E}_{0}(u,\chi)\geq \mathcal{E}_{\hom}(u).
\end{align*}
(ii) We show that there exists $\chi \in L^p_{\mathsf{pot}}(\Omega)\otimes L^p(Q)^d$ such that $\cE_{0}(u,\chi)=\cE_{\hom}(u)$, which implies the claim applying Theorem \ref{thm1} (iii). It is sufficient to show that for fixed $F \in \R^{d\times d}$ and a fixed $Q'\subset Q$ (measurable), we can find $\chi \in L^p_{\mathsf{pot}}(\Omega)\otimes L^p(Q')^d$ such that
\begin{equation}\label{claim1290}
\int_{Q'}\ex{V(\omega,x,F^s+ \chi^s(x,\omega))}dx = \int_{Q'}V_{\mathsf{hom}}(x,F)dx.
\end{equation}
Indeed, if the above holds, we approximate $\nabla u$ by piecewise-constant functions $F_k=\sum_{i}\mathbf{1}_{Q_{k,i}}F_{k,i} $ (in the strong $L^p(Q)$ topology), where $F_{k,i}\in \R^{d \times d}$, and we find $\chi_{k}=\sum_{i}\mathbf{1}_{Q_{k,i}}\chi_{k,i} \in L^p_{\mathsf{pot}}(\Omega)\otimes L^p(Q)^d$ such that
\begin{equation}\label{claim858}
\int_{Q}\ex{V(\omega,x,F_k^s(x)+\chi_k^s(x,\omega))}dx = \int_{Q}V_{hom}(x,F_k(x))dx.
\end{equation}
Using the growth conditions of $V$ and Lemma \ref{lem8}, it follows $\limsup_{k\to \infty} \|\chi_k\|_{L^p_{\mathsf{pot}}(\Omega)\otimes L^p(Q)^d}<\infty$ and therefore we may extract a (not relabeled) subsequence and $\chi \in  L^p_{\mathsf{pot}}(\Omega) \otimes L^p(Q)^d$ such that $\chi_k \rightharpoonup \chi$ weakly in $L^p(\Omega\times Q)^{d\times d}$. Note that the functional on the left-hand side of (\ref{claim858}) is weakly l.s.c. and the functional on the right-hand side is continuous (by continuity of $V_{\mathsf{hom}}(x,\cdot)$ and growth conditions of $V$). As a result of this, we may pass to the limit $k\to \infty$ in (\ref{claim858}), in order to obtain $\cE_{0}(u,\chi) \leq \cE_{\hom}(u)$. Also, the other inequality $\cE_{0}(u,\chi) \geq \cE_{\hom}(u)$ follows by the definition of $V_{\mathsf{hom}}$ and therefore we conclude that $\cE_{0}(u,\chi) = \cE_{\hom}(u)$.

In the following we show (\ref{claim1290}). Fix $F\in \R^{d\times d}$ and $Q'\subset Q$ and let 
\begin{align*}
& f: Q'\times L^p_{\mathsf{pot}}(\Omega)^d \to \R, \quad f(x,\chi)= \ex{V(\omega,x,F^s+\chi^s(\omega))},\\
& \phi: Q' \to \R,\quad \phi(x)=\inf_{\chi\in L^p_{\mathsf{pot}}(\Omega)^d}f(x,\chi).
\end{align*}
We define a multifunction $\Gamma: Q' \to 2^{L^p_{\mathsf{pot}}(\Omega)^d}$ as (the set of all correctors corresponding to the point $x\in Q'$)
\begin{equation*}
\Gamma(x)= \cb{\chi \in L^p_{\mathsf{pot}}(\Omega)^d: f(x,\chi)\leq \phi(x)}.
\end{equation*}
For each $x\in Q'$, $\Gamma(x)$ is non-empty and closed (using the direct method of calculus of variations). 

In the following, we show that for a closed set $G \subset L^p_{\mathsf{pot}}(\Omega)^d$, $\Gamma^{-1}(G)$ is measurable and therefore Lemma \ref{castaing} implies that there exists $\chi \in  L^p_{\mathsf{pot}}(\Omega) \otimes L^p(Q')^d$ which satisfies (\ref{claim1290}).

Note that $f$ defines a Charath{\'e}odory integrand in the sense that for fixed $x\in Q'$, $f(x,\cdot)$ is continuous, and for fixed $\chi \in L^p_{\mathsf{pot}}(\Omega)$, $f(\cdot, \chi)$ is measurable. Moreover, since $L^p_{\mathsf{pot}}(\Omega)$ is separable, we find a countable (dense) set $\cb{\chi_k}\subset L^p_{\mathsf{pot}}(\Omega)^d$ such that $\phi(x)=\inf_{\chi\in L^p_{\mathsf{pot}}(\Omega)^d}f(x,\chi)=\inf_{k}f(x,\chi_k)$ (using that the infimum in the definition of $\phi$ is attained and that $f(x,\cdot)$ is continuous). As a result of this, we conclude that $\phi$ is measurable and moreover we have that the function $\tilde{f}:=f-\phi: Q'\times L^p_{\mathsf{pot}}(\Omega)^d\to \R$ is as well a Charath{\'e}odory integrand. Consequently, \cite[Proposition 1]{rockafellar1971} implies that $x \mapsto epi \tilde{f}_x$ (where $epi \tilde{f}_x$ denotes the epigraph of the function $\tilde{f}(x,\cdot)$) is measurable in the following sense (see \cite[Theorem 1]{rockafellar1971}): For any closed set $\tilde{G} \subset \R \times L^p_{\mathsf{pot}}(\Omega)^d$, it holds that  
\begin{equation*}
\cb{x \in Q': epi \tilde{f}_x \cap \tilde{G} \neq \varnothing}= \cb{x\in Q': \exists (\alpha,\chi) \in \tilde{G}, \quad \tilde{f}(x,\chi)\leq \alpha}
\end{equation*}
is measurable. We choose $\tilde{G}=\cb{0}\times G$ and the above implies that $\Gamma^{-1}(G)$ is measurable. This concludes the proof.
\end{proof}
\begin{proof}[Proof of Proposition \ref{prop3}]
Uniqueness of minimizers follows by the uniform convexity assumption on the integrand $V$. As in the proof of Theorem \ref{thm2} (ii), we select $\chi \in L^p_{\pot}(\Omega)\otimes L^p(Q)^d$ such that $\int_{Q}V_{\hom}(x,\nabla u(x))dx=\int_{Q}\ex{V(\omega, x, \nabla^s u(x)+\chi^s(\omega,x))}dx$. Theorem \ref{thm1} (iii) implies that there exists a sequence $v\e\in L^p(\Omega)\otimes W^{1,p}_0(Q)^d$ such that $v\e \st u$ in $\ltp^d$ and $\mathcal{E}\e(v\e)\to \mathcal{E}_0(u,\chi)=\mathcal{E}_{\hom}(u)$. By the triangle inequality we have
$\|u\e - u\|_{L^p(\Omega\times Q)}\leq \|u\e - v\e\|_{L^p(\Omega\times Q)}+\|v\e - u\|_{L^p(\Omega\times Q)}$. By the isometry property of $\unf$ and strong two-scale convergence of $v\e$ we have $\|v\e - u\|_{L^p(\Omega\times Q)}=\|\unf(v\e - u)\|_{L^p(\Omega\times Q)}=\|\unf v\e - u\|_{L^p(\Omega\times Q)}\to 0$. Furthermore, the Poincar{\'e}-Korn inequality $\|u\e-v\e\|^p_{L^p(\Omega\times Q)}\leq C \|\nabla^su\e-\nabla^sv\e\|^p_{L^p(\Omega\times Q)}$ (for a generic constant $C$ that is independent of $\eps$ but might change from line to line), the uniform convexity of $V$ in form of $\frac{C}{4}\|\nabla^su\e-\nabla^sv\e\|_{L^p(\Omega\times Q)}^p\leq \frac12\cE\e(v\e)+\frac12\cE\e(u\e)-\cE\e(\frac12(u\e+v\e))$,
and the minimality of $u\e$, yield the estimate
\begin{equation*}
  \|u\e-v\e\|^p_{L^p(\Omega\times Q)}\leq C\brac{\cE\e(v\e)-\cE\e(\tfrac12(u\e+v\e))}.
\end{equation*}
Since $\cE\e(v\e)\to\cE_{\hom}(u)$ and $\liminf\limits_{\eps\to 0}\cE\e(\tfrac12(u\e+v\e))\geq \cE(u,\chi)=\cE_{\hom}(u)$, we conclude that the right-hand side converges to $0$.
Thus, $u\e\to u$ in $L^p(\Omega\times Q)$, and the convergence of the gradient follows using Proposition \ref{prop1}.
\end{proof}
\begin{proof}[Proof of Theorem \ref{s3_thm_5}]

\textit{Step 1. A priori estimates and compactness.}

In the following, using a standard argument, we derive an a priori estimate for the solution $u\e$. We note that (\ref{diff_incl}) implies
\begin{equation*}
\mathcal{R}\e(\dot{u}\e(t))\leq \ex{-\xi\e(t),\dot{u}\e(t)} ,
\end{equation*}
where $\xi\e(t)\in \partial_F \cE\e(u\e(t))$. Integrating the above on the interval $(0,t)$ (with arbitrary $t\in (0,T]$) and using the chain rule for the ($\Lambda$-convex) energy functional $\cE\e$ (see e.g., \cite{Rossi2006} for the chain rule), we obtain 
\begin{equation}\label{help:2}
\cE\e(u\e(t))+ \int_0^t \mathcal{R}\e(\dot{u}\e(s))ds \leq \cE\e(u\e(0)).
\end{equation}
Using the assumptions on the initial data $u\e(0)$, the above  implies that there exists $C>0$ (independent of $\varepsilon$) such that
\begin{equation}\label{uniform:estimate}
\sup_{t\in [0,T]} \brac{\|u\e(t)\|_{L^p(\Omega\times Q)}+\|u\e(t)\|_{L^2(\Omega)\otimes H^1(Q)}} + \|u\e\|_{H^1(0,T;\sB)}\leq C.
\end{equation}
As a result of this, we find a (not relabeled) subsequence and $\widetilde{u}\in H^1(0,T; \sB)\cap L^p(0,T; L^p(\Omega\times Q))$ such that 
\begin{equation*}
u\e \rightharpoonup \widetilde{u} \quad \text{weakly in }H^1(0,T; \sB) \text{ and weakly in }L^p(0,T; L^p(\Omega\times Q)).
\end{equation*}
Moreover, using the Arzel{\`a}-Ascoli theorem \cite[Proposition 3.3.1]{ambrosio2008gradient}, we might extract another subsequence such that for all $t\in [0,T]$
\begin{equation*}
u_{\varepsilon}(t)\rightharpoonup \widetilde{u}(t) \quad \text{weakly in }\sB,
\end{equation*}
and using (\ref{uniform:estimate}) and Proposition \ref{prop1} we conclude that $u\e(t)\overset{2s}{\rightharpoonup} u(t)$ in $\sB$ where $u(t):=P_{\inv}\widetilde{u}(t)$. We consider the linear extension of $\unf$ to an (not relabeled) operator $\unf: L^2(0,T; \sB)\mapsto L^2(0,T; \sB)$ and note that $\unf$ and $\dot{(\cdot)}$ commute. This results in the convergence $\unf \dot{u}\e \rightharpoonup \dot{u}$ weakly in $L^2(0,T; \sB)$. 
\medskip

\textit{Step 2. Reduction to a convex problem.}

The differential inclusion in (\ref{diff_incl}) is equivalent to 
\begin{equation*}
\cE\e(u\e(t))\leq \cE\e(w)-\ex{D\mathcal{R}\e(\dot{u}\e(t)),u\e(t)-w}_{\sB^*,\sB}- \Lambda \mathcal{R}\e(u\e(t)-w) \quad \text{for all }w \in \sB.
\end{equation*}
We set $w= e^{-\Lambda t} \tilde{w}$ with an arbitrary $\tilde{w}\in \sB$ and multiply the above inequality by $e^{2\Lambda t}$ to obtain
\begin{eqnarray}\label{eq:898}
&& e^{2\Lambda t}\cE\e(e^{-\Lambda t}e^{\Lambda t}u\e(t)) \\ & \leq & e^{2\Lambda t}\cE\e(e^{-\Lambda t}\tilde{w}) - e^{2\Lambda t} \ex{D\mathcal{R}\e(\dot{u}\e(t)),u\e(t) - e^{-\Lambda t}\tilde{w}}_{\sB^*,\sB}-\Lambda e^{2\Lambda t} \mathcal{R}\e(u\e(t)-e^{-\Lambda t}\widetilde{w}). \nonumber
\end{eqnarray}
Using the quadratic structure of $\mathcal{R}\e$ (and its homogeneity of degree 2), we compute 
\begin{equation*}
-\Lambda e^{2\Lambda t}\mathcal{R}\e (u\e(t)-e^{-\Lambda t}\tilde{w})=\Lambda \mathcal{R}\e(e^{\Lambda t}u\e(t))-\Lambda \mathcal{R}\e(\tilde{w})- \ex{D\mathcal{R}\e(\Lambda e^{\Lambda t}u\e(t)),e^{\Lambda t}u\e(t)-\tilde{w}}_{\sB^*,\sB}.
\end{equation*}
Using this equality, (\ref{eq:898}) implies that for any $\tilde{w}\in \sB$,
\begin{equation*}
\widetilde{\cE}\e(t,e^{\Lambda t}u\e(t)) \leq \widetilde{\cE}\e(t,\tilde{w})-\ex{D\mathcal{R}\e(e^{\Lambda t}\dot{u}\e(t)+\Lambda e^{\Lambda t}u\e(t)),e^{\Lambda t}u\e(t)-\tilde w}_{\sB^*,\sB},
\end{equation*} 
where $\widetilde{\cE}\e:[0,T]\times \sB \to \R \cup \cb{\infty}$, $\widetilde{\cE}\e(t,v) = e^{2\Lambda t}\cE\e(e^{-\Lambda t}v)-\Lambda \mathcal{R}\e(v)$. We remark that for each $t\in [0,T]$, $\widetilde{\cE}\e(t,\cdot)$ is a convex, l.s.c, proper functional.
We introduce a new variable $v\e: [0,T]\to \sB$ given by $v\e(t):=e^{\Lambda t}u\e(t)$ and note that $\dot{v}\e(t)=e^{\Lambda t} \dot{u}\e(t)+\Lambda e^{\Lambda t}u\e(t)$. As a result of this,
the above inequality implies that for all $w \in L^2(0,T; \sB)$
\begin{equation}\label{main:B}
\widetilde{\cE}\e(t,v\e(t))+\ex{-D\mathcal{R}\e(\dot{v}\e(t)),w(t)}_{\sB^*,\sB}-\widetilde{\cE}\e(t,w(t))\leq \ex{-D\mathcal{R}\e(\dot{v}\e(t)),v\e(t)}_{\sB^*,\sB}.
\end{equation}
Integrating the above inequality on the interval $[0,T]$ and using the chain rule, we obtain (setting $w=w\e$ a sequence that is arbitrary, but that we specify below)
\begin{equation}\label{main:inequality:189}
\int_0^T \widetilde{\cE}\e(t,v\e(t))+\ex{-D\mathcal{R}\e(\dot{v}\e(t)),w\e(t)}_{\sB^*,\sB}-\widetilde{\cE}\e(t,w\e(t))dt\leq -\mathcal{R}\e(v\e(T))+\mathcal{R}\e(v\e(0)).
\end{equation}
\textit{Step 3. Passage to the limit $\varepsilon\to 0$.}

Since $u\e(t)\overset{2s}{\rightharpoonup}u(t)$ in $\sB$, it follows that $v\e(t)\overset{2s}{\rightharpoonup} v(t):= e^{\Lambda t} u(t)$ in $\sB$ for all $t\in [0,T]$. Note that $v\e(0)=u\e(0)\to u(0)=v(0)$ strongly in $\sB$ and therefore the last term on the right-hand side of (\ref{main:inequality:189}) converges to $\mathcal{R}_{\mathsf{hom}}(v(0))$. The first term on the right-hand side satisfies
\begin{equation}\label{eq:920}
\limsup_{\varepsilon\to 0}\brac{-\mathcal{R}\e(v\e(T))} = -\liminf_{\varepsilon\to 0}\ex{\int_Q r(\omega)|\unf v\e(T)|^2 dx }\leq -\mathcal{R}_{\mathsf{hom}}(v(T)).
\end{equation}
For the first term on the left-hand side, using Fatou's lemma, we have
\begin{equation*}
\liminf_{\varepsilon\to 0}\int_{0}^{T}\widetilde{\cE}\e(t,v\e(t))dt \geq \int_{0}^{T}\liminf_{\varepsilon\to 0} \widetilde{\cE}\e(t,v\e(t))dt.
\end{equation*}
Moreover, for a fixed $t\in (0,T)$, we find a subsequence $\varepsilon'$ such that $\liminf_{\varepsilon\to 0}\widetilde{\cE}\e(t,v\e(t))=\lim_{\varepsilon'\to 0}\widetilde{\cE}_{\varepsilon'}(t,v_{\varepsilon'}(t))$. With help of the uniform estimate (\ref{uniform:estimate}), we obtain $v_{\varepsilon'}(t)\overset{2s}{\rightharpoonup} v(t)$ in $L^p(\Omega\times Q)$ and (up to a subsequence) $\nabla v_{\varepsilon'}(t)\overset{2s}{\rightharpoonup}\nabla v(t)+\chi$ in $L^2(\Omega\times Q)$ for some $\chi \in L^2_{\mathsf{pot}}(\Omega)\otimes L^2(Q)$. Therefore, we have
\begin{align}\label{eq:928}
\begin{split}
\liminf_{\varepsilon\to 0}\widetilde{\cE}\e(t,v\e(t))  \geq & \liminf_{\varepsilon'\to 0} \ex{\int_Q A(\omega)\mathcal{T}_{\varepsilon'}\nabla v_{\varepsilon'}(t)(\omega,x)\cdot \mathcal{T}_{\varepsilon'} \nabla v_{\varepsilon'}(t)(\omega,x)dx} \\ & +\liminf_{\varepsilon'\to 0} \ex{\int_Q e^{2\Lambda t}f(\omega, e^{-\Lambda t} \mathcal{T}_{\varepsilon'} v_{\varepsilon'}(t)(\omega,x))-\Lambda \frac{r(\omega)}{2}|\mathcal{T}_{\varepsilon'} v_{\varepsilon'}(t)(\omega,x)|^2 dx}\\ \geq & \widetilde{\cE}_{\mathsf{hom}}(t,v(t)),
\end{split}
\end{align}
where $\widetilde{\cE}_{\mathsf{hom}}(t,v):=e^{2\Lambda t}\cE_{\mathsf{hom}}(e^{-\Lambda t}v)-\Lambda \mathcal{R}_{\mathsf{hom}}(v)$ and the last inequality follows by convexity (and l.s.c.) of the underlying functionals. 

In order to complete the limit passage in (\ref{main:inequality:189}), it is left to treat the second and third terms on the left-hand side. In the following, we show that there exists a sequence $w\e\in L^2(0,T; \sB)$ such that
\begin{equation}\label{second:term:convergence}
\lim_{\varepsilon\to 0} \int_0^T \ex{-D\mathcal{R}\e(\dot{v}\e(t)),w\e(t)}_{\sB^*,\sB}-\widetilde{\cE}\e(t,w\e(t))dt= \int_0^T \widetilde{\cE}_{\mathsf{hom}}^*(t,-D\mathcal{R}_{\mathsf{hom}}(\dot{v}(t)))dt,
\end{equation}
where $\widetilde{\cE}_{\mathsf{hom}}^*(t,\cdot):\sB_0^* \to \R \cup \cb{\infty}$ denotes the conjugate of the functional $\widetilde{\cE}_{\mathsf{hom}}(t,\cdot)$, and it is defined as 
\begin{equation*}
\widetilde{\cE}_{\mathsf{hom}}^*(t,\xi)=\sup_{w\in \sB_0}\brac{\ex{\xi,w}_{\sB_0^*,\sB_0}-\widetilde{\cE}_{\mathsf{hom}}(t,w)}.
\end{equation*}
We remark that $\widetilde{\cE}_{\mathsf{hom}}: [0,T]\times \sB_0 \to \R \cup \cb{\infty}$ is a normal integrand in the sense that it is $\mathcal{L}(0,T)\otimes \mathcal{B}(\sB_0)$-measurable and for each $t\in [0,T]$, $\widetilde{\cE}_{\mathsf{hom}}(t,\cdot)$ is l.s.c. on $\sB_0$ and it is proper. Moreover, for each $t\in [0,T]$, $\widetilde{\cE}_{\mathsf{hom}}(t,\cdot)$ is convex. \cite[Theorem 2]{rockafellar1971} and the fact that the functional $w\in L^2(0,T; \sB_0)\mapsto \int_0^T \widetilde{\cE}_{\mathsf{hom}}(t,w(t))dt$ attains its minimum imply that there exists $w\in L^2(0,T; \sB_0)$ such that
\begin{equation*}
\int_0^T \widetilde{\cE}_{\mathsf{hom}}^*(t,-D\mathcal{R}_{\mathsf{hom}}(\dot{v}(t)))dt = \int_0^T \ex{-D\mathcal{R}_{\mathsf{hom}}(\dot{v}(t)),w(t)}_{\sB_0^*,\sB_0}-\widetilde{\cE}_{\mathsf{hom}}(t,w(t)) dt.
\end{equation*}
Moreover, it holds that $\nabla w \in L^2(0,T; \sB_0^d)$ and therefore similarly as in the proof of Theorem \ref{thm2} (ii) we may find $\chi \in L^2(0,T; L^2_{\mathsf{pot}}(\Omega)\otimes L^2(Q))$ such that
\begin{equation*}
\int_0^T \widetilde{\cE}_{\mathsf{hom}}(t,w(t))dt = \int_0^T e^{2\Lambda t}\cE_0(e^{-\Lambda t}w(t),e^{-\Lambda t}\chi(t))-\Lambda \mathcal{R}_{\mathsf{hom}}(w(t))dt,
\end{equation*}
where $\cE_0(v,\chi)= \ex{\int_Q A(\omega)(\nabla v(\omega,x)+\chi(\omega,x))\cdot (\nabla v(\omega,x) + \chi(\omega,x))+ f(\omega, v(\omega,x))dx}$. In the following, we construct a strong recovery sequence for $w$ and $\chi$ similarly as in Lemma \ref{Nonlinear_recovery} with the only difference that the functions to be recovered are time-dependent. Since $\chi \in  L^2(0,T; L^2_{\mathsf{pot}}(\Omega)\otimes L^2(Q))$, we find a sequence $g_{\delta}=\sum_{i=1}^{n_{\delta}}\xi_i^{\delta}\eta_i^{\delta}\varphi_i^{\delta}$ with $\xi_i^{\delta}\in L^p(0,T)$, $\eta_i^{\delta}\in C^{\infty}_c(Q)$ and $\varphi_i^{\delta}\in H^1(\Omega)\cap L^p(\Omega)$, such that
\begin{equation*}
\|Dg_{\delta}-\chi\|_{L^2(0,T;\sB^d)}\to 0 \quad \text{as }\delta \to 0.
\end{equation*}
Above, by a truncation and mollification argument (see e.g. \cite[Lemma 2.2]{bourgeat1994stochastic}) we may choose $\varphi_i^{\delta}\in H^1(\Omega)\cap L^p(\Omega)$ and not only in $H^1(\Omega)$ (as the definition of $L^2_{\mathsf{pot}}(\Omega)$ suggests).
We define $w_{\delta,\varepsilon}= w+ \varepsilon \mathcal{T}_{-\varepsilon}g_{\delta}$ and similarly as in the proof of Lemma \ref{Nonlinear_recovery}, we compute
\begin{eqnarray*}
&& \|\unf w_{\delta,\varepsilon} - w\|_{L^p(0,T; L^p(\Omega \times Q))}+\|\unf \nabla w_{\delta,\varepsilon} - \nabla w -\chi \|_{L^2(0,T; \sB^d)}\\ & \leq & \varepsilon \|g_{\delta}\|_{L^p(0,T; L^p(\Omega \times Q))}+ \|Dg_{\delta} -\chi \|_{L^2(0,T; \sB^d)}+\varepsilon \|\nabla g_{\delta} \|_{L^2(0,T; \sB^d)}.
\end{eqnarray*}
Letting first $\varepsilon \to 0$ and then $\delta \to 0$, the right-hand side above vanishes, therefore we can extract a diagonal sequence $\delta(\varepsilon)\to 0$ (as $\varepsilon\to 0$) such that $w_{\varepsilon}:=w_{\delta(\varepsilon),\varepsilon}$ satisfies
\begin{equation*}
\unf w\e \to w \text{ strongly in }L^p(0,T; L^p(\Omega\times Q)), \quad \unf \nabla w\e \to \nabla w + \chi \text{ strongly in }L^2(0,T; \sB^d).
\end{equation*} 
For the sequence $w\e$, we have 
\begin{eqnarray}
& & \int_0^T \ex{-D\mathcal{R}\e(\dot{v}\e(t)),w\e(t)}_{\sB^*,\sB}-\widetilde{\cE}\e(t,w\e(t))dt\\ & = & \int_{0}^T\ex{\int_Q -r(\omega)\unf \dot{v}\e(t,\omega,x) \unf w\e(t,\omega,x)- A(\omega)\unf \nabla w\e(t,\omega,x)\cdot \unf \nabla w\e(t,\omega,x)dx}dt \nonumber \\
&& - \int_{0}^T e^{2\Lambda t} \ex{\int_{Q}f(\omega,e^{-\Lambda t}\unf w\e(t,\omega,x))-\frac{\Lambda}{2} r(\omega)|\unf w\e(t,\omega,x)|^2 dx}dt. \nonumber 
\end{eqnarray}
Using the convergence properties of $u\e$, we obtain that $\unf \dot{v}\e \rightharpoonup \dot{v}$ weakly in $L^2(0,T; \sB)$ and therefore the first term on the right-hand side above converges to $\int_{0}^T \ex{-D\mathcal{R}_{\mathsf{hom}}(\dot{v}(t)),w(t)}_{\sB_0^*,\sB_0}dt$. Using the strong convergence of $\unf \nabla w\e$, it follows that the second term on the right-hand side converges to $-\int_0^{T}\ex{\int_Q A(\omega)(\nabla w(t,\omega,x)+\chi(t,\omega,x))\cdot (\nabla w(t,\omega,x)+\chi(t,\omega,x))dx}dt$. Using the growth assumptions of $f$ and its continuity in its second variable and with help of the strong convergence $\unf w\e \to w$ in $L^p(0,T; L^p(\Omega\times Q))$, we conclude that the sum of the last two terms converges to 
\begin{equation*}
-\int_{0}^T e^{2\Lambda t}\ex{\int_{Q}f(\omega,e^{-\Lambda t}w(t,\omega,x))dx}+\Lambda \mathcal{R}_{\mathsf{hom}}(w(t))dt.
\end{equation*}
Collecting the above statements, we have that $w\e$ satisfies (\ref{second:term:convergence}). 

Finally, considering all the above estimates for the terms in (\ref{main:inequality:189}), we are able to pass to the limit $\varepsilon\to 0$ in (\ref{main:inequality:189}) to obtain
\begin{eqnarray*}
& & \int_{0}^T \widetilde{\cE}_{\mathsf{hom}}(t,v(t))+\widetilde{\cE}^*_{\mathsf{hom}}(t,-D\mathcal{R}_{\mathsf{hom}}(\dot{v}(t))) dt\\ & \leq & -\mathcal{R}_{\mathsf{hom}}(v(T))+\mathcal{R}_{\mathsf{hom}}(v(0))=\int_{0}^T \ex{-D\mathcal{R}_{\mathsf{hom}}(\dot{v}(t)),v(t)}_{\sB_0^*,\sB_0}dt.
\end{eqnarray*}
We have $\widetilde{\cE}_{\mathsf{hom}}(t,v(t))+\widetilde{\cE}^*_{\mathsf{hom}}(t,-D\mathcal{R}_{\mathsf{hom}}(\dot{v}(t))) \geq \ex{-D\mathcal{R}_{\mathsf{hom}}(\dot{v}(t)),v(t)}_{\sB_0^*,\sB_0}$ (for a.e. $t$) by the definition of $\widetilde{\cE}_{\mathsf{hom}}^*$. As a result of this and of the above inequality, it follows that for a.e. $t$ it holds
\begin{equation}\label{help:1}
\widetilde{\cE}_{\mathsf{hom}}(t,v(t))+\widetilde{\cE}^*_{\mathsf{hom}}(t,-D\mathcal{R}_{\mathsf{hom}}(\dot{v}(t))) = \ex{-D\mathcal{R}_{\mathsf{hom}}(\dot{v}(t)),v(t)}_{\sB_0^*,\sB_0}.
\end{equation}
Consequently, we obtain (using standard convex analysis arguments)
\begin{equation*}
\widetilde{\cE}_{\mathsf{hom}}(t,v(t)) \leq \widetilde{\cE}_{\mathsf{hom}}(t,w)+\ex{-D\mathcal{R}_{\mathsf{hom}}(\dot{v}(t)),v(t)-w}_{\sB_0^*,\sB_0} \quad \text{for all }w\in \sB_0.
\end{equation*}
Using the above inequality and similar reasoning as in Step 2, we obtain that $u$ satisfies (\ref{diff:incl:2}).

Moreover, note that using (\ref{eq:920}) and the strong convergence of the initial data, we obtain
\begin{equation*}
\limsup_{\varepsilon\to 0} \brac{-\mathcal{R}\e(v\e(T))+\mathcal{R}\e(v\e(0))}\leq -\mathcal{R}_{\mathsf{hom}}(v(T))+\mathcal{R}_{\mathsf{hom}}(v(0)).
\end{equation*}
Also, exploiting the inequality (\ref{main:inequality:189}) and the liminf inequalities (\ref{eq:928}) and (\ref{second:term:convergence}), we obtain
\begin{align*}
\liminf_{\varepsilon\to 0}\brac{-\mathcal{R}\e(v\e(T))+\mathcal{R}\e(v\e(0))} \geq \int_{0}^T \widetilde{\cE}_{\mathsf{hom}}(t,v(t))+\widetilde{\cE}^*_{\mathsf{hom}}(t,-D\mathcal{R}_{\mathsf{hom}}(\dot{v}(t))) dt.
\end{align*} 
Finally, the above and (\ref{help:1}) imply that $\liminf_{\varepsilon\to 0}\brac{-\mathcal{R}\e(v\e(T))+\mathcal{R}\e(v\e(0))} \geq -\mathcal{R}_{\mathsf{hom}}(v(T))+\mathcal{R}_{\mathsf{hom}}(v(0))$ and therefore
\begin{equation*}
\lim_{\varepsilon \to 0}\brac{-\mathcal{R}\e(v\e(T))+\mathcal{R}\e(v\e(0))}= -\mathcal{R}_{\mathsf{hom}}(v(T))+\mathcal{R}_{\mathsf{hom}}(v(0)).
\end{equation*}
Furthermore, since $v\e(0)=u\e(0)\to u(0)=v(0)$ strongly in $\sB$, it follows that $\mathcal{R}\e(v\e(0))\to \mathcal{R}_{\mathsf{hom}}(v(0))$ and thus $\mathcal{R}\e(v\e(T))\to \mathcal{R}_{\mathsf{hom}}(v(T))$. This yields that $v\e(T)\overset{2}{\to} v(T)$ strongly in $\sB$ and using that $P_{\mathsf{inv}}v(T)=v(T)$ it follows that $v\e(T) \to v(T)$ strongly in $\sB$. Consequently, we obtain $u\e(T)\to u(T)$ strongly in $\sB$. The above procedure can be repeated with $T$ replaced by an arbitrary $t\in (0,T)$, hence we obtain that $u\e(t)\to u(t)$ strongly in $\sB$ for each $t\in [0,T]$.
\medskip

\textit{Step 4. Convergence of $\dot{u}\e$ and $\cE\e(u\e(t))$.}

We test (\ref{diff_incl}) with $\dot{u}\e$ and with the help of the chain rule for $\cE\e$ we obtain
\begin{equation*}
\ex{D\mathcal{R}\e(\dot{u}\e(t)),\dot{u}\e(t)}_{\sB^{*},\sB}=- \frac{d}{dt}\cE\e(u\e(t)).
\end{equation*}
For an arbitrary $t\in (0,T]$, we integrate the above equality on the interval $(0,t)$ to obtain
\begin{equation*}
\int_{0}^t \ex{D\mathcal{R}\e(\dot{u}\e(s)),\dot{u}\e(s)}_{\sB^{*},\sB}ds = \cE\e(u\e(0))-\cE\e(u\e(t)). 
\end{equation*}
Since $u\e(t)\to u(t)$ strongly in $\sB$, we obtain that $\liminf_{\varepsilon\to 0}\cE\e(u\e(t))\geq \cE_{\mathsf{hom}}(u(t))$  using the usual two-scale convergence arguments for the first (quadratic) part of the energy and strong convergence of $\unf u\e$ for the second (non-convex) part. As a consequence, using the convergence $\cE\e(u\e(0))\to \cE_{\mathsf{hom}}(u(0))$, we obtain 
\begin{equation*}
\limsup_{\varepsilon\to 0}\int_{0}^t \ex{D\mathcal{R}\e(\dot{u}\e(s)),\dot{u}\e(s)}_{\sB^{*},\sB}ds \leq \cE_{\mathsf{hom}}(u(0))-\cE_{\mathsf{hom}}(u(t)) = \int_{0}^t \ex{D\mathcal{R}_{\mathsf{hom}}(\dot{u}(s)),\dot{u}(s)}_{\sB^{*}_0,\sB_0}, 
\end{equation*}
where in the last equality we use that $u$ satisfies (\ref{diff:incl:2}) and the chain rule for $\cE_{\mathsf{hom}}$. Note that $\int_{0}^t\ex{D\mathcal{R}\e(\dot{u}\e(s)),\dot{u}\e(s)}_{\sB^{*},\sB} = \int_0^t\ex{\int_{Q} r |\unf \dot{u}\e(s)|^2 }ds$ and since $\unf \dot{u}\e \rightharpoonup \dot{u}$ weakly in $L^2(0,T; \sB)$, it follows that
\begin{equation*}
\liminf_{\varepsilon\to 0} \int_{0}^t\ex{D\mathcal{R}\e(\dot{u}\e(s)),\dot{u}\e(s)}_{\sB^{*},\sB} \geq \int_{0}^t \ex{D\mathcal{R}_{\mathsf{hom}}(\dot{u}(s)),\dot{u}(s)}_{\sB^{*}_0,\sB_0}.
\end{equation*}
Combining the last two inequalities (and the weak convergence $\unf \dot{u}\e \rightharpoonup \dot{u}$), we conclude that for any $t\in(0,T]$
\begin{equation*}
\dot{u}\e \to \dot{u} \text{ strongly in }L^2(0,t; \sB), \quad \cE\e(u\e(t))\to \cE_{\mathsf{hom}}(u(t)).
\end{equation*} 
Note that all of the above results hold for a subsequence of $\cb{\varepsilon}$, however using the uniqueness property of the solution of the limit problem we conclude (using a standard contradiction argument) that all the convergence statements hold for the entire sequence $\cb{\varepsilon}$. This concludes the proof.
\end{proof}
\begin{proof}[Proof of Proposition \ref{proposition:818}]
Note that using the $\Gamma$-convergence in $(C3)$ and the properties of $\cE\e^{\omega}$ and $\cE\e$, it follows that $\cE_{\mathsf{hom}}$ and $\widetilde{\cE}_{\mathsf{hom}}$ are convex, proper and l.s.c. functionals. To prove the claim of the proposition it is sufficient to show that $\widetilde{\cE}_{\mathsf{hom}}^*=\cE_{\mathsf{hom}}^*$ since by Proposition 2 in \cite{rockafellar1971} $\widetilde{\cE}_{\mathsf{hom}}^{**} = \widetilde{\cE}_{\mathsf{hom}}$ and ${\cE}_{\mathsf{hom}}^{**}={\cE}_{\mathsf{hom}}$. Note that $\widetilde{\cE}_{\mathsf{hom}}^*: L^q(Q)\to \R \cup \cb{\infty}$ is the Legendre-Fenchel conjugate of the functional $\widetilde{\cE}_{\mathsf{hom}}$ defined by $\widetilde{\cE}_{\mathsf{hom}}^*(f)=\sup_{u} \brac{\int_{Q}fu-\widetilde{\cE}_{\mathsf{hom}}(u)}$ (analogously we define $\brac{\cE\e^{\omega}}^*, \cE\e^{*}$ and $\cE_{\mathsf{hom}}^*$).

According to Theorem 2 in \cite{rockafellar1971}, it holds that for any $f\in L^q(Q)$,
\begin{equation}\label{equation:117}
\ex{\brac{\cE\e^{\omega}}^{*}(f)}= {\cE}\e^{*}(f).
\end{equation}
$\cE_{\mathsf{hom}}^*=\widetilde{\cE}_{\mathsf{hom}}^*$ follows by passing to the limit $\varepsilon \to 0$ in the above equality and using the following: \textit{(a)} $\ex{\brac{\cE\e^{\omega}}^*(f)} \to \cE_{\mathsf{hom}}^*(f)$, \textit{(b)} $\cE^*\e(f)\to \widetilde{\cE}_{\mathsf{hom}}^*(f)$ as $\varepsilon\to 0$. 

In the following we show only \textit{(a)}, and \textit{(b)} follows similarly (cf. proof of Corollary \ref{C:thm1}). For an arbitrary $u\in L^p(Q)$, using the growth assumption in $(C2)$, it follows that
\begin{equation*}
\cE\e^{\omega}(u)-\int_{Q}fu \leq \brac{\frac{C}{p}+1}\cE\e^{\omega}(u)+\frac{C^2}{p}+\frac{1}{q}{\|f\|^q_{L^q(Q)}}.
\end{equation*}
As a result of this and the assumption $\inf_{u}\cE\e^{\omega}(u)\leq \psi(\omega)$, it follows that 
\begin{equation*}
\inf_{u}\brac{\cE\e^{\omega}(u)-\int_{Q}fu} \leq \brac{\frac{C}{p}+1}\psi(\omega)+\frac{C^2}{p}+\frac{1}{q}{\|f\|^q_{L^q(Q)}}.
\end{equation*}
Note that for $P$-a.e. $\omega\in \Omega$, $\psi(\omega)<\infty$, thus the above inequality and the $\Gamma$-convergence $\cE\e^{\omega}\overset{\Gamma}{\to}\cE_{\mathsf{hom}}$ imply that for $P$-a.e. $\omega\in \Omega$ (using a standard $\Gamma$-convergence argument)
\begin{equation*}
-\brac{\cE\e^{\omega}}^*(f)=\inf_{u}\brac{\cE\e^{\omega}(u)-\int_{Q}f u} \to \inf_{u}\brac{\cE_{\mathsf{hom}}(u)-\int_{Q}f u}= -\cE_{\mathsf{hom}}^*(f).
\end{equation*}
Consequently, the dominated convergence theorem implies \textit{(a)}.
\end{proof}
\section{Quenched stochastic two-scale convergence and relation to stochastic unfolding}\label{Section_4}

In this section, we recall the concept of quenched stochastic two-scale convergence (cf.~\cite{Zhikov2006,heida2011extension})  and study its relation to stochastic unfolding. The notion of quenched stochastic two-scale convergence is based on the individual ergodic theorem, see Theorem~\ref{thm:ergodic-thm}. We thus assume throughout this section that 
\begin{equation*}
  (\Omega,\mathcal F,P,\tau)\text{ satisfies Assumption~\ref{Assumption_2_1} and $P$ is ergodic.}
\end{equation*}
Moreover, throughout this section we fix exponents $p\in(1,\infty)$, $q:=\frac{p}{p-1}$, and an open and bounded domain $Q\subset\R^d$. We denote by $(\sB^p, \|\cdot\|_{\sB^p})$ the Banach space $L^p(\Omega\times Q)$ and the associated norm, and we write $(\sB^p)^*$ for the dual space. For the definition of quenched two-scale convergence we need to specify a suitable space of test-functions in $\sB^q$ that is countably generated. To that end we fix sets $\sD_\Omega$ and $\sD_Q$ such that
\begin{itemize}
\item $\sD_\Omega$ is a countable set of bounded, measurable functions on $(\Omega,\mathcal F)$ that contains the identity $\mathbf 1_{\Omega}\equiv 1$ and is dense in $L^1(\Omega)$ (and thus in $L^r(\Omega)$ for any $1\leq r<\infty$).
\item $\sD_Q\subset C(\overline Q)$ is a countable set that contains the identity $\mathbf 1_Q\equiv 1$ and is dense in $L^1(Q)$ (and thus in $L^r(Q)$ for any $1\leq r<\infty$).
\end{itemize}
We denote by $\sA:=\{\varphi(\omega,x)=\varphi_\Omega(\omega)\varphi_Q(x)\,:\,\varphi_\Omega\in\sD_\Omega,\varphi_Q\in\sD_Q\}$ the set of simple tensor products (a countable set), and by $\sD_0$ the $\mathbb Q$-linear span of $\sA$, i.e.~
\begin{equation*}
  \sD_0:=\big\{\,\sum_{j=1}^m\lambda_j\varphi_j\,:\,m\in\N,\,\lambda_1,\ldots,\lambda_m\in\mathbb Q,\,\varphi_1,\ldots,\varphi_m\in\sA\,\big\}.
\end{equation*}
We finally set $\sD:=\mbox{span}\sA=\mbox{span}\sD_0$ and denote by $\overline{\sD}:=\mbox{span}(\sD_Q)$ (the span of $\sD_Q$ seen as a subspace of $\sD$), and note that $\sD$ and $\sD_0$ are dense subsets of $\sB^q$, while the closure of $\overline{\sD}$ in $\sB^q$ is isometrically isomorphic to $L^q(Q)$. Let us anticipate that $\sD$ serves as our space of test-functions for stochastic two-scale convergence. As opposed to two-scale convergence in the mean, ``quenched'' stochastic two-scale convergence is defined relative to a fixed ``admissible'' realization $\omega_0\in\Omega$. Throughout this section we denote by $\Omega_0$ the set of admissible realizations; it is a set of full measure determined by the following lemma:
\begin{lemma}\label{L:admis}
  There exists a measurable set $\Omega_0\subset\Omega$ with $P(\Omega_0)=1$ s.t. for all $\varphi,\varphi'\in\sA $, all $\omega_0\in\Omega_0$, and $r\in\{p,q\}$ we have with $(\unf^*\varphi)(\omega,x):=\varphi(\tau_{\frac{x}{\eps}}\omega,x)$,
  \begin{align*}
    \limsup\limits_{\eps\to 0}\|(\unf^*\varphi)(\omega_0,\cdot)\|_{L^r(Q)}&\leq \|\varphi\|_{\sB^r}\\ \text{and}\qquad \lim\limits_{\eps\to 0}\int_Q\unf^*(\varphi\varphi')(\omega_0,x)dx&=\ex{\int_Q(\varphi\varphi')(\omega_0,x)\,dx}.
  \end{align*}
\end{lemma}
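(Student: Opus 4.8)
The plan is to reduce both assertions to Birkhoff's ergodic theorem (Theorem~\ref{thm:ergodic-thm}), applied to a suitable \emph{countable} family of bounded scalar functions on $\Omega$, and to then obtain $\Omega_0$ as the intersection of the corresponding full-measure sets. The first step is to exploit that every element of $\sA$ is a simple tensor. Writing $\varphi=\varphi_\Omega\varphi_Q$ and $\varphi'=\varphi'_\Omega\varphi'_Q$ with $\varphi_\Omega,\varphi'_\Omega\in\sD_\Omega$ bounded and $\varphi_Q,\varphi'_Q\in\sD_Q\subset C(\overline Q)$, one has $(\unf^*\varphi)(\omega_0,x)=S\varphi_\Omega(\omega_0,\tfrac{x}{\eps})\,\varphi_Q(x)$, where $S$ is the stationary extension of Lemma~\ref{L:stat}, and hence for $r\in\{p,q\}$
\begin{gather*}
  \|(\unf^*\varphi)(\omega_0,\cdot)\|_{L^r(Q)}^r=\int_Q S(|\varphi_\Omega|^r)(\omega_0,\tfrac{x}{\eps})\,|\varphi_Q(x)|^r\,dx,\\
  \int_Q\unf^*(\varphi\varphi')(\omega_0,x)\,dx=\int_Q S(\varphi_\Omega\varphi'_\Omega)(\omega_0,\tfrac{x}{\eps})\,\varphi_Q(x)\varphi'_Q(x)\,dx.
\end{gather*}
The essential point of these identities is that the ``random'' factors $|\varphi_\Omega|^r$ and $\varphi_\Omega\varphi'_\Omega$ are bounded, hence integrable (indeed they lie in every $L^s(\Omega)$), so that the conclusions of Theorem~\ref{thm:ergodic-thm} are at our disposal for them.

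Next I would observe that, $\sD_\Omega$ being countable, the family
\begin{equation*}
  \mathcal P:=\{\,|\varphi_\Omega|^p,\;|\varphi_\Omega|^q\;:\;\varphi_\Omega\in\sD_\Omega\,\}\cup\{\,\varphi_\Omega\varphi'_\Omega\;:\;\varphi_\Omega,\varphi'_\Omega\in\sD_\Omega\,\}
\end{equation*}
is a countable set of bounded measurable functions on $\Omega$. For each $\psi\in\mathcal P$, Theorem~\ref{thm:ergodic-thm} provides a measurable set $\Omega_\psi\subset\Omega$ with $P(\Omega_\psi)=1$ such that $S\psi(\omega_0,\tfrac{\cdot}{\eps})\weakto\ex{\psi}$ weakly in $L^2_{loc}(\R^d)$ for every $\omega_0\in\Omega_\psi$. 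I would then set $\Omega_0:=\bigcap_{\psi\in\mathcal P}\Omega_\psi$, which is measurable with $P(\Omega_0)=1$, being a countable intersection of full-measure sets.

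Finally, I would pass to the limit for fixed $\omega_0\in\Omega_0$, $\varphi,\varphi'\in\sA$ and $r\in\{p,q\}$. Since $Q$ is bounded and $\varphi_Q,\varphi'_Q$ are continuous on $\overline Q$, the functions $|\varphi_Q|^r$ and $\varphi_Q\varphi'_Q$ lie in $L^2(Q)$ and, extended by zero to $\R^d$, are admissible test functions against the weakly $L^2_{loc}(\R^d)$-convergent sequences $S(|\varphi_\Omega|^r)(\omega_0,\tfrac{\cdot}{\eps})$ and $S(\varphi_\Omega\varphi'_\Omega)(\omega_0,\tfrac{\cdot}{\eps})$. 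Inserting this into the two identities of the first paragraph and using Fubini to factor norms and expectations yields
\begin{gather*}
  \lim_{\eps\to0}\|(\unf^*\varphi)(\omega_0,\cdot)\|_{L^r(Q)}^r=\ex{|\varphi_\Omega|^r}\int_Q|\varphi_Q|^r\,dx=\|\varphi\|_{\sB^r}^r,\\
  \lim_{\eps\to0}\int_Q\unf^*(\varphi\varphi')(\omega_0,x)\,dx=\ex{\varphi_\Omega\varphi'_\Omega}\int_Q\varphi_Q\varphi'_Q\,dx=\ex{\int_Q(\varphi\varphi')(\omega,x)\,dx},
\end{gather*}
which in particular gives the stated $\limsup$-inequality, indeed with equality. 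I expect the only mildly delicate point to be that a \emph{single} set $\Omega_0$ must work for all test pairs at once; this is exactly why the countability of $\sA$ (hence of $\mathcal P$) is exploited, and no genuine obstacle arises beyond this routine bookkeeping.
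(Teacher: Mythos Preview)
Your proof is correct and follows exactly the approach the paper sketches in its one-line proof (``a simple consequence of Theorem~\ref{thm:ergodic-thm} and the fact that $\sA$ is countable''); you have simply filled in the details by exploiting the tensor-product structure of elements of $\sA$ and applying Birkhoff to the countable family $\mathcal P$. Your observation that the first assertion in fact holds with equality (not merely as a $\limsup$-inequality) is correct and worth noting.
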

\begin{proof}
  This is a simple consequence of Theorem~\ref{thm:ergodic-thm} and the fact that $\sA$ is countable.
\end{proof}
For the rest of the section $\Omega_0$ is fixed according to Lemma~\ref{L:admis}.
\medskip

\textbf{Structure of Section \ref{Section_4}.} In Section \ref{Section:4:1} we quickly recall the definition of quenched two-scale convergence and its main properties. Section \ref{Section:4:2} is dedicated to the comparison of the notions of quenched and mean stochastic two-scale convergence using Young measures. In the last Section \ref{Section:4:3} we demonstrate that mean homogenization results (e.g., homogenization of convex integral functionals) might be extended to quenched results appealing to some aspects of the theory of quenched two-scale convergence. 

\subsection{Definition and basic properties}\label{Section:4:1}
The idea of quenched stochastic two-scale convergence is similar to periodic two-scale convergence: We associate with a bounded sequence $(u\e)\subset L^p(Q)$ and $\omega_0\in\Omega_0$, a sequence of linear functionals $(U\e)$ defined on $\sD$. We can pass (up to a subsequence) to a pointwise limit $U$, which is again a linear functional on $\sD$ and which (thanks to Lemma~\ref{L:admis}) can be uniquely extended to a bounded linear functional on $\sB^q$. We then define the \textit{weak quenched $\omega_0$-two-scale limit} of $(u\e)$ as the Riesz-representation $u\in \sB^p$ of $U\in(\sB^q)^*$.
\begin{defn}[quenched two-scale limit, cf.~\cite{Zhikov2006,Heida2017b}]
\label{def:two-scale-conv}
Let $(u\e)$ be a sequence in $L^{p}(Q)$, and let $\omega_0\in\Omega_0$ be fixed.  We say that $u\e$ converges (weakly, quenched) $\omega_0$-two-scale to $u\in \sB^{p}$, and write
$u\e\tsq{\omega_0}u$, if the sequence $u\e$ is bounded in $L^p(Q)$, and for all $\varphi\in \sD$ we have
\begin{equation}
  \lim_{\eps\to0}\int_{Q}u\e(x)(\unf^*\varphi)(\omega_0,x)\,dx=\int_\Omega\int_{Q}u(x,\omega)\varphi(\omega,x)\,dx\,dP(\omega).\label{eq:def-quenched-two-scale}
\end{equation}
\end{defn}
\begin{lemma}[Compactness]\label{lem:two-scale-limit}
Let $(u\e)$ be a bounded sequence in $L^p(Q)$ and $\omega_0\in \Omega_0$. Then there exists a subsequence (still denoted by $\eps$) and $u\in \sB^p$ such that $u_{\eps}\tsq{\omega_0}u$ and
    \begin{equation}
      \| u\|_{\sB^{p}}\leq \liminf_{\eps\to0}\|u_{\eps}\|_{L^{p}(Q)},\,\label{eq:two-scale-limit-estimate}
    \end{equation}
    and $u_{\eps}\weakto \ex{u}$ weakly in $L^p(Q)$.
\end{lemma}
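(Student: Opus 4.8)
The plan is to mimic the standard compactness argument for periodic two-scale convergence, using the countability of the test-function set $\sA$ together with a diagonal extraction, and then the Riesz representation theorem on $\sB^q$. First I would fix $\omega_0\in\Omega_0$ and, for each $\eps>0$, introduce the linear functional $U_\eps:\sD\to\R$ defined by $U_\eps(\varphi):=\int_Q u_\eps(x)(\unf^*\varphi)(\omega_0,x)\,dx$. By H\"older's inequality and the first estimate in Lemma~\ref{L:admis}, for every $\varphi\in\sA$ we have $\limsup_{\eps\to 0}|U_\eps(\varphi)|\leq \big(\sup_\eps\|u_\eps\|_{L^p(Q)}\big)\,\|\varphi\|_{\sB^q}$, so the real sequence $(U_\eps(\varphi))_\eps$ is bounded. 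Since $\sA$ is countable, a diagonal argument produces a subsequence (not relabeled) along which $U_\eps(\varphi)$ converges for every $\varphi\in\sA$, hence for every $\varphi\in\sD_0$ (the $\mathbb Q$-linear span) by linearity; call the limit $U(\varphi)$. Extending $\R$-linearly and using that $\sD_0$ is $\R$-dense in $\sD$ and in $\sB^q$, together with the uniform bound $|U_\eps(\varphi)|\leq C\|\varphi\|_{\sB^q}+o(1)$, one checks $U$ extends uniquely to a bounded linear functional on $\sB^q$ with $\|U\|_{(\sB^q)^*}\leq \liminf_{\eps\to 0}\|u_\eps\|_{L^p(Q)}$.

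Next I would invoke the Riesz representation theorem for $L^q(\Omega\times Q)^*\simeq L^p(\Omega\times Q)=\sB^p$ to obtain $u\in\sB^p$ with $U(\varphi)=\ex{\int_Q u\,\varphi\,dx}$ for all $\varphi\in\sB^q$, and $\|u\|_{\sB^p}=\|U\|_{(\sB^q)^*}\leq \liminf_{\eps\to 0}\|u_\eps\|_{L^p(Q)}$, which is \eqref{eq:two-scale-limit-estimate}. By construction of $U$ as the pointwise limit of the $U_\eps$ on $\sD$, this $u$ satisfies \eqref{eq:def-quenched-two-scale} for all $\varphi\in\sD$, i.e.~$u_\eps\tsq{\omega_0}u$. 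Finally, to identify the weak $L^p(Q)$-limit, I would test with $\varphi=\varphi_Q\in\overline{\sD}=\mathrm{span}(\sD_Q)$, i.e.~functions depending only on $x$; then $\unf^*\varphi=\varphi$ and $U_\eps(\varphi_Q)=\int_Q u_\eps\varphi_Q\,dx\to\ex{\int_Q u\,\varphi_Q\,dx}=\int_Q\ex{u}\varphi_Q\,dx$. Since $\sD_Q$ is dense in $L^q(Q)$ and $(u_\eps)$ is bounded in $L^p(Q)$, a standard density argument upgrades this to $u_\eps\weakto\ex{u}$ weakly in $L^p(Q)$.

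The only mildly delicate point — the ``main obstacle'' — is verifying that the pointwise limit $U$ on the countable $\mathbb Q$-vector space $\sD_0$ really extends to a \emph{bounded} $\R$-linear functional on all of $\sB^q$: one must pass the estimate $\limsup_\eps|U_\eps(\varphi)|\le C\|\varphi\|_{\sB^q}$ through the $\liminf$/$\limsup$ bookkeeping and the $\mathbb Q$-to-$\R$ and $\sD_0$-to-$\sB^q$ closures carefully, using that $\sA$ (hence $\sD_0$) is dense in $\sB^q$ in the norm topology. Everything else is routine: H\"older, the diagonal argument, Riesz representation, and a density argument for the weak convergence. I would also remark that this lemma is essentially the $L^p$-analogue of the corresponding statement in \cite{Zhikov2006,Heida2017b}, so one may alternatively cite those sources for the extraction step and only spell out the identification of $\ex{u}$ as the weak $L^p(Q)$-limit.
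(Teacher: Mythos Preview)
Your proposal is correct and follows essentially the same route as the paper: define the functionals $U_\eps$, bound them on $\sA$ via H\"older and Lemma~\ref{L:admis}, extract a subsequence by countability of $\sA$, extend the limit to a bounded functional on $\sB^q$, apply Riesz, and finally test with $\mathbf 1_\Omega\varphi_Q$ to get the weak $L^p(Q)$-limit. One minor simplification: your detour through the $\mathbb Q$-span $\sD_0$ and the $\mathbb Q$-to-$\R$ extension is unnecessary, since each $U_\eps$ is already an $\R$-linear functional (it is an integral), so convergence on $\sA$ immediately yields convergence on the full $\R$-span $\sD=\mathrm{span}(\sA)$ and the pointwise limit is automatically $\R$-linear --- this is exactly how the paper proceeds.
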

{\it (For the proof see Section~\ref{S:4:p1}).}\smallskip

For our purpose it is convenient to have a metric characterization of two-scale convergence.
\begin{lemma}[Metric characterization]\label{L:metric-char}
  \begin{enumerate}[(i)]
  \item 
  Let $\{\varphi_j\}_{j\in\N}$ denote an enumeration of  $\sA_1:=\{\frac{\varphi}{\|\varphi\|_{\sB^q}}\,:\,\varphi\in \sD_0\}$. The vector space $\mbox{\rm Lin}(\sD):=\{U:\sD\to\R\text{ linear}\,\}$ endowed with the metric $$d(U,V;\mbox{\rm Lin}(\sD)):=\sum_{j\in\N}2^{-j}\frac{|U(\varphi_j)-V(\varphi_j)|}{|U(\varphi_j)-V(\varphi_j)|+1}$$  is complete and separable.
\item   Let $\omega_0\in\Omega_0$. Consider the maps
  \begin{align*}
    &J_\eps^{\omega_0}: L^p(Q)\to \mbox{\rm Lin}(\sD),\qquad (J_\eps^{\omega_0} u)(\varphi):=\int_Qu(x)(\unf^*\varphi)(\omega_0,x)\,dx,\\
    &J_0:\sB^p\to \mbox{\rm Lin}(\sD),\qquad (J_0u)(\varphi):=\ex{\int_Qu\varphi}.
  \end{align*}
  Then for any bounded sequence $u_\eps$ in $L^p(Q)$ and any $u\in \sB^p$ we have $u_\eps\tsq{\omega_0}u$ if and only if $J_\eps^{\omega_0} u_\eps \to J_0u$ in $\mbox{\rm Lin}(\sD)$.
\end{enumerate}
\end{lemma}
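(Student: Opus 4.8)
The plan is to reduce both parts to the single elementary fact that $d$ metrizes pointwise convergence of linear functionals on $\sD$. For part (i), I would first check that $d$ is a genuine metric on $\mathrm{Lin}(\sD)$: each summand is at most $2^{-j}$ so the series converges, symmetry is clear, and since $t\mapsto\tfrac{t}{t+1}$ is increasing, concave and vanishes at $0$ it is subadditive, giving the triangle inequality term by term. For positivity, $d(U,V)=0$ forces $U(\varphi_j)=V(\varphi_j)$ for all $j$, i.e.\ $U=V$ on $\sD_0$; since every element of $\sA$ is, up to normalization, one of the $\varphi_j$ (or the zero element of $\sB^q$) and $\sD=\mathrm{span}_{\R}\sA$, $\R$-linearity upgrades this to $U=V$ on $\sD$. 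The same estimate shows $d(U_n,U)\to0$ iff $U_n(\varphi_j)\to U(\varphi_j)$ for every $j$; rescaling from $\sA_1$ to $\sD_0$ and then taking finite $\R$-linear combinations, this is equivalent to $U_n(\varphi)\to U(\varphi)$ for every $\varphi\in\sD$. I would record this equivalence explicitly, as it is exactly what part (ii) requires.

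Completeness then follows: given a $d$-Cauchy sequence $(U_n)$, the bound $d(U_n,U_m)\ge 2^{-j}\tfrac{|U_n(\varphi_j)-U_m(\varphi_j)|}{|U_n(\varphi_j)-U_m(\varphi_j)|+1}$ makes each real sequence $(U_n(\varphi_j))_n$ Cauchy, hence convergent; I would define $U$ on $\sA$ by $U(\varphi):=\lim_nU_n(\varphi)$ and extend $\R$-linearly to $\sD=\mathrm{span}_{\R}\sA$, the extension being well defined because every finite linear relation $\sum_ic_i\varphi_i=0$ among elements of $\sA$ passes to the limit, $\sum_ic_iU(\varphi_i)=\lim_nU_n\big(\sum_ic_i\varphi_i\big)=0$. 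Then $U\in\mathrm{Lin}(\sD)$ and, by the recorded equivalence, $U_n\to U$ in $d$. For separability I would use the injection $\iota(U):=(U(\varphi_j))_{j\in\N}$ of $\mathrm{Lin}(\sD)$ into $\R^{\N}$: the recorded equivalence says $\iota$ is a homeomorphism onto its image when $\R^{\N}$ carries the product topology, and a subspace of the separable space $\R^{\N}$ is separable.

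For part (ii) the work is essentially done. For $\varphi\in\sD$ the function $(\unf^*\varphi)(\omega_0,\cdot)$ is bounded and measurable on $Q$ (since $\sD_\Omega$ consists of bounded functions and $\sD_Q\subset C(\overline Q)$), so for $u\in L^p(Q)\subset L^1(Q)$ the quantity $(J_\eps^{\omega_0}u)(\varphi)=\int_Qu(x)(\unf^*\varphi)(\omega_0,x)\,dx$ is finite and linear in $\varphi$, and likewise $(J_0u)(\varphi)=\ex{\int_Qu\varphi}$ is finite and linear in $\varphi$ for $u\in\sB^p$; hence $J_\eps^{\omega_0}u_\eps$ and $J_0u$ lie in $\mathrm{Lin}(\sD)$. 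By the equivalence from part (i), $J_\eps^{\omega_0}u_\eps\to J_0u$ in $\mathrm{Lin}(\sD)$ iff $\int_Qu_\eps(x)(\unf^*\varphi)(\omega_0,x)\,dx\to\ex{\int_Qu\varphi}$ for every $\varphi\in\sD$, which — $(u_\eps)$ being bounded in $L^p(Q)$ by hypothesis — is exactly the definition of $u_\eps\tsq{\omega_0}u$. The one mildly delicate step is the completeness argument: one must build the limit functional directly on the spanning set $\sA$ and check that it respects the linear dependencies there, i.e.\ that $d$, assembled only from the countable rational span $\sD_0$, nonetheless controls an $\R$-linear functional on the full real span $\sD$; everything else is routine manipulation with the metric and the definitions.
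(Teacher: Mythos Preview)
Your proof is correct. For completeness and for part (ii) you argue essentially as the paper does: $d$-convergence is equivalent to pointwise convergence on $\sA_1$, hence (by $\R$-linearity and $\sD=\mathrm{span}_\R\sA_1$) on all of $\sD$; a $d$-Cauchy sequence therefore converges pointwise on $\sD$ to a linear limit, and part (ii) is then just the definition of quenched two-scale convergence rephrased via that equivalence. Your extra care about whether agreement on the countable $\Q$-span $\sD_0$ propagates to the full $\R$-span $\sD$ is justified and handled correctly.

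The one genuine difference is the separability argument. The paper proves separability by showing that restrictions to $\sD$ of functionals in $(\sB^q)^*$ form a dense subset of $\mathrm{Lin}(\sD)$: given $U\in\mathrm{Lin}(\sD)$ and $k\in\N$, it builds $U_k\in(\sB^q)^*$ agreeing with $U$ on $\mathrm{span}\{\varphi_1,\dots,\varphi_k\}$ and vanishing on a complement, so that $d(U,U_k)\le 2^{-k}$; separability of $(\sB^q)^*$ (via the countably generated $\sigma$-algebra) then finishes. Your route is more elementary: the map $\iota(U)=(U(\varphi_j))_j$ is, by the recorded equivalence, a homeomorphism of $(\mathrm{Lin}(\sD),d)$ onto its image in $\R^\N$ with the product topology, and every subspace of the separable metrizable space $\R^\N$ is separable. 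This avoids any appeal to the Banach-space structure of $\sB^q$ and uses only the countability of $\sA_1$; the paper's construction, on the other hand, exhibits a concrete dense subset consisting of \emph{bounded} functionals, which is occasionally useful downstream but not needed for the lemma as stated.
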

{\it (For the proof see Section~\ref{S:4:p1}).}\smallskip
\begin{remark}
  Convergence in the metric space $(\mbox{\rm Lin}(\sD),d(\cdot,\cdot,\mbox{\rm Lin}(\sD))$ is equivalent to pointwise convergence. $(\sB^q)^*$ is naturally embedded into the metric space by means of the restriction $J:(\sB^q)^*\to\mbox{\rm Lin}(\sD)$, $JU=U\vert_{\sD}$. In particular, we deduce that for a bounded sequences $(U_k)$ in $(\sB^q)^*$ we have $U_k\stackrel{*}{\weakto} U$ if and only if $JU_k\to JU$ in the metric space. Likewise, $\sB^p$  (resp. $L^p(Q)$) can be embedded into the metric space $\mbox{\rm Lin}(\sD)$ via $J_0$ (resp. $J_\eps^{\omega_0}$ with $\eps>0$ and $\omega_0\in\Omega_0$ arbitrary but fixed), and for a bounded sequence $(u_k)$ in $\sB^p$ (resp. $L^p(Q)$) weak convergence in $\sB^p$ (resp. $L^p(Q)$) is equivalent to convergence of $(J_0u_k)$ (resp. $(J^{\omega_0}_\eps u_k)$) in the metric space.
\end{remark}
\begin{lemma}[Strong convergence implies quenched two-scale convergence]\label{L:strong}
Let $(u\e)$ be a strongly convergent sequence in $L^p(Q)$ with limit $u\in L^p(Q)$. Then for all $\omega_0\in\Omega_0$ we have $u\e\tsq{\omega_0}u$.  
\end{lemma}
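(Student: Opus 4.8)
The plan is to reduce the claim, via linearity, to test functions in the countable set $\sA$, and then to combine the strong convergence $u_\eps\to u$ with the two ergodic-averaging properties of the realization $\omega_0\in\Omega_0$ provided by Lemma~\ref{L:admis}. Throughout, $u\in L^p(Q)$ is regarded as the element of $\sB^p$ that is constant in $\omega$; since $P$ is a probability measure this identification is isometric and $\ex{\int_Q u\varphi}=\ex{\int_Q u(x)\varphi(\omega,x)\,dx}$ for all $\varphi\in\sD$. Both sides of the defining relation \eqref{eq:def-quenched-two-scale} depend linearly on $\varphi$ and $\sD=\mbox{span}\,\sA$, so it suffices to fix $\varphi=\varphi_\Omega\varphi_Q\in\sA$ and verify the convergence; note $(\unf^*\varphi)(\omega_0,x)=\varphi_\Omega(\tau_{x/\eps}\omega_0)\varphi_Q(x)$.

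First I would remove the difference between $u_\eps$ and $u$ in the oscillating integral. By H\"older's inequality,
\[
\Bigl|\int_Q (u_\eps-u)(x)\,(\unf^*\varphi)(\omega_0,x)\,dx\Bigr|\;\le\;\|u_\eps-u\|_{L^p(Q)}\,\bigl\|(\unf^*\varphi)(\omega_0,\cdot)\bigr\|_{L^q(Q)},
\]
and the first conclusion of Lemma~\ref{L:admis} gives $\limsup_{\eps\to0}\bigl\|(\unf^*\varphi)(\omega_0,\cdot)\bigr\|_{L^q(Q)}\le\|\varphi\|_{\sB^q}<\infty$, so the right-hand side tends to $0$. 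It therefore remains to prove
\[
\int_Q u(x)\,\varphi_\Omega(\tau_{x/\eps}\omega_0)\,\varphi_Q(x)\,dx\;\longrightarrow\;\ex{\int_Q u\varphi}=\langle\varphi_\Omega\rangle\int_Q u\varphi_Q .
\]

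For this I would approximate $u$ in $L^p(Q)$: given $\delta>0$, choose $v\in\mbox{span}(\sD_Q)$ with $\|u-v\|_{L^p(Q)}<\delta$, which is possible since $\sD_Q$ is dense in $L^p(Q)$. Writing $v=\sum_{j}c_j\psi_j$ with $\psi_j\in\sD_Q$, each summand $\int_Q\varphi_\Omega(\tau_{x/\eps}\omega_0)(\varphi_Q\psi_j)(x)\,dx$ equals $\int_Q\unf^*(\varphi\varphi')(\omega_0,x)\,dx$ with $\varphi'=\mathbf{1}_\Omega\psi_j\in\sA$, so the second conclusion of Lemma~\ref{L:admis} makes it converge to $\ex{\int_Q\varphi\varphi'}=\langle\varphi_\Omega\rangle\int_Q\varphi_Q\psi_j$; summing the finitely many terms, $\int_Q v(x)(\unf^*\varphi)(\omega_0,x)\,dx\to\langle\varphi_\Omega\rangle\int_Q v\varphi_Q=\ex{\int_Q v\varphi}$. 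The errors incurred by replacing $u$ with $v$ are each bounded by $\delta\|\varphi\|_{\sB^q}$ (on the $\eps$-side, using the $L^q$-bound above; on the limit side, using H\"older and the isometric identification $\|u-v\|_{\sB^p}=\|u-v\|_{L^p(Q)}$), so letting $\eps\to0$ and then $\delta\to0$ yields the desired limit and, combined with the first step, the lemma.

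The argument is essentially routine; the one point requiring attention is that $\omega_0$ is only known to belong to the full-measure set $\Omega_0$ furnished by Lemma~\ref{L:admis}, so one may not appeal to Birkhoff's theorem (Theorem~\ref{thm:ergodic-thm}) at $\omega_0$ directly, and every instance of ergodic averaging must instead be channelled through the two conclusions of Lemma~\ref{L:admis} for test functions in the countable set $\sA$ — which is precisely the reason $\Omega_0$ was defined through that lemma.
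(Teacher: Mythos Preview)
Your proof is correct and follows essentially the same approach as the paper's: reduce by linearity to $\varphi\in\sA$, use H\"older together with the first conclusion of Lemma~\ref{L:admis} to kill the contribution of $u_\eps-u$, and invoke the second conclusion of Lemma~\ref{L:admis} for the remaining term. In fact you are more thorough than the paper: the paper's proof only records the H\"older estimate showing $\int_Q(u_\eps-u)\,\unf^*\varphi(\omega_0,\cdot)\to 0$ and tacitly leaves the verification that $\int_Q u\,\unf^*\varphi(\omega_0,\cdot)\to\ex{\int_Q u\varphi}$ to the reader, whereas you carry this out explicitly by approximating $u$ in $L^p(Q)$ by elements of $\mbox{span}(\sD_Q)$ and applying the second part of Lemma~\ref{L:admis} with $\varphi'=\mathbf 1_\Omega\psi_j\in\sA$. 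Your closing remark that one must route all ergodic averaging through Lemma~\ref{L:admis} (rather than Birkhoff directly) because $\omega_0$ is only known to lie in $\Omega_0$ is exactly the right observation.
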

{\it (For the proof see Section~\ref{S:4:p1}).}\smallskip

\begin{defn}[set of quenched two-scale cluster points]
  For a bounded sequence $(u\e)$ in $L^p(Q)$ and $\omega_0\in\Omega_0$ we denote by $\CL(\omega_0,(u\e))$ the set of all $\omega_0$-two-scale cluster points, i.e. the set of $u\in\sB^p$ with  $J_0u\in \bigcap_{k=1}^\infty\overline{\big\{J^{\omega_0}_{\eps} u_\eps\,:\,\eps<\frac{1}{k}\big\}}$  where the closure is taken in the metric space $\big(\mbox{\rm Lin}(\sD),d(\cdot,\cdot;\mbox{\rm Lin}(\sD)))$.
\end{defn}
We conclude this section with two elementary results on quenched stochastic two-scale convergence:
\begin{lemma}[Approximation of two-scale limits]
\label{lem:Every-v-is-a-fB-limit}Let $u\in\sB^p$.
Then for all $\omega_0\in\Omega_0$, there exists a sequence
$u\e\in L^{p}(Q)$ such that $u\e\stackrel{2s}{\weakto}_{\omega_0} u$ as $\eps\to0$.
\end{lemma}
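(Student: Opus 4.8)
The plan is to establish the claim first for the dense set $\sD$ of test-functions, and then to pass to general $u\in\sB^p$ by a diagonal argument using the metric characterization from Lemma~\ref{L:metric-char}. First I would observe that for a simple tensor $\varphi=\varphi_\Omega\varphi_Q\in\sA$ the natural candidate is the oscillatory sequence $u\e(x):=(\unf^*\varphi)(\omega_0,x)=\varphi_\Omega(\tau_{\frac{x}{\eps}}\omega_0)\varphi_Q(x)$. By the second identity in Lemma~\ref{L:admis} (applied with $\varphi'$ ranging over $\sA$) and linearity, for every test-function $\psi\in\sD$ we get $\int_Q u\e(x)(\unf^*\psi)(\omega_0,x)\,dx\to\ex{\int_Q\varphi\psi}$, which is exactly the defining relation \eqref{eq:def-quenched-two-scale} for $u\e\tsq{\omega_0}\varphi$; moreover the first estimate in Lemma~\ref{L:admis} gives $\limsup_\eps\|u\e\|_{L^p(Q)}\le\|\varphi\|_{\sB^p}<\infty$, so the sequence is bounded. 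Taking $\mathbb Q$-linear combinations, the same construction and estimates show that every $u\in\sD_0$ (hence, by a further trivial limiting step within $\sD_0$ since it is dense in $\sD$ in $\sB^p$-norm, every $u\in\sD$) is the quenched $\omega_0$-two-scale limit of an explicit bounded sequence in $L^p(Q)$.

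Next I would treat general $u\in\sB^p$. Since $\sD_0$ is dense in $\sB^p$, pick $v_k\in\sD_0$ with $\|v_k-u\|_{\sB^p}\to 0$. For each $k$ the previous step provides a bounded sequence $(w\e^k)\subset L^p(Q)$ with $w\e^k\tsq{\omega_0}v_k$ as $\eps\to 0$, i.e.\ by Lemma~\ref{L:metric-char}(ii), $J_\eps^{\omega_0}w\e^k\to J_0v_k$ in the complete separable metric space $(\mbox{\rm Lin}(\sD),d(\cdot,\cdot;\mbox{\rm Lin}(\sD)))$ as $\eps\to 0$. Also $J_0v_k\to J_0u$ in this metric space, because $J_0:\sB^p\to\mbox{\rm Lin}(\sD)$ is (weak-to-metric) continuous and $v_k\to u$ strongly, hence weakly, in $\sB^p$ (this is the content of the Remark following Lemma~\ref{L:metric-char}). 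Now apply the standard diagonal/Attouch argument: choose $\eps\mapsto k(\eps)\to\infty$ slowly enough that $u\e:=w\e^{k(\eps)}$ satisfies $d\big(J_\eps^{\omega_0}u\e, J_0u;\mbox{\rm Lin}(\sD)\big)\to 0$, which is precisely $u\e\tsq{\omega_0}u$ again via Lemma~\ref{L:metric-char}(ii). One should check along the way that the diagonal sequence remains bounded in $L^p(Q)$; this is automatic here because convergence in the metric $d$ of $J_\eps^{\omega_0}u\e$ to $J_0u\in(\sB^q)^*$ forces, via the uniform boundedness principle restricted to the countable total family $\{\varphi_j\}$, a uniform bound on $\|u\e\|_{L^p(Q)}$ (alternatively one simply arranges $k(\eps)$ so that $\limsup_\eps\|u\e\|_{L^p(Q)}\le\|u\|_{\sB^p}+1$ by keeping track of the bounds $\|v_k\|_{\sB^p}\le\|u\|_{\sB^p}+1$).

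The only genuine subtlety — the "hard part" — is the bookkeeping in the diagonalization: one needs the convergence $J_\eps^{\omega_0}w\e^k\to J_0v_k$ to be in a single metrizable topology (so that a diagonal sequence makes sense), and this is exactly why Lemma~\ref{L:metric-char} was set up. With that metric in hand the argument is routine: pick an increasing sequence $\eps_k\downarrow 0$ with $d(J_\eps^{\omega_0}w\e^k,J_0v_k;\mbox{\rm Lin}(\sD))\le 1/k$ for all $\eps\le\eps_k$, set $k(\eps):=k$ for $\eps\in(\eps_{k+1},\eps_k]$, and estimate $d(J_\eps^{\omega_0}u\e,J_0u)\le d(J_\eps^{\omega_0}w\e^{k(\eps)},J_0v_{k(\eps)})+d(J_0v_{k(\eps)},J_0u)\le 1/k(\eps)+d(J_0v_{k(\eps)},J_0u)\to 0$ as $\eps\to 0$. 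This yields the desired sequence $u\e\in L^p(Q)$ with $u\e\tsq{\omega_0}u$, completing the proof. \qed
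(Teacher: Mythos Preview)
Your proof is correct and follows essentially the same route as the paper: construct explicit recovery sequences for elements of $\sD_0$ via $u\e(x)=(\unf^*v_\delta)(\omega_0,x)$, invoke Lemma~\ref{L:admis} for boundedness and convergence, and then diagonalize using the metric from Lemma~\ref{L:metric-char}. One small remark: your appeal to the uniform boundedness principle to secure an $L^p(Q)$-bound on the diagonal sequence is not valid as stated (pointwise bounds on the countable family $\{\varphi_j\}$ do not by themselves control $\|u\e\|_{L^p(Q)}$, since the embeddings $J_\eps^{\omega_0}$ vary with $\eps$), but your alternative---arranging $k(\eps)$ so that $\|w_\eps^{k(\eps)}\|_{L^p(Q)}\le\|v_{k(\eps)}\|_{\sB^p}+1\le\|u\|_{\sB^p}+2$ using the bound from Lemma~\ref{L:admis}---is exactly right and is what the paper implicitly relies on as well.
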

{\it (For the proof see Section~\ref{S:4:p1}).}\smallskip

Similar to the slightly different setting in \cite{Heida2017b}
one can prove the following result:
\begin{lemma}[Two-scale limits of gradients]
\label{lem:sto-conver-grad}
Let $(u_{\eps})$ be a sequence in $W^{1,p}(Q)$ and $\omega_0\in\Omega_0$. Then there exist a subsequence (still denoted by $\eps$) and functions $u\in W^{1,p}(Q)$ and $\chi\in L^p_{\textsf{pot}}(\Omega)\otimes L^{p}(Q)$ such that $u\e\weakto u$ weakly in $W^{1,p}(Q)$ and
\[
u_{\eps}\tsq{\omega_0}u\quad\mbox{and }\quad\nabla u_{\eps}\tsq{\omega_0}\nabla u+\chi\qquad\mbox{as }\eps\to0\,.
\]
\end{lemma}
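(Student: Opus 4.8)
The plan is to follow the same template as the proof of Proposition~\ref{prop1} in the ``mean'' setting: first extract a quenched two-scale limit by compactness, then identify it by testing against oscillating stationary fields and integrating by parts. Concretely, assuming (as the statement implicitly does) that $(u_\eps)$ is bounded in $W^{1,p}(Q)$, I would first pass to a subsequence with $u_\eps\weakto u$ weakly in $W^{1,p}(Q)$, and then apply the compactness Lemma~\ref{lem:two-scale-limit} to $(u_\eps)$ and to each component of $(\nabla u_\eps)$ to obtain, along a further subsequence, $v\in\sB^p$ and $\zeta\in(\sB^p)^d$ with $u_\eps\tsq{\omega_0}v$ and $\nabla u_\eps\tsq{\omega_0}\zeta$; the same lemma also gives $u_\eps\weakto\ex v$ and $\nabla u_\eps\weakto\ex\zeta$ weakly in $L^p(Q)$, so $\ex v=u$ and $\ex\zeta=\nabla u$.

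Next I would show that $v$ carries no oscillation, i.e.\ $v=u$. Following Lemma~\ref{lem6}, for $\psi\in W^{1,q}(\Omega)$ and $\eta\in C^\infty_c(Q)$ I would test with $\varphi_\eps(x):=\eps\,\psi(\tau_{x/\eps}\omega_0)\eta(x)$; since $\partial_i\varphi_\eps=(D_i\psi)(\tau_{x/\eps}\omega_0)\eta+\eps\,\psi(\tau_{x/\eps}\omega_0)\partial_i\eta$, integration by parts in $x$ together with $\nabla u_\eps\tsq{\omega_0}\zeta$ (note the extra prefactor $\eps$) and $u_\eps\tsq{\omega_0}v$ yields $\ex{\int_Q v\,D_i\psi\,\eta\,dx}=0$, hence $\ex{v(x,\cdot)D_i\psi}=0$ for a.e.\ $x$; by Lemma~\ref{lemA}(i) this puts $v(x,\cdot)\in L^p_{\inv}(\Omega)$, and ergodicity forces $v(x,\cdot)=\ex{v(x,\cdot)}=u(x)$. (If $\partial Q$ is regular enough for Rellich's theorem this step can be shortened: $u_\eps\to u$ strongly in $L^p(Q)$ and Lemma~\ref{L:strong} applies directly.)

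The last and most substantial step is to identify $\chi:=\zeta-\nabla u$ as a potential field. For $\psi\in W^{1,q}(\Omega)^d$ with $D^*\psi=0$ and $\eta\in C^\infty_c(Q)$ I would test $\nabla u_\eps$ against $\theta_\eps(x):=\psi(\tau_{x/\eps}\omega_0)\eta(x)$; since $\nabla_x\cdot\theta_\eps=-\tfrac1\eps(D^*\psi)(\tau_{x/\eps}\omega_0)\eta+\sum_i\psi_i(\tau_{x/\eps}\omega_0)\partial_i\eta$ and the singular term drops out by $D^*\psi=0$, integrating by parts and passing to the limit with $\nabla u_\eps\tsq{\omega_0}\zeta$ and $u_\eps\tsq{\omega_0}u$ (and integrating by parts back in $x$, using that $u$ is $\omega$-independent) gives $\ex{\int_Q\chi\cdot\psi\,\eta\,dx}=0$ for all such $\psi,\eta$. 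Varying $\eta$ gives $\ex{\chi(x,\cdot)\cdot\psi}=0$ for a.e.\ $x$; since $\{\psi\in W^{1,q}(\Omega)^d:D^*\psi=0\}$ is dense in $\mathcal N(D^*)$ (Lemma~\ref{lemA}(ii) and its proof), the orthogonality relation \eqref{orth} yields $\chi(x,\cdot)\in\mathcal N(D^*)^\bot=L^p_{\pot}(\Omega)$ for a.e.\ $x$, and since $L^p_{\pot}(\Omega)$ is closed in $L^p(\Omega)^d$ this means exactly $\chi\in L^p_{\pot}(\Omega)\otimes L^p(Q)$. Collecting everything gives $u_\eps\weakto u$ in $W^{1,p}(Q)$, $u_\eps\tsq{\omega_0}u$ and $\nabla u_\eps\tsq{\omega_0}\nabla u+\chi$.

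The main obstacle I anticipate is not any of these computations but the bookkeeping needed to make the oscillating test fields legitimate: the definition of quenched two-scale convergence uses only the fixed countable class $\sD$ (adapted to Birkhoff's theorem), whereas the integration-by-parts argument needs test fields differentiable along the flow, i.e.\ in $W^{1,q}(\Omega)$, whose $\tau$-shifts need not lie in $L^q(Q)$ for \emph{every} $\omega_0$. I would resolve this exactly as in \cite{Heida2017b}: enlarge $\Omega_0$ (keeping full measure) to absorb a countable dense family of such fields via Theorem~\ref{thm:ergodic-thm}, and use an equiboundedness-and-density argument for the functionals $J_\eps^{\omega_0}u_\eps$ and $J_\eps^{\omega_0}\nabla u_\eps$ on $\sB^q$ to extend the two-scale tests from $\sD$ to this larger class.
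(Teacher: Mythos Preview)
The paper does not give its own proof of this lemma: it merely records the statement, prefaced by ``Similar to the slightly different setting in \cite{Heida2017b} one can prove the following result.'' So there is nothing to compare against beyond that pointer.

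Your outline is the right one and matches what that reference does: you transplant the argument of Proposition~\ref{prop1} to the quenched setting, using Lemma~\ref{lem:two-scale-limit} for compactness, the pairing with $\eps\psi(\tau_{x/\eps}\omega_0)\eta(x)$ to kill oscillations of the limit of $u_\eps$ (or Rellich plus Lemma~\ref{L:strong} when available), and the divergence-free test $\psi(\tau_{x/\eps}\omega_0)\eta(x)$ with $D^*\psi=0$ together with Lemma~\ref{lemA}(ii) and \eqref{orth} to land $\chi$ in $L^p_{\pot}(\Omega)\otimes L^p(Q)$. You also correctly flag the implicit boundedness hypothesis and, more importantly, the one genuine technical point specific to the quenched theory: the oscillating test fields you need are not drawn from the fixed countable class $\sD$, so one must enlarge the set $\Omega_0$ of admissible realizations (still of full measure) to accommodate a countable dense family in $W^{1,q}(\Omega)$ via Theorem~\ref{thm:ergodic-thm}, and then pass from $\sD$ to this family by an equiboundedness-and-density argument for the linear forms $J_\eps^{\omega_0}u_\eps$, $J_\eps^{\omega_0}\nabla u_\eps$. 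That is precisely the device used in \cite{Heida2017b}, and it is the step the paper is tacitly outsourcing.
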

\subsubsection{Proofs}\label{S:4:p1}
\begin{proof}[Proof of Lemma~\ref{lem:two-scale-limit}]
  Set $C_0:=\limsup\limits_{\eps\to 0}\|u\e\|_{L^p(Q)}$ and note that $C_0<\infty$. By passing to a subsequence (not relabeled) we may assume that $C_0=\liminf\limits_{\eps\to 0}\|u\e\|_{L^p(Q)}$.
Fix $\omega_0\in\Omega_0$. Define linear functionals $U_\eps\in\mbox{\rm Lin}(\sD)$ via
    \begin{equation*}
      U_\eps(\varphi):=\int_Qu\e(x)(\unf^*\varphi)(\omega_0,x)\,dx.
    \end{equation*}
    Note that for all $\varphi\in\sA$, $(U\e(\varphi))$ is a bounded sequence in $\R$. Indeed, by H\"older's inequality and Lemma~\ref{L:admis},
    \begin{equation}\label{eq:x1}
      \limsup\limits_{\eps\to0}|U\e(\varphi)|\leq      \limsup\limits_{\eps\to0}\|u\e\|_{L^p(Q)}\|\unf^*\varphi(\omega_0,\cdot)\|_{L^q(Q)}\leq C_0\|\varphi\|_{\sB^q}.
    \end{equation}
    Since $\sA$ is countable we can pass to a subsequence (not relabeled) such that $U\e(\varphi)$ converges for all $\varphi\in\sA$. By linearity and since $\sD=\mbox{span}(\sA)$, we conclude that $U\e(\varphi)$ converges for all $\varphi\in\sD$, and $U(\varphi):=\lim\limits_{\eps\to0}U\e(\varphi)$ defines a linear functional on $\sD$. In view of \eqref{eq:x1} we have $|U(\varphi)|\leq C_0\|\varphi\|_{\sB^q}$, and thus $U$ admits a unique extension to a linear functional in $(\sB^q)^*$. Let $u\in\sB^p$ denote its Riesz-representation. Then $u\e\tsq{\omega_0} u$, and
    \begin{equation*}
      \|u\|_{\sB^p}=\|U\|_{(\sB^q)^*}\leq C_0=\liminf\limits_{\eps\to 0}\|u\e\|_{L^p(Q)}.
    \end{equation*}
    Since $\mathbf 1_{\Omega}\in\sD_\Omega$ we conclude that for all $\varphi_Q\in\sD_Q$ we have
    \begin{equation*}
      \int_Qu\e(x)\varphi_Q(x)\,dx=U\e(\mathbf 1_{\Omega}\varphi_Q)\to U(\mathbf 1_{\Omega}\varphi_Q)=\ex{\int_Q u(\omega,x)\varphi_Q(x)\,dx}=\int_Q \ex{u(x)}\varphi_Q(x)\,dx.
    \end{equation*}
    Since $(u\e)$ is bounded in $L^p(Q)$, and $\sD_Q\subset L^p(Q)$ is dense, we conclude that $u\e\weakto\ex{u}$ weakly in $L^p(Q)$.
\end{proof}

\begin{proof}[Proof of Lemma~\ref{L:metric-char}]
  \begin{enumerate}[(i)]
  \item Argument for completeness: If $(U_j)$ is a Cauchy sequence in $\mbox{\rm Lin}(\sD)$, then for all $\varphi\in\sA_1$, 
    $(U_j(\varphi))$ is a Cauchy sequence in $\R$. By linearity of the $U_j$'s this implies that $(U_j(\varphi))$ is Cauchy in $\R$ for all $\varphi\in\sD$. Hence, $U_j\to U$ pointwise in $\sD$ and it is easy to check that $U$ is linear. Furthermore, $U_j\to U$ pointwise in $\sA_1$ implies $U_j\to U$ in the metric space.
    
Argument for separability: Consider the (injective) map $J:(\sB^q)^*\to\mbox{\rm Lin}(\sD)$ where $J(U)$ denotes the restriction of $U$ to $\sD$. The map $J$ is continuous, since for all $U,V\in(\sB^q)^*$ and $\varphi\in\sA_1$ we have $|(JU)(\varphi)-(JV)(\varphi)|\leq \|U-V\|_{(\sB^q)^*}\|\varphi\|_{\sB^q}=\|U-V\|_{(\sB^q)^*}$ (recall that the test functions in $\sA_1$ are normalized). Since $(\sB^q)^*$ is separable (as a consequence of the assumption that $\mathcal F$ is countably generated), it suffices to show that the range $\mathcal R(J)$ of $J$ is dense in $\mbox{\rm Lin}(\sD)$. To that end let $U\in\mbox{\rm Lin}(\sD)$. For $k\in\N$ we denote by $U_k\in(\sB^q)^*$ the unique linear functional that is equal to $U$ on the the finite dimensional (and thus closed) subspace $\mbox{span}\{\varphi_1,\ldots,\varphi_k\}\subset \sB^q$ (where $\{\varphi_j\}$ denotes the enumeration of $\sA_1$), and zero on the orthogonal complement in $\sB^q$. Then a direct calculation shows that $d(U,J(U_k);\mbox{\rm Lin}(\sD))\leq\sum_{j>k}2^{-j}=2^{-k}$. Since $k\in\N$ is arbitrary, we conclude that $\mathcal R(J)\subset\mbox{\rm Lin}(\sD)$ is dense.
  \item Let $u\e$ denote a bounded sequence in $L^p(Q)$ and $u\in \sB^p$. Then by definition, $u\e\tsq{\omega_0}u$ is equivalent to $J^{\omega_0}\e u\e\to J_0u$ pointwise in $\sD$, and the latter is equivalent to convergence in the metric space $\mbox{\rm Lin}(\sD)$.
  \end{enumerate}
\end{proof}

\begin{proof}[Lemma~\ref{L:strong}]
This follows from H{\"o}lder's inequality and Lemma~\ref{L:admis}, which imply for all $\varphi\in\sA$ the estimate
\begin{multline*}
  \limsup\limits_{\eps\to 0}\int_Q|(u\e(x)-u(x))\unf^*\varphi(\omega_0,x)|\,dx\\ 
	\leq    \limsup\limits_{\eps\to 0}\Big(\|u\e-u\|_{L^p(Q)}\left(\int_Q|\unf^*\varphi(\omega_0,x)|^q\,dx\right)^\frac1q\Big)=0.
\end{multline*}
\end{proof}

\begin{proof}[Proof of Lemma~\ref{lem:Every-v-is-a-fB-limit}]
  Since $\sD$ (defined as in Lemma~\ref{L:metric-char}) is dense in $\sB^p$, for any $\delta>0$ there exists $v_\delta\in\sD_0$ with $\|u-v_\delta\|_{\sB^p}\leq \delta$. Define $v_{\delta,\eps}(x):=\unf^*v_\delta(\omega_0,x)$. Let $\varphi\in\sD$. Since $v_\delta$ and $\varphi$ (resp. $v_\delta\varphi$) are by definition linear combinations of functions (resp. products of functions) in $\sA$, we deduce from Lemma~\ref{L:admis} that $(v_{\delta,\eps})_{\eps}$ is bounded in $L^p(Q)$, and that 
  \begin{equation*}
    \int_Qv_{\delta,\eps}\unf^*\varphi(\omega_0,x)=\int_Q\unf^*(v_\delta\varphi)(\omega_0,x)\to \ex{\int_Qv_\delta\varphi}.
  \end{equation*}
  By appealing to the metric characterization, we can rephrase the last convergence statement as $d(J^{\omega_0}\e v_{\delta,\eps},J_0v_\delta;\mbox{\rm Lin}(\sD))\to 0$. By the triangle inequality we have
  \begin{eqnarray*}
    d(J^{\omega_0}\e v_{\delta,\eps},J_0u;\mbox{\rm Lin}(\sD))&\leq& d(J^{\omega_0}\e v_{\delta,\eps},J_0v_\delta;\mbox{\rm Lin}(\sD))+d(J_0v_\delta,J_0u;\mbox{\rm Lin}(\sD)).
  \end{eqnarray*}
  The second term is bounded by $\|v_\delta-u\|_{\sB^p}\leq \delta$, while the first term vanishes for $\eps\downarrow 0$. Hence, there exists a diagonal sequence $u\e:=v_{\delta(\eps),\eps}$ (bounded in $L^p(Q)$) such that there holds $d(J^{\omega_0}\e u_{\eps},J_0u;\mbox{\rm Lin}(\sD))\to 0$. The latter implies $u_{\eps}\tsq{\omega_0}u$ by Lemma~\ref{L:metric-char}.
\end{proof}

\subsection{Comparison to stochastic two-scale convergence in the mean via Young measures}\label{Section:4:2}
In this paragraph we establish a relation between quenched two-scale
convergence and two-scale convergence in the mean (in the sense of Definition
\ref{def46}). The relation is established by Young measures: We show that any bounded sequence $u\e$ in $\sB^p$ -- viewed as a functional acting on test-functions of the form $\unf^*\varphi$ -- generates (up to a subsequence) a Young measure on $\sB^p$ that (a) concentrates on the quenched two-scale cluster points of $u\e$, and (b) allows to represent the two-scale limit (in the mean) of $u\e$. 
\begin{defn}
  We say $\boldsymbol{\nu}:=\left\{ \nu_{\omega}\right\} _{\omega\in\Omega}$ is a Young measure on $\sB^p$, if for all $\omega\in\Omega$, $\nu_\omega$ is a Borel probability measure on $\sB^p$  (equipped with the weak topology) and 
  \[
  \omega\mapsto\nu_{\omega}(B)\quad\mbox{is }\mbox{measurable for all }B\in\cB(\sB^p),
  \]
  where $\cB(\sB^p)$ denotes the Borel-$\sigma$-algebra on $\sB^p$  (equipped with the weak topology).
\end{defn}
\begin{thm}
\label{thm:Balder-Thm-two-scale}Let $u_{\eps}$ denote a bounded sequence in $\sB^p$.
Then there exists a subsequence (still denoted by $\eps$) and a Young measure  $\boldsymbol{\nu}$ on $\sB^p$
such that for all $\omega_0\in\Omega_0$,
\[
\nu_{\omega_0}\mbox{ is concentrated on }\CL\left(\omega_0,\big(u_{\eps}(\omega_0,\cdot)\big)\right),
  \]
  and 
  \[
  \liminf_{\eps\to0}\Vert u_{\eps}\Vert_{\sB^p}^{p}\geq \int_{\Omega}\left(\int_{\sB^p}\left\Vert v\right\Vert _{\sB^p}^{p}\,d\nu_{\omega}(v)\right)\,dP(\omega).
  \]
  Moreover, we have 
  \[
  u_{\eps}\ts u\qquad\text{where }u:=\int_{\Omega}\int_{\sB^p}v\, d\nu_{\omega}(v)dP(\omega).
  \]
  Finally, if there exists $v:\Omega\to\sB^p$ measurable and $\nu_{\omega}=\delta_{v(\omega)}$ for $P$-a.e. $\omega\in\Omega$,
  then up to extraction of a further subsequence (still denoted by $\eps$) we have
  \[
  u_{\eps}(\omega)\tsq{\omega}v(\omega)\qquad\text{for $P$-a.e.~$\omega\in\Omega$}.
  \]
\end{thm}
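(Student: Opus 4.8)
The plan is to construct the Young measure by compactifying the space of linear functionals and appealing to a fundamental theorem on Young measures generated by sequences in metric spaces. First I would recall the metric space $(\mathrm{Lin}(\sD),d(\cdot,\cdot;\mathrm{Lin}(\sD)))$ from Lemma~\ref{L:metric-char}: it is complete and separable, and weak-$*$ convergence of bounded functionals in $(\sB^q)^*$ is equivalent to convergence in this metric. The key observation is that for each fixed $\eps>0$ the map $\omega\mapsto J^{\omega}_\eps u_\eps \in \mathrm{Lin}(\sD)$ (where $(J^\omega_\eps v)(\varphi) = \int_Q v(x)(\unf^*\varphi)(\omega,x)\,dx$ applied to $v = u_\eps(\omega,\cdot)$) is measurable, so we obtain a sequence of $\mathrm{Lin}(\sD)$-valued random variables. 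By the a priori bound on $\|u_\eps\|_{\sB^p}$ and Lemma~\ref{L:admis}, for $P$-a.e.\ $\omega$ the family $\{J^\omega_\eps u_\eps\}_\eps$ is, for small $\eps$, relatively compact in the metric space (it lies in a weak-$*$-compact ball of $(\sB^q)^*$ embedded into $\mathrm{Lin}(\sD)$). This tightness lets me invoke the Young measure compactness theorem (e.g.\ Balder's version, which is why the theorem is named after him) to extract a subsequence and a Young measure $\boldsymbol{\nu}=\{\nu_\omega\}$ on $\mathrm{Lin}(\sD)$, which by the compactness just mentioned is in fact concentrated on the image $J_0(\sB^p)\cong \sB^p$; transporting back via the isometry $J_0$ gives a Young measure on $\sB^p$ equipped with the weak topology.

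Next I would verify the three asserted properties. The concentration statement $\nu_{\omega_0}$ is supported on $\CL(\omega_0,(u_\eps(\omega_0,\cdot)))$ follows from the general principle that a generated Young measure is concentrated on the set of cluster points of the generating sequence — here the cluster points in $\mathrm{Lin}(\sD)$ of $(J^{\omega_0}_\eps u_\eps(\omega_0,\cdot))$ — combined with the very definition of $\CL(\omega_0,\cdot)$ as the intersection of closures of tails. The norm inequality $\liminf_\eps \|u_\eps\|_{\sB^p}^p \geq \int_\Omega \int_{\sB^p}\|v\|^p_{\sB^p}\,d\nu_\omega(v)\,dP(\omega)$ follows from Fatou's lemma applied to $\eps\mapsto \|u_\eps(\omega,\cdot)\|^p_{L^p(Q)}$ together with lower semicontinuity of $v\mapsto \|v\|^p_{\sB^p}$ under weak convergence and the standard lower-semicontinuity estimate for integral functionals against Young measures (this is essentially Lemma~\ref{lem:Balder-Lem-two-scale}). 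For the identification of the two-scale limit in the mean: testing $\unf u_\eps$ against an arbitrary $\psi\in\sA$ and using the adjoint relation $\ex{\int_Q (\unf u_\eps)\psi} = \ex{\int_Q u_\eps (\unf^*\psi)}$, I rewrite the right-hand side as $\int_\Omega (J^\omega_\eps u_\eps(\omega,\cdot))(\psi)\,dP(\omega)$; the Young measure convergence then passes this to $\int_\Omega \int_{\sB^p}\big(\ex{\int_Q v\,\psi}\big|_{\text{eval}}\big)\,d\nu_\omega(v)\,dP(\omega)$, which by Fubini and the definition of $u$ equals $\ex{\int_Q u\,\psi}$; density of $\mathrm{span}\,\sA$ in $\sB^q$ and boundedness of $\unf u_\eps$ upgrade this to weak convergence, i.e.\ $u_\eps \ts u$.

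Finally, the last (Dirac) case: if $\nu_\omega = \delta_{v(\omega)}$ for $P$-a.e.\ $\omega$, then the generated Young measure being a Dirac mass at $v(\omega)$ means precisely that $J^\omega_\eps u_\eps(\omega,\cdot)\to J_0 v(\omega)$ in $\mathrm{Lin}(\sD)$ \emph{in measure} (in $\omega$). Passing to a further subsequence we upgrade convergence in measure to $P$-a.e.\ convergence in the metric space, and by Lemma~\ref{L:metric-char}(ii) this is exactly $u_\eps(\omega,\cdot)\tsq{\omega}v(\omega)$ for $P$-a.e.\ $\omega$. The main obstacle I anticipate is the careful setup of the Young measure machinery in this infinite-dimensional, weak-topology setting: one must ensure the relevant measurability (of $\omega\mapsto J^\omega_\eps u_\eps(\omega,\cdot)$ as a Borel map into the Polish space $\mathrm{Lin}(\sD)$), verify the tightness/equi-compactness hypothesis uniformly enough to apply the fundamental theorem, and check that the limit Young measure genuinely concentrates on the (metrically closed, hence Borel) image of $\sB^p$ under $J_0$ so that it can legitimately be regarded as a Young measure on $\sB^p$ with its weak topology. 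Once those technical points are in place — likely isolated in an auxiliary lemma such as Lemma~\ref{lem:Balder-Lem-two-scale} — the three conclusions follow by the routine Fatou/density arguments sketched above.
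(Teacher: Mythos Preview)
Your overall strategy—embed the problem into a metric space of linear functionals, invoke Balder's Young measure compactness theorem, and read off the claims—matches the paper's. But the execution has a genuine gap at the tightness step, and this gap cascades into the norm estimate.

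You claim that for $P$-a.e.\ $\omega$ the family $\{J^\omega_\eps u_\eps(\omega,\cdot)\}_\eps$ ``lies in a weak-$*$-compact ball of $(\sB^q)^*$''. This is not correct: for fixed $\eps>0$ the functional $J^\omega_\eps v$ acts on $\varphi\in\sD$ by $\varphi\mapsto\int_Q v\,(\unf^*\varphi)(\omega,\cdot)$, and the bound $\|(\unf^*\varphi)(\omega,\cdot)\|_{L^q(Q)}\le\|\varphi\|_{\sB^q}$ from Lemma~\ref{L:admis} is only \emph{asymptotic} (as $\eps\to 0$) and only for individual $\varphi\in\sA$. There is no reason for $J^\omega_\eps u_\eps(\omega,\cdot)$ to extend to a bounded functional on $\sB^q$ with uniformly bounded norm. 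Moreover, you only control $\int_\Omega\|u_\eps(\omega,\cdot)\|^p_{L^p(Q)}\,dP$, not $\sup_\eps\|u_\eps(\omega,\cdot)\|_{L^p(Q)}$ for a.e.\ $\omega$, so per-$\omega$ relative compactness is not available either.

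The paper resolves this by augmenting $\mathrm{Lin}(\sD)$ with two extra real coordinates: the scale $\eps$ and a norm proxy $r$, setting $s_\eps(\omega)=(J^\omega_\eps u_\eps(\omega,\cdot),\eps,\|u_\eps(\omega,\cdot)\|_{L^p(Q)})$ in the enlarged space $\sM$ (Lemma~\ref{L:metric-struct}). The point is that on the closed subset $\sM^{\omega_0}\subset\sM$ one can define a function $\|\cdot\|_{\omega_0}$ that (i) has compact sublevels (the coordinate $r$ supplies the weak $L^p(Q)$ or $\sB^p$ bound needed for compactness, case-splitting on $\eps>0$ versus $\eps=0$), and (ii) evaluates to $2\|u_\eps(\omega,\cdot)\|^p_{L^p(Q)}+\eps$ on the sequence, so its $\omega$-integral is bounded by the $\sB^p$-norm hypothesis. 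This is what Balder's tightness criterion actually requires. The same case-split (on the $\eps$-coordinate) is essential for the norm inequality: one needs a single lower semicontinuous integrand on $\sM$ that reads $\|u_\eps(\omega,\cdot)\|^p_{L^p(Q)}$ on the sequence (via $\sup_{\varphi\in\overline\sD}$) and $\|v\|^p_{\sB^p}$ on limits (via $\sup_{\varphi\in\sD}$). No such integrand exists on $\mathrm{Lin}(\sD)$ alone; your appeal to ``Fatou plus Lemma~\ref{lem:Balder-Lem-two-scale}'' does not supply it. Once the augmented space is in place, your arguments for Steps~4 and~5 (identification of the mean two-scale limit, and the Dirac case via convergence in $L^1(\Omega)$ of the metric distance and extraction of an a.e.\ subsequence) are essentially those of the paper.
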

{\it (For the proof see Section~\ref{S:4:p2}).}\smallskip

In the opposite direction we observe that quenched two-scale convergence implies two-scale convergence in the mean in the following sense:
  \begin{lemma}\label{L:fromquenchedtomean}
    Consider a family $\{(u\e^\omega)\}_{\omega\in\Omega}$ of sequences $(u^\omega\e)$ in $L^p(Q)$ and suppose that:
    \begin{enumerate}[(a)]
    \item There exists $u\in\sB^p$ s.t.~for $P$-a.e. $\omega\in\Omega$ we have $u\e^\omega\tsq{\omega}u$.
    \item There exists a sequence $(\tilde u\e)$ s.t.~$u\e^\omega(x)=\tilde u\e(\omega,x)$ for a.e.~$(\omega,x)\in\Omega\times Q$.
    \item There exists a bounded sequence $(\chi\e)$ in $L^p(\Omega)$ such that $\|u^\omega\e\|_{L^p(Q)}\leq\chi\e(\omega)$ for a.e.~$\omega\in\Omega$.
    \end{enumerate}
    Then $\tilde u\e\wt u$ weakly two-scale (in the mean).
  \end{lemma}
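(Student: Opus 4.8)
The plan is to verify directly that $\unf\tilde u\e\weakto u$ weakly in $L^p(\Omega\times Q)$, which by Definition~\ref{def46} is precisely the assertion $\tilde u\e\wt u$. First I would record boundedness: since $\unf$ is an isometry (Lemma~\ref{L:unf}),
$$\|\unf\tilde u\e\|_{L^p(\Omega\times Q)}^p=\|\tilde u\e\|_{L^p(\Omega\times Q)}^p=\ex{\|u\e^\omega\|_{L^p(Q)}^p}\le\ex{\chi\e^p},$$
which is bounded by hypothesis (c); in particular each $\tilde u\e\in L^p(\Omega\times Q)$. Because $\sD=\mathrm{span}\,\sA$ is dense in the dual space $L^q(\Omega\times Q)$ and $(\unf\tilde u\e)$ is bounded, it then suffices to show $\ex{\int_Q(\unf\tilde u\e)\varphi\,dx}\to\ex{\int_Q u\varphi\,dx}$ for every simple tensor $\varphi=\varphi_\Omega\varphi_Q\in\sA$, since the convergence then extends by linearity to $\sD$ and by density to all of $L^q(\Omega\times Q)$.

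Fix such a $\varphi$. Using the adjoint $\unf^*=\mathcal T_{-\varepsilon}$ of Lemma~\ref{L:unf} together with Fubini's theorem — the integrand $(\omega,x)\mapsto\tilde u\e(\omega,x)\varphi(\tau_{\frac{x}{\varepsilon}}\omega,x)$ is measurable by Assumption~\ref{Assumption_2_1}(iii) and integrable because $Q$ is bounded and $\varphi_\Omega,\varphi_Q$ are bounded — I would rewrite
$$\ex{\int_Q(\unf\tilde u\e)\varphi\,dx}=\ex{\int_Q\tilde u\e\,(\unf^*\varphi)\,dx}=\int_\Omega\Big(\int_Q u\e^\omega(x)\,(\unf^*\varphi)(\omega,x)\,dx\Big)\,dP(\omega)=\int_\Omega(J\e^\omega u\e^\omega)(\varphi)\,dP(\omega),$$
where $J\e^\omega,J_0$ are as in Lemma~\ref{L:metric-char} and I used hypothesis (b) to identify $u\e^\omega(\cdot)=\tilde u\e(\omega,\cdot)$ in $L^p(Q)$ for a.e.~$\omega$. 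By hypothesis (a) there is a $P$-null set $N$, which I enlarge to contain $\Omega\setminus\Omega_0$, such that $u\e^\omega\tsq{\omega}u$ for every $\omega\notin N$; since quenched two-scale convergence is tested against the whole space $\sD\supseteq\sA$ at once, this yields $(J\e^\omega u\e^\omega)(\varphi)\to(J_0u)(\varphi)=\ex{\int_Q u\varphi\,dx}$ for every $\omega\notin N$.

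It remains to pass this pointwise-in-$\omega$ limit through the integral over $\Omega$, and this is the only delicate point of the argument. The elementary estimate
$$|(J\e^\omega u\e^\omega)(\varphi)|\le\|u\e^\omega\|_{L^p(Q)}\,\|(\unf^*\varphi)(\omega,\cdot)\|_{L^q(Q)}\le\|\varphi_\Omega\|_{L^\infty(\Omega)}\|\varphi_Q\|_{L^q(Q)}\,\chi\e(\omega)$$
shows that the family $\{\omega\mapsto(J\e^\omega u\e^\omega)(\varphi)\}_\varepsilon$ is bounded in $L^p(\Omega)$; since $p>1$ and $P(\Omega)=1$, it is therefore uniformly integrable (from $\int_{\{g>M\}}g\,dP\le M^{1-p}\|g\|_{L^p}^p$), so Vitali's convergence theorem applies and gives $\int_\Omega(J\e^\omega u\e^\omega)(\varphi)\,dP(\omega)\to\ex{\int_Q u\varphi\,dx}$. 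Combined with the identity of the second paragraph, this is the required testing convergence for all $\varphi\in\sA$, and hence, as explained above, $\unf\tilde u\e\weakto u$ in $L^p(\Omega\times Q)$, i.e.\ $\tilde u\e\wt u$.

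The main obstacle is exactly this last exchange of limit and expectation; it is also the only place where the $L^p$-domination hypothesis (c) is used in an essential way — an a.e.\ bound on $\|u\e^\omega\|_{L^p(Q)}$ by a single $L^p(\Omega)$ function would let one use dominated convergence instead, but with the $\varepsilon$-dependent bound $\chi\e$ one genuinely needs the equi-integrability coming from the uniform $L^p(\Omega)$-bound with $p>1$.
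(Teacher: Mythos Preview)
Your proof is correct and takes a genuinely different, more elementary route than the paper's. The paper invokes the Young-measure machinery of Theorem~\ref{thm:Balder-Thm-two-scale}: it passes to a subsequence along which $(\tilde u\e)$ generates a Young measure $\boldsymbol{\nu}$ on $\sB^p$, identifies the mean two-scale limit as the barycenter $\int_\Omega\int_{\sB^p}v\,d\nu_\omega(v)\,dP(\omega)$, and then uses hypothesis~(a) to see that each $\nu_\omega$ is the Dirac mass at $u$, so the barycenter equals $u$; convergence of the full sequence then follows by the usual ``every subsequence has a further subsequence'' argument. Your approach bypasses all of this: you test directly against simple tensors $\varphi\in\sA$, use the definition of quenched convergence to get pointwise-in-$\omega$ convergence of $(J^\omega_\eps u^\omega_\eps)(\varphi)$, and pass to the limit under $\int_\Omega\cdots\,dP$ via Vitali, exploiting that the dominating functions $\chi_\eps$ are bounded in $L^p(\Omega)$ with $p>1$. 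This is shorter, avoids any subsequence extraction, and does not rely on Balder's theorem; the paper's route, on the other hand, illustrates how Theorem~\ref{thm:Balder-Thm-two-scale} packages such arguments once and for all and is the natural tool when the quenched limit is not a single deterministic function (cf.\ Theorem~\ref{thm:Quenched-hom-convex-grad}).
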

{\it (For the proof see Section~\ref{S:4:p2}).}\smallskip

To compare homogenization of convex integral functionals w.r.t.~stochastic two-scale convergence in the mean and in the quenched sense, we appeal to the following result.
\begin{defn}[Quenched two-scale normal integrand]\label{D:qtscnormal}
A function $h:\,\Omega\times Q\times\R^{d}\to\R$ is called a quenched two-scale normal integrand, if for all $\xi\in\R^d$, $h(\cdot,\cdot,\xi)$ is $\mathcal F\otimes\cB(\R^{d})$-measurable, and for a.e.~$(\omega,x)\in\Omega\times Q$, $h(\omega,x,\cdot)$ is lower semicontinuous, and for $P$-a.e.~$\omega_0\in\Omega_0$ and sequence $(u\e)$ in $L^p(Q)$ the following implication holds:
\[
u\e\tsq{\omega_0}u\qquad\Rightarrow\qquad
\liminf_{\eps\to0}\int_{Q}h(\tau_{\frac{x}{\eps}}\omega_0,x,u\e(x))dx\geq\int_{\Omega}\int_{Q}h(\omega,x,u(\omega,x))\,dx\,dP(\omega).
\]
\end{defn}
\begin{lemma}
\label{lem:Balder-Lem-two-scale}
Let $h$ denote a quenched two-scale normal integrand, let $(u\e)$ denote a bounded sequence in $\sB^p$ that generates a Young measure $\boldsymbol{\nu}$ on $\sB^p$ in the sense of Theorem \ref{thm:Balder-Thm-two-scale}. 
Suppose that $h\e:\Omega\to\R$, $h\e(\omega):=-\int_Q\min\big\{0,h(\tau_{\frac{x}{\eps}}\omega,x,u_{\eps}(\omega,x))\big\}\,dx$ is uniformly integrable. Then 
\begin{multline}
  \liminf_{\eps\to 0}\int_{\Omega}\int_{Q}h(\tau_{\frac{x}{\eps}}\omega,x,u_{\eps}(\omega,x))\,dx\,dP(\omega)\\ 
	\geq\int_{\Omega}\int_{\sB^p}\left(\int_\Omega\int_Qh(\tomega,x,v(\tomega,x))\,dx\,dP(\tomega)\right)\,d\nu_{\omega}(v)\,dP(\omega).\label{eq:liminf-balder-ts-lower-semic}
\end{multline}
\end{lemma}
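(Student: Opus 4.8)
\emph{Proof plan.} The plan is to read the spatial integrals $g_\eps(\omega):=\int_Q h(\tau_{\frac x\eps}\omega,x,u_\eps(\omega,x))\,dx$ as values of an $\eps$-dependent family of integral functionals evaluated along the ``two-scale test lifts'' $F_\eps(\omega):=J^\omega_\eps u_\eps(\omega,\cdot)$ of $u_\eps$, living in the Polish space $X:=\mbox{Lin}(\sD)$ of Lemma~\ref{L:metric-char}(i), and then to invoke a Balder-type lower semicontinuity theorem for the Young measure $\boldsymbol\nu$ produced by Theorem~\ref{thm:Balder-Thm-two-scale}. First I would dispose of the trivial case $\liminf_\eps\ex{g_\eps}=+\infty$ and pass to a subsequence (not relabeled) along which $\ex{g_\eps}$ converges; since $g_\eps(\omega)^-\le h_\eps(\omega)$ for a.e.\ $\omega$, the hypothesis makes $(g_\eps^-)_\eps$ uniformly integrable, which is the only integrability input needed (no control on $g_\eps^+$ is required, as the Young measure supplies the matching upper side). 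The maps $F_\eps:\Omega\to X$ are measurable by Assumption~\ref{Assumption_2_1}(iii) and the countability of $\sA$, and by construction $g_\eps(\omega)=\Phi^\omega_\eps(F_\eps(\omega))$, where $\Phi^\omega_\eps(\zeta):=\int_Q h(\tau_{\frac x\eps}\omega,x,w(x))\,dx$ if $\zeta=J^\omega_\eps w$ (the function $w\in L^p(Q)$ being uniquely determined, as $J^\omega_\eps$ is injective on $L^p(Q)$) and $+\infty$ otherwise. By Theorem~\ref{thm:Balder-Thm-two-scale}, $(J_0)_*\nu_\omega$ is carried by $J_0(\CL(\omega,(u_\eps(\omega,\cdot))))\subset J_0(\sB^p)$ for every $\omega\in\Omega_0$, i.e.\ by the $X$-cluster points of $(F_\eps(\omega))$, and -- as is apparent from its construction via the weakly-homeomorphic embedding $J_0:\sB^p\to X$ of bounded sets -- $(J_0)_*\boldsymbol\nu$ is the Young measure narrowly generated by $(F_\eps)$ in $X$.

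The heart of the argument is a two-step estimate. I would introduce the sequential $\Gamma$-lower limit $\underline\Phi:\Omega\times X\to(-\infty,+\infty]$ of $(\Phi^\omega_\eps)$ computed along bounded realizations,
\[
  \underline\Phi(\omega,\zeta):=\inf\Big\{\liminf_{k}\int_Q h(\tau_{\frac x{\eps_k}}\omega,x,w_k)\,dx\ :\ \eps_k\downarrow 0,\ \sup_k\|w_k\|_{L^p(Q)}<\infty,\ J^\omega_{\eps_k}w_k\to\zeta\Big\},
\]
which is automatically lower semicontinuous in $\zeta$; one also checks, by a measurable-selection argument in the spirit of \cite{rockafellar1971} and Castaing's Lemma~\ref{castaing}, that $\underline\Phi$ is a normal integrand. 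The decisive point is that $h$ being a quenched two-scale normal integrand forces, for $P$-a.e.\ $\omega\in\Omega_0$ and all $v\in\sB^p$,
\[
  \underline\Phi(\omega,J_0 v)\ \ge\ H(v):=\ex{\textstyle\int_Q h(\cdot,x,v(\cdot,x))\,dx},
\]
since every competitor $(\eps_k,w_k)$ in the infimum is, by Lemma~\ref{L:metric-char}(ii), a bounded sequence with $w_k\tsq{\omega}v$ along $\eps_k$, so $\liminf_k\int_Q h(\tau_{\frac x{\eps_k}}\omega,x,w_k)\,dx\ge H(v)$ by Definition~\ref{D:qtscnormal} (the infimum being over a nonempty set by Lemma~\ref{lem:Every-v-is-a-fB-limit}). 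Then, applying the Balder-type lower semicontinuity theorem for Young measures in $X$ to the family $(\Phi^\omega_\eps)$ with $\Gamma$-lower limit $\underline\Phi$ and with $\Phi^\omega_\eps(F_\eps(\omega))^-=g_\eps^-(\omega)$ uniformly integrable gives $\lim_\eps\ex{g_\eps}\ge\int_\Omega\int_X\underline\Phi(\omega,\zeta)\,d((J_0)_*\nu_\omega)(\zeta)\,dP(\omega)$; combining this with the previous inequality and the fact that $(J_0)_*\nu_\omega$ lives on $J_0(\sB^p)$, and changing variables via $J_0$, yields exactly \eqref{eq:liminf-balder-ts-lower-semic}.

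I expect the main obstacle to be the Balder-type step, because the integrands $\Phi^\omega_\eps$ genuinely depend on $\eps$: one cannot quote the classical statement verbatim but must couple the narrow convergence of the Young measures generated by $(F_\eps)$ with a $\Gamma$-lower-limit passage. The standard route -- which I would follow -- is to write $\underline\Phi$ as an increasing supremum of Carath\'eodory integrands $a_n\le\underline\Phi$, apply the fundamental theorem of Young measures to each (continuous-in-$\zeta$) $a_n$, and then use the $\Gamma$-lower-limit property together with an Egorov argument to pass from $g_\eps=\Phi^\omega_\eps(F_\eps(\cdot))$ to the $a_n$'s, the uniform integrability of $g_\eps^-$ being precisely what prevents mass from escaping to $-\infty$. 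Two subsidiary technical points feed in: the normal-integrand measurability of $\underline\Phi$ (handled by the indicated measurable selection), and the reconciliation of the fiberwise boundedness invoked above with the fact that $(u_\eps(\omega,\cdot))$ is bounded in $L^p(Q)$ only in the averaged sense $\ex{\|u_\eps\|_{L^p(Q)}^p}\le C$; the latter is absorbed by splitting $\Omega$ along $\{\|u_\eps(\omega,\cdot)\|_{L^p(Q)}\le R\}$, whose complement has $P$-measure $\le C^p R^{-p}$ uniformly in $\eps$ by Markov's inequality, and using once more the uniform integrability of $g_\eps^-$ to render the remainder negligible as $R\to\infty$.
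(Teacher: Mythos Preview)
Your plan is in the right spirit, but the paper takes a genuinely different and much shorter route that sidesteps every obstacle you flag.

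You work in $X=\mbox{Lin}(\sD)$ with an $\eps$-dependent family $\Phi^\omega_\eps$, then build the sequential $\Gamma$-lower limit $\underline\Phi$ and appeal to a non-standard ``Balder-type'' theorem for $\eps$-dependent integrands. The paper instead enlarges the state space to the metric space $\sM$ of triples $(U,\eps,r)$ from Lemma~\ref{L:metric-struct}, so that the parameter $\eps$ and the fiberwise norm $r=\|u_\eps(\omega,\cdot)\|_{L^p(Q)}$ become components of the state. On $\sM$ one defines a \emph{single} integrand
\[
  \overline h(\omega,(U,\eps,r)):=
  \begin{cases}
    \int_Q h(\tau_{x/\eps}\omega,x,(\pi^\omega s)(x))\,dx & \eps>0,\\
    \int_\Omega\int_Q h(\tilde\omega,x,(\pi^\omega s)(\tilde\omega,x))\,dx\,dP(\tilde\omega) & \eps=0,
  \end{cases}
\]
where $\pi^\omega s$ is the unique representative of $U$ (Lemma~\ref{L:metric-struct}(iii)). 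The quenched two-scale normal integrand hypothesis on $h$ is \emph{exactly} the statement that $\overline h(\omega,\cdot)$ is lower semicontinuous on $\sM^\omega$ along sequences with $\eps_k\downarrow 0$; the other cases are routine. Hence $\overline h$ is a normal integrand on $\Omega\times\sM$, and the classical Balder theorem (Theorem~\ref{thm:Balder}) applies verbatim to $s_\eps(\omega)=(J^\omega_\eps u_\eps(\omega,\cdot),\eps,\|u_\eps(\omega,\cdot)\|_{L^p(Q)})$, yielding the claim in one line.

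What this buys: no $\Gamma$-limit construction, no measurable-selection argument for normality of $\underline\Phi$, no Egorov step, and no Markov/splitting trick for fiberwise boundedness --- the third coordinate $r$ already feeds into the tightness criterion of Lemma~\ref{L:metric-struct}(vi), which was set up precisely for this purpose in the proof of Theorem~\ref{thm:Balder-Thm-two-scale}. Your approach may be completable, but the step you yourself identify as the main obstacle (passing from $\Phi^\omega_\eps(F_\eps(\omega))$ to $\int a_n\,d\nu_\omega$ when only $a_n\le\underline\Phi$ is known at the limit level) is genuinely delicate and not fully justified in your sketch; the paper's device of putting $\eps$ into the state space is exactly the trick that dissolves it.
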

{\it (For the proof see Section~\ref{S:4:p2}).}

\subsubsection{Proof of Theorem~\ref{thm:Balder-Thm-two-scale} and Lemmas~\ref{lem:Balder-Lem-two-scale} and \ref{L:fromquenchedtomean}}\label{S:4:p2}
We first recall some notions and results of Balder's theory for Young measures \cite{Balder1984}. Throughout this section $\sM$ is assumed to be a separable, complete  metric space with metric $d(\cdot,\cdot;\sM)$. 

\begin{defn}
  \begin{itemize}
  \item We say a function $s:\Omega\to\sM$ is measurable, if it is $\mathcal F-\mathcal B(\sM)$-measurable where $\mathcal B(\sM)$ denotes the Borel-$\sigma$-algebra in $\sM$.
  \item A function $h:\Omega\times\sM\to(-\infty,+\infty]$ is called a \textit{normal} integrand, if $h$ is $\mathcal F\otimes\mathcal B(\sM)$-measurable, and for all $\omega\in\Omega$ the function $h(\omega,\cdot):\sM\to(-\infty,+\infty]$ is lower semicontinuous.
  \item A sequence $s\e$ of measurable functions $s\e:\Omega\to\sM$ is called \textit{tight}, if there exists a normal integrand $h$ such that for every $\omega\in\Omega$ the function $h(\omega,\cdot)$ has compact sublevels in $\sM$ and $\limsup_{\eps\to 0}\int_\Omega h(\omega,s\e(\omega))\,dP(\omega)<\infty$.
  \item A Young measure in $\sM$ is a family $\boldsymbol{\mu}:=\left\{ \mu_{\omega}\right\} _{\omega\in\Omega}$
    of Borel probability measures on $\sM$ such that for all $B\in\mathcal B(\sM)$ the map $\Omega\ni \omega\mapsto \mu_\omega(B)\in\R$ is $\mathcal F$-measurable.
  \end{itemize}
\end{defn}
\begin{thm}[\mbox{\cite[Theorem I]{Balder1984}}]\label{thm:Balder} Let $s\e:\,\Omega\to\sM$ denote a tight sequence of measurable functions. Then there exists a subsequence, still indexed by $\eps$, and a Young measure ${\boldsymbol\mu}:\Omega\to\sM$ such that for every normal integrand $h:\,\Omega\times \sM\rightarrow(-\infty,+\infty]$ we have
\begin{equation}\label{eq:Balders-ineq}
\liminf_{\eps\to0}\int_{\Omega}h(\omega,s\e(\omega))\,dP(\omega)\geq\int_{\Omega}\int_{\sM}h(\omega,\xi) d\mu_{\omega}(\xi)dP(\omega)\,,
\end{equation}
provided that the negative part $h^-_\eps(\cdot)=|\min\{0,h(\cdot,s\e(\cdot))\}|$ is uniformly integrable.
Moreover, for $P$-a.e. $\omega\in\Omega_0$ the measure $\mu_\omega$ is supported in the set of all cluster points of $s\e(\omega)$, i.e.~in $\bigcup_{k=1}^\infty\overline{\{s\e(\omega)\,:\,\eps<\frac{1}{k}\}}$ (where the closure is taken in $\sM$).
\end{thm}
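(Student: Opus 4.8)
The statement is the classical Young measure compactness theorem of Balder, so the plan is to reproduce its proof scheme in four steps. First I would associate with each $s\e$ its \emph{elementary Young measure}, namely the probability measure $\gamma\e$ on $\Omega\times\sM$ obtained as the push-forward of $P$ under $\omega\mapsto(\omega,s\e(\omega))$; each $\gamma\e$ has first marginal $P$. Applying Chebyshev's inequality to the tightness integrand $h$ (whose sublevel sets $\cb{\xi\in\sM:h(\omega,\xi)\le R}$ are compact, and which satisfies $\sup_{\eps}\int_\Omega h(\omega,s\e(\omega))\,dP(\omega)<\infty$) produces a uniform tightness modulus: for every $\delta>0$ there is a measurable $K_\delta\subset\Omega\times\sM$ with $\omega$-section $K_\delta(\omega)$ compact for each $\omega$, and $\sup_{\eps}\gamma\e\brac{(\Omega\times\sM)\setminus K_\delta}<\delta$.

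The core step is the extraction of a narrowly convergent subsequence. Since $\sM$ is separable metrizable I would first remetrize $\sM$ so that its completion $\widehat\sM$ is compact, embed $\Omega\times\sM\hookrightarrow\Omega\times\widehat\sM$, and use that the set of Young measures on $(\Omega,P;\widehat\sM)$ is a weak-$*$ closed subset of the unit ball of $L^1(\Omega;C(\widehat\sM))^*$, hence weak-$*$ sequentially compact — the ball being metrizable because $L^1(\Omega;C(\widehat\sM))$ is separable ($\mathcal F$ is countably generated and $\widehat\sM$ is compact metrizable). Thus, along a subsequence (not relabelled), the images of $\gamma\e$ converge narrowly in $\widehat\sM$ to some $\widehat\gamma$; the uniform tightness modulus forces $\widehat\gamma$ to be carried by $\Omega\times\sM$, and disintegrating $\widehat\gamma$ with respect to $P$ yields the family $\boldsymbol\mu=\cb{\mu_\omega}_{\omega\in\Omega}$, a Young measure on $\sM$ by the disintegration theorem, for which $\int g\,d\gamma\e\to\int_\Omega\int_\sM g(\omega,\xi)\,d\mu_\omega(\xi)\,dP(\omega)$ for every bounded Carath\'eodory integrand $g$ on $\Omega\times\sM$. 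I expect this Prokhorov-type compactness to be the main obstacle, since $\Omega$ carries only a measurable structure, so the topological Prokhorov theorem on the product is unavailable and one has to argue through the compactification and a fibrewise disintegration.

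Given $\boldsymbol\mu$, the inequality \eqref{eq:Balders-ineq} follows by monotone approximation. For a nonnegative normal integrand $h$ the inf-convolutions $h_n(\omega,\xi):=\inf_{\xi'\in\sM}\brac{h(\omega,\xi')+n\,d(\xi,\xi';\sM)}$ are Carath\'eodory (Lipschitz in $\xi$) and increase pointwise to $h$; applying narrow convergence to the bounded Carath\'eodory integrands $\min\cb{h_n,m}$ and then letting $m\to\infty$ and $n\to\infty$ by monotone convergence yields $\liminf_{\eps}\int_\Omega h(\omega,s\e(\omega))\,dP\ge\int_\Omega\int_\sM h\,d\mu_\omega\,dP$. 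A shift by a constant extends this to normal integrands bounded below, and for a general normal $h$ with $h_\eps^-$ uniformly integrable I would apply the bounded-below case to the truncations $h^{(k)}:=\max\cb{h,-k}$, use the elementary bound $0\le h^{(k)}(\omega,s\e(\omega))-h(\omega,s\e(\omega))\le h_\eps^-(\omega)\mathbf 1_{\cb{h_\eps^->k}}$ together with $\int_\Omega\int_\sM h^{(k)}\,d\mu_\omega\,dP\ge\int_\Omega\int_\sM h\,d\mu_\omega\,dP$, and then let $k\to\infty$, the error $\sup_{\eps}\int_{\cb{h_\eps^->k}}h_\eps^-\,dP$ tending to $0$ by uniform integrability.

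For the support statement I would test the (already established) liminf inequality against the bounded normal integrands $h(\omega,\xi)=\mathbf 1_{A_F}(\omega)\,\mathbf 1_{\sM\setminus F}(\xi)$, where $F$ runs through a countable family of closed balls forming a base of the topology of $\sM$, and $A_F:=\cb{\omega\in\Omega:\ s\e(\omega)\notin F\ \text{for all sufficiently small }\eps}$ (measurable, since after extraction $\eps$ runs through a sequence). On $A_F$ the quantity $\mathbf 1_{\sM\setminus F}(s\e(\omega))$ equals $1$ for $\eps$ small, so the inequality forces $\int_{A_F}\mu_\omega(F)\,dP=0$; intersecting the resulting full-measure sets over the countable family of balls shows that for $P$-a.e.~$\omega$ the measure $\mu_\omega$ charges no closed ball disjoint from the tail of $(s\e(\omega))$, i.e.~$\mu_\omega$ is concentrated on $\bigcap_{k\in\N}\overline{\cb{s\e(\omega):\eps<\tfrac1k}}$, which is the asserted set of cluster points.
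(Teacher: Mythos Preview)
The paper does not prove this theorem at all: it is quoted verbatim from \cite[Theorem I]{Balder1984} and used as a black box. So there is no ``paper's own proof'' to compare against. Your sketch follows the classical route (elementary Young measures, compactification of $\sM$, weak-$*$ sequential compactness of Young measures in $L^1(\Omega;C(\widehat\sM))^*$, disintegration, Moreau--Yosida regularisation, truncation), and the first three steps are essentially correct.

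There is, however, a genuine error in your support argument. With $h(\omega,\xi)=\mathbf 1_{A_F}(\omega)\,\mathbf 1_{\sM\setminus F}(\xi)$ and $F$ closed, the liminf inequality reads
\[
P(A_F)=\liminf_{\eps\to 0}\int_{A_F}\mathbf 1_{\sM\setminus F}(s_\eps(\omega))\,dP\ \ge\ \int_{A_F}\mu_\omega(\sM\setminus F)\,dP\ =\ P(A_F)-\int_{A_F}\mu_\omega(F)\,dP,
\]
which only gives the trivial $\int_{A_F}\mu_\omega(F)\,dP\ge 0$, not $=0$. The inequality points the wrong way for your choice of test function. The fix is to swap the roles: take $G$ from a countable base of \emph{open} sets, set $A_G:=\{\omega:s_\eps(\omega)\notin G\text{ for all small }\eps\}$, and test with $h(\omega,\xi)=\mathbf 1_{A_G}(\omega)\,\mathbf 1_G(\xi)$, which is still lower semicontinuous in $\xi$. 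Then the left-hand side is $0$ (since on $A_G$ eventually $s_\eps(\omega)\notin G$), so the liminf inequality forces $\int_{A_G}\mu_\omega(G)\,dP\le 0$, hence $\mu_\omega(G)=0$ for a.e.\ $\omega\in A_G$. Intersecting over the countable base then gives that $\mu_\omega$ does not charge the complement of the cluster set $\bigcap_k\overline{\{s_\eps(\omega):\eps<1/k\}}$, which is what you want. (Incidentally, the paper's statement writes $\bigcup_k$ where it should be $\bigcap_k$; your version with the intersection is the correct description of the cluster set.)
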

In order to apply the above theorem we require an appropriate  metric space in which two-scale convergent sequences and their limits embed:
\begin{lemma}\label{L:metric-struct}
  \begin{enumerate}[(i)]
  \item   We denote by $\sM$ the set of all triples $(U,\eps,r)$ with $U\in\mbox{\rm Lin}(\sD)$, $\eps\geq 0$, $r\geq 0$.  $\sM$ endowed with the metric
  \begin{equation*}
    d((U_1,\eps_1,r_1),(U_2,\eps_2,r_2);\sM):=d(U_1,U_2;\mbox{\rm Lin}(\sD))+|\eps_1-\eps_2|+|r_1-r_2|
  \end{equation*}
  is a complete, separable metric space. 
\item For $\omega_0\in\Omega_0$ we denote by $\sM^{\omega_0}$ the set of all triples $(U,\eps,r)\in\sM$ such that
  \begin{equation}\label{eq:repr}
    U=
    \begin{cases}
      J^{\omega_0}_\eps u&\text{for some }u\in L^p(Q)\text{ with }\|u\|_{L^p(Q)}\leq r\text{ in the case }\eps>0,\\
      J_0 u&\text{for some }u\in\sB^p\text{ with }\|u\|_{\sB^p}\leq r\text{ in the case }\eps=0.
    \end{cases}
  \end{equation}
  Then $\sM^{\omega_0}$ is a closed subspace of $\sM$.
\item Let $\omega_0\in\Omega_0$, and $(U,\eps,r)\in\sM^{\omega_0}$. Then the function $u$ in the representation \eqref{eq:repr} of $U$ is unique, and
  \begin{equation}\label{eq:x6}
    \begin{cases}\displaystyle
      \|u\|_{L^p(Q)}=\sup\limits_{\varphi\in\overline{\sD},\ \|\varphi\|_{\sB^q}\leq 1}|U(\varphi)|&\text{if }\eps>0,\\
      \|u\|_{\sB^p}=\sup\limits_{\varphi\in \sD,\ \|\varphi\|_{\sB^q}\leq 1}|U(\varphi)|&\text{if }\eps=0.
    \end{cases}
  \end{equation}
\item For $\omega_0\in\Omega_0$ the function $\|\cdot\|_{\omega_0}:\sM^{\omega_0}\to[0,\infty)$, 
  \begin{equation*}
    \|(U,\eps,r)\|_{\omega_0}:=
    \begin{cases}
      \big(\sup\limits_{\varphi\in\overline{\sD},\ \|\varphi\|_{\sB^q}\leq 1}|U(\varphi)|^p+\eps+r^p\big)^{\frac{1}{p}}&\text{if }(U,\eps,r)\in\sM^{\omega_0},\,\eps>0,\\
      \big(\sup\limits_{\varphi\in \sD,\ \|\varphi\|_{\sB^q}\leq 1}|U(\varphi)|^p+r^p\big)^\frac1p&\text{if }(U,\eps,r)\in\sM^{\omega_0},\,\eps=0,\\
    \end{cases}
  \end{equation*}
  is lower semicontinuous on $\sM^{\omega_0}$. 
\item For all $(u,\eps)$ with $u\in L^p(Q)$ and $\eps>0$ we have $s:=(J^{\omega_0}_\eps u,\eps,\|u\|_{L^p(Q)})\in\sM^{\omega_0}$ and $\|s\|_{\omega_0}=\big(2\|u\|_{L^p(Q)}^p+\eps\big)^\frac1p$. Likewise, for all $(u,\eps)$ with $u\in\sB^p$ and $\eps=0$ we have $s=(J_0u,\eps,\|u\|_{\sB^p})$ and $\|s\|_{\omega_0}=2^\frac1p\|u\|_{\sB^p}$.
\item For all $R<\infty$ the set $\{(U,\eps,r)\in\sM^{\omega_0}\,:\,\|(U,\eps,r)\|_{\omega_0}\leq R\}$ is compact in $\sM$.
\item Let $\omega_0\in\Omega_0$ and let $u\e$ denote a bounded sequence in $L^p(Q)$. Then the triple $s\e:=(J^{\omega_0}\e u\e,\eps,\|u_{\varepsilon}\|_{L^p(Q)})$ defines a sequence in $\sM^{\omega_0}$. Moreover, we have $s\e\to s_0$ in $\sM$ as $\eps\to0$ if and only if $s_0=(J_0u,0,r)$ for some $u\in\sB^p$, $r\geq\|u\|_{\sB^p}$, and $u\e\tsq{\omega_0}u$.
  \end{enumerate}
\end{lemma}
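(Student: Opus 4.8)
The plan is to treat the eight items by a common case analysis on the scale parameter, using three ingredients: reflexivity of $L^p(Q)$ and $\sB^p$, the completeness and metric characterisation of $\mbox{\rm Lin}(\sD)$ from Lemma~\ref{L:metric-char}, and the compactness / norm-lower-semicontinuity of quenched two-scale convergence from Lemma~\ref{lem:two-scale-limit}. Item (i) is immediate: $\sM=\mbox{\rm Lin}(\sD)\times[0,\infty)\times[0,\infty)$ with the sum metric is a finite product of complete separable spaces. For (iii) the point to exploit is that a test function $\varphi\in\overline{\sD}$ depends only on $x$, so $(\unf^*\varphi)(\omega_0,\cdot)=\varphi$ for every $\eps$; hence for $\eps>0$ and $U=J^{\omega_0}_\eps u$ one has $U(\varphi)=\int_Qu\varphi\,dx$ on $\overline{\sD}$, and the identity together with injectivity of $J^{\omega_0}_\eps$ (hence uniqueness of $u$) follows from $L^p(Q)=L^q(Q)^*$ and density of $\mbox{span}(\sD_Q)$ in $L^q(Q)$; for $\eps=0$ and $U=J_0u$ the identity is the duality $\sB^p=(\sB^q)^*$ and density of $\sD$ in $\sB^q$. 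Item (v) is then a direct substitution of (iii) into the definition of $\|\cdot\|_{\omega_0}$.

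For (ii) I would take $(U_n,\eps_n,r_n)\in\sM^{\omega_0}$ with $(U_n,\eps_n,r_n)\to(U,\eps,r)$ in $\sM$ and, after passing to a subsequence, distinguish: (a) $\eps_n\equiv0$; (b) $\eps_n>0$, $\eps_n\to0$; (c) $\eps_n\to\eps_\infty>0$. In (a) the representatives are bounded in the reflexive space $\sB^p$, so $u_n\weakto u$ along a subsequence, $\|u\|_{\sB^p}\le r$, and $U=J_0u$. In (b) the representatives are bounded in $L^p(Q)$ and Lemma~\ref{lem:two-scale-limit} gives $u_n\tsq{\omega_0}u$ with $\|u\|_{\sB^p}\le r$, i.e.\ $J^{\omega_0}_{\eps_n}u_n\to J_0u$ by Lemma~\ref{L:metric-char}; in both cases the limit triple has $\eps$-component $0$ and lies in $\sM^{\omega_0}$. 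Case (c) is the delicate one because the scale does not vanish: there $u_n\weakto u$ weakly in $L^p(Q)$ with $\|u\|_{L^p(Q)}\le r$, and for $\varphi=\varphi_\Omega\varphi_Q\in\sA$ the functions $(\unf^*\varphi)(\omega_0,x)=\varphi_\Omega(\tau_{x/\eps_n}\omega_0)\varphi_Q(x)$ converge \emph{strongly} in $L^q(Q)$ to $\varphi_\Omega(\tau_{x/\eps_\infty}\omega_0)\varphi_Q(x)$ by continuity of dilations in $L^q$ — the arguments $x/\eps_n$ stay in a fixed ball since $\eps_n$ is bounded away from $0$, and $\varphi_\Omega$ is bounded, so one approximates $y\mapsto\varphi_\Omega(\tau_y\omega_0)$ by continuous functions and uses the uniform $L^q$-bound. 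Pairing weak with strong then identifies $U=J^{\omega_0}_{\eps_\infty}u$, so $(U,\eps_\infty,r)\in\sM^{\omega_0}$.

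For (iv) I would rerun this case analysis: in $\|\cdot\|_{\omega_0}^p$ the $r_n^p$ term converges and the $\eps_n$-term converges to $\eps_\infty$ in case (c) or vanishes in case (b) (matching the absence of an $\eps$-term in the $\eps=0$ formula); by (iii) the remaining supremum equals $\|u_n\|_{L^p(Q)}^p$ or $\|u_n\|_{\sB^p}^p$, and the required lower bound for the limit is weak lower semicontinuity of the norm in cases (a) and (c) and precisely the estimate $\|u\|_{\sB^p}\le\liminf\|u_n\|_{L^p(Q)}$ of Lemma~\ref{lem:two-scale-limit} in the transition case (b). For (vi), $K_R:=\{\|\cdot\|_{\omega_0}\le R\}$ is closed by (ii) and (iv); to get compactness I would note that for each enumerated $\varphi_j\in\sA_1$ the quantity $\sup_{\eps>0}\|(\unf^*\varphi_j)(\omega_0,\cdot)\|_{L^q(Q)}$ is finite (bounded near $\eps=0$ by Lemma~\ref{L:admis}, and by $\|\varphi_{\Omega,j}\|_{L^\infty}\|\varphi_{Q,j}\|_{L^q(Q)}$ away from $0$ since $Q$ is bounded), so $|U(\varphi_j)|\le RC_j$ on $K_R$ for suitable constants $C_j$; hence $K_R$ sits inside $\{U:|U(\varphi_j)|\le RC_j\ \forall j\}\times[0,R^p]\times[0,R]$, which is compact by Tychonoff (linearity being closed under pointwise limits), and a closed subset of a compact set is compact. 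Finally, for (vii): $s_\eps=(J^{\omega_0}_\eps u_\eps,\eps,\|u_\eps\|_{L^p(Q)})\in\sM^{\omega_0}$ by construction, convergence in $\sM$ is coordinatewise, the $\eps$-coordinate always tends to $0$, and once the coordinates converge Lemmas~\ref{lem:two-scale-limit} and \ref{L:metric-char} force the limit of $J^{\omega_0}_\eps u_\eps$ to be $J_0u$ with $u$ the quenched two-scale limit of $u_\eps$ and $\|u\|_{\sB^p}\le r=\lim\|u_\eps\|_{L^p(Q)}$; the converse implication is immediate.

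I expect the main obstacle to be the non-uniformity near $\eps=0$: the functional $\|\cdot\|_{\omega_0}$ is defined by genuinely different formulas at $\eps=0$ (supremum over the larger space $\sD$, no $\eps$-term) and at $\eps>0$ (supremum over $\overline{\sD}$ only, plus an $\eps$-term), and these must be glued into something lower semicontinuous, which works \emph{only} because weak quenched two-scale limits obey the norm inequality of Lemma~\ref{lem:two-scale-limit}. A secondary technical nuisance is the $\eps_n\to\eps_\infty>0$ branch of the closedness proof, which rests on $L^q$-continuity of dilations rather than on any two-scale input.
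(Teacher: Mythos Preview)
Your proposal is correct and follows essentially the same architecture as the paper's proof: the same three-case distinction on the behaviour of $\eps_n$ drives items (ii), (iv) and (vi), and items (i), (iii), (v), (vii) are handled identically. The only visible difference is in (vi): the paper simply reruns the three-case subsequence extraction from (ii) on a sequence in the sublevel set, whereas you package the same extraction as a Tychonoff-type embedding into $\{U:|U(\varphi_j)|\le RC_j\}\times[0,R^p]\times[0,R]$. Both work; note only that the $\varphi_j\in\sA_1$ are normalized elements of $\sD_0$ and hence finite $\mathbb Q$-linear combinations of simple tensors rather than simple tensors themselves, so your bound $C_j$ should be obtained by summing the $L^\infty\!\times\! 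L^q$ bounds over the constituent tensors (this also makes the appeal to Lemma~\ref{L:admis} unnecessary, since the $L^\infty$ bound is uniform in $\eps$).
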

\begin{proof}
  \begin{enumerate}[(i)]
  \item This is a direct consequence of Lemma~\ref{L:metric-char} (i) and the fact that the product of complete, separable metric spaces remains complete and separable.
  \item Let $s_k:=(U_k,\eps_k,r_k)$ denote a sequence in $\sM^{\omega_0}$ that converges in $\sM$ to some $s_0=(U_0,\eps_0,r_0)$. We need to show that $s_0\in\sM^{\omega_0}$. By passing to a subsequence, it suffices to study the following three cases: $\eps_k>0$ for all $k\in\N_0$, $\eps_k=0$ for all $k\in\N_0$, and $\eps_0=0$ while $\eps_k>0$ for all $k\in\N$.

    Case 1: $\eps_k>0$ for all $k\in\N_0$.\\
    W.l.o.g.~we may assume that $\inf_k\eps_k>0$. Hence, there exist  $u_k\in L^p(Q)$ with $U_k=J^{\omega_0}_{\eps_k}u_k$. Since $r_k\to r$, we conclude that $(u_k)$ is bounded in $L^p(\Omega)$. We thus may pass to a subsequence (not relabeled) such that $u_k\weakto u_0$ weakly in $L^p(Q)$, and
    \begin{equation}\label{eq:x3}
      \|u_0\|_{L^p(Q)}\leq \liminf\limits_{k}\|u_k\|_{L^p(Q)}\leq \lim_k r_k=r_0.
    \end{equation}
    Moreover, $U_k\to U$ in the metric space $\mbox{\rm Lin}(\sD)$ implies pointwise convergence on $\sD$, and thus for all $\varphi_Q\in\sD_Q$ we have $U_k(\mathbf 1_{\Omega}\varphi_Q)=\int_Qu_k\varphi_Q\to \int_Qu_0\varphi_Q$. We thus conclude that $U_0(\mathbf 1_{\Omega}\varphi_Q)=\int_Q u_0\varphi_Q$. Since $\sD_Q\subset L^q(Q)$ dense, we deduce that $u_k\weakto u_0$ weakly in $L^p(Q)$ for the entire sequence.
    On the other hand the properties of the shift $\tau$ imply that for any $\varphi_\Omega\in\sD_\Omega$ we have $\varphi_\Omega(\tau_{\frac{\cdot}{\eps_k}}\omega_0)\to\varphi_\Omega(\tau_{\frac{\cdot}{\eps_0}}\omega_0)$ in $L^q(Q)$. Hence, for any $\varphi_\Omega\in\sD_\Omega$ and $\varphi_Q\in\sD_Q$ we have
    \begin{equation*}
      U_k(\varphi_\Omega\varphi_Q)=\int_Q u_k(x)\varphi_Q(x)\varphi_\Omega(\tau_{\frac{x}{\eps_k}}\omega_0)\,dx\to
      \int_Q u_0(x)\varphi_Q(x)\varphi_\Omega(\tau_{\frac{x}{\eps_0}}\omega_0)\,dx=J^{\omega_0}_{\eps_0}(\varphi_\Omega\varphi_Q)
    \end{equation*}
    and thus (by linearity) $U_0=J^{\omega_0}_{\eps_0}u_0$.

    Case 2: $\eps_k=0$ for all $k\in\N_0$.\\
    In this case there exist a bounded sequence $u_k$ in $\sB^p$ with $U_k=J_0u_k$ for $k\in\N$. By passing to a subsequence we may assume that $u_k\weakto u_0$ weakly in $\sB^p$ for some $u_0\in\sB^p$ with 
    \begin{equation}\label{eq:x4}
      \|u_0\|_{\sB^p}\leq \liminf\limits_{k}\|u_{\eps_k}\|_{\sB^p}\leq r_0.
    \end{equation}
    This implies that $U_k=J_0u_k\to J_0u_0$ in $\mbox{\rm Lin}(\sD)$. Hence, $U_0=J_0u_0$ and we conclude that $s_0\in\sM$.

    Case 3: $\eps_k>0$ for all $k\in\N$ and $\eps_0=0$.\\
    There exists a bounded sequence $u_k$ in $L^p(Q)$. Thanks to Lemma~\ref{lem:two-scale-limit}, by passing to a subsequence we may assume that $u_k\tsq{\omega_0} u_0$ for some $u\in \sB^p$ with 
    \begin{equation}\label{eq:x5}
      \|u_0\|_{\sB^p}\leq \liminf\limits_{k}\|u_k\|_{L^p(Q)}\leq r_0.
    \end{equation}
    Furthermore, Lemma~\ref{L:metric-char} implies that $J^{\omega_0}_{\eps_k}u_k\to J_0u_0$ in $\mbox{\rm Lin}(\sD)$, and thus $U_0=J_0u_0$. We conclude that $s_0\in\sM$.
\item We first argue that the representation \eqref{eq:repr} of $U$ by $u$ is unique. In the case $\eps>0$ suppose that $u,v\in L^p(Q)$ satisfy $U=J^{\omega_0}_{\eps}u=J^{\omega_0}_{\eps}v$. Then for all $\varphi_Q\in\sD_Q$ we have $\int_Q(u-v)\varphi_Q=J^{\omega_0}_\eps u(\mathbf 1_\Omega\varphi_Q)-J^{\omega_0}_\eps v(\mathbf 1_\Omega\varphi_Q)=U(\mathbf 1_\Omega\varphi_Q)-U(\mathbf 1_\Omega\varphi_Q)=0$, and since $\sD_Q\subset L^q(Q)$ dense, we conclude that $u=v$. In the case $\eps=0$ the statement follows by a similar argument from the fact that $\sD$ is dense $\sB^q$.
  To see \eqref{eq:x6} let $u$ and $U$ be related by \eqref{eq:repr}. Since $\overline\sD$ (resp. $\sD$) is dense in $L^q(Q)$ (resp. $\sB^q$), we have
  \begin{equation*}
    \begin{cases}
      \|u\|_{L^p(Q)}=\sup\limits_{\varphi\in\overline{\sD},\ \|\varphi\|_{\sB^q}\leq 1}|\int_Qu\varphi\,dx\,dP|=\sup\limits_{\varphi\in\overline{\sD},\ \|\varphi\|_{\sB^q}\leq 1}|U(\varphi)|&\text{if }\eps>0,\\
      \|u\|_{\sB^p}=\sup\limits_{\varphi\in\sD,\ \|\varphi\|_{\sB^q}\leq 1}|\int_{\Omega}\int_{Q}u\varphi\,dx\,dP|=\sup\limits_{\varphi\in\sD,\ \|\varphi\|_{\sB^q}\leq 1}|U(\varphi)|&\text{if }\eps=0.
    \end{cases}    
  \end{equation*}

  \item Let $s_k=(U_k,\eps_k,r_k)$ denote a sequence in $\sM^{\omega_0}$ that converges in $\sM$ to a limit $s_0=(U_0,\eps_0,r_0)$. By (ii) we have $s_0\in\sM^{\omega_0}$. For $k\in\N_0$ let $u_k$ in $L^p(Q)$ or $\sB^p$ denote the representation of $U_k$ in the sense of \eqref{eq:repr}. We may pass to a subsequence such that one of the three cases in (ii) applies and (as in (ii)) either $u_k$ weakly converges to $u_0$ (in $L^p(Q)$ or $\sB^p$), or $u_k\tsq{\omega_0}u_0$. In any of these cases the claimed lower semicontinuity of $\|\cdot\|_{\omega_0}$ follows from $\eps_k\to\eps_0$, $r_k\to r_0$, and \eqref{eq:x6} in connection with one of the lower semicontinuity estimates \eqref{eq:x3} -- \eqref{eq:x5}.
\item This follows from the definition and duality argument \eqref{eq:x6}.
\item Let $s_k$ denote a sequence in $\sM^{\omega_0}$. Let $u_k$ in $L^p(Q)$ or $\sB^p$ denote the (unique) representative of $U_k$ in the sense of \eqref{eq:repr}. Suppose that  $\|s_k\|_{\omega_0}\leq R$. Then  $(r_k)$ and $(\eps_k)$ are bounded sequences in $\R_{\geq 0}$, and $\sup_{k}\|u_k\|\leq \sup_kr_k<\infty$ (where $\|\cdot\|$ stands short for either $\|\cdot\|_{L^p(Q)}$ or $\|\cdot\|_{\sB^p}$). Thus we may pass to a subsequence such that~$r_k\to r_0$,  $\eps_k\to \eps_0$, and one of the following three cases applies:
  \begin{itemize}
  \item Case 1: $\inf_{k\in\N_0}\eps_k>0$. In that case we conclude (after passing to a further subsequence) that $u_k\weakto u_0$ weakly in $L^p(Q)$, and thus $U_k\to U_0=J^{\omega_0}_{\eps_0}u_0$ in $\mbox{Lin}(\sD)$.
  \item Case 2: $\eps_k=0$ for all $k\in\N_0$. In that case we conclude (after passing to a further subsequence) that $u_k\weakto u_0$ weakly in $\sB^p(Q)$, and thus $U_k\to U_0=J_0u_0$ in $\mbox{Lin}(\sD)$.
  \item Case 3: $\eps_k>0$ for all $k\in\N$ and $\eps_0=0$.  In that case we conclude (after passing to a further subsequence) that $u_k\tsq{\omega_0}u_0$, and thus $U_k\to U_0=J_0u_0$ in $\mbox{Lin}(\sD)$.
  \end{itemize}
  In all of these cases we deduce that $s_0=(U_0,\eps_0,r_0)\in\sM^{\omega_0}$, and $s_k\to s_0$ in $\sM$.
\item This is a direct consequence of (ii) -- (vi), and Lemma~\ref{L:metric-char}.
\end{enumerate}
\end{proof}

Now we are in position to prove Theorem~\ref{thm:Balder-Thm-two-scale}
\begin{proof}[Proof of Theorem~\ref{thm:Balder-Thm-two-scale}]
Let $\sM$, $\sM^{\omega_0}$, $J^{\omega_0}_\eps$ etc.~be defined as in Lemma~\ref{L:metric-struct}. 
\smallskip

{\it Step 1. (Identification of $(u\e)$ with a tight $\sM$-valued sequence).}
Since $u\e\in\sB^p$, by Fubini's theorem, we have $u\e(\omega,\cdot)\in L^p(Q)$ for $P$-a.e.~$\omega\in\Omega$. By modifying $u\e$ on a null-set in $\Omega\times Q$ (which does not alter two-scale limits in the mean), we may assume w.l.o.g.~that $u\e(\omega,\cdot)\in\ L^p(Q)$ for all $\omega\in\Omega$. Consider the measurable function $s\e:\Omega\to\sM$ defined as
\begin{equation*}
  s\e(\omega):=    \begin{cases}
      \big(J^{\omega}\e u\e(\omega,\cdot),\eps,\|u\e(\omega,\cdot)\|_{L^p(Q)}\big)&\text{if }\omega\in\Omega_0\\
      (0,0,0)&\text{else.}
    \end{cases}
\end{equation*}
We claim that $(s\e)$ is tight. To that end consider the integrand $h:\Omega\times\sM\to(-\infty,+\infty]$ defined by
\begin{equation*}
  h(\omega,(U,\eps,r)):=
  \begin{cases}
    \|(U,\eps,r)\|_{\omega}^p&\text{if }\omega\in\Omega_0\text{ and }(U,\eps,r)\in\sM^{\omega},\\
    +\infty&\text{else.}
  \end{cases}
\end{equation*}
From Lemma~\ref{L:metric-struct} we deduce that $h$ is a normal integrand and $h(\omega,\cdot)$ has compact sublevels for all $\omega\in\Omega$. Moreover, for all $\omega_0\in\Omega_0$ we have $s\e(\omega_0)\in\sM^{\omega_0}$ and thus $h(\omega_0,s\e(\omega_0))=2\|u\e(\omega_0,\cdot)\|^p_{L^p(Q)}+\eps$. Hence,
\begin{equation*}
  \int_\Omega h(\omega,s\e(\omega))\,dP(\omega)=2\|u\e\|^p_{\sB^p}+\eps.
\end{equation*}
We conclude that $(s\e)$ is tight.

{\it Step 2. (Compactness and  definition of $\boldsymbol{\nu}$)}. By appealing to Theorem~\ref{thm:Balder} there exists a subsequence (still denoted by $\eps$) and a Young measure $\boldsymbol{\mu}$ that is generated by $(s\e)$. Let $\boldsymbol{\mu_1}$ denote the first component of $\boldsymbol{\mu}$, i.e.~the Young measure on $\mbox{\rm Lin}(\sD)$ characterized for $\omega\in\Omega$ by
\begin{equation*}
  \int_{\mbox{\rm Lin}(\sD)}f(\xi)\,d\mu_{1,\omega}(\xi)=\int_{\sM}f(\xi_1)\,d\mu_\omega(\xi),
\end{equation*}
for all $f:\mbox{\rm Lin}(\sD)\to\R$ continuous and bounded, where $\sM\ni\xi=(\xi_1,\xi_2,\xi_3)\to\xi_1\in\mbox{\rm Lin}(\sD)$ denotes the projection to the first component.
By Balder's theorem, $\mu_\omega$ is concentrated on the limit points of $(s\e(\omega))$. By Lemma~\ref{L:metric-struct} we deduce that for all $\omega\in\Omega_0$ any limit point $s_0(\omega)$ of $s\e(\omega)$ has the form $s_0(\omega)=(J_0u,0,r)$ where $0\leq r<\infty$ and $u\in\sB^p$ is a $\omega$-two-scale limit of a subsequence of $u\e(\omega,\cdot)$. Thus, $\mu_{1,\omega}$ is supported on $\{J_0u\,:\,u\in \CL(\omega,(u\e(\omega,\cdot))\}$ which in particular is a subset of $(\sB^q)^*$. Since $J_0:\sB^p\to (\sB^q)^*$ is an isometric isomorphism (by the Riesz-Frech\'et theorem), we conclude that $\boldsymbol{\nu}=\{\nu_\omega\}_{\omega\in\Omega}$, $\nu_\omega(B):=\mu_{1,\omega}(J_0B)$ (for all Borel sets $B\subset\sB^p$ where $\sB^p$ is equipped with the weak topology) defines a Young measure on $\sB^p$, and for all $\omega\in\Omega_0$, $\nu_\omega$ is supported on $\CL(\omega,(u\e(\omega,\cdot))$. 
\smallskip

{\it Step 3. (Lower semicontinuity estimate).} Note that $h:\Omega\times\sM\to[0,+\infty]$,
\begin{equation*}
  h(\omega,(U,\eps,r)):=
  \begin{cases}
    \sup_{\varphi\in\overline\sD,\,\|\varphi\|_{\sB^q}\leq 1}|U(\varphi)|^p&\text{if }\omega\in\Omega_0\text{ and }(U,\eps,r)\in\sM^{\omega},\eps>0,\\
    \sup_{\varphi\in\sD,\,\|\varphi\|_{\sB^q}\leq 1}|U(\varphi)|^p&\text{if }\omega\in\Omega_0\text{ and }(U,\eps,r)\in\sM^{\omega},\eps=0,\\
    +\infty&\text{else.}
  \end{cases}
\end{equation*}
defines a normal integrand, as can be seen as in the proof of Lemma~\ref{L:metric-struct}. Thus Theorem~\ref{thm:Balder} implies that
\begin{equation*}
  \liminf_{\eps\to 0}\int_\Omega h(\omega,s\e(\omega))\,dP(\omega)\geq \int_\Omega\int_{\sM}h(\omega,\xi)\,d\mu_\omega(\xi)dP(\omega).
\end{equation*}
In view of Lemma~\ref{L:metric-struct} we have $    \sup_{\varphi\in\overline\sD,\,\|\varphi\|_{\sB^q}\leq 1}|(J^\omega_\eps u_\eps)(\omega,\cdot))(\varphi)|=\|u\e(\omega,\cdot)\|_{L^p(Q)}$ for $\omega\in\Omega_0$, and thus the left-hand side turns into $\liminf_{\eps\to 0}\|u\e\|^p_{\sB^p}$. Thanks to the definition of $\boldsymbol{\nu}$ the right-hand side turns into $\int_\Omega \int_{\sB^p}\|v\|_{\sB^p}^p\,d\nu_\omega(v)dP(\omega)$.
\smallskip

{\it Step 4. (Identification of the two-scale limit in the mean)}.
Let $\varphi\in\sD_0$. Then $h:\Omega\times\sM\to[0,+\infty]$,
\begin{equation*}
  h(\omega,(U,\eps,r)):=
  \begin{cases}
    U(\varphi)&\text{if }\omega\in\Omega_0,\,(U,\eps,r)\in\sM^\omega,\\
    +\infty&\text{else.}
  \end{cases}
\end{equation*}
defines a normal integrand. Since $h(\omega,s\e(\omega))=\int_Qu\e(\omega,x)\unf^*\varphi(\omega,x)\,dx$ for $P$-a.e.~$\omega\in\Omega$, we deduce that $|h(\cdot, s\e(\cdot))|$ is uniformly integrable. Thus, \eqref{eq:Balders-ineq} applied to $\pm h$ and the definition of $\boldsymbol{\nu}$ imply that
\begin{eqnarray*}
  \lim\limits_{\eps\to 0}\int_\Omega\int_Qu\e(\omega,x)(\unf^*\varphi)(\omega,x)\,dx\,dP(\omega)&=&  \lim\limits_{\eps\to 0}\int_\Omega h(\omega,s\e(\omega))\,dP(\omega)\\
  &=&\int_\Omega\int_{\sB^p}h(\omega,v)\,d\nu_\omega(v)\,dP(\omega)\\
  &=&\int_\Omega\int_{\sB^p}\ex{\int_Qv\varphi}\,d\nu_\omega(v)\,dP(\omega).
\end{eqnarray*}
Set $u:=\int_\Omega\int_{\sB^p}v\,d\nu_\omega(v)dP(\omega)\in\sB^p$. Then Fubini's theorem yields
\begin{eqnarray*}
  \lim\limits_{\eps\to 0}\int_\Omega\int_Qu\e(\omega,x)(\unf^*\varphi)(\omega,x)\,dx\,dP(\omega)&=& \ex{\int_Qu\varphi}.
\end{eqnarray*}
Since $\mbox{span}(\sD_0)\subset \sB^q$ dense, we conclude that $u\e\ts u$.
\smallskip

{\it Step 5. Recovery of quenched two-scale convergence}. Suppose that $\nu_\omega$ is a delta distribution on $\sB^p$, say $\nu_\omega=\delta_{v(\omega)}$ for some measurable $v:\Omega\to\sB^p$. Note that $h:\Omega\times\sM\to[0,+\infty]$,
\begin{equation*}
  h(\omega,(U,\eps,r)):=-d(U,J_0v(\omega);\mbox{\rm Lin}(\sD))
\end{equation*}
is a normal integrand and $|h(\cdot, s\e(\cdot))|$ is uniformly integrable. Thus,  \eqref{eq:Balders-ineq} yields
\begin{eqnarray*}
  &&\limsup\limits_{\eps\to 0}\int_{\Omega} d(J^\omega\e u\e(\omega,\cdot),J_0v(\omega);\mbox{\rm Lin}(\sD))\,dP(\omega)\\
  &=&-\liminf\limits_{\eps\to 0}\int_\Omega h(\omega,s\e(\omega))\,dP(\omega)\\
  &\leq&-\int_\Omega\int_{\sB^p}h(\omega,J_0v)\,d\nu_\omega(v)\,dP(\omega)=-\int_\Omega h(\omega,J_0v(\omega))\,dP(\omega)=0.
\end{eqnarray*}
Thus, there exists a subsequence (not relabeled) such that $d(J^\omega\e u\e(\omega,\cdot),J_0v(\omega);\mbox{\rm Lin}(\sD))\to 0$ for a.e.~$\omega\in\Omega_0$. In view of Lemma~\ref{L:metric-char} this implies that $u\e\tsq{\omega}v(\omega)$ for a.e.~$\omega\in\Omega_0$.
\end{proof}

\begin{proof}[Proof of Lemma~\ref{lem:Balder-Lem-two-scale}]
  {\it Step 1. Representation of the functional by a lower semicontinuous integrand on $\sM$.}\\
  For all $\omega_0\in\Omega_0$ and $s=(U,\eps,r)\in\sM^{\omega_0}$ we write $\pi^{\omega_0}(s)$ for the unique representation $u$ in $\sB^p$ (resp. $L^p(Q)$) of $U$ in the sense of \eqref{eq:repr}. We thus may define for $\omega\in\Omega_0$ and $s\in\sM^{\omega_0}$ the integrand
\begin{equation*}
  \overline h(\omega_0,s):=
  \begin{cases}
    \int_Qh(\tau_{\frac{x}{\eps}}\omega,x,(\pi^{\omega_0}s)(x))\,dx&\text{if }s=(U,\eps,s)\text{ with }\eps>0,\\
    \int_\Omega\int_Qh(\omega,x,(\pi^{\omega_0}s)(x))\,dx\,dP(\omega)&\text{if }s=(U,\eps,s)\text{ with }\eps=0.
  \end{cases}
\end{equation*}

We extend $\overline h(\omega_0,\cdot)$ to $\sM$ by $+\infty$, and define $\overline h(\omega,\cdot)\equiv 0$ for $\omega\in\Omega\setminus\Omega_0$. We claim that $\overline h(\omega,\cdot):\sM\to(-\infty,+\infty]$ is lower semicontinuous for all $\omega\in\Omega$. It suffices to consider $\omega_0\in\Omega_0$ and a convergent sequence $s_k=(U_k,\eps_k,r_k)$ in $\sM^{\omega_0}$. For brevity we only consider the (interesting) case when $\eps_k\downarrow \eps_0=0$. Set $u_k:=\pi^{\omega_0}(s_k)$. By construction we have
\begin{equation*}
  \overline h(\omega_0,s_k)=\int_Q h(\tau_{\frac{x}{\eps_k}}\omega_0,u_k(\omega_0,x))\,dx,
\end{equation*}
and
\begin{equation*}
  \overline h(\omega_0,s_0)=\int_{\Omega}\int_Q h(\omega,x,u_0(\omega,x))\,dx\,dP(\omega).
\end{equation*}
Since $s_k\to s_0$ and $\eps_k\to 0$, Lemma~\ref{L:metric-struct} (vi) implies that $u_k\tsq{\omega_0}u_0$, and since $h$ is assumed to be a quenched two-scale normal integrand, we conclude that $\liminf\limits_{k}  \overline h(\omega_0,s_k)\geq   \overline h(\omega_0,s_0)$, and thus $\overline h$ is a normal integrand.

{\it Step 2. Conclusion.}\\
As in Step~1 of the proof of Theorem~\ref{thm:Balder-Thm-two-scale} we may associate with the sequence $(u\e)$ a sequence of measurable functions $s\e:\Omega\to\sM$ that (after passing to a subsequence that we do not relabel) generates a Young measure $\boldsymbol{\mu}$ on $\sM$. Since by assumption $u\e$ generates the Young measure ${\boldsymbol \nu}$ on $\sB^p$, we deduce that the first component $\boldsymbol{\mu_1}$ satisfies $\nu_\omega(B)=\mu_{\omega}(J_0B)$ for any Borel set $B$. Applying \eqref{eq:Balders-ineq} to the integrand $\overline h$ of Step~1, yields
\begin{eqnarray*}
  &&\liminf\limits_{\e\to 0}\int_\Omega\int_Q h(\tau_{\frac{x}{\eps}}\omega_0,u\e(\omega_0,x))\,dx\,dP(\omega)\\
  &=&\liminf\limits_{\e\to 0}\int_\Omega\overline h(\omega,s\e(\omega))\,dP(\omega)\\
  &\geq&\int_\Omega\int_{\sM}\overline h(\omega,\xi)\,d\mu_\omega(\xi)\,dP(\omega)\\
  &=&\int_\Omega\int_{\sB^p}\Big(\int_\Omega\int_Qh(\tomega,x,v(\tomega,x))\,dx\,dP(\tomega)\Big)\,d\nu_\omega(v)\,dP(\omega).
\end{eqnarray*}
\end{proof}

\begin{proof}[Proof of Lemma~\ref{L:fromquenchedtomean}]
  By (b) and (c) the sequence $(\tilde u\e)$ is bounded in $\sB^p$ and thus we can pass to a subsequence such that $(\tilde u\e)$ generates a Young measure $\boldsymbol \nu$. Set $\tilde u:=\int_\Omega\int_{\sB^p}v\,d\nu_\omega(v)\,dP(\omega)$ and note that Theorem~\ref{thm:Balder-Thm-two-scale} implies that $\tilde u\e\wt \tilde u$ weakly two-scale in the mean. On the other hand the theorem implies that $\nu_\omega$ concentrates on the quenched two-scale cluster points of $(u^\omega\e)$ (for a.e.~$\omega\in\Omega$). Hence, in view of (a) we conclude that for a.e.~$\omega\in\Omega$ the measure $\nu_\omega$ is a Dirac measure concentrated on $u$, and thus $\tilde u=u$ a.e.~in $\Omega\times Q$.  
\end{proof}

\subsection{Quenched homogenization of convex functionals}\label{Section:4:3}
In this section we demonstrate how to lift homogenization results w.r.t.~two-scale convergence in the mean to quenched statements at the example of a convex minimization problem. Throughout this section we assume that $V:\Omega\times Q\times\R^{d\times d}\to\R$ is a convex integrand satisfying the assumptions $(A1)-(A3)$ of Section \ref{Section_Convex}. For $\omega\in\Omega$ we define $\cE^{\omega}\e: W^{1,p}_0(Q)\to \re{}$, 
\begin{equation*}
\cE^{\omega}_{\eps}(u):=\int_{Q}V\left(\tau_{\frac{x}{\eps}}\omega, x,\nabla^s u(x)\right)\,dx,
\end{equation*}
and recall from Section~\ref{Section_Convex} the definition \eqref{energy} of the averaged energy $\cE_{\eps}$ and the definition \eqref{energy_hom} of the two-scale limit energy
$\cE_{0}$. 
The goal of this section is to relate two-scale limits of ``mean''-minimizers, i.e.~functions $u\e\in L^p(\Omega)\otimes W^{1,p}_0(Q)$ that minimize $\cE_{\eps}$, with limits of ``quenched''-minimizers, i.e.~families  $\{u\e(\omega)\}_{\omega\in\Omega}$ of minimizers to $\cE^{\omega}\e$ in $W^{1,p}_0(Q)$.
\begin{thm}
  \label{thm:Quenched-hom-convex-grad} Let $u\e\in L^{p}(\Omega)\otimes W_{0}^{1,p}(Q)$
  be a minimizer of $\cE_{\eps}$. Then there exists a subsequence such that $(u\e,\nabla u\e)$ generates a Young measure $\boldsymbol{\nu}$ in $\sB:=(\sB^p)^{d+d^2}$ in the sense of Theorem~\ref{thm:Balder-Thm-two-scale}, and for $P$-a.e.~$\omega\in\Omega$, $\nu_{\omega}$ concentrates on the set $    \big\{\,(u,\nabla u+\chi)\,:\,\cE_0(u,\chi)=\min\cE_0\,\big\}$ of minimizers of the limit functional.
  Moreover, if  $V(\omega,x,\cdot)$ is strictly convex for all $x\in Q$ and $P$-a.e.~$\omega\in\Omega$, then the minimizer $u\e$ of $\cE_{\eps}$ and the minimizer $(u,\chi)$ of $\cE_0$ are  unique, and for $P$-a.e.~$\omega\in\Omega$ we have (for a not relabeled subsequence)
  \begin{gather*}
    u\e\weakto u\text{ weakly in }W^{1,p}(Q),\qquad u\e(\omega,\cdot)\tsq{\omega}u,\qquad\nabla u\e(\omega,\cdot)\tsq{\omega}\nabla u+\chi,\\
    \text{and }\min\cE^\omega_\eps=\cE^\omega_\eps(u\e(\omega,\cdot))\to \cE_0(u,\chi)=\min\cE_0.
  \end{gather*}
\end{thm}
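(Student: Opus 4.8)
The plan is to combine the abstract Young-measure machinery of Theorem~\ref{thm:Balder-Thm-two-scale} with the mean-homogenization result Corollary~\ref{C:thm1} and the quenched-two-scale compactness Lemma~\ref{lem:sto-conver-grad}. First I would note that since $u\e$ minimizes $\cE_\eps$ we have $\limsup_{\eps\to0}\cE_\eps(u\e)\leq\limsup_{\eps\to0}\cE_\eps(0)<\infty$, so by the Poincar\'e--Korn inequality and $(A3)$ the sequence $(u\e)$ is bounded in $L^p(\Omega)\otimes W^{1,p}_0(Q)$. Hence $(u\e,\nabla u\e)$ is bounded in $\sB:=(\sB^p)^{d+d^2}$ and, after passing to a subsequence, Theorem~\ref{thm:Balder-Thm-two-scale} (applied componentwise, i.e.\ on the product space) provides a Young measure $\boldsymbol\nu$ on $\sB$ such that for $P$-a.e.~$\omega$, $\nu_\omega$ concentrates on the quenched two-scale cluster points $\CL(\omega,(u\e(\omega,\cdot),\nabla u\e(\omega,\cdot)))$, and such that the barycentre $(u,v):=\int_\Omega\int_{\sB}(\xi)\,d\nu_\omega\,dP$ is the weak two-scale limit in the mean of $(u\e,\nabla u\e)$. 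By Corollary~\ref{C:thm1} this barycentre is of the form $(u,\nabla u+\chi)$ with $(u,\chi)$ a minimizer of $\cE_0$, and $\cE_\eps(u\e)\to\min\cE_0$.

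Next I would upgrade the support statement: I claim that for $P$-a.e.~$\omega$, every cluster point in $\CL(\omega,(u\e(\omega,\cdot),\nabla u\e(\omega,\cdot)))$ is a minimizer of $\cE_0$. By Lemma~\ref{lem:sto-conver-grad} any such cluster point has the form $(\hat u,\nabla\hat u+\hat\chi)$ with $\hat u\in W^{1,p}_0(Q)$ (the Dirichlet boundary condition is preserved, cf.\ Remark~\ref{R_Two_1} in the quenched setting) and $\hat\chi\in L^p_{\pot}(\Omega)\otimes L^p(Q)$. Applying Lemma~\ref{lem:Balder-Lem-two-scale} to the quenched two-scale normal integrand $h(\omega,x,\xi):=V(\omega,x,\xi^s)$ --- which is a quenched two-scale normal integrand precisely by the liminf part of a quenched analogue of Theorem~\ref{thm1}(ii), or more directly by the lower semicontinuity result of Heida--Nesenenko invoked in the introduction --- gives
\begin{equation*}
\min\cE_0=\liminf_{\eps\to0}\cE_\eps(u\e)\geq\int_\Omega\int_{\sB}\cE_0(\hat u(v),\hat\chi(v))\,d\nu_\omega(v)\,dP(\omega)\geq\min\cE_0,
\end{equation*}
so the integrand $\cE_0(\hat u(v),\hat\chi(v))$ equals $\min\cE_0$ for $\nu_\omega$-a.e.~$v$ and $P$-a.e.~$\omega$; that is, $\nu_\omega$ concentrates on the set of minimizers of $\cE_0$. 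This proves the first assertion.

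For the second assertion, assume $V(\omega,x,\cdot)$ strictly convex. Uniqueness of the mean-minimizer $u\e$ and of $(u,\chi)$ follows from strict convexity together with the stochastic Korn inequality Lemma~\ref{lem8} (which ensures strict convexity of $(\nabla u,\chi)\mapsto\cE_0$ modulo the symmetrised-gradient ambiguity, and that ambiguity is harmless here because $\chi\in L^p_{\pot}$). Since the set of minimizers of $\cE_0$ is then the single point $(u,\nabla u+\chi)$, the support statement forces $\nu_\omega=\delta_{(u,\nabla u+\chi)}$ for $P$-a.e.~$\omega$. The final clause of Theorem~\ref{thm:Balder-Thm-two-scale} then yields, along a further subsequence, $u\e(\omega,\cdot)\tsq{\omega}u$ and $\nabla u\e(\omega,\cdot)\tsq{\omega}\nabla u+\chi$ for $P$-a.e.~$\omega$; weak convergence $u\e(\omega,\cdot)\weakto u$ in $W^{1,p}(Q)$ follows from the ``$u\e\weakto\ex{u}$'' clauses of Lemmas~\ref{lem:two-scale-limit} and~\ref{lem:sto-conver-grad} together with $P_{\inv}u=u$. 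It remains to show $\cE^\omega_\eps(u\e(\omega,\cdot))\to\cE_0(u,\chi)=\min\cE_0$ for $P$-a.e.~$\omega$. The inequality $\liminf_\eps\cE^\omega_\eps(u\e(\omega,\cdot))\geq\cE_0(u,\chi)$ holds for $P$-a.e.~$\omega$ by the quenched liminf property (Definition~\ref{D:qtscnormal} applied to $h(\omega,x,\xi)=V(\omega,x,\xi^s)$ and the convergence $\nabla u\e(\omega,\cdot)\tsq{\omega}\nabla u+\chi$). For the matching $\limsup$ bound I would use a quenched recovery sequence: by Corollary~\ref{C:thm1} and Theorem~\ref{thm1}(iii) there is a recovery sequence $v\e\in L^p(\Omega)\otimes W^{1,p}_0(Q)$ with $\cE_\eps(v\e)\to\cE_0(u,\chi)$; by minimality $\cE^\omega_\eps(u\e(\omega,\cdot))\leq\cE^\omega_\eps(v\e(\omega,\cdot))$, and since $\int_\Omega\cE^\omega_\eps(v\e(\omega,\cdot))\,dP=\cE_\eps(v\e)\to\cE_0(u,\chi)$ while (by the quenched liminf) $\liminf_\eps\cE^\omega_\eps(v\e(\omega,\cdot))\geq\cE_0(u,\chi)$ $P$-a.e., a Fatou/reverse-Fatou argument forces $\cE^\omega_\eps(v\e(\omega,\cdot))\to\cE_0(u,\chi)$ for $P$-a.e.~$\omega$ along a subsequence; hence $\limsup_\eps\cE^\omega_\eps(u\e(\omega,\cdot))\leq\cE_0(u,\chi)$ and convergence of the quenched energies follows. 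A standard contradiction argument using uniqueness of the limit removes the subsequence. \emph{The main obstacle} I anticipate is verifying that $h(\omega,x,\xi)=V(\omega,x,\xi^s)$ is a genuine quenched two-scale normal integrand (Definition~\ref{D:qtscnormal}), i.e.\ the quenched lower semicontinuity of convex integral functionals along quenched two-scale convergence --- this is exactly the point where one must invoke the Heida--Nesenenko result \cite{HeidaNesenenko2017monotone} rather than the elementary unfolding argument --- and, secondarily, keeping careful track of which statements hold for the full sequence versus only along (possibly $\omega$-dependent) subsequences and how uniqueness of the limit is used to reconcile them.
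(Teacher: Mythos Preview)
Your overall strategy coincides with the paper's: boundedness of $(u_\eps,\nabla u_\eps)$ via Corollary~\ref{C:thm1}, extraction of a Young measure by Theorem~\ref{thm:Balder-Thm-two-scale}, identification of the structure of cluster points via Lemma~\ref{lem:sto-conver-grad}, and the chain
\[
\min\cE_0=\lim_{\eps\to0}\cE_\eps(u_\eps)\ \geq\ \int_\Omega\int_{\sB}\cE_0(\xi_1,\xi_2-\nabla\xi_1)\,d\nu_\omega(\xi)\,dP(\omega)\ \geq\ \min\cE_0
\]
via Lemma~\ref{lem:Balder-Lem-two-scale} together with Lemma~\ref{lem:General-Hom-Convex} (the Heida--Nesenenko lower semicontinuity result you correctly singled out). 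In the strictly convex case, $\nu_\omega=\delta_{(u,\nabla u+\chi)}$ and the last clause of Theorem~\ref{thm:Balder-Thm-two-scale} gives the quenched convergence. So far this matches the paper exactly.

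There are two points where your argument deviates and should be corrected. First, your $\limsup$ argument for the quenched energies takes an unnecessary detour through a \emph{mean} recovery sequence $v_\eps$, and the step ``by the quenched liminf, $\liminf_\eps\cE^\omega_\eps(v_\eps(\omega,\cdot))\geq\cE_0(u,\chi)$ $P$-a.e.'' is not justified: the construction in Theorem~\ref{thm1}(iii) only gives strong two-scale convergence \emph{in the mean} of $\nabla v_\eps$, not quenched $\omega$-two-scale convergence, so Definition~\ref{D:qtscnormal} does not apply to $v_\eps$. The paper's route is simpler and avoids this gap: apply the Fatou-type argument directly to $u_\eps$. You already have $\nabla u_\eps(\omega,\cdot)\tsq{\omega}\nabla u+\chi$, hence $\liminf_\eps\cE^\omega_\eps(u_\eps(\omega,\cdot))\geq\min\cE_0$ for $P$-a.e.~$\omega$; together with $\int_\Omega\cE^\omega_\eps(u_\eps(\omega,\cdot))\,dP=\cE_\eps(u_\eps)\to\min\cE_0$ and the uniform lower bound from $(A3)$, a standard $L^1$-argument (split into positive and negative parts and use dominated convergence for the negative part) yields $\cE^\omega_\eps(u_\eps(\omega,\cdot))\to\min\cE_0$ for $P$-a.e.~$\omega$ along a further subsequence. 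The identification $\cE^\omega_\eps(u_\eps(\omega,\cdot))=\min\cE^\omega_\eps$ follows because a minimizer of the averaged functional $\cE_\eps$ must minimize $\cE^\omega_\eps$ for $P$-a.e.~$\omega$ (otherwise a measurable selection would produce a better competitor).

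Second, your final sentence is incorrect: the subsequence \emph{cannot} be removed by a uniqueness-of-the-limit contradiction argument. The exceptional $P$-null set on which quenched convergence may fail depends a priori on the chosen subsequence, so different subsequences of the original $\eps\to0$ may carry different null sets; uniqueness of $(u,\chi)$ does not reconcile these. The paper's statement accordingly keeps the phrase ``for a not relabeled subsequence'' and explicitly remarks on this limitation after the theorem.
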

\begin{remark}[Identification of quenched two-scale cluster points]
  If we combine Theorem~\ref{thm:Quenched-hom-convex-grad} with the identification of the support of the Young measure in Theorem~\ref{thm:Balder-Thm-two-scale} we conclude the following: There exists a subsequence such that
$(u\e,\nabla u\e)$ two-scale converges in the mean to a limit of the form $(u_0,\nabla u_0+\chi_0)$ with $\cE_0(u_0,\chi_0)=\min\cE_0$, and for a.e.~$\omega\in\Omega$ the set of quenched $\omega$-two-scale cluster points $\CL(\omega, (u\e(\omega,\cdot),\nabla u\e(\omega,\cdot)))$ is contained in $\big\{\,(u,\nabla u+\chi)\,:\,\cE_0(u,\chi)=\min\cE_0\,\big\}$. In the strictly convex case we further obtain that $\CL(\omega, (u\e(\omega,\cdot),\nabla u\e(\omega,\cdot)))=\{(u,\nabla u+\chi)\}$ where $(u,\chi)$ is the unique minimizer to $\cE_0$. Note, however, that our argument (that extracts quenched two-scale limits from the sequence of ``mean'' minimizers) involves an exceptional $P$-null-set that a priori depends on the selected subsequence. This is in contrast to the classical result in \cite{DalMaso1986} which is based on a subadditive ergodic theorem and states that there exists a set of full measure $\Omega'$ such that for all $\omega\in\Omega'$ the minimizer $u\e^\omega$ to $\cE^{\omega}_\eps$ weakly converges in $W^{1,p}(Q)$ to the deterministic minimizer $u$ of the reduced functional $\cE_{\hom}$ for any sequence $\eps\to 0$.
\end{remark}
In the proof of Theorem~\ref{thm:Quenched-hom-convex-grad} we combine homogenization in the mean in form of Theorem~\ref{thm1}, the connection to quenched two-scale limits via Young measures in form of Theorem~\ref{thm:Balder-Thm-two-scale}, and a recent result by Nesenenko and the first author that states that $V$ is a quenched two-scale normal integrand:
\begin{lemma}[\mbox{\cite[Lemma~5.1]{HeidaNesenenko2017monotone}}]\label{lem:General-Hom-Convex}
$V$ is a quenched two-scale normal integrand in the sense of Definition~\ref{D:qtscnormal}.
\end{lemma}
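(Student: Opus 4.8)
\textit{Proof strategy.} The two structural conditions in Definition~\ref{D:qtscnormal} are immediate: $V(\cdot,\cdot,F)$ is $\mathcal F\otimes\mathcal B(\R^{d\times d})$-measurable for each $F$ by $(A1)$, and for a.e.\ $(\omega,x)$ the map $V(\omega,x,\cdot)$ is convex (by $(A2)$) and finite on $\R^{d\times d}$ (by $(A3)$), hence continuous, hence lower semicontinuous. The content of the lemma is the quenched two-scale lower semicontinuity inequality (read with $\R^{d\times d}$, or its symmetric subspace, in place of $\R^d$), and the plan is to prove it by the classical representation of a convex integral functional through affine minorants, combined with the passage to the limit in affine expressions along quenched two-scale convergent sequences. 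After replacing $V$ by $V+C$ (which changes both sides of the inequality by $C|Q|$) we may assume $V\ge0$. Write $\mathcal V:\sB^p\to\R$, $\mathcal V(w):=\ex{\int_Q V(\omega,x,w(\omega,x))\,dx}$, where $\sB^p$ here denotes $L^p(\Omega\times Q)^{d\times d}$; by the growth and convexity of $V$ this is a finite, continuous, convex functional.

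\textit{Step 1: a fixed countable family of pointwise-admissible minorants.} Call $(\phi,\psi)\in L^q(\Omega\times Q)^{d\times d}\times L^1(\Omega\times Q)$ \emph{admissible} if $\phi(\omega,x):F+\psi(\omega,x)\le V(\omega,x,F)$ for all $F$ and a.e.\ $(\omega,x)$; modifying $\phi,\psi$ on a null set (using $0\le V$) we may assume this holds for \emph{all} $(\omega,x)$. I claim there is a countable family $\{(\phi_n,\psi_n)\}_{n\in\N}$ of admissible pairs with
\[
\mathcal V(u)=\sup_{n\in\N}\ex{\int_Q\bigl(\phi_n:u+\psi_n\bigr)\,dx}\qquad\text{for every }u\in\sB^p.
\]
Indeed, $\mathcal V(u)=\sup_{(\phi,\psi)\ \mathrm{admissible}}\ex{\int_Q(\phi:u+\psi)\,dx}$: ``$\ge$'' is trivial, and ``$\le$'' follows by choosing a measurable selection $\phi^u(\omega,x)\in\partial_FV(\omega,x,u(\omega,x))$ and setting $\psi^u:=V(\cdot,\cdot,u)-\phi^u:u$, the $p$-growth of $V$ giving the subgradient bound $|\phi^u|\le C(1+|u|^{p-1})\in L^q$ and $|\psi^u|\le C(1+|u|^p)\in L^1$, so that $(\phi^u,\psi^u)$ is admissible and attains the value $\mathcal V(u)$. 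Since the set of admissible pairs lies in the separable space $L^q(\Omega\times Q)^{d\times d}\times L^1(\Omega\times Q)$ and $(\phi,\psi)\mapsto\ex{\int_Q(\phi:u+\psi)\,dx}$ is norm continuous there, a countable dense subfamily $\{(\phi_n,\psi_n)\}$ serves for all $u$ simultaneously. (Alternatively one may invoke Rockafellar's formula $\mathcal V^*(\phi)=\ex{\int_Q V^*(\omega,x,\phi(\omega,x))\,dx}$ \cite{rockafellar1971} together with $\mathcal V=\mathcal V^{**}$ and the continuity of $\phi\mapsto\ex{\int_Q V^*(\cdot,\phi)\,dx}$ coming from the $q$-growth of $V^*$.) The essential point is that this family is fixed once and for all, independently of any sequence.

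\textit{Step 2: good realizations and limit passage.} Using the parametrized form of Birkhoff's theorem --- for $g\in L^1(\Omega\times Q)$ one has $\int_Q g(\tau_{x/\eps}\omega,x)\,dx\to\ex{\int_Q g\,dx}$ for $P$-a.e.\ $\omega$, a standard consequence of Theorem~\ref{thm:ergodic-thm} (cf.\ \cite{jikov2012homogenization}) --- applied to the countably many $L^1$-functions $\psi_n$ and $|\phi_n-\varphi|^q$ with $n\in\N$, $\varphi\in\sD_0$, we obtain a measurable $\Omega_1\subseteq\Omega_0$ of full measure such that for every $\omega_0\in\Omega_1$ the limits $\int_Q\psi_n(\tau_{x/\eps}\omega_0,x)\,dx\to\ex{\int_Q\psi_n\,dx}$ and $\int_Q|\phi_n-\varphi|^q(\tau_{x/\eps}\omega_0,x)\,dx\to\ex{\int_Q|\phi_n-\varphi|^q\,dx}$ hold for all $n$ and all $\varphi\in\sD_0$. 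Fix $\omega_0\in\Omega_1$ and a sequence $u_\eps$ in $L^p(Q)^{d\times d}$, bounded by $C_0$, with $u_\eps\tsq{\omega_0}u$. For each $n$, admissibility gives
\[
\int_Q V(\tau_{x/\eps}\omega_0,x,u_\eps(x))\,dx\ \ge\ \int_Q\phi_n(\tau_{x/\eps}\omega_0,x):u_\eps(x)\,dx+\int_Q\psi_n(\tau_{x/\eps}\omega_0,x)\,dx;
\]
the second term tends to $\ex{\int_Q\psi_n\,dx}$, and for the first we pick $\varphi\in\sD_0$ with $\|\phi_n-\varphi\|_{\sB^q}\le\eta$: by the definition of quenched two-scale convergence $\int_Q\varphi(\tau_{x/\eps}\omega_0,x):u_\eps(x)\,dx\to\ex{\int_Q\varphi\,u\,dx}$, while H\"older's inequality with the second convergence above bounds the error $\int_Q(\phi_n-\varphi)(\tau_{x/\eps}\omega_0,x):u_\eps(x)\,dx$ by $C_0\eta$ in the limit, and $|\ex{\int_Q(\phi_n-\varphi)u\,dx}|\le\|u\|_{\sB^p}\eta$; letting $\eta\downarrow0$ gives $\int_Q\phi_n(\tau_{x/\eps}\omega_0,x):u_\eps(x)\,dx\to\ex{\int_Q\phi_n:u\,dx}$. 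Hence $\liminf_{\eps\to0}\int_Q V(\tau_{x/\eps}\omega_0,x,u_\eps(x))\,dx\ge\ex{\int_Q(\phi_n:u+\psi_n)\,dx}$ for every $n$, and taking the supremum over $n$ together with Step~1 yields $\liminf_{\eps\to0}\int_Q V(\tau_{x/\eps}\omega_0,x,u_\eps(x))\,dx\ge\mathcal V(u)=\ex{\int_Q V(\omega,x,u(\omega,x))\,dx}$, which is the claimed inequality. Since $\Omega_1$ depends only on the fixed countable data, $V$ is a quenched two-scale normal integrand.

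\textit{Main obstacle.} The genuinely delicate points are: (a) arranging that the exceptional $P$-null set of ``bad'' realizations does not depend on the test sequence $(u_\eps)$, which forces us to replace the \emph{a priori} uncountable family of affine minorants of $\mathcal V$ by a fixed countable dense subfamily (Step~1); this uses separability of $L^q\times L^1$, the $L^q$-bound on subgradient selections coming from the $p$-growth of $V$, and --- in the alternative route --- the $q$-growth of the conjugate $V^*$; and (b) the limit passage in $\int_Q\psi_n(\tau_{x/\eps}\omega_0,x)\,dx$, which involves only the rough oscillating coefficient and therefore genuinely requires the $L^1$-parametrized ergodic theorem rather than the elementary Lemma~\ref{L:admis}, whose test functions must be simple tensor products. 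The remaining manipulations (measurable selection of subgradients, modification on null sets so that the minorant inequality holds everywhere, componentwise reading of \eqref{eq:def-quenched-two-scale} for matrix-valued functions) are routine.
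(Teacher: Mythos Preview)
The paper does not supply a proof of this lemma; it simply cites \cite[Lemma~5.1]{HeidaNesenenko2017monotone}. So there is no in-paper argument to compare against, and your proposal should be read as a self-contained proof.

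Your strategy --- represent the convex integral functional $\mathcal V$ as a supremum over a \emph{fixed countable} family of admissible affine minorants $(\phi_n,\psi_n)$, then pass to the limit in each minorant using quenched two-scale convergence together with a parametrized ergodic theorem --- is sound, and you have correctly isolated the crucial point: the exceptional null set must not depend on the test sequence $(u_\eps)$, which forces the reduction to countable data in Step~1. The density argument there is fine (the continuity constant of $(\phi,\psi)\mapsto\ex{\int_Q(\phi{:}u+\psi)}$ depends on $\|u\|_{\sB^p}$, but for each fixed $u$ you still recover the supremum), and the modification of $(\phi,\psi)$ on a null set so that the pointwise minorant inequality holds everywhere is needed and correctly done.

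The one place where you invoke a genuine black box is the parametrized Birkhoff statement in Step~2: for $g\in L^1(\Omega\times Q)$, $\int_Q g(\tau_{x/\eps}\omega,x)\,dx\to\ex{\int_Q g}$ for $P$-a.e.\ $\omega$. This does \emph{not} follow from Lemma~\ref{L:admis} or from the tensor-product case by a naive density argument, because the null set of bad $\omega$'s in the approximation $\int_Q|g-g_k|(\tau_{x/\eps}\omega,x)\,dx$ would a priori depend on $\eps$. One needs a Wiener-type maximal ergodic inequality to control $\sup_{\eps>0}\int_Q|g-g_k|(\tau_{x/\eps}\omega,x)\,dx$ in probability and then run a Borel--Cantelli argument. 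This is classical and available in the references you cite, so the appeal is legitimate; just be aware that it is the real analytic input, not a triviality. With that granted, the remainder of Step~2 (the $\eta$-approximation of $\phi_n$ by $\varphi\in\sD_0^{d\times d}$, using that $\sD_0$ is countable and dense in $\sB^q$) is correct.
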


\begin{proof}[Proof of Theorem~\ref{thm:Quenched-hom-convex-grad}]  
  {\it Step 1. (Identification of the support of $\boldsymbol{\nu}$).}
  
  Since $u\e$ is a sequence of minimizers, by Corollary~\ref{C:thm1} there exists a subsequence (not relabeled) and minimizers $(u,\chi)\in W^{1,p}_0(Q)\times (L^p_{\pot}(\Omega)\otimes L^p(Q)^d)$ of $\cE_0$ such that
  that $u\e \wt u \text{ in }\ltp^d$, $\nabla u\e \wt \nabla u+\chi  \text{ in }\ltp^{d\times d}$, and 
  \begin{equation}\label{eq:conv-minima}
    \lim\limits_{\eps\to 0}\min\cE\e=    \lim\limits_{\eps\to 0}\cE\e(u\e)=\cE_0(u,\chi)=\min\cE_0.
  \end{equation}
  In particular, the sequence $(u\e,\nabla u\e)$ is bounded in $\sB$.  By Theorem~\ref{thm:Balder-Thm-two-scale} we may pass to a further subsequence (not relabeled) such that $(u\e,\nabla u\e)$ generates a Young measure $\boldsymbol{\nu}$ on $\sB$.   Since $\nu_\omega$ is supported on the set of quenched $\omega$-two-scale cluster points of $(u\e(\omega,\cdot),\nabla u\e(\omega,\cdot))$, we deduce from Lemma~\ref{lem:sto-conver-grad} that the support of $\nu_\omega$ is contained in $\sB_0:=\{\xi=(\xi_1,\xi_2)=(u',\nabla u'+\chi')\,:\,u'\in W^{1,p}_0(Q),\,\chi\in L^p_{\pot}(\Omega)\otimes L^p(Q)^d\}$ which is a closed subspace of $\sB$. Moreover, thanks to the relation of the generated Young measure and stochastic two-scale convergence in the mean, we have $(u,\chi)=\int_\Omega \int_{\sB_0}(\xi_1,\xi_2-\nabla\xi_1)\,\nu_\omega(d\xi)\,dP(\omega)$. By Lemma~\ref{lem:General-Hom-Convex}, $V$ is a quenched two-scale normal integrand and thus Lemma~\ref{lem:Balder-Lem-two-scale} implies that
  \begin{equation*}
    \lim\limits_{\eps\to 0}\cE_\eps(u\e)\geq \int_\Omega\int_{\sB}\Big(\int_\Omega\int_Q V(\tomega,x,\xi_2)\,dx\,dP(\tomega)\Big)\,\nu_\omega(d\xi)\,dP(\omega).
  \end{equation*}
  In view of \eqref{eq:conv-minima} and the fact that $\nu_\omega$ is supported in $\sB_0$, we conclude that
  \begin{equation*}
    \min\cE_0\geq \int_\Omega\int_{\sB_0}\cE_0(\xi_1,\xi_2-\nabla\xi_1)\,\nu_\omega(d\xi)\,dP(\omega)\geq \min\cE_0\int_\Omega\int_{\sB_0}\nu_\omega(d\xi)dP(\omega).
  \end{equation*}
  Since $\int_\Omega\int_{\sB_0}\nu_\omega(d\xi)dP(\omega)=1$, we have $\int_\Omega\int_{\sB_0}|\cE_0(\xi_1,\xi_2-\nabla\xi_1)-\min\cE_0|\,\nu_\omega(d\xi)\,dP(\omega)= 0$, and thus we conclude that for $P$-a.e.~$\omega\in\Omega_0$,  $\nu_\omega$ concentrates on $\{(u,\nabla u+\chi)\,:\,\cE_0(u,\chi)=\min\cE_0\}$.
  \smallskip

  {\it Step 2. (The strictly convex case).}

  The uniqueness of $u\e$ and $(u,\chi)$ is clear. From Step~1 we thus conclude that $\nu_\omega=\delta_{\xi}$ where $\xi=(u,\nabla u+\chi)$. Theorem~\ref{thm:Balder-Thm-two-scale} implies that $(u\e(\omega,\cdot),\nabla u\e(\omega,\cdot))\tsq{\omega}(u,\nabla u+\chi)$ (for $P$-a.e.~$\omega\in\Omega$). 
  By Lemma~\ref{lem:General-Hom-Convex}, $V$ is a quenched two-scale normal integrand and thus for $P$-a.e.~$\omega\in\Omega$,
  \begin{equation*}
    \liminf\limits_{\eps\to 0}\cE^\omega_\eps(u\e(\omega,\cdot))\geq \cE_0(u,\chi)=\min\cE_0.
  \end{equation*}
  On the other hand, since $u\e(\omega,\cdot)$ minimizes $\cE^\omega_\eps$, we deduce by a standard argument that for $P$-a.e.~$\omega\in\Omega$,
  \begin{equation*}
    \lim\limits_{\eps\to 0}\min\cE^\omega_\eps=\lim\limits_{\eps\to 0}\cE^\omega_\eps(u\e(\omega,\cdot))=\cE_0(u,\chi)=\min\cE_0.
  \end{equation*}
\end{proof}
\begin{appendix}
\section{Proof of Lemma \ref{lem8}}\label{appendix:1}
\begin{proof}
We prove the claim for $\chi=D\varphi$ for $\varphi\in \sob(\Omega)^d$ and the general case follows by density.

Recall, the stationary extension of $D\varphi$ is given by $S{D\varphi}(\omega,x)=D\varphi(\tau_x\omega)$ and we have $\nabla S{\varphi}(\omega,x)=S{D\varphi}(\omega,x)$. Let $R>0$, $K>0$ and $\eta_{R}\in C^{\infty}_c(B_{R+K})$ be a cut-off function satisfying $\eta=1$ in $B_R$, $0 \leq \eta\leq 1$ and $|\nabla \eta_{R}|\leq \frac{2}{K}$.
Using stationarity of $P$, we obtain
\begin{equation*}
\ex{|D\varphi|^p}=\ex{\fint_{B_R}|\nabla S{\varphi}|^p dx}=\ex{\fint_{B_R}|\nabla(\eta_R S{\varphi})|^p dx}\leq \ex{\frac{1}{|B_R|}\int_{\re d}|\nabla(\eta_R S{\varphi})|^p dx}.
\end{equation*}
Using this and Korn's inequality in $L^p(\R^d)$,
\begin{align*}
\ex{|D\varphi|^p}&\leq 2 \ex{\frac{1}{|B_R|}\int_{\re d}|\nabla^s(\eta_R S{\varphi})|^p}\\ &=2\ex{\fint_{B_R}|\nabla^s S{\varphi}|^p dx}+\frac{2}{|B_R|}\ex{\int_{B_{R+K}\setminus B_R}|\nabla^s(\eta_R S{\varphi})|^p dx}.
\end{align*}
The first term on the right-hand side of the above inequality equals $2 \ex{|D^s\varphi|^p}$ and therefore to conclude the proof, it is sufficient to show that the second term vanishes in the limit $R\to \infty$.
We have
\begin{align}\label{ineq47}
\begin{split}
&\frac{1}{|B_R|}\ex{\int_{B_{R+K}\setminus B_R}|\nabla^s(\eta_R S{\varphi})|^p dx}\leq  \frac{1}{|B_R|}\ex{\int_{B_{R+K}\setminus B_R}|\nabla(\eta_R S{\varphi})|^p dx}\\
&\qquad\qquad \leq  \frac{C}{|B_R|} \ex{\int_{B_{R+K}\setminus B_R}|\eta_R|^p|\nabla S{\varphi}|^p +|\nabla \eta_R|^p|S{\varphi}|^p dx}  \\
&\qquad\qquad \leq  \frac{C}{|B_R|}\ex{\int_{B_{R+K}\setminus B_R}|\nabla S{\varphi}|^pdx}+\frac{C}{|B_R|K^p}\ex{\int_{B_{R+K}\setminus B_R}|S{\varphi}|^p dx}.
\end{split}
\end{align}
For the first term on the right-hand side, we have
\begin{align*}
\frac{C}{|B_R|}\ex{\int_{B_{R+K}\setminus B_R}|\nabla S{\varphi}|^pdx}= & \frac{C|B_{R+K}|}{|B_R|}\ex{\fint_{B_{R+K}}|\nabla S{\varphi}|^pdx}-C\ex{\fint_{B_R}|\nabla S{\varphi}|^pdx}\\  = & C \ex{|D\varphi|^p}\brac{\frac{|B_{R+K}|}{|B_R|}-1}
\end{align*}
and as $R\to \infty$ the last expression vanishes. Similarly, the second term on the right-hand side of (\ref{ineq47}) vanishes as $R\to \infty$.
\end{proof}
\end{appendix}
\section*{Acknowledgments}
The authors thank Alexander Mielke for fruitful discussions and valuable comments. SN and MV thank Goro Akagi for useful discussions and valuable comments.
MH has been funded by Deutsche Forschungsgemeinschaft (DFG) through grant CRC 1114 ``Scaling Cascades
in Complex Systems'', Project C05 ``Effective models for interfaces with many scales''. SN and MV were supported by the DFG
in the context of TU Dresden's Institutional Strategy ``The Synergetic University''.

\nocite{*}
\bibliographystyle{abbrv}
\bibliography{ref}

\end{document}